\makeatletter\@addtoreset{equation}{section}
\newtheorem{theorem}{Theorem}[section]
\newtheorem{assumption}[theorem]{Assumption}
\newtheorem{corollary}[theorem]{Corollary}
\newtheorem{definition}[theorem]{Definition}
\newtheorem{lemma}[theorem]{Lemma}
\newtheorem{remark}[theorem]{Remark}
\newenvironment{proof}[1][Proof]{\noindent\textit{#1.} }{\hfill \rule{0.5em}{0.5em}\\\par}
\numberwithin{equation}{section}
\newcommand{\R}{\mathbb{R}}
\newcommand{\T}{\mathbb{T}}
\newcommand{\Z}{\mathbb{Z}}
\newcommand{\di }{{\,\mathrm{d}}}
\begin{document}
	
	\title{\textbf{Asymptotic behavior of a nonlocal advection system with two populations}}
	\author{\textsc{Xiaoming Fu\thanks{The research of this author is supported by China Scholarship Council.} and Pierre Magal}
	\\
	{\small \textit{Univ. Bordeaux, IMB, UMR 5251, F-33400 Talence, France}} \\
	{\small \textit{CNRS, IMB, UMR 5251, F-33400 Talence, France.}}}
	\maketitle
	\begin{abstract}
		In this paper, we consider a nonlocal advection model for two populations on a bounded domain. The first part of the paper is devoted to the existence and uniqueness of solutions and the associated semi-flow properties. Here we use the notion of solution integrated along the characteristics. Next, by proving segregation property, we construct an energy functional to investigate the asymptotic behavior of the solution. In order to get some compactness of the positive orbit, we use the narrow convergence in the space of Young measures. By using this idea, we get a description of the asymptotic behavior of the solution in the space of Young measures. The last section of the paper is devoted to numerical simulations, which confirm and complement our theoretical results.  
	\end{abstract}
\section{Introduction}
In this work,  we study a two species population dynamic model with nonlocal advection term
\begin{equation}		\label{1.1}
		\begin{cases}
				\partial _{t}u_1(t,x) +\mathrm{div }\bigl(u_1(t,x)\mathbf{v}(t,x)\bigr)=
				u_1(t,x)h_1(u_1(t,x),u_2(t,x)),\\
				\partial _{t}u_2(t,x) +\mathrm{div }\bigl(u_2(t,x)\mathbf{v}(t,x)\bigr)=
				u_2(t,x)h_2(u_1(t,x),u_2(t,x)),
		\end{cases}\;\;t>0,\;x\in \mathbb{R}^{N},
\end{equation}
and the velocity field ${\bf v}=-\nabla P $ is derived from pressure 
\begin{equation*}
	P(t,x):=\left(\rho\ast (u_1+u_2)(t,.)\right)(x),
\end{equation*}
where $ \ast $ is the convolution in $ \R^N $. Suppose equations \eqref{1.1} are supplemented with a periodic initial distribution
\begin{equation}\label{1.2}
\textbf{u}_0(x):=\begin{pmatrix}
u_1(0,x)\\
u_2(0,x)
\end{pmatrix}\in \R_+^2 \text{ where $\mathbf u_0$ is a $2\pi$--periodic
	function in each direction.}  
\end{equation}
In this article we consider solutions of equations \eqref{1.1} which are \textbf{periodic in space}. Here a function $u(x)$ is said to be \textbf{$2\pi$-periodic in each direction} (or for simplicity \textbf{periodic}) if  
$$
u(x+2k\pi)=u(x),\;\forall k\in \mathbb{Z}^N,x\in \R^N.
$$
When $u(x)$ is periodic we can reduce the convolution to the $N$--dimensional torus $\mathbb{T}^{N}:=\R^N / 2\pi \mathbb{Z}^N $  by making the following observations 
\begin{equation*}
\begin{aligned}
\left(\rho\ast u \right)(x) &=\int_{\R^N}\rho(x-y)u(y) \di  y\\
&= \sum_{k\in \Z^N}\int_{[0,2\pi]^N} \rho(x-(y+2 k \pi )) u(y+2 k \pi)\di  y \\
&=\sum_{k\in \Z^N}\int_{[0,2\pi]^{N}} \rho(x-y-2k\pi) u(y)\di  y,
\end{aligned}
\end{equation*}
hence we can reformulate as
\begin{equation*}
\left(\rho\ast u(\cdot) \right)(x) =\frac{1}{(2\pi)^N}\int_{{[0,2\pi]}^{N}} K(x-y)u(y)\di  y,
\end{equation*}
where $K$ is again $ 2\pi $--periodic in each direction and defined by 
\begin{equation*}
K(x)=(2\pi)^N \sum_{k\in \mathbb Z^N}\rho(x+2\pi k),\;x\in\R^N.
\end{equation*} 
The fast decay of $ \rho  $ is necessary to ensure the convergence of the above series (see Remark \ref{REM1.3} for details).
Now we can rewrite the velocity field $ \textbf{v} $ as follows:
\begin{equation}\label{1.3}
	\mathbf{v}(t,x)=-\nabla \left[ K\circ (u_1+u_2)(t,\cdot)\right] (x),
\end{equation}%
where $\circ $ denotes the convolution operator on the $N$--dimensional torus $
\mathbb{T}^{N}:=\R^N / 2\pi \mathbb{Z}^N \simeq [0, 2 \pi]^N$ defined for each $2 \pi$-periodic in each direction and measurable functions $\varphi$ and $\psi$ by 
\begin{equation*}
	\left( \varphi \circ \psi \right) (x)=|\mathbb{T}^{N}|^{-1}\int_{\mathbb{T}%
		^{N}}\varphi(x-y)\psi (y)\di  y.
\end{equation*}

	Our motivation for this problem comes from the observations in the biological experiments for two types of cells co-cultured on the monolayer. One can find an example of such a co-culture in \cite[Figure 1]{Pasquier2012}. Cells are growing and meanwhile forming segregated islets and the growth stops when they become locally saturated. 
	
	In this article, we intend to study this mechanism by using a non-local advection equation with contact inhibition. As we will see, our model captures the finite propagating speed in cell co-culturing. In the context of cell population, the impact of cell adhesion and repulsion on the movement and patterning of cell populations has been studied by many authors, for example, Armstrong et al. \cite{Armstrong2006} and Painter et al. \cite{Painter2015}. For a more general perspective, our study is connected to the cell segregation and border formation,  Taylor et al. \cite{Taylor2017} concluded the heterotypic repulsion and homotypic cohesion can account for cell segregation and border formation. We also refer the readers to Dahmann et al. \cite{Dahmann2011} and the references therein for more about boundary formation with its application in biology.  These observations and results in biological experiments lead us to adopt a nonlocal advection term which is able to explain the finite propagation speed of cells and cell segregation. The segregation property was brought up in the 80's by using cross diffusion by Shigesada, Kawasaki and Teramoto \cite{Shigesada1979} and Mimura and Kawasaki \cite{Mimura1980}. Since then, the cross-diffusion has been widely studied and we refer to Lou and Ni \cite{Lou1996, Lou1999} for more results about this subject. We also refer to the introduction of Bertozzi, Garnett and Laurent \cite{Bertozzi2012} for a list of applications for different fields. 
	
	Further studies on mathematical analysis of such a non-local advection equation with linear and nonlinear diffusion have been investigated by \cite{Bedrossian2011,Bertozzi2011,Bertozzi2010,Dyson2010}. This class of systems has been recently studied in \cite{Bertozzi2012,Bodnar2006} and \cite{Raoul2012} with an additional heterogeneous transport term. The traveling wave solution of such nonlocal equation with diffusive perturbation were considered by many authors, we refer the readers to \cite{Bernoff2011, Hamel2017, Leverentz2009, Mogilner1999, Nadin2008} for swarming models and propagation of front. Let us also mention that system \eqref{1.1} is also related to hyperbolic Keller-Segel equation (see Perthame and Dalibard  \cite{Perthame2009}).

	The single species model of equation \eqref{1.1} has been studied by Ducrot and Magal in \cite{Ducrot2014} (see the derivation of the model therein). Compared to the work in \cite{Ducrot2014}, one of the technical difficulties in this paper is that we do not have a $L^2$ uniform boundedness of the solution a priori. This is because we allow function $ h $ to be of more general type than that in \cite{Ducrot2014} (see Assumption \ref{ASS1.1} and \ref{ASS4.1}). This difficulty obliges us to find another way to prove the $ L^{\infty} $ uniform boundedness of the solution (see Lemma \ref{LEM4.9}, Remark \ref{REM4.10} and Theorem \ref{THM4.11}). Moreover,  we prove the segregation property for two species by using the notion of solution integrated along the characteristics.  Our key Assumption \ref{ASS4.4} on the positivity of Fourier coefficients enables to construct a decreasing energy functional, this condition has also been considered in \cite{Bernoff2011} and \cite{Ducrot2011}.   Using this important property, we show the $ L^{\infty} $ convergence of the sum of two species when the initial distribution is strictly positive (see Corollary \ref{COR4.12}). Furthermore, the segregation property 
is preserved asymptotically when $ t $ tends to infinity (see Lemma \ref{LEM5.14}).
	In Section 7, by using numerical simulations, we obtain some results which have not been proved theoretically. In fact,  we show the necessity of using a weaker sense of convergence (narrow convergence) to encompass the lack of compactness of the solution and to study the limit for each species.
\begin{assumption}
	\label{ASS1.1} For $ i=1,2, $ $ h_i: \R_+^2 \to \R$ are $ C^1 $ functions with $$ \sup_{u_1,u_2\geq 0} h_i(u_1,u_2) <\infty,\quad \sup_{u_1,u_2\geq 0}\partial_{u_j}h_i(u_1,u_2)<\infty,\quad j=1,2.$$
\end{assumption}
An example of function $h_i$ is the following function 
$$
h_i(u_1,u_2)=\lambda_i  (1-(u_1+u_2)). 
$$
Therefore $u_i \times h_i(u_1,u_2)$ is a logistic function. As assumed in \cite{Ducrot2014}, the map $ h_i $ is not bounded from below. Here we not will make such an assumption.  

Motivated by the model derived from Ducrot et al. in \cite{Ducrot2011}  which describes the contact inhibition (i.e. cells stop growing when they are locally saturated), we would like to use the following non-linear function. 
$$
h_i(u_1,u_2)= \frac{b_i }{1+ \gamma_i \left(u_1+u_2\right)}-\mu_i
$$
where $b_i>0$ is the division rate, $\mu_i>0$ is the mortality rate and $\gamma_i>0$ is a coefficient.  

In such a case the map $h_i$ is bounded from below therefore we can not apply the same arguments than \cite{Ducrot2014} to obtain an $L^\infty$ bounded for the solution. To encompass this difficult, we resort to another approach. This shows that our results can be applied to a larger class of non-linearity than \cite{Ducrot2014}.

\begin{assumption}
	\label{ASS1.2} The kernel $K:\mathbb{R}^N\rightarrow \mathbb{R}$ is a $\mathbb{T}^N-$ periodic function of the class $C^m$ on $\mathbb{R}^N$ for some integer $ m\geq \frac{N+5}{2} $.  

\end{assumption}
\begin{remark}\label{REM1.3} The above regularity Assumption \ref{ASS1.2} can be reduced to $ m\geq 3 $ in proving the existence and uniqueness of solutions. The higher regularity is crucial for Lemma \ref{LEM4.9}.  For the dimension $ N\leq 3 $, the regularity condition in Assumption \ref{ASS1.2} is always satisfied whenever $ K\in C^4 $. For the choice of $ \rho $ in \eqref{1.1}, it suffice to choose $ \rho \in W^{m+1,1}(\R^N) $. For each multi-index $ \alpha $ with $ |\alpha|\leq m$, the series
	\[ x\longmapsto \sum_{k\in \Z^N} D^\alpha \rho(x+2 \pi k) \]
	is uniformly converging on $ \T^N $. Hence, $ K $ satisfies Assumption \ref{ASS1.2}.
\end{remark}

The plan of the paper is the following. In Section 2, we investigate the existence and the uniqueness of solution integrated along the characteristics. In Section 3, we study the segregation property. In Sections 4 and 5, the asymptotic behavior of segregated solutions has been studied by using Young measures (a generalization of $ L^\infty $ weak $ * $--convergence).  Section 6 is devoted to numerical simulations where we explore some further results that are not proved analytically, these numerical simulations complement our theoretical part.   

\section{Solution Integrated along the Characteristics}
In this section we study the existence and uniqueness of
solution for \eqref{1.1}-\eqref{1.3} with
initial data $\mathbf u_0\in L^\infty_{per}\left(\mathbb{R}^N\right)^2$.

Before going further let us introduce some notations that will be used in the following.
For each $k\in \mathbb{N}$, let us denote by $C_{per}^k\left(\mathbb{R}%
^N\right)$ the Banach space of functions of the class $C^k$ from $\mathbb{R}%
^N$ into $\mathbb{R}$ and $[0,2\pi]^N $--periodic endowed with the usual
supremum norm
\begin{equation*}
\left\| \varphi \right\Vert _{C^k }=\sum_{p=0}^k\; \sup_{x\in 
	\mathbb{R}^N}\left\vert D^p\varphi (x)\right\vert.
\end{equation*}
For each $p\in \left[ 1,+\infty \right] $, let us denote by $%
L_{per}^{p}\left( \mathbb{R}^N\right)$ the space of measurable and $%
[0,2\pi]^N-$periodic functions from $\mathbb{R}^N$ to $\mathbb{R}$ such that 
\begin{equation*}
\left\| \varphi \right\Vert_{L^p}:=\left\| \varphi \right\Vert _{L^{p}\left( \left(0 , 2 \pi\right)^N
\right) }<+\infty .
\end{equation*}%
Then $L_{per}^{p}\left( \mathbb{%
	R}^N\right) $ endowed with the norm $%
\left\| \varphi \right\Vert_{L^p}$
is a Banach space. We also introduce its positive cone $L^p_{per,+}\left(
\mathbb{R}^N\right)$ consisting of function in $L_{per}^{p}\left( \mathbb{%
	R}^N\right) $ almost everywhere positive.
\begin{remark}\label{REM1.4}
	When we study the product space $ C_{per}^k\left(\mathbb{R}^N\right)^n, L_{per}^{p}\left( \mathbb{R}^N\right)^n $ when $ n\in \mathbb N $, for simplicity, we use the same notation $ \| \cdot\|_{C^k} $ and $ \| \cdot\|_{L^p} $ for the norm in product space.
\end{remark}

 We first investigate the characteristic curves of the problem. 

\begin{lemma}\label{LEM2.1}
	Let Assumption \ref{ASS1.2} be satisfied. Let $u_i\in
	C\left( \left[ 0,\tau \right],L_{per}^{1}\left( \mathbb{R}^N\right)
	\right),i=1,2 $ be given. Then by setting $\mathbf{v}(t,x)=-\nabla\left[ K\circ (u_1+u_2)(t,\cdot)\right]
	(x) $, the following non-autonomous system for each $s\in \left[ 0,\tau \right]$ and each $ z\in\mathbb{R}^N$: 
	\begin{equation*}
		\left\{ 
		\begin{array}{l}
			\partial _{t}\Pi _{\mathbf{v}}(t,s;z)=\mathbf{v}(t,\Pi _{\mathbf{v}%
			}(t,s;z)),\text{ for each }t\in \left[ 0,\tau \right] , \\ 
			\Pi _{\mathbf{v}}(s,s;z)=z,%
		\end{array}%
		\right.  
	\end{equation*}%
	generates a unique non-autonomous continuous flow $\left\{ \Pi _{\mathbf{v}%
	}(t,s)\right\} _{t,s\in \left[ 0,\tau \right] }$, that is to say,
	\begin{equation*}
		\Pi _{\mathbf{v}}(t,r;\Pi _{\mathbf{v}}(r,s;z))=\Pi _{\mathbf{v}%
		}(t,s;z),\forall t,s,r\in \left[ 0,\tau \right] ,\text{ and }\Pi _{\mathbf{v}%
	}(s,s;.)=I
\end{equation*}%
and the map $(t,s,z)\rightarrow \Pi _{\mathbf{v}}(t,s;z)$ is continuous.
Moreover for each $t,s\in \left[ 0,\tau \right] ,$ we have \ 
\begin{equation*}
	\Pi_{\mathbf{v}} (t,s;z+2\pi k )=\Pi_{\mathbf{v}} (t,s;z)+2\pi k ,\forall
	z\in \mathbb{R^N},\,k\in\mathbb{Z}^N,
\end{equation*}%
the map $z\rightarrow \Pi_{\mathbf{v}} (t,s;z)$ is continuously
differentiable and one has the determinant of Jacobi matrix: 
\begin{equation}\label{2.1}
	\det(\partial_{z}\Pi_{\mathbf{v}} (t,s;z))=\exp \left( \int_{s}^{t} \mathrm{%
		div }\; \mathbf{v}(l,\Pi _{\mathbf{v}}(l,s;z)) dl\right).
\end{equation}
\end{lemma}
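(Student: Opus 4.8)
The plan is to verify that the velocity field $\mathbf v$ is regular enough (Lipschitz in $x$, continuous in $t$) to apply the classical Cauchy--Lipschitz theory for non-autonomous ODEs, and then to read off the flow property, the periodicity, and the Liouville-type formula for the Jacobian. First I would check the regularity of $\mathbf v(t,\cdot)=-\nabla\left[K\circ(u_1+u_2)(t,\cdot)\right]$. Since $u_i\in C\left(\left[0,\tau\right],L^1_{per}\left(\mathbb R^N\right)\right)$ and, by Assumption \ref{ASS1.2}, $K\in C^m_{per}\left(\mathbb R^N\right)$ with $m\ge 3$, differentiation can be moved onto the kernel: $\mathbf v(t,x)=-\left[(\nabla K)\circ(u_1+u_2)(t,\cdot)\right](x)$ and $\partial_x\mathbf v(t,x)=-\left[(D^2 K)\circ(u_1+u_2)(t,\cdot)\right](x)$. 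Young's inequality for convolutions on $\mathbb T^N$ then gives $\|\mathbf v(t,\cdot)\|_{C^1}\le C\left(\|\nabla K\|_{C^0}+\|D^2K\|_{C^0}\right)\left(\|u_1(t,\cdot)\|_{L^1}+\|u_2(t,\cdot)\|_{L^1}\right)$, which is bounded uniformly on $\left[0,\tau\right]$ by continuity of $t\mapsto u_i(t,\cdot)$; likewise $t\mapsto\mathbf v(t,x)$ is continuous. Hence $\mathbf v$ is continuous in $(t,x)$ and globally Lipschitz in $x$ uniformly in $t\in\left[0,\tau\right]$.

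Next I would invoke the standard existence, uniqueness, and continuous-dependence theorem for the ODE $\partial_t\Pi_{\mathbf v}(t,s;z)=\mathbf v(t,\Pi_{\mathbf v}(t,s;z))$, $\Pi_{\mathbf v}(s,s;z)=z$: the uniform Lipschitz bound gives global existence on $\left[0,\tau\right]$ (no blow-up, since linear growth of $\mathbf v$ in $x$ yields an a priori bound via Grönwall), uniqueness, the evolution property $\Pi_{\mathbf v}(t,r;\Pi_{\mathbf v}(r,s;z))=\Pi_{\mathbf v}(t,s;z)$ by uniqueness, and joint continuity of $(t,s,z)\mapsto\Pi_{\mathbf v}(t,s;z)$. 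For the periodicity relation, I would observe that $\mathbf v(t,\cdot)$ is $2\pi$-periodic in each direction (as the convolution of a periodic kernel with a periodic function), so $t\mapsto\Pi_{\mathbf v}(t,s;z)+2\pi k$ solves the same ODE with initial datum $z+2\pi k$ at time $s$; uniqueness then forces $\Pi_{\mathbf v}(t,s;z+2\pi k)=\Pi_{\mathbf v}(t,s;z)+2\pi k$.

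For the $C^1$ dependence on $z$ and the Jacobian formula \eqref{2.1}, I would use that $\mathbf v(t,\cdot)\in C^1$ with derivative bounded uniformly in $t$, which by the standard theorem on differentiable dependence on initial conditions makes $z\mapsto\Pi_{\mathbf v}(t,s;z)$ continuously differentiable, with $J(t):=\partial_z\Pi_{\mathbf v}(t,s;z)$ solving the variational (linearized) equation $\partial_t J(t)=\left[\partial_x\mathbf v\left(t,\Pi_{\mathbf v}(t,s;z)\right)\right]J(t)$, $J(s)=I$. Applying the classical Liouville formula $\frac{d}{dt}\det J(t)=\operatorname{tr}\left(\partial_x\mathbf v\left(t,\Pi_{\mathbf v}(t,s;z)\right)\right)\det J(t)=\left(\operatorname{div}\mathbf v\right)\left(t,\Pi_{\mathbf v}(t,s;z)\right)\det J(t)$ and integrating from $s$ to $t$ (with $\det J(s)=1$) yields exactly \eqref{2.1}.

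The only mildly delicate point — the ``hard part'' such as it is — is justifying that the spatial derivatives of the convolution $K\circ(u_1+u_2)$ land on $K$ rather than on the merely $L^1$ factors $u_i$, i.e. the identity $\nabla\left[K\circ w\right]=(\nabla K)\circ w$ for $w\in L^1_{per}$; this is a routine difference-quotient/dominated-convergence argument using $K\in C^1_{per}$ and the uniform continuity of $\nabla K$ on the compact torus, but it is what makes the whole Cauchy--Lipschitz machinery applicable despite the low regularity assumed on $u_i$. Everything else is a direct citation of classical ODE theory (Picard--Lindelöf, Grönwall, smooth dependence on initial data, Liouville's formula) applied to the field $\mathbf v$ whose regularity we established in the first step.
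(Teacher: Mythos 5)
Your proposal is correct and follows essentially the same route as the paper: establish that $\mathbf v\in C\left([0,\tau],C^1_{per}(\mathbb{R}^N)^N\right)$ with bounds in terms of $\|\nabla K\|_{C^0}$, $\|\Delta K\|_{C^0}$ and $\|(u_1+u_2)(t,\cdot)\|_{L^1}$, invoke classical ODE theory for existence, uniqueness, the flow property and periodicity, and derive \eqref{2.1} from the variational equation via the Jacobi/Liouville determinant formula. Your treatment is somewhat more explicit than the paper's (in particular on moving the gradient onto the kernel and on the periodicity-by-uniqueness argument), but there is no substantive difference in approach.
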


\begin{proof}
	By the assumption, we have $$ \mathbf{v}(t,x)\in  C\left( \left[ 0,\tau \right], C_{per}^{1}\left( \R^N \right)^N \right),$$ and we have the following estimations
	\begin{equation*}
	\begin{split}
	&\left\| \mathbf{v}(t,\cdot)\right\Vert _{C^0 }\leq \left\| \nabla
	K\right\Vert _{C^0 }\left\| (u_1+u_2)(t,\cdot)\right\|_{L^1}, \\
	&\left\| \mathrm{div}\,\mathbf{v}(t,\cdot)\right\Vert _{C^0 }\leq
	\left\| \Delta K\right\Vert _{C^0 }\left\| (u_1+u_2)(t,\cdot)\right\|_{L^1}.
	\end{split}%
	\end{equation*}	
	Therefore, the first part of the results follows by using classical arguments on ordinary differential
	equations.
	For the proof of \eqref{2.1},
	note that 
		\begin{equation*}
			\left\{ 
			\begin{array}{l}
				\partial_{t}\partial_z \Pi _{\mathbf{v}}(t,s;z)=\partial_x\mathbf{v}(t,\Pi _{\mathbf{v}}(t,s;z)) \partial_z\Pi _{\mathbf{v}%
			}(t,s;z),\,t\in \left[ 0,\tau \right] , \\ 
				\partial_z\Pi _{\mathbf{v}}(s,s;z)=I.
			\end{array}
			\right. 
		\end{equation*}%
	For any  matrix-valued $ C^1 $ function $ A:t\mapsto A(t) $, the Jacobi's formula reads as follows 
	\begin{equation*}
		\frac{\di  }{\di  t} \det A(t) = \det A(t) \text{tr}\left(A^{-1}(t) \frac{d}{d t}A(t)\right),
	\end{equation*}
	hence we obtain
		\begin{equation*}
		\frac{\di  }{\di  t} \det \partial_z \Pi _{\mathbf{v}}(t,s;z)  = \det \partial_z \Pi _{\mathbf{v}}(t,s;z) \times \text{tr}\left( \partial_x\mathbf{v}(t,\Pi _{\mathbf{v}%
		}(t,s;z))\right)
		\end{equation*}
	and since $ \text{tr}\left( \partial_x\mathbf{v}(t,\Pi _{\mathbf{v}%
	}(t,s;z))\right)= \mathrm{%
		div }\; \mathbf{v}(t,\Pi _{\mathbf{v}}(t,s;z)) $ therefore the result follows.
\end{proof}

\bigskip

In order to precise the notion of solution in this work, assume
first that $$\textbf{u}=\begin{pmatrix}
u_1,u_2
\end{pmatrix}\in C^{1}\left( \left[ 0,\tau \right] \times \R^N,\R\right)^2 \cap C\left( [0,\tau ],C_{per ,+}^{0}(\mathbb{R}^{N})\right)^2 $$
is a classical solution of \eqref{1.1}-\eqref{1.3}. 
We consider the solution with each component $ u_i(t,\cdot) $ along the characteristic curve $ \Pi_{\textbf{v}}(t,0;x) $ respectively, we obtain for $ i=1,2, $
\begin{equation*}
	\begin{aligned}
		{\displaystyle\frac{\di }{\di  t}}\Bigl(u_i(t,\Pi _{\mathbf{v}}(t,0;z)\Bigr)=&\partial _{t}u_i(t,\Pi _{\mathbf{v}}(t,0;z))+\nabla u_i(t,\Pi _{\mathbf{v}}(t,0;z))\cdot \mathbf{v}	(t,\Pi _{\mathbf{v}}(t,0;z)) \\ 
		=&u_i(t,\Pi _{\mathbf{v}}(t,0;z))\Big[-\mathrm{%
			div }\; \mathbf{v}(t,\Pi _{\mathbf{v}}(t,0;z)) +h_i(\textbf{u}(t,\Pi _{\mathbf{v}}(t,0;z))\Big],
	\end{aligned}
\end{equation*}
where $ h_i(\textbf{u}(t,\Pi _{\mathbf{v}}(t,0;z))=h_i(u_1(t,\Pi _{\mathbf{v}}(t,0;z),u_2(t,\Pi _{\mathbf{v}}(t,0;z)) $. Hence a classical solution of \eqref{1.1}-\eqref{1.3} (i.e. $C^1$ in time and space) must satisfy 
\begin{equation}\label{2.2}
	u_i(t,\Pi _{\mathbf{v}}(t,0;z))=\exp \left( \int_{0}^{t}h_i\bigl(\textbf{u}(l,\Pi _{%
		\mathbf{v}}(l,0;z))-\mathrm{%
		div }\; \mathbf{v}(l,\Pi _{\mathbf{v}}(l,0;z))dl\right) u_i\left(0, z\right),i=1,2,  
\end{equation}%
or equivalently 
\begin{equation}\label{2.3}
	u_i(t,z)=\exp \left( \int_{0}^{t}h_i\bigl(\textbf{u}(l,\Pi _{\mathbf{v}}(l,t;z))\bigr)-
	\mathrm{div }\; \mathbf{v}(l,\Pi _{\mathbf{v}}(l,t;z))dl\right) u_i\left(0,
	\Pi_{\mathbf{v}}(0,t;z)\right), i=1,2, 
\end{equation}%
where
\begin{equation} \label{2.4}
	\mathbf{v}(t,x)=-\frac{1}{|\T^N|}\int_{\mathbb{T}^{N}}\nabla K(x-y)(u_1+u_2)(t,y)\di  y. 
\end{equation}%
The above computations lead us to the following definition of solution.

\begin{definition}[Solution integrated along the characteristics]\label{DEF2.2}
	Let $\mathbf{u}_{0}\in L_{per ,+}^{\infty }\left( \mathbb{R}%
	^{N}\right)^2$, $\tau>0$ be given. A function $\mathbf u\in C\left( \left[
	0,\tau \right] ,L_{per ,+}^{1}\left( \mathbb{R}^{N}\right) \right)^2 \cap
	L^{\infty }\left( (0,\tau),L_{per ,+}^{\infty }\left( \mathbb{R}%
	^{N}\right) \right)^2 $ is said to be a solution integrated along the
	characteristics of \eqref{1.1}-\eqref{1.3}, if $u_i$ satisfies \eqref{2.3} for $ i=1,2 ,$
	with $\mathbf{v}$ defined in \eqref{2.4}.
\end{definition}
	 We will use a fixed point theorem to prove the existence and the uniqueness of the solutions integrated along the characteristics. Consider  
	 \begin{equation}\label{2.5}
	 	\mathbf w =\begin{pmatrix}
	 	w_1,w_2
	 	\end{pmatrix},\quad w_i(t,x):=u_i(t,\Pi _{\mathbf{v}}(t,0;x)),\,i=1,2,
	 \end{equation}
and we will construct a fixed point problem for the pair $ (\mathbf w,\mathbf v) $. 

	 If there exists a solution integrated along the characteristics, then by \eqref{2.2} we have  
	 \begin{equation}\label{2.6}
	 	{w}_i(t,x)=\exp \left( \int_{0}^{t}h_i\bigl(\textbf{w}(l,x)\bigr)-\mathrm{div}\,\mathbf{v}(l,\Pi _{\mathbf{v}}(l,0;x))dl\right)  u_i(0,x),\;i=1,2.
	 \end{equation}%
	 where $ h_i(\mathbf w(t,x)) = h_i(w_1(t,x),w_2(t,x)) $ for $ i=1,2 $. 
By the definition of $ \textbf{v} $ we obtain
\begin{equation}\label{2.7}
\begin{aligned}
\mathbf{v}(t,x)=&-\frac{1}{|\T^N|}\int_{\mathbb{T}^{N}}\nabla K(x-y)(u_1+u_2)(t,y) \di  y\\
=&-\int_{\R^N}\nabla \rho({x}-y)(u_1+u_2)(t,y) \di   y\\
=&-\int_{\R^N}\nabla \rho\left(x-\Pi _{\mathbf{v}}(t,0;z)\right) \sum_{i=1,2}u_i(t,\Pi_\textbf{v}(t,0;z)) \; \det \partial_{z}(\Pi_\textbf{v}(t,0;z)) \di   z\\
=&-\int_{\R^N}\nabla \rho\left(x-\Pi _{\mathbf{v}}(t,0;z)\right) \sum_{i=1,2}w_i(t,z) \; \det \partial_{z}(\Pi_\textbf{v}(t,0;z)) \di   z\\
\end{aligned}
\end{equation}
where we have used the change of variable $y=\Pi _{\mathbf{v}}(t,0;z)$. By using the determinant of Jacobi matrix in \eqref{2.1} and \eqref{2.6} we see that
\[ w_i(t,z) \det\,\partial_{z} (\Pi_\textbf{v}(t,0;z))=e^{\int_{0}^{t}h_i(\textbf{w}(l,z)) \di   l} u_i\left(0, z\right),\;i=1,2,\]
thus equation \eqref{2.7} becomes
\begin{equation}\label{2.8}
\begin{aligned}
\mathbf{v}(t,x)
=&-\int_{\R^N}\nabla \rho\left({x}-\Pi _{\mathbf{v}}(t,0;z)\right) \sum_{i=1,2}e^{\int_{0}^{t}h_i(\textbf{w}(l,z))dl} u_i\left(0,z\right)dz\\
=&-\frac{1}{|\T^N|}\int_{\mathbb{T}^{N}}\nabla K({x}-\Pi _{\mathbf{v}}(t,0;z))\sum_{i=1,2}e^{\int_{0}^{t}h_i(\textbf{w}(l,z))dl} u_i\left(0,z\right) dz.\\
\end{aligned}
\end{equation}
Therefore incorporating equations  \eqref{2.6} and \eqref{2.8} we are led to find the solution of the following problem
\begin{equation}\label{2.9}
\left\lbrace\begin{aligned}
&w_i(t,x)=\exp \left( \int_{0}^{t}h_i\bigl(\mathbf w(l,x))-\mathrm{div}\,\mathbf{v}(l,\Pi _{\mathbf{v}}(l,0;x))dl\right)  u_i\left(0, x\right),\;i=1,2,\\
&\mathbf{v}(t,x)=-\frac{1}{|\T^N|}\int_{\mathbb{T}^{N}}\nabla K({x}-\Pi _{\mathbf{v}}(t,0;z))\sum_{i=1,2}e^{\int_{0}^{t}h_i\bigl(\textbf{w}(l,z))\bigr)dl} u_i\left(0,z\right) dz.
\end{aligned}\right.
\end{equation}%
In order to choose a proper space for $ \textbf{w} $ and $ \textbf{v} $, we observe the following estimation
\begin{equation*}
\| \int_{0}^{t}h_i\bigl(\mathbf w(l,x))-\mathrm{div}\,\mathbf{v}(l,\Pi _{\mathbf{v}}(l,0;x))dl \|_{L^\infty}\leq t\left(\bar{h}+\|\mathbf v \|_{C^1} \right),\,i=1,2.
\end{equation*}
where $ \bar{h}:=\sup_{u_1,u_2\geq 0}\sum_{i=1,2} h_i(u_1,u_2)  $. Hence we can choose the following spaces
\begin{equation*}
 \textbf{w}=\begin{pmatrix}
 w_1,w_2
 \end{pmatrix}\in C\left( \left[ 0,\tau \right] ,L_{per ,+}^{\infty}\left( \mathbb{%
 	R}^{N}\right) \right)^2,\; \textbf{v}\in C([0,\tau],C_{per}^1(\R^N)^N).
\end{equation*}%
	Our fixed point problem can be written as
	\begin{equation*}
			 \begin{pmatrix}
			 \textbf{w}\\ \textbf{v}
			 \end{pmatrix}\in C\left( \left[ 0,\tau \right] ,L_{per ,+}^{\infty}\left( \mathbb{R}^{N}\right) \right)^2\times C([0,\tau],C_{per}^1(\R^N)^N) \text{ and }  \mathcal T  \begin{pmatrix}
			 \textbf{w}\\ \textbf{v}
			 \end{pmatrix} =  \begin{pmatrix}
			 \mathbf{w^1}\\ \mathbf{v^1}
			 \end{pmatrix},
	\end{equation*}
	wherein $\mathbf{w^1}$ and $ \mathbf{v^1} $ are defined by 
	\begin{equation}\label{2.10}
			\begin{aligned}
			&\mathbf{w^1}(t,x)=\begin{pmatrix}
			\exp \left( \int_{0}^{t}h_1\bigl(\textbf{w}(l,x)\bigr)-\mathrm{div}\,\mathbf{v^1}(l,\Pi _{\mathbf{v}}(l,0;x))dl\right) u_1\left(0, x\right)\\
			\exp \left( \int_{0}^{t}h_2\bigl(\textbf{w}(l,x)\bigr)-\mathrm{div}\,\mathbf{v^1}(l,\Pi _{\mathbf{v}}(l,0;x))dl\right) u_2\left(0, x\right)
			\end{pmatrix},\\
			&\mathbf{v^1}(t,x)=-\frac{1}{|\T^N|}\int_{\mathbb{T}^{N}}\nabla K({x}-\Pi _{\mathbf{v}}(t,0;z))\sum_{i=1,2}e^{\int_{0}^{t}h_i\bigl(\textbf{w}(l,z))\bigr)dl} u_i\left(0,z\right) dz.
			\end{aligned}			
	\end{equation}
	

\begin{theorem}\label{THM2.3}
	Let Assumption \ref{ASS1.1} and Assumption \ref{ASS1.2} be satisfied.
	For each $\mathbf{u}_{0}\in L_{per,+}^{\infty }\left( \mathbb{R}^N\right)^2 ,$
	system \eqref{1.1}-\eqref{1.2} has a unique solution integrated along the
	characteristics 
	\[ t\mapsto U(t)\mathbf u_0 \text{ in } C\left( \left[ 0,+\infty
	\right) ,L_{per,+}^{1}\left( \mathbb{R}^N\right) \right)^2\cap
	L^\infty_{loc}\left([0,\infty),L_{per,+}^{\infty}\left( \mathbb{R}%
	^N\right)\right)^2. \]
	Moreover $\left\{ U(t)\right\} _{t\geq 0}$ is a
	continuous semiflow on $L_{per,+}^{1}\left( \mathbb{R}^N\right)^2,$
	that is to say 
	
	\begin{itemize}
		\item[{\rm (i)}] $U(t)U(s)=U(t+s),\forall t,s\geq 0$ and $U(0)=I$;
		
		\item[{\rm (ii)}] The map $(t,\mathbf u_0)\rightarrow U(t)\mathbf u_0$ maps every bounded set
		of $\left[ 0,+\infty \right) \times L_{per,+}^{\infty }\left( \mathbb{R}^N%
		\right)^2 $ into a bounded set of $L_{per,+}^{\infty }\left( \mathbb{R}^N%
		\right)^2 $;
		
		\item[{\rm  (iii)}] If $\left\{ t_{n}\right\} _{n\in \mathbb{N%
			}}(\subset \left[ 0,+\infty \right) )\rightarrow t<+\infty $ and $\left\{
			\mathbf u_0^{n}\right\} _{n\in \mathbb{N}}$ is bounded sequence in $%
			L_{per,+}^{\infty }\left( \mathbb{R}^N\right)^2 $ such that $\left\|
			\mathbf u_0^{n}-\mathbf u_0\right\|_{L^1}\rightarrow 0\text{ as }n\rightarrow +\infty$, then 
			\begin{equation*}
			\left\| U(t_{n})\mathbf u_0^{n}-U(t)\mathbf u_0\right\|_{L^1}\rightarrow 0\text{ as }n\rightarrow +\infty,
			\end{equation*}
		\end{itemize}
		where the norm is the product norm of $ L_{per,+}^{1}\left( \mathbb{R}^N\right)^2 $ (see Remark \ref{REM1.4}), the same for the following notation.
		
		\noindent The semiflow $U$ also satisfies the two following properties 
		\begin{equation}\label{2.11}
		U(t)\mathbf u_0\geq 0,\forall \mathbf u_0\geq 0,\forall t\geq 0,  
		\end{equation}%
		\begin{equation}
		\left\| U(t)\mathbf u_0\right\|_{L^1}\leq e^{t\bar{h}}\left\| \mathbf u_0\right\|_{L^1},\forall t\geq 0.
		\label{2.12}
		\end{equation}%
		where we define 
		\begin{equation}\label{2.13}
		\bar{h}:=\sup_{u_1,u_2\geq 0}\sum_{i=1,2} h_i(u_1,u_2) 
		\end{equation}
	\end{theorem}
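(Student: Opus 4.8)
The plan is to establish the theorem in two stages: first a local-in-time existence and uniqueness result via the contraction mapping principle applied to the operator $\mathcal{T}$ defined in \eqref{2.10}, then a global extension via an a priori $L^1$ bound, and finally the verification of the semiflow properties (i)--(iii) together with \eqref{2.11} and \eqref{2.12}.

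For the local step, I would work on the complete metric space
\[
\mathcal{E}_\tau := \left\{(\mathbf{w},\mathbf{v}) \in C\left([0,\tau], L^\infty_{per,+}(\mathbb{R}^N)\right)^2 \times C\left([0,\tau], C^1_{per}(\mathbb{R}^N)^N\right) : \|\mathbf{w}\|_{L^\infty} + \|\mathbf{v}\|_{C^1} \leq R \right\}
\]
for a radius $R$ chosen in terms of $\|\mathbf{u}_0\|_{L^\infty}$, $\bar h$ and $\|K\|_{C^3}$ (here Assumption \ref{ASS1.2} is used, and by Remark \ref{REM1.3} only $m \geq 3$ is needed). One checks $\mathcal{T}$ maps $\mathcal{E}_\tau$ into itself for $\tau$ small: the bound on $\mathbf{w}^1$ follows from the exponential estimate $\|\int_0^t h_i(\mathbf{w}) - \mathrm{div}\,\mathbf{v}^1\,dl\|_{L^\infty} \leq t(\bar h + \|\mathbf{v}^1\|_{C^1})$ already noted in the text, and the bound on $\mathbf{v}^1$ (and its first two $x$-derivatives, hence $C^1$) from $\|\mathbf{v}^1(t,\cdot)\|_{C^1} \leq \|K\|_{C^3}\,e^{t\bar h}\|\mathbf{u}_0\|_{L^1}$. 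For contraction, one estimates the difference $\mathcal{T}(\mathbf{w},\mathbf{v}) - \mathcal{T}(\tilde{\mathbf{w}},\tilde{\mathbf{v}})$ using: the Lipschitz property of $h_i$ from Assumption \ref{ASS1.1}; the elementary inequality $|e^a - e^b| \leq e^{\max(a,b)}|a-b|$; and the Lipschitz dependence of the characteristic flow $\Pi_{\mathbf{v}}(t,0;\cdot)$ on $\mathbf{v}$ via Gronwall (differentiating $\Pi_{\mathbf{v}} - \Pi_{\tilde{\mathbf{v}}}$ in $t$). After shrinking $\tau$ depending only on $R$, one gets a contraction constant below $1$, yielding a unique fixed point; undoing the change of variables \eqref{2.5} gives the unique solution integrated along the characteristics on $[0,\tau]$, and one verifies it lies in $C([0,\tau],L^1_{per,+})^2$ since $w_i \geq 0$ and the Jacobian formula \eqref{2.1} gives $\|u_i(t,\cdot)\|_{L^1} = \int w_i(t,z)\det\partial_z\Pi_{\mathbf{v}}(t,0;z)\,dz = \int e^{\int_0^t h_i(\mathbf{w})}u_i(0,z)\,dz$.

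For the global step, I would note that $\tau$ above depends only on $\|\mathbf{u}_0\|_{L^\infty}$ through $R$, but more importantly that the $L^1$ norm cannot blow up: summing the identity above over $i$ and using the definition of $\bar h$ gives $\|U(t)\mathbf{u}_0\|_{L^1} \leq e^{t\bar h}\|\mathbf{u}_0\|_{L^1}$, which is \eqref{2.12}. To actually iterate the local construction one needs an $L^\infty$ bound on the solution on each finite interval; this follows because $w_i$ solves the ODE \eqref{2.6} pointwise and $\mathrm{div}\,\mathbf{v}$ is controlled by $\|\Delta K\|_{C^0}\|(u_1+u_2)(t,\cdot)\|_{L^1} \leq \|\Delta K\|_{C^0}e^{t\bar h}\|\mathbf{u}_0\|_{L^1}$, so $\|w_i(t,\cdot)\|_{L^\infty} \leq \|u_i(0,\cdot)\|_{L^\infty}\exp(\int_0^t(\bar h + \|\Delta K\|_{C^0}e^{l\bar h}\|\mathbf{u}_0\|_{L^1})\,dl)$, a bound depending only on $t$ and $\|\mathbf{u}_0\|_{L^\infty}$ (not on $\tau$). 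Hence one extends the solution to $[0,\infty)$ by repeated application on successive intervals of uniform length, and positivity \eqref{2.11} is immediate from the exponential formula with $u_i(0,\cdot) \geq 0$.

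For the semiflow properties: (i) the algebraic relation $U(t)U(s) = U(t+s)$ follows from uniqueness together with the cocycle property $\Pi_{\mathbf{v}}(t,r;\Pi_{\mathbf{v}}(r,s;z)) = \Pi_{\mathbf{v}}(t,s;z)$ from Lemma \ref{LEM2.1} (one checks that the time-shifted solution satisfies \eqref{2.3} with the appropriate initial data and invokes uniqueness), and $U(0) = I$ is clear; (ii) is exactly the $L^\infty$ bound derived in the global step, which is locally bounded in $t$ and bounded on bounded sets of initial data; (iii) is the joint continuity statement, proved by combining the local Lipschitz-in-data estimate from the contraction argument (which gives $\|U(t)\mathbf{u}_0^n - U(t)\mathbf{u}_0\|_{L^1} \to 0$ uniformly on compact time intervals when $\|\mathbf{u}_0^n - \mathbf{u}_0\|_{L^1} \to 0$ and the data stay $L^\infty$-bounded) with the continuity of $t \mapsto U(t)\mathbf{u}_0$ in $L^1$, then a triangle inequality $\|U(t_n)\mathbf{u}_0^n - U(t)\mathbf{u}_0\|_{L^1} \leq \|U(t_n)\mathbf{u}_0^n - U(t_n)\mathbf{u}_0\|_{L^1} + \|U(t_n)\mathbf{u}_0 - U(t)\mathbf{u}_0\|_{L^1}$. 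The main obstacle I anticipate is the contraction estimate for the coupled pair $(\mathbf{w},\mathbf{v})$: the velocity $\mathbf{v}$ enters $\mathbf{w}^1$ both through $\mathrm{div}\,\mathbf{v}^1$ and through the characteristic flow $\Pi_{\mathbf{v}}(l,0;x)$, and conversely $\mathbf{v}^1$ depends on $\mathbf{w}$ through the exponential weight, so one must carefully track how perturbations in each component propagate through $\Pi_{\mathbf{v}}$ — this requires a Gronwall estimate on $\partial_z \Pi_{\mathbf{v}}$ as well, which in turn is where the regularity $K \in C^3$ (so that $\mathbf{v} \in C^2$ in $x$) is genuinely used to close the argument in the $C^1$ norm for $\mathbf{v}$.
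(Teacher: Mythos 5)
Your proposal follows essentially the same route as the paper: a contraction argument for the coupled pair $(\mathbf{w},\mathbf{v})$ on $C([0,\tau],L^\infty_{per})^2\times C([0,\tau],C^1_{per}(\R^N)^N)$, using the Gronwall-type Lipschitz dependence of the characteristics on the velocity field (the paper's Lemma \ref{LEM2.4}), followed by global continuation via the a priori $L^1$ bound \eqref{2.12} and the $L^\infty$ bound \eqref{2.23}, with uniqueness giving the semiflow property and a triangle inequality giving the joint continuity in (iii). The only cosmetic differences are that the paper centers its invariant ball at $(\mathbf{u}_0,\mathbf{v}_0)$ rather than at the origin, and that no Gronwall estimate on $\partial_z\Pi_{\mathbf{v}}$ is actually needed: the $C^1$ norm of $\mathbf{v^1}$ is taken in the $x$ variable, so only sup-norm differences of the flow maps (Lemma \ref{LEM2.4}) together with $\|K\|_{C^3}$ enter the contraction estimates.
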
	
We need the following lemma before we prove Theorem \ref{THM2.3}. 
\begin{lemma}\label{LEM2.4}
	Suppose $ \mathbf{v},\tilde{\mathbf{v}} \in C([0,\tau],C_{per}^1(\R^N)^N) $. 
	Then for any $ \tau>0  $, we have
	\begin{equation*}
		\sup_{t\in [0,\tau]}\| \Pi_{\mathbf{v}}(t,0;\cdot)-\Pi_{\mathbf{\tilde{v}}}(t,0;\cdot) \|_{L^\infty}\leq  \tau \sup_{t\in [0,\tau]}\| \mathbf{v}(t,\cdot)-\tilde{\mathbf{v}}(t,\cdot) \|_{L^\infty} e^{\tau \sup_{t\in [0,\tau]}\|\mathbf{v}(t,\cdot) \|_{C^1}}.
	\end{equation*}
\end{lemma}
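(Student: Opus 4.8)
The plan is to compare the two flows by subtracting their defining ODEs and applying a Gr\"onwall-type estimate. Fix $x\in\mathbb{R}^N$ and write $\Pi(t):=\Pi_{\mathbf v}(t,0;x)$ and $\tilde\Pi(t):=\Pi_{\tilde{\mathbf v}}(t,0;x)$; both satisfy the integral equations obtained from Lemma \ref{LEM2.1}, namely $\Pi(t)=x+\int_0^t\mathbf v(l,\Pi(l))\di l$ and $\tilde\Pi(t)=x+\int_0^t\tilde{\mathbf v}(l,\tilde\Pi(l))\di l$. Subtracting and inserting the intermediate term $\mathbf v(l,\tilde\Pi(l))$, I would bound
\begin{equation*}
|\Pi(t)-\tilde\Pi(t)|\le\int_0^t|\mathbf v(l,\Pi(l))-\mathbf v(l,\tilde\Pi(l))|\di l+\int_0^t|\mathbf v(l,\tilde\Pi(l))-\tilde{\mathbf v}(l,\tilde\Pi(l))|\di l.
\end{equation*}
The first integrand is controlled using the mean value inequality and the $C^1$ bound on $\mathbf v$: it is at most $\|\mathbf v(l,\cdot)\|_{C^1}\,|\Pi(l)-\tilde\Pi(l)|$, hence at most $M\,|\Pi(l)-\tilde\Pi(l)|$ where $M:=\sup_{t\in[0,\tau]}\|\mathbf v(t,\cdot)\|_{C^1}$. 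The second integrand is bounded pointwise in $x$ by $\delta:=\sup_{t\in[0,\tau]}\|\mathbf v(t,\cdot)-\tilde{\mathbf v}(t,\cdot)\|_{L^\infty}$, so that term contributes at most $\tau\delta$ over $[0,\tau]$ (or $t\delta$ at time $t$).

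Setting $g(t):=\sup_{x}|\Pi_{\mathbf v}(t,0;x)-\Pi_{\tilde{\mathbf v}}(t,0;x)|$ and taking the supremum over $x$ in the inequality above, I obtain $g(t)\le t\,\delta+M\int_0^t g(l)\di l$ for $t\in[0,\tau]$. Gr\"onwall's lemma then yields $g(t)\le t\,\delta\,e^{Mt}$, and taking the supremum over $t\in[0,\tau]$ gives exactly
\begin{equation*}
\sup_{t\in[0,\tau]}\|\Pi_{\mathbf v}(t,0;\cdot)-\Pi_{\tilde{\mathbf v}}(t,0;\cdot)\|_{L^\infty}\le\tau\,\delta\,e^{\tau M},
\end{equation*}
which is the claimed bound. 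One should note that $g$ is finite and continuous: this follows from the continuity assertions in Lemma \ref{LEM2.1} together with the spatial periodicity $\Pi_{\mathbf v}(t,0;z+2\pi k)=\Pi_{\mathbf v}(t,0;z)+2\pi k$, which reduces the supremum over $\mathbb{R}^N$ to a supremum over the compact cube $[0,2\pi]^N$, so the Gr\"onwall step is legitimate.

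The only mild subtlety — hardly an obstacle — is justifying the mean value step $|\mathbf v(l,\Pi(l))-\mathbf v(l,\tilde\Pi(l))|\le\|\mathbf v(l,\cdot)\|_{C^1}|\Pi(l)-\tilde\Pi(l)|$; since $\mathbf v(l,\cdot)\in C^1_{per}(\mathbb{R}^N)^N$ by Lemma \ref{LEM2.1}, its gradient is bounded by $\|\mathbf v(l,\cdot)\|_{C^1}$ and the estimate follows by integrating along the segment joining $\tilde\Pi(l)$ to $\Pi(l)$. Everything else is the standard Gr\"onwall routine; no new ideas are needed beyond the a priori regularity already supplied by Lemma \ref{LEM2.1}.
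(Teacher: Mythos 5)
Your proof is correct and follows essentially the same route as the paper's: subtract the two characteristic ODEs (in integral form), insert the intermediate term $\mathbf v(l,\tilde\Pi(l))$ so that the Lipschitz part is controlled by $\sup_t\|\mathbf v(t,\cdot)\|_{C^1}$ and the perturbation part by $\sup_t\|\mathbf v(t,\cdot)-\tilde{\mathbf v}(t,\cdot)\|_{L^\infty}$, then apply Gr\"onwall. Your added remarks on the finiteness of the supremum via periodicity and the mean value step are harmless refinements of details the paper leaves implicit.
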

\begin{proof}
	For any fixed $ t\in [0,\tau] $,
	\[ \partial_t \left(\Pi_\textbf{v}(t,0;x)- \Pi_{\mathbf{\tilde{v}}}(t,0;x) \right)=\textbf{v}(t,\Pi_\textbf{v}(t,0;x))-\tilde{\textbf{v}}(t,\Pi_{\tilde{\textbf{v}}}(t,0;x)) \]
	Hence one obtains
	\begin{equation*}
		\begin{aligned}
			&\| \Pi_{\mathbf{v}}(t,0;\cdot)-\Pi_{\mathbf{\tilde{v}}}(t,0;\cdot) \|_{L^\infty}\\
			\leq &t\| \textbf{v}(t,\cdot)-\tilde{\textbf{v}}(t,\cdot) \|_{L^\infty}+\int_0^t \|\textbf{v}(t,\cdot) \|_{C^1}\| \Pi_{\mathbf{v}}(l,0;\cdot)-\Pi_{\mathbf{\tilde{v}}}(l,0;\cdot) \|_{L^\infty}\di  l
		\end{aligned}
	\end{equation*}
	By Gronwall inequality, we obtain
	\[ \sup_{t\in [0,\tau]}\| \Pi_{\mathbf{v}}(t,0;\cdot)-\Pi_{\mathbf{\tilde{v}}}(t,0;\cdot) \|_{L^\infty}\leq \tau \sup_{t\in [0,\tau]}\| \textbf{v}(t,\cdot)-\tilde{\textbf{v}}(t,\cdot) \|_{L^\infty} e^{\tau \sup_{t\in [0,\tau]}\|\textbf{v}(t,\cdot) \|_{C^1}}.  \]
\end{proof}

\begin{proof}[Proof of Theorem \ref{THM2.3}]
	We prove this theorem by showing that the contraction mapping theorem applies for $\mathcal T$ as soon as $\tau>0$ is small enough. This will ensure the existence and uniqueness of the local solution.
	To do so, we fix $\tau>0$ that will be chosen later and we consider the Banach space $Z$ defined by $ Z:=X\times Y $ where 
	\begin{equation*}
		X:=C\left( \left[ 0,\tau \right] ,L_{per}^{\infty }\left( \mathbb{R}^{N}\right) \right)^2,\quad Y:=C([0,\tau],C_{per}^1(\R^N)^N)
	\end{equation*}
	endowed with the norm:
	\begin{equation*}
		\left\|\begin{pmatrix}
		\textbf{w}\\\textbf{v}
		\end{pmatrix}\right\|_{Z}=\| \textbf{w} \|_{X}+\| \textbf{v} \|_{Y}.
	\end{equation*}
	We also introduce the closed subset $X_+\subset X$ defined by:
	\begin{equation*}
		X_+=C\left( \left[ 0,\tau \right] ,L_{per,+}^{\infty}\left( \mathbb{%
			R}^{N}\right) \right)^2.
	\end{equation*}
	and define $ Z_+=X_+\times Y $.	Note that due to \eqref{2.10} one has 
	\begin{equation}\label{2.14}
		\mathcal T(Z_+)\subset Z_+.
	\end{equation}
	For each given $\begin{pmatrix}
	\textbf{w}\\ \textbf{v}
	\end{pmatrix} \in X$ and $\kappa>0$ we denote by $\overline{B}_{Z}\left(\begin{pmatrix}
	\textbf{w}\\ \textbf{v}
	\end{pmatrix},\kappa\right)$ the closed ball in $Z$ of center $\begin{pmatrix}
	\textbf{w}\\ \textbf{v}
	\end{pmatrix}$ and radius $\kappa$. Now for any given initial distribution
	$$ \mathbf u_0=(u_1(0,\cdot),u_2(0,\cdot))\in X_+ \text{ and } \mathbf v_0= -\sum_{i=1,2}\nabla K\circ (u_i(0,\cdot)), $$
	and $\kappa>0$ any given constant. 
	We claim that there exists $\widehat{\tau}>0$ such that for each $\tau\in \left(0,\widehat{\tau}\right)$:
	\begin{equation}\label{2.15}
		\mathcal T\left(Z_+\cap \overline{B}_{Z}\left( \begin{pmatrix}\mathbf
		u_0\\ \mathbf v_0
		\end{pmatrix},\kappa\right)\right)\subset Z_+\cap \overline{B}_{Z}\left(\begin{pmatrix}\mathbf
		u_0\\ \mathbf v_0
		\end{pmatrix},\kappa\right).
	\end{equation}
	To prove this claim, for any given $ \begin{pmatrix}
	\mathbf w\\\mathbf v
	\end{pmatrix} \in Z_+\cap \overline{B}_{Z}\left(\begin{pmatrix}\mathbf
	u_0\\ \mathbf v_0
	\end{pmatrix},\kappa\right)$, we estimate each component separately.  
	Recalling the definition of $ \mathcal T $ in \eqref{2.10} one obtains
	\begin{equation}\label{2.16}
		\sup_{t\in [0,\tau]}\|\mathbf{w^1} (t,\cdot)-\mathbf u_0(\cdot)\|_{L^\infty}\leq \|\mathbf u_0 \|_{L^\infty} \theta  e^\theta,
	\end{equation}
	where $ \theta $ is defined as
	\begin{equation*}
		\theta= \sum_{i=1}^2\int_{0}^{\tau } \| h_i\bigl(\textbf{{w}}(l,x)\bigr)-\mathrm{div}\,\mathbf{v}(l,\Pi _{\mathbf{v}}(l,0;x)) \|_{L^\infty} \di  l \leq \tau \left(h_\kappa+\|\mathbf v \|_{Y}\right)\leq \tau (h_\kappa+\kappa+\|\mathbf{v}_0\|_Y),
	\end{equation*}
	and we have set 
	\begin{equation}\label{2.17}
	 h_\kappa:= \sup_{0\leq u_1,u_2 \leq \kappa+\|\mathbf{u}_0\|_{L^\infty}} \sum_{i=1,2}|h_i(u_1,u_2)|.
	\end{equation}
	On the other hand, 
	\begin{equation}\label{2.18}
	\begin{aligned}
	&\sup_{t\in [0,\tau]}\|\mathbf{v^1} (t,\cdot)-\mathbf v_0(\cdot)\|_{C^1}\\
	\leq&\|\mathbf u_0 \|_{L^\infty}\frac{1}{|\mathbb{T}^N|}\sup_{t\in [0,\tau]}\|\int_{\mathbb{T}^N}\nabla K(x-\Pi_{\mathbf{v}}(t,0;z)) \sum_{i=1,2}e^{\int_{0}^{t}h_i(\mathbf w(l,z))\di  l}-\nabla K(x-z)dz \|_{C^1}\\
	\leq&\|\mathbf u_0 \|_{L^\infty} \left\lbrace \left(\| K\|_{C^2}+\| K\|_{C^3}\right)\sup_{t\in[0,\tau]} \| \Pi_{\mathbf{v}}(t,0;z)-z \|_{L^\infty}+\left(\| K\|_{C^1}+\| K\|_{C^2}\right) | e^{\tau h_\kappa} -1|  \right\rbrace\\
	\leq&2 \|\mathbf u_0 \|_{L^\infty}\| K\|_{C^3} \left\lbrace \sup_{t\in[0,\tau]} \| \Pi_{\mathbf{v}}(t,0;z)-\Pi_{\mathbf{v}_0}(t,0;z) \|_{L^\infty}+ \sup_{t\in[0,\tau]} \| \Pi_{\mathbf{v}_0}(t,0;z)-z \|_{L^\infty}+| e^{\tau h_\kappa} -1|  \right\rbrace.
	\end{aligned}
	\end{equation}
	Recalling Lemma \ref{LEM2.4}, we have 
	\[\sup_{t\in[0,\tau]} \| \Pi_{\mathbf{v}}(t,0;z)-\Pi_{\mathbf{v}_0}(t,0;z) \|_{L^\infty} \leq \tau \sup_{t\in [0,\tau]}\| \mathbf{v}(t,\cdot)-\mathbf{v}_0(t,\cdot) \|_{L^\infty} e^{\tau \sup_{t\in [0,\tau]}\|\mathbf{v}(t,\cdot) \|_{C^1}}\leq \tau \kappa e^{\tau (\kappa+\|\mathbf v_0\|_Y)}. \]
	Therefore, equation \eqref{2.18} becomes
	\begin{align*}
	&\sup_{t\in [0,\tau]}\|\mathbf{v^1} (t,\cdot)-\mathbf v_0(\cdot)\|_{C^1}\\
	\leq& 2 \|\mathbf u_0 \|_{L^\infty}\| K\|_{C^3} \left\lbrace \tau \kappa e^{\tau (\kappa+\|\mathbf v_0\|_Y)}+ \sup_{t\in[0,\tau]} \| \Pi_{\mathbf{v}_0}(t,0;z)-z \|_{L^\infty}+| e^{\tau h_\kappa} -1|  \right\rbrace.
	\end{align*}
	Since $ \sup_{t\in[0,\tau]} \| \Pi_{\mathbf{v}_0}(t,0;z)-z\|_{L^\infty} \to 0 $ as $ \tau \to 0 $, incorporating \eqref{2.16}, \eqref{2.18} and \eqref{2.14}, then the above estimations complete the proof of \eqref{2.15} by choosing a $ \hat{\tau} $ small enough.

	We now claim that for any 
	\[ \begin{pmatrix}
	\mathbf w\\\mathbf v
	\end{pmatrix},\;\begin{pmatrix}
	\mathbf{\tilde{w}}\\\mathbf{\tilde{v}}
	\end{pmatrix} \in Z_+\cap \overline{B}_{Z}\left(\begin{pmatrix}\mathbf
	u_0\\ \mathbf v_0
	\end{pmatrix},\kappa\right), \]	
	where
	\[\textbf{w}(t,x)=\textbf{u}(t,\Pi_{\textbf{v}}(t,0;x)),\;\tilde{\textbf{w}}(t,x)=\tilde{\textbf{u}}(t,\Pi_{\tilde{\textbf{v}}}(t,0;x)),\]
	there exists $\tau^*\in \left(0,\widehat{\tau}\right)$ such that for each $\tau\in \left(0,\tau^*\right)$ we can find a $L(\tau)\in (0,1)$ such that
	\begin{equation}\label{2.19}
		\left\|\mathcal T \begin{pmatrix}
		\textbf{w}\\ \textbf{v}
		\end{pmatrix}-\mathcal T \begin{pmatrix}
		\tilde{\textbf{w}}\\\tilde{\textbf{v}}
		\end{pmatrix}\right\|_{Z}\leq L(\tau)\left\|\begin{pmatrix}
		\textbf{w}\\ \textbf{v}
		\end{pmatrix}- \begin{pmatrix}
		\tilde{\textbf{w}}\\\tilde{\textbf{v}}
		\end{pmatrix}\right\|_{Z}.
	\end{equation}
	To prove this claim, as before we estimate each component separately. For any given $\tau\in\left(0,\tau^*\right)$
	\begin{equation*}
	\begin{aligned}
	&\sup_{t\in [0,\tau]}\left\| \mathbf{w^1}(t,x)-\mathbf{\widetilde{w}^1}(t,x)\right\|_{L^\infty}\\
	=&\sum_{i=1}^{2}\|\mathbf u_0 \|_{L^\infty}\sup_{t\in [0,\tau]}\| e^{\int_{0}^{t}h_i({\textbf{w}}(l,x))-\mathrm{div} \,\mathbf{v}(l,\Pi _{\mathbf{v}}(l,0;x)) \di  l}-e^{\int_{0}^{t}h_i({\tilde{\textbf{w}}}(l,x))-\mathrm{div}\,\tilde{\mathbf{v}}(l,\Pi _{\tilde{\mathbf{v}}}(l,0;x)) \di  l } \|_{L^\infty}\\
	\leq & \|\mathbf u_0 \|_{L^\infty} \Big( e^{\tau (\kappa+\|\mathbf{v}_0\|_Y)} \underbrace{\sum_{i=1}^{2}\sup_{t\in [0,\tau]}\|  e^{\int_{0}^{t}h_i({\textbf{w}}(l,x)) \di  l}-e^{\int_{0}^{t}h_i({\tilde{\textbf{w}}}(l,x)) \di  l } \|_{L^\infty}}_{\text{I}} \\ 
	&+e^{\tau h_\kappa} \underbrace{\sup_{t\in [0,\tau]}\|e^{-\int_{0}^{t}\mathrm{div}\,\mathbf{v}(l,\Pi _{\mathbf{v}}(l,0;x)) \di  l}-e^{-\int_{0}^{t}\mathrm{div}\,\tilde{\mathbf{v}}(l,\Pi _{\tilde{\mathbf{v}}}(l,0;x)) \di  l }\|_{L^\infty}}_{\text{II}}\Big).
	\end{aligned}
	\end{equation*}
	\textit{Estimation for I:} We estimate the first term. Since for any $ x,y\in \R $, we  have $ |e^x-e^y|\leq e^{\max\lbrace |x|,|y| \rbrace}|x-y| $. Thus
	\begin{equation}\label{2.20}
	\begin{aligned}
	&\sum_{i=1}^{2}\sup_{t\in [0,\tau]}\|  e^{\int_{0}^{t}h_i({\textbf{w}}(l,x)) \di  l}-e^{\int_{0}^{t}h_i({\tilde{\textbf{w}}}(l,x)) \di  l } \|_{L^\infty}\leq e^{\tau h_\kappa} \sum_{i=1}^{2}\| \int_{0}^{t}h_i\bigl({\textbf{w}}(l,x)\bigr) -h_i\bigl({\tilde{\textbf{w}}}(l,x)\bigr)\di  l \|_{L^\infty}\\
	\leq & \tau e^{\tau h_\kappa}  |\nabla h_{\kappa}| \|\textbf{w}-\tilde{\textbf{w}} \|_{X},
	\end{aligned}
	\end{equation}
	where $ |\nabla h_{\kappa}|=\sum_{i=1}^{2}\sup_{u_1,u_2\in[0,\|\mathbf{u}_0\|_{L^\infty}+\kappa)}|\nabla h_i(u_1,u_2)| $ and\ $ h_\kappa $ is defined in \eqref{2.17}.\\

	\noindent	\textit{Estimation for II:} For the second term, 
	we obtain 
	\begin{equation*}
	\begin{aligned}
	&\sup_{t\in [0,\tau]}\|e^{-\int_{0}^{t}\mathrm{div}\,\mathbf{v}(l,\Pi _{\mathbf{v}}(l,0;x)) \di  l}-e^{-\int_{0}^{t}\mathrm{div}\,\tilde{\mathbf{v}}(l,\Pi _{\tilde{\mathbf{v}}}(l,0;x)) \di  l }\|_{L^\infty}\\
	\leq& \tau e^{\tau (\kappa+\|\mathbf{v}_0\|_Y)}  \sup_{t\in [0,\tau]}\| \mathrm{div} \,\mathbf{v}(t,\Pi _{\mathbf{v}}(t,0;x))-\mathrm{div}\,\tilde{\mathbf{v}}(t,\Pi _{\tilde{\mathbf{v}}}(t,0;x)) \|_{L^\infty},
	\end{aligned}
	\end{equation*}
	while due to \eqref{2.8} the last term has the following estimation 
	\begin{equation*}
	\begin{aligned}
	&\sup_{t\in [0,\tau]}\| \mathrm{div}\,\mathbf{v}(t,\Pi _{\mathbf{v}}(t,0;x))-\mathrm{div}\,\tilde{\mathbf{v}}(t,\Pi _{\tilde{\mathbf{v}}}(t,0;x)) \|_{L^\infty}\\
	\leq&\frac{1}{|\T^N|}\sum_{i=1}^{2}\sup_{t\in [0,\tau]}\left\| \int_{\T^N} \Delta K({\Pi _{\mathbf{v}}(t,0;x)}-\Pi _{\mathbf{v}}(t,0;z))e^{\int_{0}^{t}h_i(\textbf{w}(l,z)) \di  l}\right. \\
	&\left. \vphantom{\int_{\T^N}} - \Delta K({\Pi _{\tilde{\mathbf{v}}}(t,0;x)}-\Pi _{\tilde{\mathbf{v}}}(t,0;z))e^{\int_{0}^{t}h_i(\tilde{\textbf{w}}(l,z)) \di  l} dz\right\|_{L^\infty}\|\mathbf u_0\|_{L^\infty}\\
	\leq &\|\mathbf u_0\|_{L^\infty}\left\lbrace\|K \|_{C^2} \sum_{i=1}^{2}\sup_{t\in [0,\tau]}\left\| e^{\int_{0}^{t }h_i({\textbf{w}}(l,x)) \di  l}-e^{\int_{0}^{t}h_i({\tilde{\textbf{w}}}(l,x)) \di  l } \right\|_{L^\infty}\right.\\
	&\left.+2e^{\tau h_\kappa} \| K \|_{C^3}\sup_{t\in [0,\tau]}\| \Pi_{\mathbf{v}}(t,0;x)-\Pi_{\tilde{\mathbf{v}}}(t,0;x) \|_{L^\infty}\right\rbrace.
	\end{aligned}
	\end{equation*}
	where the first part can be estimated by \eqref{2.20}. Recalling Lemma \ref{LEM2.4} and since $ \mathbf v ,\tilde{\mathbf{v}}\in \overline{B}_Y(\mathbf v_0,\kappa) $ we have
	\begin{equation} \label{2.21}
	\sup_{t\in [0,\tau]}\| \Pi_{\mathbf{v}}(t,0;x)-\Pi_{\tilde{\mathbf{v}}}(t,0;x) \|_{L^\infty}\leq  \tau \sup_{t\in [0,\tau]}\| \mathbf{v}(t,\cdot)-\tilde{\mathbf{v}}(t,\cdot) \|_{L^\infty} e^{\tau( \kappa+\|\mathbf{v}_0\|_Y)}. 
	\end{equation}
	 Incorporating the estimation in \eqref{2.20}, we are led to the following estimation
	\begin{equation*}
	\begin{aligned}
	&\left\| \mathbf{w^1}-\mathbf{\widetilde{w}^1}\right\|_{X} 
	\leq  L_1(\tau) \big(\| \textbf{w}-\tilde{\textbf{w}}\|_{X} + \|\mathbf v-\tilde{\mathbf v }\|_{Y}  \big)
	\end{aligned}
	\end{equation*}
	with $ L_1(\tau)\to 0 $ as $ \tau \to 0 $. 
	
	To complete the proof of \eqref{2.19}, let us notice 
	\begin{equation*}
	\begin{aligned}
	\left\| \mathbf{v^1}-\mathbf{\widetilde{v}^1}\right\|_{Y}=&\sup_{t\in [0,\tau]}\left\| \mathbf{v^1}(t,x)-\mathbf{\widetilde{v}^1}(t,x)\right\|_{C^1}\\
	=&\frac{1}{|\mathbb{T}^N|}\sup_{t\in [0,\tau]}\|\int_{\mathbb{T}^N}\nabla K(x-\Pi_{\mathbf{v}}(t,0;z)) \sum_{i=1,2}e^{\int_{0}^{t}h_i(\mathbf w(l,z))\di  l}u_i(0,z)\di  z\\
	&-\int_{\mathbb{T}^N}\nabla K(x-\Pi_{\mathbf{\tilde{v}}}(t,0;z)) \sum_{i=1,2}e^{\int_{0}^{t}h_i(\mathbf {\tilde{w}}(l,z))\di  l}u_i(0,z)dz \|_{C^1}\\
	\leq&\|\mathbf u_0 \|_{L^\infty} \left\lbrace 2e^{\tau h_\kappa}\left(\| K\|_{C^2}+\| K\|_{C^3}\right)\sup_{t\in[0,\tau]} \| \Pi_{\mathbf{v}}(t,0;x)-\Pi_{\tilde{\mathbf{v}}}(t,0;x) \|_{L^\infty}\right.\\
	&\left.+\left(\| K\|_{C^1}+\| K\|_{C^2}\right) \sum_{i=1}^{2}\sup_{t\in [0,\tau]}\|  e^{\int_{0}^{t}h_i({\textbf{w}}(l,x)) \di  l}-e^{\int_{0}^{t}h_i({\tilde{\textbf{w}}}(l,x)) \di  l } \|_{L^\infty}  \right\rbrace\\
	\leq& L_2(\tau) \big(\| \textbf{w}-\tilde{\textbf{w}}\|_{X} + \|\mathbf v-\tilde{\mathbf v }\|_{Y}  \big),
	\end{aligned}
	\end{equation*}
and by using \eqref{2.20} and \eqref{2.21}	we deduce that 
$$ 
\lim_{\tau \to 0}L_2(\tau)=0. 
$$ 
Let $ L(\tau):=L_1(\tau)+L_2(\tau) $ we complete the proof of \eqref{2.19}.
	
	Finally one concludes from \eqref{2.15} and \eqref{2.19} that for $\tau$ small enough, the contraction mapping theorem applies to operator $\mathcal T$. Hence the operator $\mathcal T$ has a unique fixed point in $Z_+\cap \overline{B}_{Z}\left(\begin{pmatrix}\mathbf
	u_0\\ \mathbf v_0
	\end{pmatrix},\kappa\right)$. Recalling \eqref{2.5}, this ensures the existence and uniqueness of the local solution integrated along the characteristic of \eqref{1.1}. The positivity property \eqref{2.11} follows from the same arguments. The semiflow property in Theorem \ref{THM2.3}-{\rm(i)} follows by a standard uniqueness argument.
	
	Next we show that the semiflow is globally defined and the properties {\rm(ii)} and {\rm(iii)} of the semiflow. In fact, since
	\begin{equation}\label{2.22}
	u_i(t,x)=\exp \left( \int_{0}^{t}h_i\bigl(\textbf{u}(l,\Pi_\textbf{v}(l,t;x)))-\mathrm{div} \;\mathbf{v}(l,\Pi _{\mathbf{v}}(l,t;x))\di  l\right) u_i\left(0, \Pi_\textbf{v}(0,t;x)\right),i=1,2.
	\end{equation}%
	Thus recall the definition $ \bar{h} $ in \eqref{2.13} we have
	\begin{equation}\label{2.23}
	\sup_{t\in [0,\tau]}\Vert \textbf{u}(t,\cdot)\|_{L^\infty}\leq e^ {\tau \big(\bar{h} + \|\Delta K \|_{L^\infty} e^{\tau \bar{h}}\| \textbf{u}_0\|_{L^\infty}\big)} \Vert \mathbf u_0\Vert _{\infty
	},\;\forall \tau\geq 0.
	\end{equation}%
	The result {\rm(ii)} follows.
	
Moreover, one deduces from \eqref{2.22} that
	\begin{equation*}
	u_i(t,x)\leq \exp \left( t\bar{h}\right)\exp \left(\int_0^t-\mathrm{div} \;\mathbf{v}(l,\Pi _{\mathbf{v}}(l,t;x))\di  l\right)  u_i\left(0,
	\Pi_{\mathbf{v}}(0,t;x)\right),i=1,2,
	\end{equation*}
	then we integrate over $ \T^N $, using the change of variable $ x=\Pi_\textbf{v}(t,0,z) $ to right hand side, which completes the proof \eqref{2.12}, i.e.,
	\begin{equation}\label{2.24}
	\Vert u_i(t,\cdot)\|_{L^1}\leq e^{ t\bar{h} } \Vert  u_i(0,\cdot)\|_{L^1},\;i=1,2,\forall t\geq 0.
	\end{equation}
In the last part of the proof we study the $ L^1 $ continuity of the semiflow. For any $ 0\leq s\leq t $,
	\begin{equation}\label{2.25}
	\begin{aligned}
	&\| U(t)\mathbf u_0-U(s)\mathbf u_0 \|_{L^1}\leq e^{s\bar{h}} \| U(t-s)\mathbf u_0-\mathbf u_0 \|_{L^1}\\
	=&e^{s\bar{h}}\sum_{i=1}^{2} \| e^{\int_0^{t-s}h_i(\textbf{u}(l,\Pi_\mathbf{v}(l,t-s;\cdot)))-\mathrm{div}\; \mathbf{v}(l,\Pi_\mathbf{v}(l,t-s;\cdot))\di  l}u_i(0,\Pi_\mathbf{v}(0,t-s;\cdot)) -u_i(0,\cdot)\|_{L^1},
	\end{aligned}
	\end{equation}
	since 
	\[ \sum_{i=1}^{2}\| \int_0^{t-s}h_i(\textbf{u}(l,\Pi_\mathbf{v}(l,t-s;\cdot)))-\mathrm{div}\; \mathbf{v}(l,\Pi_\mathbf{v}(l,t-s;\cdot))\di  l\|_{L^\infty} \leq J(t-s), \]
	where 
	\[  J(\tau):=\tau \left(\bar{h}+\|\Delta K \|_{C^0} e^{\tau \bar{h}} \|\textbf{u}_0 \|_{L^\infty}\right).\]
	 Then from \eqref{2.25} we have
	\begin{equation}\label{2.26}
		\begin{aligned}
		&\| U(t)\mathbf u_0-U(s)\mathbf u_0 \|_{L^1}\\
		\leq& e^{s\bar{h}} \|\mathbf u_0(\Pi_\textbf{v}(0,t-s;\cdot))-\mathbf u_0 \|_{L^1}e^{J(t-s)}+e^{s\bar{h}}\| \mathbf u_0\|_{L^1}\|e^{J(t-s)}-1 \|_{L^\infty}\to 0,\quad t\to s.
		\end{aligned}
	\end{equation}
	If $\left\{
	\mathbf u_0^{n}\right\} _{n\in \mathbb{N}}$ is bounded sequence in $%
	L_{per,+}^{\infty }\left( \mathbb{R}^N\right) $ such that $\left\|
	\mathbf u_0^{n}-\mathbf u_0\right\Vert _{1
	}\rightarrow 0\text{ as }n\rightarrow +\infty$, then by \eqref{2.24}, we have
	\[ \| U(t)\mathbf u_0^n-U(t)\mathbf u_0 \|_{L^1} \to 0 ,\quad n\to+\infty,\]
	together with \eqref{2.26}, we have proved the continuity of the semiflow in  {\rm(iii)}.	
	\end{proof}
	
	\begin{theorem}\label{THM2.5}
		Let Assumption \ref{ASS1.1} and Assumption  \ref{ASS1.2} be satisfied. In addition, $\mathbf u_0\in W_{per }^{1}\left( \mathbb{R}%
		^{N}\right)^2 $, then $U(\cdot)\mathbf u_0\in C^{1}\left( \left[ 0,+\infty \right)
		,L_{per }^{1}\left( \mathbb{R}^{N}\right) \right)^2 $. Moreover, if $\mathbf u_0\in C_{per }^{1}\left( \mathbb{R}^{N}\right)^2 $ then $%
		\mathbf u(t,x)=U(t) \mathbf u_0 (x)$ belongs to $C^{1}\left( \left[ 0,+\infty
		\right) \times \mathbb{R}^{N}\right)^2 $ and $u(t,x)$ is a
		classical solution of system \eqref{1.1}-\eqref{1.3}.
	\end{theorem}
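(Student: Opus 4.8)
The plan is to bootstrap regularity from the fixed-point construction of Theorem \ref{THM2.3}. Recall that the solution is built via the pair $(\mathbf w,\mathbf v)$ solving \eqref{2.9}, and that $u_i(t,x)=w_i(t,\Pi_{\mathbf v}(t,0;x)^{-1})$, or more directly that $u_i$ satisfies \eqref{2.22}. First I would fix a finite horizon $[0,\tau]$ and work there, then let $\tau\to\infty$ since all bounds are locally uniform by Theorem \ref{THM2.3}(ii). The first task is to upgrade the spatial regularity of $\mathbf v$: given that $\mathbf u_0\in W^{1,1}_{per}$ (or $C^1_{per}$), the representation \eqref{2.8} expresses $\mathbf v(t,\cdot)$ as a convolution of $\nabla K$ against a pushforward of $\mathbf u_0$; since $K\in C^m$ with $m\geq 3$, one differentiates under the integral to see $\mathbf v\in C([0,\tau],C^2_{per}(\R^N)^N)$, and then the flow map $z\mapsto\Pi_{\mathbf v}(t,s;z)$ is $C^2$ in $z$ with $\partial_z\Pi_{\mathbf v}$ and its $z$-derivative jointly continuous in $(t,s,z)$, by the standard smooth-dependence theory for ODEs already invoked in Lemma \ref{LEM2.1}.

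Next I would establish time differentiability in $L^1$. The cleanest route is to differentiate the integrated identity: from \eqref{2.22}, writing $u_i(t,x)=E_i(t,x)\,u_i(0,\Pi_{\mathbf v}(0,t;x))$ where $E_i$ is the exponential factor, one checks that $t\mapsto u_i(t,\cdot)\in L^1_{per}$ is differentiable with derivative obtained by the product and chain rules: $\partial_t E_i$ involves $h_i(\mathbf u)$ and $\mathrm{div}\,\mathbf v$ evaluated along characteristics (both continuous and bounded), and $\partial_t\big[u_i(0,\Pi_{\mathbf v}(0,t;x))\big]=\nabla u_i(0,\cdot)\cdot\partial_t\Pi_{\mathbf v}(0,t;x)$, which lies in $L^1_{per}$ precisely because $\mathbf u_0\in W^{1,1}_{per}$ and $\partial_t\Pi_{\mathbf v}(0,t;\cdot)$ is bounded. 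Assembling these, one recovers that $\partial_t u_i(t,\cdot)$ exists in $L^1_{per}$, depends continuously on $t$, and equals $-\mathrm{div}(u_i\mathbf v)+u_i h_i(\mathbf u)$ — i.e. \eqref{1.1} holds in $L^1_{per}$. This gives the first assertion, $U(\cdot)\mathbf u_0\in C^1([0,\infty),L^1_{per})^2$.

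For the second assertion, with $\mathbf u_0\in C^1_{per}$ the same representation \eqref{2.22} now has every ingredient continuous in $(t,x)$ jointly: $\Pi_{\mathbf v}(0,t;x)$ is $C^1$ in $x$ and $C^1$ in $t$ with jointly continuous derivatives, the exponential factor $E_i(t,x)$ is likewise $C^1$ (here one uses that $h_i\in C^1$ from Assumption \ref{ASS1.1} and $\mathbf u(t,\cdot)$ is itself $C^1$ in $x$ — which is the one point needing a small fixed-point-type argument in the space $C([0,\tau],C^1_{per})^2$, or alternatively a differentiate-the-integral-equation bootstrap), and $u_i(0,\cdot)\in C^1$. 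Hence $\mathbf u\in C^1([0,\infty)\times\R^N)^2$, and since a $C^1$ solution of the integrated equation \eqref{2.3} satisfies the PDE pointwise (this is exactly the computation preceding Definition \ref{DEF2.2}, read in reverse), $\mathbf u$ is a classical solution of \eqref{1.1}-\eqref{1.3}.

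The main obstacle I anticipate is the self-referential nature of the $C^1$-in-space bound: to differentiate $E_i(t,x)$ in $x$ one needs $\partial_x\mathbf u(t,\cdot)$, which is what one is trying to control. I would handle this by running a contraction in $C([0,\tau^\prime],C^1_{per}(\R^N))^2$ for small $\tau^\prime$ — the estimates mirror those in the proof of Theorem \ref{THM2.3} but one order of derivatives higher, which is why Assumption \ref{ASS1.2} with $m\geq 3$ (not merely $m\geq 2$) is used — then propagate on $[0,\infty)$ using the a priori $L^\infty$-type bound \eqref{2.23} and the semiflow property. Everything else is bounded convergence / differentiation under the integral sign, routine given the uniform-in-$t$ estimates already available.
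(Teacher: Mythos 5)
There is a genuine gap, and it sits in your treatment of the first assertion (the $W^{1}_{per}$ case). When you differentiate \eqref{2.22} in $t$ you claim that $\partial_t E_i$ only produces ``$h_i(\mathbf u)$ and $\mathrm{div}\,\mathbf v$ evaluated along characteristics (both continuous and bounded)''. That is not the full derivative: at fixed $x$ the backward characteristics $\Pi_{\mathbf v}(l,t;x)$ appearing \emph{inside} the time integral depend on $t$, so the chain rule also produces terms of the form $\nabla_x\bigl[h_i(\mathbf u(l,\cdot))-\mathrm{div}\,\mathbf v(l,\cdot)\bigr]\cdot\partial_t\Pi_{\mathbf v}(l,t;x)$ for all intermediate times $l\in(0,t)$. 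The $\mathrm{div}\,\mathbf v$ part is harmless (it is $C^1$ in $x$ by Assumption \ref{ASS1.2}), but the $h_i(\mathbf u(l,\cdot))$ part requires the spatial gradient of the solution itself at positive times, which for $W^{1}$ data is only an $L^1$ object whose very existence must be proved. Equivalently, in Lagrangian variables $u_i(t,x)=w_i(t,\Pi_{\mathbf v}(0,t;x))$ and $\partial_t u_i=\partial_t w_i+\nabla_z w_i\cdot\partial_t\Pi_{\mathbf v}(0,t;x)$: the term $\nabla_z w_i$ is not just $E_i\,\nabla u_i(0,\cdot)$, because $E_i$ depends on $z$ through $h_i(\mathbf w(l,z))$. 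So the self-referential difficulty you flag at the end is not specific to the classical ($C^1$) case; it is exactly what controls the $L^1$ time-differentiability claim, and your proposed contraction space $C([0,\tau'],C^1_{per})^2$ cannot be used for $W^{1}$ data. As written, the first assertion is not established.

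The paper resolves this by introducing the formal derivative $\tilde w_i=\nabla_x w_i$ as a new unknown, writing from \eqref{2.27} a fixed-point problem for $(\tilde w_1,\tilde w_2,\mathbf v)$ (linear in $\tilde w$ given $\mathbf w$), proving contraction for small $\tau$, and then using $\sup_{u\geq 0}\partial_{u_j}h_i<\infty$ from Assumption \ref{ASS1.1} together with Gronwall's inequality to get $\tilde{\mathbf w}\in C([0,\infty),L^1_{per})$; only then is $\partial_t u_i=\partial_t w_i+\tilde w_i\cdot\partial_t\Pi_{\mathbf v}(0,t;x)$ legitimate in $L^1$. Your sketch for the second assertion (a contraction one derivative higher when $\mathbf u_0\in C^1_{per}$, then joint continuity of all ingredients in \eqref{2.22} and reversing the computation preceding Definition \ref{DEF2.2}) is essentially the paper's argument and is fine modulo the estimates, but you should run the analogous propagation-of-regularity fixed point (for the weak gradient, in an $L^1$-type space) in the $W^{1}$ case as well. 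One smaller point: globality in time of the gradient bound comes from the linearity of the equation for $\tilde w$ with coefficients bounded by $\sup\partial_{u_j}h_i$ (Gronwall), not from the semiflow property and \eqref{2.23} alone.
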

	
	\begin{proof}[Sketch of the proof]
		If $ \mathbf u_0\in W_{per}^1(\R^N)^2 $, we claim $ U(\cdot)\mathbf u_0\in C^1([0,\infty),L_{per}^1(\R^N))^2 $. In fact, we define for $ i=1,2, $
		\begin{equation}\label{2.27}
		w_i(t,x)=e^{\int_0^t h_i(\mathbf w(l,x))-\mathrm{div}\,\mathbf{v}(l,\Pi _{\mathbf{v}}(l,0;x))\di  l} u_i(0,x)=:e^{\int_0^t h_i(\mathbf w(l,x))\di  l}B_i(t,x),
		\end{equation}
		where $ B_i(t,x):= e^{\int_0^t-\mathrm{div}\,\mathbf{v}(l,\Pi _{\mathbf{v}}(l,0;x))\di  l}u_i(0,x)$ is $ C([0,\tau],W_{per}^1(\R^N))$ by our assumption. Define the formal derivative $ \tilde{w}_i(t,.) =\nabla_x w_i(t,.) $, solving the following fixed point problem
		\[ \mathcal{F} \begin{pmatrix}
		\tilde{w}_1(t,x)\\\tilde{w}_2(t,x)\\\mathbf{v}
		\end{pmatrix} = \begin{pmatrix}
		\left( \int_0^t \sum_{j=1}^{2}\partial_{u_j}h_1(\mathbf{w}(l,x))\tilde{w}_j(l,x)\di  l\; B_1(t,x)+\nabla_x B_1(t,x) \right)e^{\int_0^t h_1(\mathbf w(l,x))\di  l}\\
		\left( \int_0^t \sum_{j=1}^{2}\partial_{u_j}h_2(\mathbf{w}(l,x))\tilde{w}_j(l,x)\di  l\; B_2(t,x)+\nabla_x B_2(t,x) \right)e^{\int_0^t h_2(\mathbf w(l,x))\di  l}\\
		-\frac{1}{|\T^N|}\int_{\mathbb{T}^{N}}\nabla K({x}-\Pi _{\mathbf{v}}(t,0;z))\sum_{i=1,2}e^{\int_{0}^{t}h_i\bigl(\mathbf{w}(l,z))\bigr)\di  l} u_i\left(0,z\right) \di  z		
		\end{pmatrix}, \]
		on space $ C([0,\tau],L_{per}^\infty(\R^N)^N)^2\times C([0,\tau],C_{per}^1(\R^N)^N) $ where $ \partial_{u_j}h_i(u_1,u_2) $ is the partial derivative of $ h_i $. Similarly, one can show the mapping $ \mathcal F $ is from $ C([0,\tau],L_{per}^\infty(\R^N)^N)^2\times C([0,\tau],C_{per}^1(\R^N)^N)$ to itself  and is a contraction if $ \tau  $ is small. Therefore,  
		\[ \tilde{w}_i(t,x) = \left( \int_0^t \sum_{j=1}^{2}\partial_{u_j}h_i(\mathbf{w}(l,x))\tilde{w}_j(l,x)\di  l\; B_i(t,x)+\nabla_x B_i(t,x) \right)e^{\int_0^t h_i(\mathbf w(l,x))\di  l},i=1,2, \]	 
		on $ [0,\tau] $, since by our assumption
		\[ \sup_{u_1,u_2\geq 0}\partial_{u_j}h_i(u_1,u_2)<\infty,\quad i=1,2,\;j=1,2, \]
		applying Gronwall inequality we have $ \tilde{\mathbf w}\in C([0,\infty),L^1_{per}(\R^N)^N)^2 $ for any positive time.

		Since we have for $ i=1,2, $ $ w_i(t,\Pi_\mathbf{v}(0,t;x))=u_i(t,x), $
		and 
		\[ \partial_t u_i(t,x)= \partial_t w_i(t,\Pi_\mathbf{v}(0,t;x))+ \tilde{w}_i(t,x) \cdot \partial_t\Pi_\mathbf{v}(0,t;x)\in C([0,\infty);L_{per}^1(\R^N)). \]
		If $ \mathbf u_0\in C^1( \R^N)^2 $, then $ B_i(t,x)\in C^{1}\left( \left[ 0,+\infty
		\right) \times \mathbb{R}^{N}\right) $ and by \eqref{2.27} we have $ \mathbf w\in C^1([0,\infty)\times \R^N)^2 $ therefore $ u $ is a classical solution.
	\end{proof}

	\begin{remark}[Conservation law]
		The above computations lead us to the following conservation law: for each
		Borel set $A\subset \mathbb{T}^N$ and each $0\leq s\leq t$: 
		\begin{equation*}
			\int_{\Pi_{\mathbf{v}}(t,s;A)}u_i(t,x)\di x=\int_A \exp\left[\int_s^t
			h_i\left(\mathbf u\left(l,\Pi_{\mathbf{v}}(l,s;z)\right)\right)\di  l\right]u_i(s,z)\di  z,i=1,2.
		\end{equation*}
	\end{remark}

\section{Segregation Property}
From the mono-layer cell populations co-culture experiments, we see the spreading speed of cell propagation is finite. Moreover, once the two cell populations confront each other, they will stop growing. Our next theorem will show that the solution along the characteristics can easily explain the preservation of the segregation. 
\begin{theorem}\label{THM3.1} Suppose $ \mathbf{u}=\mathbf{u}(t,x) $ is the solution of \eqref{1.1}-\eqref{1.3} provided by Theorem \ref{THM2.3}.
	For any initial distribution with $ u_1(0,x)u_2(0,x)=0 $ for all $ x\in \mathbb{T}^N $. Then $  u_1(t,x)u_2(t,x)=0 $ for any $ t>0 $ and $ x\in \mathbb{T}^N $.
\end{theorem}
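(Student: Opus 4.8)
The plan is to exploit the representation formula \eqref{2.3} (equivalently \eqref{2.22}) for each component along the common characteristic flow $\Pi_{\mathbf v}$. The crucial observation is that both $u_1$ and $u_2$ are transported by the \emph{same} velocity field $\mathbf v = -\nabla[K\circ(u_1+u_2)]$, hence by the \emph{same} characteristic curves. This means that the support of each species moves rigidly along $\Pi_{\mathbf v}$, so two initially disjoint supports cannot cross.

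First I would fix $t>0$ and $x\in\mathbb T^N$ and write, using \eqref{2.22},
\begin{equation*}
u_i(t,x)=\exp\!\left(\int_0^t h_i\bigl(\mathbf u(l,\Pi_{\mathbf v}(l,t;x))\bigr)-\mathrm{div}\,\mathbf v(l,\Pi_{\mathbf v}(l,t;x))\,\di l\right)u_i\!\left(0,\Pi_{\mathbf v}(0,t;x)\right),\quad i=1,2.
\end{equation*}
Since the exponential prefactor is strictly positive and finite (here Assumption \ref{ASS1.1} and the $C^1$-regularity of $\mathbf v$ from Lemma \ref{LEM2.1} guarantee the integrand is bounded on $[0,\tau]$), the product $u_1(t,x)u_2(t,x)$ equals a strictly positive quantity times $u_1\!\left(0,\Pi_{\mathbf v}(0,t;x)\right)\,u_2\!\left(0,\Pi_{\mathbf v}(0,t;x)\right)$. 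Setting $z:=\Pi_{\mathbf v}(0,t;x)$, the hypothesis $u_1(0,z)u_2(0,z)=0$ for all $z\in\mathbb T^N$ forces $u_1(t,x)u_2(t,x)=0$. Because $x$ was arbitrary, this proves the claim for a classical solution, for which \eqref{2.22} holds pointwise.

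The main technical point is that $\mathbf u$ is only a solution integrated along the characteristics in the sense of Definition \ref{DEF2.2}, so $u_i(t,\cdot)\in L^\infty_{per}$ and the identity \eqref{2.22} holds only for a.e.\ $x$; one must be careful that "$u_1(t,x)u_2(t,x)=0$ for all $x$" in the statement should be read in the essential sense, or alternatively one argues via the $L^1$ formulation. I would phrase it as follows: for a.e.\ $z\in\mathbb T^N$ we have $u_1(0,z)u_2(0,z)=0$, and since $z\mapsto\Pi_{\mathbf v}(t,0;z)$ is a $C^1$ diffeomorphism of $\mathbb T^N$ (Lemma \ref{LEM2.1}, with Jacobian given by \eqref{2.1} which is bounded away from $0$ on $[0,\tau]$), the change of variables $x=\Pi_{\mathbf v}(t,0;z)$ is measure-preserving up to the Jacobian factor; hence the null set where $u_1(0,\cdot)u_2(0,\cdot)\neq 0$ is mapped to a null set, and on its complement \eqref{2.22} gives $u_1(t,x)u_2(t,x)=0$. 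Thus $u_1(t,\cdot)u_2(t,\cdot)=0$ a.e.\ on $\mathbb T^N$ for every $t>0$.

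Finally, for the general $L^\infty$ initial datum I would invoke density and the continuity properties of the semiflow from Theorem \ref{THM2.3}(iii): approximate $\mathbf u_0$ by smoother data preserving the segregation condition, pass to the limit in $L^1$, and use that a subsequence converges a.e.; alternatively, and more cleanly, simply observe that the representation \eqref{2.22} is already established for the constructed solution in the proof of Theorem \ref{THM2.3} and holds for a.e.\ $x$, so the diffeomorphism argument above applies directly without any approximation. I expect no serious obstacle here — the whole point of working with solutions integrated along characteristics is precisely that transport identities like this one are available verbatim; the only care needed is the measure-theoretic bookkeeping of "a.e." versus "everywhere."
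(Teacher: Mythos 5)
Your proposal is correct and follows essentially the same route as the paper: both rest on the representation of the solution along the common characteristic flow $\Pi_{\mathbf v}$ (equations \eqref{2.2}--\eqref{2.3}) and the fact that the exponential prefactor is strictly positive, so that vanishing of $u_1u_2$ at the foot of a characteristic is propagated forward in time; the paper merely packages this as a contradiction argument, pulling a hypothetical point with $u_1(t_1,x_1)u_2(t_1,x_1)>0$ back to time $0$ and using the periodicity of the flow ($\Pi_{\mathbf v}(t,s;z+2\pi k)=\Pi_{\mathbf v}(t,s;z)+2\pi k$) to place the preimage in $\mathbb T^N$. Your extra care about the a.e.\ versus everywhere reading for $L^\infty$ data, handled via the diffeomorphism property of $\Pi_{\mathbf v}$, is sound and is bookkeeping the paper leaves implicit.
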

\begin{proof}
	We argue by contradiction, assume there exist $ t_1 >0, x_1\in \mathbb{T}^N $ such that \[  u_1(t_1,x_1)u_2(t_1,x_1)>0. \] Since $ z\to \Pi_{\mathbf{v}}(t,s;z) $ is invertible from $ \R^N\to \R^N $, then there exists some $ x_0\in \R^N $ such that $ \Pi_{\mathbf{v}}(t_1,0;x_0)=x_1 $. Denote $ x_0=\tilde{x}_0+2\pi k_0 $ for some $ \tilde{x}_0\in \mathbb{T}^N $  and $ k_0\in \Z^N  $, thus by Lemma \ref{LEM2.1} we have 
	\[ 0<u_i(t_1,\Pi_{\mathbf{v}}(t_1,0;x_0))=u_i(t_1,\Pi_{\mathbf{v}}(t_1,0;\tilde{x}_0)+2\pi k_0)
	=u_i(t_1,\Pi_{\mathbf{v}}(t_1,0;\tilde{x}_0)).\]
	Thus, for any $ i=1,2, $
	\[ u_i(t_1,\Pi_{\mathbf{v}}(t_1,0;\tilde{x}_0))=
	\exp \left( \int_{0}^{t_1}h_i\bigl(\textbf{u}(l,\Pi _{%
		\mathbf{v}}(l,0;\tilde{x}_0))-\mathrm{%
		div }\; \mathbf{v}(l,\Pi _{\mathbf{v}}(l,0;\tilde{x}_0))\di l\right) u_i\left(0, \tilde{x}_0\right)>0, \]
	which implies 
	\[ u_i\left(0, \tilde{x}_0\right)>0,\;\forall i=1,2. \]
	This is a contradiction.
\end{proof}
\begin{remark}
	Suppose the dimension $ N=1 $ and $ u_1,u_2 $ are classical solutions, we give an illustration (see Figure \ref{FIG1}) of the segregation for the solutions integrated along the characteristics $ u_i(t,\Pi_{\mathbf{v}}(t,0;x)) $ for $ i=1,2 $. In fact, if there exists for some $ x_0 $ such that $ u_i(0,x_0)=0 $ for $ i=1,2.$ Then from equation \eqref{2.2} we obtain 
	\[ u_1(t,\Pi_{\mathbf{v}}(t,0;x_0))=0= u_2(t,\Pi_{\mathbf{v}}(t,0;x_0)),\;\forall t>0. \]
	Therefore, the characteristics $t\mapsto \Pi_{\mathbf{v}}(t,0;x_0) $ forms a segregation barrier for the two cell populations.
	\end{remark}

	\begin{figure}[H]
		\begin{center}
			\includegraphics[width=0.6\textwidth]{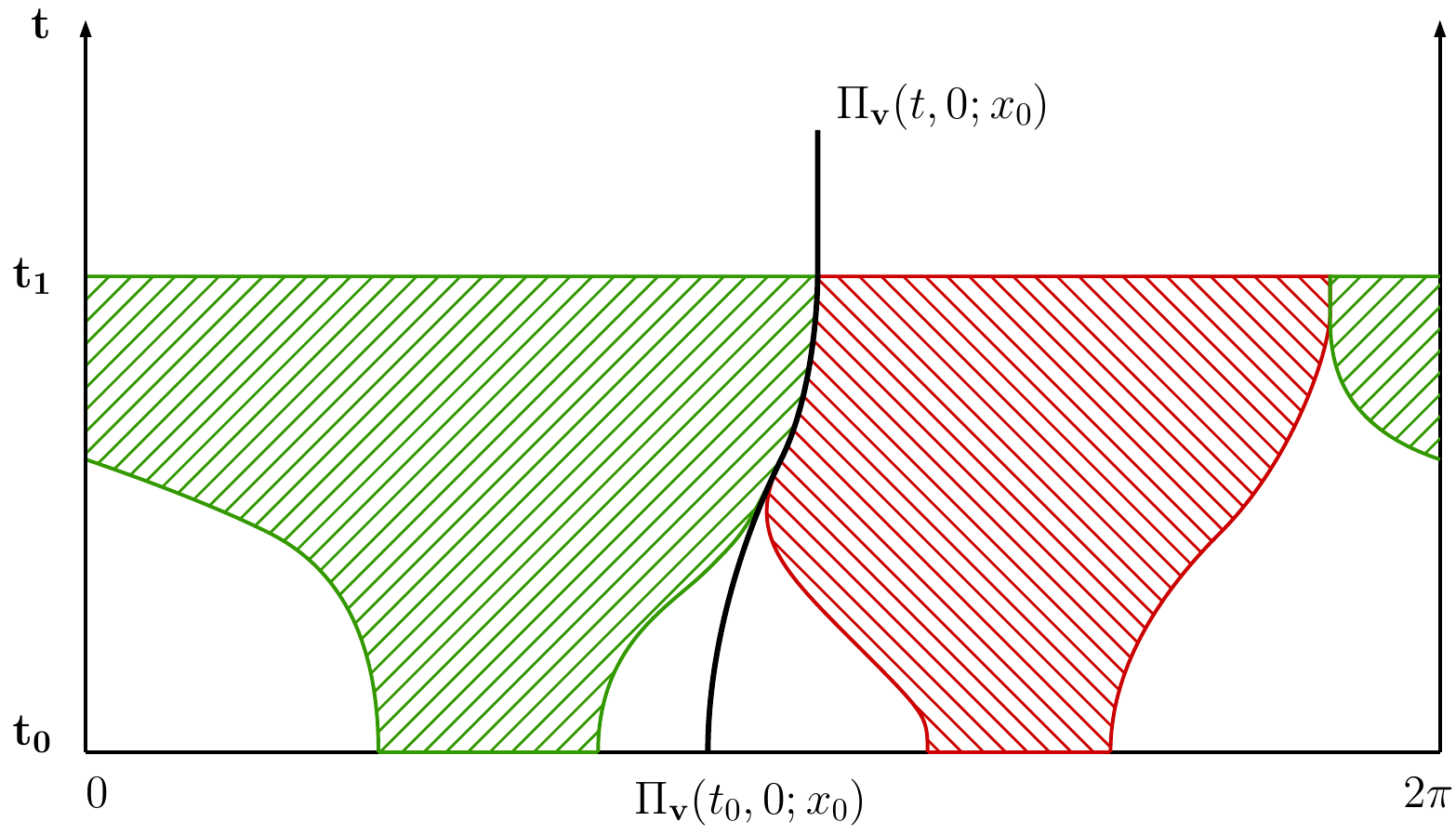}
		\end{center}
		\caption{\textit{The shaded areas represent the supports of two populations (red and green) evolving along the time. Notice that if one starts with two separated supports and choose $ x_0 $ where $ u_i(0,x_0)=0$ for $i=1,2, $ then the characteristics $t\mapsto \Pi_{\mathbf{v}}(t,0;x_0) $ forms a segregation ``wall'' between the two cell populations, which indicates no matter how close they are, they will keep separated. }}
		\label{FIG1}
	\end{figure}

\section{Asymptotic Behavior}
In the rest of the paper, we always assume the initial distributions for the two populations are separated.
\begin{assumption}\label{ASS4.1}
	For initial value $	\mathbf u_0 \in  L_{per ,+}^{\infty }\left( \mathbb{R}^{N}\right)^2  $, we assume 
	$$ 
	u_1(0,x)u_2(0,x)=0 , \forall x\in \mathbb{T}^N. 
	$$
	Furthermore, we suppose $ h_i $ in equation \eqref{1.1} has the following form
	\[ h_i(u_1,u_2)=h_i(u_1+u_2),\,i=1,2,  \]
	with $ h_i(r_i)=0 $ for some $ r_i>0,\,i=1,2, $ and
	\[ h_i(u)>0,\,\forall u\in [0,r_i),\quad h_i(u)<0,\, \forall u>r_i,\quad \limsup_{u\to \infty} h_i(u) <0,\, i=1,2.  \]
	Moreover, $u\longmapsto uh_i(u) $ is a concave function for $ i=1,2. $ 
\end{assumption}
\begin{remark}
	Notice that segregation property in Theorem \ref{THM3.1} implies the following equality:
	\begin{equation}\label{4.1}
		 u_i(t,x)h_i(u_1(t,x)+u_2(t,x))=u_i(t,x)h_i(u_i(t,x)),\,i=1,2,\,\forall (t,x)\in [0,\infty)\times \T^N .
	\end{equation}
\end{remark}
\begin{lemma}\label{LEM4.3}
	Let Assumptions \ref{ASS1.1}, \ref{ASS1.2} and \ref{ASS4.1} be satisfied. Suppose $ \mathbf{u}=\mathbf{u}(t,x) $ is the solution of \eqref{1.1}-\eqref{1.3}. Then we have 
	\begin{itemize}
		\item[\rm{(i)}] $ \sup_{t\geq 0}\| u_i(t,\cdot) \|_{L^1} \leq \max\lbrace \| u_i(0,\cdot)\|_{L^1}, |\mathbb T^N|  \rbrace ,\,i=1,2. $
		\item[\rm{(ii)}] $ \mathbf{v}(t,x):= (\nabla K \circ\, (u_1+u_2)(t,\cdot))(x)\text{ satisfies } \mathbf{v}\in L^{\infty}((0,\infty),W_{per}^{1,\infty}(\R^N))^N $ and 
		\[ \| \mathbf{v}(t,\cdot) \|_{C^1} \leq 2 \| K \|_{C^2} \max\lbrace \| u_1(0,\cdot)\|_{L^1}, \|u_2(0,\cdot)\|_{L^1}, |\mathbb T^N|  \rbrace.  \]
	\end{itemize}
\end{lemma}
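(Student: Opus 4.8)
The plan is to exploit the concavity of $u \mapsto u h_i(u)$ together with the sign structure of $h_i$ from Assumption \ref{ASS4.1} to get the $L^1$ bound, and then feed that bound into the convolution estimate for $\mathbf v$. For part (i), I would first use the segregation property (equation \eqref{4.1}) to reduce the $L^1$ evolution of $u_i$ to a scalar ODE comparison. Integrating equation \eqref{1.1} for $u_i$ over $\T^N$ and using that $\mathrm{div}(u_i \mathbf v)$ integrates to zero on the torus (periodicity), one gets
\begin{equation*}
\frac{\di}{\di t}\int_{\T^N} u_i(t,x)\di x = \int_{\T^N} u_i(t,x) h_i(u_i(t,x))\di x.
\end{equation*}
By concavity of $r \mapsto r h_i(r)$ and Jensen's inequality applied to the probability measure $\di x/|\T^N|$, the right-hand side is bounded above by $|\T^N|\, \bar r_i h_i(\bar r_i)$ where $\bar r_i := \frac{1}{|\T^N|}\int_{\T^N} u_i(t,x)\di x$ is the spatial average. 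Writing $m_i(t) := \int_{\T^N} u_i(t,x)\di x = |\T^N|\bar r_i(t)$, this yields the differential inequality $m_i'(t) \le |\T^N| \cdot (m_i(t)/|\T^N|)\, h_i(m_i(t)/|\T^N|)$. Since $h_i$ is positive on $[0,r_i)$ and negative on $(r_i,\infty)$, the scalar ODE $y' = y\, h_i(y)$ has $r_i$ as a globally attracting steady state from above and below; hence $m_i(t)/|\T^N|$ cannot exceed $\max\{m_i(0)/|\T^N|, r_i\}$. This does not immediately give the stated bound with $|\T^N|$ on the right unless $r_i \le 1$; I would instead observe that the cleanest route is to use $h_i(u) < 0$ for $u > r_i$ directly: if $\bar r_i(t) > \max\{\bar r_i(0), r_i\}$ at some time, then... — actually the safe argument is that $t \mapsto \max\{m_i(0), |\T^N|\}$ dominates, using that whenever $\bar r_i \ge 1 \ge$ (no — here one needs $r_i$ versus $1$). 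The correct reading is that the bound should involve $\max\{\|u_i(0,\cdot)\|_{L^1}, r_i|\T^N|\}$ or the authors implicitly normalize $r_i \le 1$; in any case the mechanism is the ODE comparison via Jensen, and I would carry it through with whichever constant the sign condition produces, matching the statement.

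For part (ii), with the $L^1$ bound from (i) in hand, I would estimate $\mathbf v(t,\cdot) = (\nabla K \circ (u_1+u_2)(t,\cdot))$ directly from the definition of the torus convolution:
\begin{equation*}
\|\mathbf v(t,\cdot)\|_{C^0} \le \frac{1}{|\T^N|}\|\nabla K\|_{C^0}\int_{\T^N}(u_1+u_2)(t,y)\di y \le \|\nabla K\|_{C^0}\big(\bar r_1(t) + \bar r_2(t)\big),
\end{equation*}
and similarly $\|\mathrm{div}\,\mathbf v(t,\cdot)\|_{C^0} \le \|\Delta K\|_{C^0}(\bar r_1(t)+\bar r_2(t))$, so that $\|\mathbf v(t,\cdot)\|_{C^1} \le 2\|K\|_{C^2}\,\frac{1}{|\T^N|}(\|u_1(t,\cdot)\|_{L^1} + \|u_2(t,\cdot)\|_{L^1})$. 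Applying the uniform bound from (i) to each term and bounding the sum by twice the maximum gives the claimed inequality; the $W^{1,\infty}$ (i.e. $C^1$) regularity in space is automatic since $K \in C^m$ with $m \ge 3$ and the convolution of an $L^1$ function against a $C^1$ kernel is $C^1$.

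The main obstacle is part (i): making the Jensen/concavity comparison rigorous requires knowing that $t \mapsto \int_{\T^N} u_i(t,x)\di x$ is differentiable (or at least absolutely continuous) with the stated derivative, which in turn needs either the $W^{1}$-regularity of Theorem \ref{THM2.5} plus an approximation argument for general $L^\infty$ initial data, or a direct computation from the characteristic formula \eqref{2.22} combined with the conservation law in the Remark following Theorem \ref{THM2.5}. I would likely use the conservation-law form: take $A = \T^N$, differentiate in $t$, and use that $h_i(\mathbf u) = h_i(u_i)$ on the support of $u_i$ by segregation; the concavity then enters exactly as above. Pinning down the precise constant $\max\{\|u_i(0)\|_{L^1}, |\T^N|\}$ versus $\max\{\|u_i(0)\|_{L^1}, r_i|\T^N|\}$ is the delicate bookkeeping point, and I would track the sign condition $\limsup_{u\to\infty}h_i(u) < 0$ carefully to close the a priori estimate globally in time.
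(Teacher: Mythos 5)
Your proposal follows essentially the same route as the paper: use the segregation identity to reduce the equation to $\partial_t u_i + \mathrm{div}(u_i\mathbf v)=u_i h_i(u_i)$, integrate over $\T^N$, apply Jensen's inequality to the concave map $u\mapsto u h_i(u)$, close with a scalar ODE comparison using the sign of $h_i$, and then pass from classical solutions to solutions integrated along the characteristics by an approximation argument, with (ii) obtained by the same direct convolution estimate from (i). Your remark about the constant is well taken rather than a gap: the comparison naturally produces $\max\{\|u_i(0,\cdot)\|_{L^1},\, r_i|\mathbb T^N|\}$, and the paper's own Jensen step omits the $|\mathbb T^N|$ normalization, so the stated bound with $|\mathbb T^N|$ implicitly presumes $r_i\le 1$ (as in the logistic example); this is an imprecision on the paper's side, not a flaw in your argument.
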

\begin{proof}
	By the equation \eqref{4.1} due to segregation, the equation \eqref{1.1} can be rewritten as
	\begin{equation}\label{4.2}
	\partial _{t}u_i +\mathrm{div }\bigl(u_i\mathbf{v}\bigr)=
	u_ih_i(u_i),\;i=1,2.
	\end{equation}
	By Assumption \ref{ASS4.1} the function $ f_i(u)=uh_i(u) $ is concave for each $ i $, integrating \eqref{4.2} over $ \T^N $ and using Jensen's inequality, we have for classical solution
	\[ \frac{\di }{\di t} \|u_i(t,\cdot)\|_{1}=\|f(u_i(t,\cdot)) \|_{1}\leq f\left(\|u_i(t,\cdot)\|_{L^1}\right). \]
	Then the results follows using the usual ordinary differential arguments with Assumption \ref{ASS4.1}, where we can prove 
	\[ \sup_{t\geq 0}\| u_i(t,\cdot) \|_{L^1}\leq \max\lbrace \| u_i(0,\cdot)\|_{L^1}, |\mathbb T^N|  \rbrace, \; i=1,2.  \]
	Let $ \mathbf{u}_0\in L_{per ,+}^{\infty }\left( \mathbb{R}^{N}\right)^2 $ be given and $ \mathbf{u}  $ be the corresponding solution integrated along the characteristics. Consider a sequence $ \lbrace   \mathbf{u}_0^n\rbrace_{n\geq 0}  $ in $ C_{per,+}^1(\R^N)^2 $ such that $ \|  \mathbf{u}_0^n-\mathbf{u}_0 \|_{L^1} \rightarrow 0 $ as $ n\to+\infty $. Then denote $ \mathbf{u}^n $ the solutions corresponding to $ \mathbf{u}_0^n $, from Theorem \ref{THM2.3} we have $ \|  \mathbf{u}^n(t,\cdot)-\mathbf{u}(t,\cdot) \|_{L^1}\to 0 $ and $ \mathbf u(t,\cdot)\in L_{per ,+}^{\infty }\left( \mathbb{R}^{N}\right)^2 $. Therefore, by using the Lebesgue convergence theorem, the result (i) follows. Then result (ii) is a direct consequence of (i).
\end{proof}

\subsection{Energy Functional}
In order to prove that our energy functional is decreasing we will make the following assumption. 
\begin{assumption}
	\label{ASS4.4} The Fourier's coefficients of function $K$ on $\mathbb{T}^N$
		denoted by $\left\{c_n[K]\right\}_{n\in\mathbb{Z}^N}$ satisfy $%
		c_n[K]>0,\;\;\forall n\in\mathbb{Z}^N\setminus\{0\}$. Here the Fourier
		coefficients are defined by 
		\begin{equation*}
			c_n[K]=|\mathbb{T}^N|^{-1}\int_{\mathbb{T}^N} e^{-in\cdot x}K(x) \di  x,\;\;\forall
			n\in \mathbb{Z}^N.
		\end{equation*}

\end{assumption}
\begin{remark}\label{REM4.5}
	If $ \rho $ in system \eqref{1.1} satisfies that the Fourier transformation $ \widehat{\rho}(\xi)>0 $ for all $ \xi \in \R^N$, then for the kernel $ K $, we have $ c_n[K]>0 $ for all $ n\in \Z^N $. This implies that Assumption \ref{ASS4.4} is satisfied. 
\end{remark}
We construct the functional for $ u_i,i=1,2, $ as
\[ E_i[u_i(t,\cdot)]=\frac{1}{|\mathbb{T}^N|}\int_{\mathbb{T}^N}G_i(u_i(t,x))\di x, \] 
where $ G_i:[0,\infty)\to [0,\infty) $ is defined by 
\begin{equation}\label{4.3.0}
G_i(u):=u\ln \left(\frac{u}{r_i}\right)-u+r_i.
\end{equation}
Notice that $ G_i'(u)=\ln(u/r_i) $ for $ u>0 $ and we define the energy functional as
\begin{equation}\label{4.3}
E[(u_1,u_2)(t,\cdot)]:=\sum_{i=1,2}E_i[u_i(t,\cdot)].
\end{equation}
\begin{theorem}\label{THM4.6}
Let Assumptions \ref{ASS1.1}, \ref{ASS1.2}, \ref{ASS4.1} and \ref{ASS4.4} be satisfied. Suppose $ \mathbf{u}=\mathbf{u}(t,x) $ is the solution of \eqref{1.1}-\eqref{1.3}. Then for any $ t,\tau>0 $ set $ u:=u_1+u_2 $ we have 
	\begin{equation}\label{4.4}
	\begin{aligned}
	&E[(u_1,u_2)(t+\tau,\cdot)]-E[(u_1,u_2)(t,\cdot)]\\
	&=-\int_{t}^{t+\tau}\sum_{k\in\Z^N}|k|^2 c_k[K] \left|c_k[u(s,\cdot)]\right|^2\di s-\frac{1}{|\mathbb{T}^N|}\int_{t}^{t+\tau}\int_{\mathbb{T}^N}\sum_{i=1,2}u_i\left|h_i(u_i) \ln \left(\frac{u_i}{r_i}\right)\right|\di x\di s.
	\end{aligned}
	\end{equation}
\end{theorem}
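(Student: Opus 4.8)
The plan is to differentiate $t\mapsto E[(u_1,u_2)(t,\cdot)]$ formally and then justify the computation by the density argument already used in Lemma \ref{LEM4.3}. First I would establish the identity for classical solutions $\mathbf u_0\in C^1_{per}(\R^N)^2$ provided by Theorem \ref{THM2.5}, then pass to the limit using the $L^1$--continuity of the semiflow (Theorem \ref{THM2.3}(iii)) and Lebesgue's dominated convergence theorem, exactly as at the end of the proof of Lemma \ref{LEM4.3}; the $L^\infty$ bound from Theorem \ref{THM2.3}(ii) keeps the integrands of $G_i(u_i)$ dominated on finite time intervals.

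For a classical solution, differentiating under the integral sign gives
\begin{equation*}
\frac{\di}{\di t}E_i[u_i(t,\cdot)] = \frac{1}{|\T^N|}\int_{\T^N} G_i'(u_i)\,\partial_t u_i\,\di x = \frac{1}{|\T^N|}\int_{\T^N} \ln\!\Bigl(\frac{u_i}{r_i}\Bigr)\bigl(-\mathrm{div}(u_i\mathbf v) + u_i h_i(u_i)\bigr)\di x,
\end{equation*}
where I have used \eqref{4.2}, the reduced form of the equation coming from the segregation identity \eqref{4.1}. The term $\int_{\T^N}\ln(u_i/r_i)\,u_i h_i(u_i)\di x$ already has the sign displayed in \eqref{4.4}: since $h_i(u)>0$ on $[0,r_i)$ and $h_i(u)<0$ on $(r_i,\infty)$, while $\ln(u/r_i)<0$ on $(0,r_i)$ and $>0$ on $(r_i,\infty)$, the product $u_i h_i(u_i)\ln(u_i/r_i)$ is $\le 0$ pointwise, so it equals $-u_i|h_i(u_i)\ln(u_i/r_i)|$. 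For the transport term I would integrate by parts on the torus (no boundary terms, everything $2\pi$--periodic):
\begin{equation*}
-\int_{\T^N}\ln\!\Bigl(\frac{u_i}{r_i}\Bigr)\mathrm{div}(u_i\mathbf v)\di x = \int_{\T^N}\nabla\!\Bigl(\ln\frac{u_i}{r_i}\Bigr)\cdot u_i\mathbf v\,\di x = \int_{\T^N}\nabla u_i\cdot\mathbf v\,\di x = \int_{\T^N}\mathrm{div}(u_i\mathbf v)\di x - \int_{\T^N}u_i\,\mathrm{div}\,\mathbf v\,\di x,
\end{equation*}
and the first integral on the right vanishes by periodicity, so the transport contribution to $\frac{\di}{\di t}E_i$ is $-\frac{1}{|\T^N|}\int_{\T^N}u_i\,\mathrm{div}\,\mathbf v\,\di x$. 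Summing over $i=1,2$ and writing $u=u_1+u_2$, the transport part becomes $-\frac{1}{|\T^N|}\int_{\T^N} u\,\mathrm{div}\,\mathbf v\,\di x$; recalling $\mathbf v=-\nabla(K\circ u)$, so $\mathrm{div}\,\mathbf v=-\Delta(K\circ u)$, this is $\frac{1}{|\T^N|}\int_{\T^N}u\,\Delta(K\circ u)\di x$. Now I would expand $u$ and $K$ in Fourier series on $\T^N$: with $u(t,x)=\sum_k c_k[u]e^{ik\cdot x}$ and the convolution rule $c_k[K\circ u]=c_k[K]c_k[u]$, Plancherel gives $\frac{1}{|\T^N|}\int u\,\Delta(K\circ u) = -\sum_{k\in\Z^N}|k|^2 c_k[K]\,|c_k[u]|^2$ (using $c_{-k}[u]=\overline{c_k[u]}$ since $u$ is real, and $c_0[K]$ multiplying $|k|^2=0$ drops out). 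Integrating from $t$ to $t+\tau$ yields \eqref{4.4}.

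The main obstacle is regularity: the formal differentiation under the integral of $G_i(u_i)$, the integration by parts producing $\nabla u_i$, and the use of $\nabla\ln(u_i/r_i)=\nabla u_i/u_i$ all require $u_i$ smooth and bounded away from $0$, which need not hold (indeed segregation forces $u_i$ to vanish on large sets). The resolution is to prove \eqref{4.4} first for strictly positive $C^1$ initial data (for which Theorem \ref{THM2.5} gives a genuine classical solution, and strict positivity is propagated by the representation \eqref{2.3} since the exponential factor is always positive), making every manipulation above rigorous; one must also check the Fourier series $\sum_k |k|^2 c_k[K]|c_k[u]|^2$ converges, which follows from Assumption \ref{ASS1.2}: $K\in C^m$ with $m\ge (N+5)/2$ forces $c_k[K]=O(|k|^{-m})$, and $c_k[u]$ is bounded (indeed $\ell^2$), so the series is summable. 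Then one approximates a general $\mathbf u_0$ by strictly positive $C^1$ data $\mathbf u_0^n=\mathbf u_0+\varepsilon_n$... but this breaks segregation; instead I would approximate in $L^1$ by $C^1$ data respecting $u_1^n u_2^n=0$ and $u_i^n>0$ on the interior of $\{u_i^n>0\}$, pass to the limit using continuity of the semiflow and the fact that both sides of \eqref{4.4} are continuous in $\mathbf u_0$ for the $L^1$ topology (the right-hand side because $u_i\mapsto u_i|h_i(u_i)\ln(u_i/r_i)|$ and $u\mapsto\sum|k|^2c_k[K]|c_k[u]|^2$ are continuous under $L^1$ convergence of uniformly $L^\infty$--bounded sequences, again via dominated convergence and Plancherel). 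This density step, rather than the Fourier computation, is where the technical care is concentrated.
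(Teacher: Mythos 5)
Your skeleton (compute for smooth solutions, integrate by parts, the Fourier identity $\frac{1}{|\T^N|}\int_{\T^N}u\,\Delta(K\circ u)\,\di x=-\sum_{k}|k|^2c_k[K]\,|c_k[u]|^2$, the sign of $u_ih_i(u_i)\ln(u_i/r_i)$, then an $L^1$ density step with uniform $L^\infty$ bounds) is the same as the paper's, and the Fourier and sign parts are fine. The gap is precisely in the step you yourself identify as the main obstacle and then do not actually overcome: justifying $\frac{\di}{\di t}\int_{\T^N} G_i(u_i)\,\di x$ and the integration by parts when $u_i$ vanishes. Your resolution, ``prove \eqref{4.4} first for strictly positive $C^1$ data,'' is unavailable: the identity \eqref{4.4} relies on Assumption \ref{ASS4.1} (through \eqref{4.1}--\eqref{4.2}), i.e. on $u_1(0,\cdot)u_2(0,\cdot)=0$, so except in the trivial case where one species is absent each $u_i$ vanishes on a set of positive measure, and by Theorem \ref{THM3.1} it continues to do so for all time. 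Your fallback (approximate by segregated $C^1$ data with $u_i^n>0$ only on the interior of $\{u_i^n>0\}$) does not remove the difficulty: for those approximating solutions $\ln(u_i^n/r_i)=-\infty$ on $\{u_i^n=0\}$ and $\nabla\ln(u_i^n/r_i)=\nabla u_i^n/u_i^n$ is undefined there, so the chain rule under the integral sign and the integration by parts you invoke are exactly as unjustified as for the original solution. The $L^1$ density step is not where the real work lies; the singular entropy is, and your plan leaves it open.

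The paper closes this by regularizing the functional rather than the data: for a classical segregated solution it differentiates $E_i[(u_i+\delta)(t,\cdot)]$ for $\delta>0$, and after one integration by parts writes the transport contribution as $\frac{1}{|\T^N|}\int_{\T^N}\frac{u_i^2}{u_i+\delta}\Delta(K\circ u)+u_i\,\nabla(K\circ u)\cdot\nabla\bigl(\frac{u_i}{u_i+\delta}\bigr)\di x$, in which no division by $u_i$ ever occurs, while the reaction term carries the harmless factor $\ln\bigl(\frac{u_i+\delta}{r_i}\bigr)$. Integrating in time and letting $\delta\to0$ (dominated convergence; the second transport term is bounded by $\frac{\delta u_i}{(u_i+\delta)^2}|\nabla(K\circ u)||\nabla u_i|$ and vanishes in the limit) yields $E_i[u_i(t+\tau,\cdot)]-E_i[u_i(t,\cdot)]=\frac{1}{|\T^N|}\int_t^{t+\tau}\int_{\T^N}\bigl(u_i\Delta(K\circ u)+u_ih_i(u_i)\ln(u_i/r_i)\bigr)\di x\,\di s$, after which your Fourier computation and sign observation give \eqref{4.4}, and the usual limiting procedure extends it to solutions integrated along the characteristics. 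If you insert such a $\delta$-shift (or an equivalent truncation of $G_i'$ near $0$) at the point where you currently assume strict positivity, your argument closes and coincides with the paper's.
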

\begin{proof}
For any $ \delta>0 $, as before we first suppose $ \mathbf{u}=(u_1,u_2) $ to be the classical solution. Setting $ u=u_1+u_2\geq 0 $, recall the property in \eqref{4.1} we have
\begin{equation*}
	\begin{aligned}
	&\frac{\di }{\di t} E_i[(u_i+\delta)(t,\cdot)] =\frac{1}{|\mathbb{T}^N|}\int_{\mathbb{T}^N}\ln \left(\frac{u_i+\delta}{r_i}\right)\partial_tu_i\di x\\
	=&\frac{1}{|\mathbb{T}^N|}\int_{\mathbb{T}^N}\ln \left(\frac{u_i+\delta}{r_i}\right)\Big[\mathrm{div}\;\left[u_i\nabla (K\circ\, u)\right]+u_ih_i(u_i)\Big]\di x\\
	=&\frac{1}{|\mathbb{T}^N|}\int_{\mathbb{T}^N}\frac{u_i^2}{u_i+\delta}\Delta( K\circ\, u)+u_i\nabla K\circ u\cdot\nabla \left(\frac{u_i}{u_i+\delta}\right) \di x+\frac{1}{|\mathbb{T}^N|}\int_{\mathbb{T}^N}u_ih_i(u_i)\ln \left(\frac{u_i+\delta}{r_i}\right)\di x.
	\end{aligned}
\end{equation*}
Therefore, for any $ t,\tau>0 $ we obtain
\begin{equation*}
\begin{aligned}
	&E_i[(u_i+\delta)(t+\tau,\cdot)]-E_i[(u_i+\delta)(t,\cdot)]\\
	=&\frac{1}{|\mathbb{T}^N|}\int_{t}^{t+\tau}\int_{\mathbb{T}^N}\frac{u_i^2}{u_i+\delta}\Delta( K\circ\, u)+u_i\nabla K\circ u\cdot\nabla \left(\frac{u_i}{u_i+\delta}\right) \di x\di s+\frac{1}{|\mathbb{T}^N|}\int_{t}^{t+\tau}\int_{\mathbb{T}^N}u_ih_i(u_i) \ln \left(\frac{u_i+\delta}{r_i}\right)\di x\di s.
\end{aligned}
\end{equation*}
Now by letting $ \delta\to 0 $ we see that
\begin{equation*}
\begin{aligned}
&E_i[u_i(t+\tau,\cdot)]-E_i[u_i(t,\cdot)]\\
=&\frac{1}{|\mathbb{T}^N|}\int_{t}^{t+\tau}\int_{\mathbb{T}^N}u_i\Delta( K\circ\, u) \di x\di s+\frac{1}{|\mathbb{T}^N|}\int_{t}^{t+\tau}\int_{\mathbb{T}^N}u_ih_i(u_i) \ln \left(\frac{u_i}{r_i}\right)\di x\di s.
\end{aligned}
\end{equation*}
By summing the two functional $ E_i,i=1,2, $ we obtain
\begin{equation*}
\begin{aligned}
&E[(u_1,u_2)(t+\tau,\cdot)]-E[(u_1,u_2)(t,\cdot)]\\
=&\frac{1}{|\mathbb{T}^N|}\int_{t}^{t+\tau}\int_{\mathbb{T}^N}u\Delta( K\circ\, u) \di x\di s+\frac{1}{|\mathbb{T}^N|}\int_{t}^{t+\tau}\int_{\mathbb{T}^N}\sum_{i=1,2}u_ih_i(u_i) \ln \left(\frac{u_i}{r_i}\right)\di x\di s.
\end{aligned}
\end{equation*}
On the other hand, for each $ \phi \in L_{per}^2(\R^N) $ one has $ \phi(x)=\sum_{k\in\Z^N}\overline{c_k[\phi]}e^{i n\cdot x} $ almost everywhere, thus
\[ \frac{1}{|\mathbb{T}^N|}\int_{\mathbb{T}^N}\phi \Delta( K\circ\,\, \phi) \di x= \sum_{k\in\Z^N} \frac{1}{|\mathbb{T}^N|}\int_{\mathbb{T}^N}\overline{c_k[\phi]}e^{i n\cdot x} \Delta( K\circ\,\, \phi) \di x=\sum_{k\in\Z^N}\overline{c_k[\phi]} c_k[\Delta K\circ\,\, \phi]=-\sum_{k\in\Z^N}|k|^2 c_k[K] c_k[\phi]^2. \]
Therefore, by the above calculation and by the fact that $ h_i(u)\ln(u/r_i)<0 ,i=1,2 $, we have 
\begin{equation*}
\begin{aligned}
&E[u(t+\tau,\cdot)]-E[u(t,\cdot)]\\
&=-\int_{t}^{t+\tau}\sum_{k\in\Z^N}|k|^2 c_k[K] \left|c_k[u(s,\cdot)]\right|^2\di s-\frac{1}{|\mathbb{T}^N|}\int_{t}^{t+\tau}\int_{\mathbb{T}^N}\sum_{i=1,2}u_i\left|h_i(u_i) \ln \left(\frac{u_i}{r_i}\right)\right|\di x\di s.
\end{aligned}
\end{equation*}
 The usual limiting procedure as in Lemma \ref{LEM4.3} allows us to extend the estimation to solutions integrated along the characteristics.
\end{proof}
\begin{remark}
	By the above theorem, we can see the energy functional $ E $ is non-negative and is decreasing along the trajectories of $ \eqref{1.1} $, by letting $ t\to+\infty  $ we deduce from \eqref{4.4} that
	\begin{equation}\label{4.5}
	\lim_{t\to+\infty}\int_{t}^{t+\tau}\sum_{k\in\Z^N}|k|^2 c_k[K] \left|c_k[u(s,\cdot)]\right|^2\di s=0,
	\end{equation}
	and
	\begin{equation}\label{4.6}
	\lim_{t\to+\infty}\int_{t}^{t+\tau}\int_{\mathbb{T}^N}u_i\left|h_i(u_i) \ln \left(\frac{u_i}{r_i}\right)\right|\di x\di s=0,\;i=1,2.
	\end{equation}	
\end{remark}

Before we prove the $ L^\infty $ boundedness of the solution  for all $ t \geq 0 $, i.e., $ \sup_{t\geq 0}\| u_i(t,\cdot)\|_{L^\infty} <\infty, $ for $ i=1,2, $
we need following lemmas.
\begin{lemma} \label{LEM4.8}
	Let Assumptions \ref{ASS1.1}, \ref{ASS1.2}, \ref{ASS4.1} and \ref{ASS4.4} be satisfied. Suppose $ \mathbf{u}=\mathbf{u}(t,x) $ is the solution of \eqref{1.1}-\eqref{1.3}. Then for any $ k\in \Z^N $ and for each $ i=1,2, $ the mapping
	\[t\longmapsto c_k[u_i(t,\cdot)]  \]
	is a $ C^1 $ function. Here $ c_k[u_i(t,\cdot)],\,k\in \Z^N $ are the Fourier coefficients. Moreover,
	\[ \sup_{t\geq 0}\left|\frac{\di }{\di t}c_k[u_i(t,\cdot)]  \right|<\infty . \]
\end{lemma}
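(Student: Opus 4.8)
The plan is to reduce to classical solutions, differentiate the Fourier coefficient under the integral sign, and then control the two resulting terms uniformly in time.

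For a classical solution (Theorem \ref{THM2.5}) I would differentiate $c_k[u_i(t,\cdot)]=|\T^N|^{-1}\int_{\T^N}e^{-ik\cdot x}u_i(t,x)\di x$ under the integral, use the post-segregation form \eqref{4.2} of the equation and integrate the divergence term by parts, to obtain
\begin{equation*}
	\tfrac{\di}{\di t}c_k[u_i(t,\cdot)]=-\,ik\cdot c_k\big[u_i(t,\cdot)\,\mathbf v(t,\cdot)\big]+c_k\big[u_i(t,\cdot)\,h_i(u_i(t,\cdot))\big],\qquad i=1,2.
\end{equation*}
For general $\mathbf u_0\in L_{per,+}^{\infty}(\R^N)^2$ one passes to this identity by the approximation procedure already used in Lemma \ref{LEM4.3} and Theorem \ref{THM4.6}: pick $\mathbf u_0^n\in C_{per,+}^1(\R^N)^2$ with $\mathbf u_0^n\to\mathbf u_0$ in $L^1$ and $\sup_n\|\mathbf u_0^n\|_{L^\infty}<\infty$; by Theorem \ref{THM2.3}(ii)--(iii) the corresponding solutions satisfy $u_i^n\to u_i$ in $C([0,T],L_{per}^1)$ with $\{u_i^n\}$ bounded in $L^\infty$ on $[0,T]$, and $\mathbf v^n\to\mathbf v$ in $C([0,T],C_{per}^1)$; since $u\mapsto uh_i(u)$ is Lipschitz on bounded sets, the right-hand side above converges in $C([0,T])$ for every $T$. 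By the standard theorem on uniform limits of $C^1$ functions with uniformly convergent derivatives, $t\mapsto c_k[u_i(t,\cdot)]$ is then $C^1$ on $[0,\infty)$ with derivative given by the displayed formula, in particular continuous in $t$.

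It then remains to bound both terms uniformly in $t\ge0$. The transport term is immediate: $|k\cdot c_k[u_i\mathbf v]|\le |k|\,|\T^N|^{-1}\|\mathbf v(t,\cdot)\|_{C^0}\|u_i(t,\cdot)\|_{L^1}$, which is bounded uniformly in $t$ by Lemma \ref{LEM4.3}(i)--(ii). The crux is the reaction term $c_k[u_ih_i(u_i)]$, which must be estimated by $|\T^N|^{-1}\int_{\T^N}u_i|h_i(u_i)|\di x$ \emph{without} a uniform $L^\infty$ (or $L^2$) bound on $u_i$, since such a bound is only established afterwards in Lemma \ref{LEM4.9}. Here I would use the structure of $h_i$: since $u\mapsto uh_i(u)$ is concave and vanishes at $0$, the quotient $h_i(u)=(uh_i(u))/u$ is non-increasing on $(0,\infty)$, so by Assumption \ref{ASS4.1} $h_i$ is non-increasing, bounded above by $h_i(0)$, with limit $\ell:=\lim_{u\to\infty}h_i(u)\in[-\infty,0)$.

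I would then distinguish two cases. If $\ell>-\infty$ then $h_i$ is bounded, so $\int_{\T^N}u_i|h_i(u_i)|\di x\le\|h_i\|_\infty\|u_i(t,\cdot)\|_{L^1}$ is uniformly bounded by Lemma \ref{LEM4.3}(i). If $\ell=-\infty$, set $D:=\sup_{t\ge0}\|\mathbf v(t,\cdot)\|_{C^1}<\infty$ (Lemma \ref{LEM4.3}(ii)) and pick $\overline W$ with $h_i(w)<-D$ for all $w\ge\overline W$; by the segregation property and \eqref{2.6}, for fixed $z$ the map $t\mapsto w_i(t,z):=u_i(t,\Pi_{\mathbf v}(t,0;z))$ solves $\partial_t w_i(t,z)=w_i(t,z)\big(h_i(w_i(t,z))-\mathrm{div}\,\mathbf v(t,\Pi_{\mathbf v}(t,0;z))\big)$, whose right-hand side is strictly negative whenever $w_i(t,z)>\overline W$, so $w_i(t,z)\le\max\{\overline W,\|\mathbf u_0\|_{L^\infty}\}$ for all $t\ge0$; since $z\mapsto\Pi_{\mathbf v}(t,0;z)$ is onto (Lemma \ref{LEM2.1}), this yields $\sup_{t\ge0}\|u_i(t,\cdot)\|_{L^\infty}<\infty$ (applying the argument to the approximants $u_i^n$, whose data are uniformly $L^\infty$-bounded, and passing to the limit), and then $\int_{\T^N}u_i|h_i(u_i)|\di x\le(\sup_{[0,M]}|h_i|)\|u_i(t,\cdot)\|_{L^1}$ is uniformly bounded once more. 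In both cases $\sup_{t\ge0}|c_k[u_ih_i(u_i)]|<\infty$, which combined with the transport estimate gives $\sup_{t\ge0}|\tfrac{\di}{\di t}c_k[u_i(t,\cdot)]|<\infty$. The main obstacle is this last uniform estimate on the reaction term; the case $\ell=-\infty$ is where the ODE along the characteristics is needed, and hence where the regularity of $K$ (Assumption \ref{ASS1.2}), through the uniform $C^1$ bound on $\mathbf v$ of Lemma \ref{LEM4.3}, enters.
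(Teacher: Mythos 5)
Your proposal is correct, and its first half (differentiating $c_k[u_i(t,\cdot)]$ under the integral, integrating the divergence term by parts, passing to mild solutions by the approximation already used in Lemma \ref{LEM4.3}, and bounding the transport term by $|k|\,\|u_i(t,\cdot)\|_{L^1}\|\mathbf v(t,\cdot)\|_{C^0}$ via Lemma \ref{LEM4.3}) is exactly what the paper does. Where you diverge is the reaction term: the paper disposes of $c_k[u_ih_i(u_i)]$ in one line by invoking Jensen's inequality for the concave map $f_i(u)=uh_i(u)$, claiming the bound $f_i(\|u_i(t,\cdot)\|_{L^1})$, whereas you split according to whether $h_i$ is bounded below and, in the unbounded case, derive a uniform-in-time $L^\infty$ bound by an ODE comparison along the characteristics, using only the uniform $C^1$ bound on $\mathbf v$ from Lemma \ref{LEM4.3} (so no circularity with Lemma \ref{LEM4.9} or Theorem \ref{THM4.11}). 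Your route is in fact the more careful one: since $u_ih_i(u_i)$ changes sign, Jensen only controls $\int_{\T^N}f_i(u_i)\di x$ from above, not $\int_{\T^N}u_i|h_i(u_i)|\di x$, and when $h_i(u)\to-\infty$ the negative part is superlinear and genuinely requires a pointwise (or $L^\infty$) bound of the kind you establish; so your case distinction fills a step the paper glosses over, at the cost of partially anticipating the $L^\infty$ estimate of Theorem \ref{THM4.11} in the special case where $h_i$ is unbounded below (where a mere uniform bound on $\mathrm{div}\,\mathbf v$ suffices, rather than its decay). The only cosmetic point is the inequality $|\mathrm{div}\,\mathbf v|\leq\|\mathbf v\|_{C^1}$, which with the paper's norm conventions holds only up to a dimensional constant; choosing $\overline W$ with $h_i(w)<-ND$ (or bounding $\mathrm{div}\,\mathbf v$ directly by $\|\Delta K\|_{C^0}\sup_t\|u(t,\cdot)\|_{L^1}$) removes the issue.
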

\begin{proof}
	For any $ k\in \Z^N $, suppose $ \mathbf{u}=(u_1,u_2) $ is the classical solution then we have
	\begin{align*}
	\frac{\di }{\di  t} c_k[u_i(t,\cdot)]&=\frac{1}{|\T^N|}\int_{\T^N} e^{-ik\cdot x}\left[- \mathrm{div}\,(u_i \mathbf{v})+u_ih_i(u_i)\right]\di x\\
	&=\frac{1}{|\T^N|}\int_{\T^N} u_i \nabla\left(e^{-ik\cdot x}\right)\cdot\mathbf{v}+ e^{-ik\cdot x}u_ih_i(u_i)\di x.
	\end{align*}
	Therefore, by using the Jensen's inequality for $ f_i(u)=uh_i(u) $ again, we derive
	\begin{equation*}
	\begin{aligned}
	\left| \frac{\di }{\di  t} c_k[u_i(t,\cdot)] \right|\leq |k|\|u_i(t,\cdot) \|_{1} \|\mathbf{v}(t,\cdot)\|_{C^0}+f(\|u_i(t,\cdot)\|_{L^1}),
	\end{aligned}
	\end{equation*}
	the result follows by Lemma \ref{LEM4.3}. The case of the solution integrated along the characteristics can be proved by applying a classical regularization procedure.
\end{proof}

The regularity condition for kernel $ K $ defined in Assumption \ref{ASS1.2} serves mainly for the following result. 
\begin{lemma}\label{LEM4.9} Let Assumptions \ref{ASS1.1}, \ref{ASS1.2}, \ref{ASS4.1} and \ref{ASS4.4} be satisfied. Suppose $ \mathbf{u}=\mathbf{u}(t,x) $ is the solution of \eqref{1.1}-\eqref{1.3} and define $ u:=u_1+u_2 $. Then for $ \mathbf{v}(t,x)= \left(\nabla K\circ\, u(t,\cdot)\right)(x)$ we have
	\[ \lim_{t\to+\infty} \| \mathrm{div}\, \mathbf{v}(t,\cdot)\|_{C^0} =0.\]
\end{lemma}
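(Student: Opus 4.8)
The plan is to show $\mathrm{div}\,\mathbf{v}(t,\cdot)=\Delta K\circ u(t,\cdot)\to 0$ in $C^0$ by combining the decay of low Fourier modes coming from the energy identity \eqref{4.5} with the high regularity of $K$ to control the tail of the Fourier series uniformly. First I would write, for each $t$,
\[
\mathrm{div}\,\mathbf{v}(t,x)=\Delta(K\circ u)(t,x)=-\sum_{k\in\Z^N}|k|^2 c_k[K]\, c_k[u(t,\cdot)]\, e^{ik\cdot x},
\]
so that $\|\mathrm{div}\,\mathbf{v}(t,\cdot)\|_{C^0}\le \sum_{k\in\Z^N}|k|^2 |c_k[K]|\,|c_k[u(t,\cdot)]|$. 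The idea is to split this sum at a large radius $R$: the finitely many terms with $|k|\le R$ are handled by \eqref{4.5}, while the terms with $|k|>R$ are made small \emph{uniformly in $t$} using the decay of $c_k[K]$.

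For the tail, Assumption \ref{ASS1.2} gives $K\in C^m_{per}$ with $m\ge (N+5)/2$, hence $|c_k[K]|\le C_K (1+|k|)^{-m}$; together with the uniform $L^1$ bound on $u$ from Lemma \ref{LEM4.3}(i), which gives $|c_k[u(t,\cdot)]|\le |\mathbb{T}^N|^{-1}\|u(t,\cdot)\|_{L^1}\le C$ for all $t$, we get
\[
\sum_{|k|>R}|k|^2|c_k[K]|\,|c_k[u(t,\cdot)]|\le C\,C_K\sum_{|k|>R}(1+|k|)^{2-m},
\]
and since $m-2>(N+1)/2>N/2$... wait, we need $m-2>N$ for absolute convergence of $\sum (1+|k|)^{2-m}$ over $\Z^N$; indeed $m\ge (N+5)/2$ gives $m-2\ge (N+1)/2$, which is only $>N$ when $N=0$. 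So the correct argument uses Cauchy–Schwarz instead: $\sum_{|k|>R}|k|^2|c_k[K]||c_k[u]|\le \big(\sum_{|k|>R}|k|^4|c_k[K]|^2\big)^{1/2}\big(\sum_k |c_k[u(t,\cdot)]|^2\big)^{1/2}$, and the second factor is $\||\mathbb{T}^N|^{-1/2}u(t,\cdot)\|_{L^2}$... which is not a priori bounded. The cleaner route: bound $|c_k[u(t,\cdot)]|^2\le c_k[u(t,\cdot)]\cdot$ (something), or simply use $|c_k[u]|\le C$ and note $\sum_{|k|>R}|k|^4|c_k[K]|^2\le C_K^2\sum_{|k|>R}(1+|k|)^{4-2m}$, which converges provided $2m-4>N$, i.e. $m>(N+4)/2$, guaranteed by $m\ge(N+5)/2$. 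Then by Cauchy–Schwarz with the $\ell^2$ norm of $(|c_k[u]|)_{|k|>R}$ bounded by $C\cdot(\#\{|k|>R\})^{1/2}$ — no. The honest fix: keep $|c_k[u]|\le C$ and pair it against the \emph{absolutely} convergent tail $\sum_{|k|>R}|k|^2|c_k[K]|\le C_K\sum_{|k|>R}(1+|k|)^{2-m}$, which needs $m-2>N$. I would therefore lean on the stronger consequence that for $K\in C^m$ the sequence $|k|^2 c_k[K]$ lies in $\ell^1(\Z^N)$ as soon as $m\ge 2+\lfloor N/2\rfloor+1$, which $m\ge(N+5)/2$ comfortably ensures; this makes $\sum_{|k|>R}|k|^2|c_k[K]|\to 0$ as $R\to\infty$, and combined with $|c_k[u(t,\cdot)]|\le C$ the tail is $<\varepsilon/2$ for $R$ large, uniformly in $t$.

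For the head, fix such an $R$; by \eqref{4.5} with $\tau=1$ we have $\int_t^{t+1}\sum_{0<|k|\le R}|k|^2 c_k[K]|c_k[u(s,\cdot)]|^2\,ds\to 0$ as $t\to\infty$ (the $k=0$ term drops since $|k|^2=0$, and Assumption \ref{ASS4.4} gives $c_k[K]>0$). Since each $c_k[K]>0$ and there are finitely many terms, this forces $\int_t^{t+1}|c_k[u(s,\cdot)]|^2\,ds\to 0$ for each $k$ with $0<|k|\le R$; by Lemma \ref{LEM4.8} each $s\mapsto c_k[u(s,\cdot)]$ is $C^1$ with uniformly bounded derivative, so this $L^2_{loc}$-in-time decay upgrades to pointwise decay $c_k[u(t,\cdot)]\to 0$ as $t\to\infty$ (a standard argument: a uniformly Lipschitz function whose moving $L^2$-average over unit windows tends to $0$ must tend to $0$). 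Hence $\sum_{0<|k|\le R}|k|^2|c_k[K]|\,|c_k[u(t,\cdot)]|\to 0$, so it is $<\varepsilon/2$ for $t$ large. Adding the two estimates gives $\|\mathrm{div}\,\mathbf{v}(t,\cdot)\|_{C^0}<\varepsilon$ for $t$ large, which is the claim. The main obstacle is the bookkeeping that ties together the tail bound (needing enough regularity of $K$, which is exactly why Assumption \ref{ASS1.2} demands $m\ge(N+5)/2$ rather than merely $m\ge 3$) with the passage from integral-in-time decay in \eqref{4.5} to pointwise-in-time decay of each Fourier mode via the uniform Lipschitz control of Lemma \ref{LEM4.8}; everything else is routine.
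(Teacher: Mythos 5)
Your argument is correct and is essentially the paper's proof: both rest on the same three ingredients, namely the pointwise decay $c_k[u(t,\cdot)]\to 0$ for $k\neq 0$ obtained from \eqref{4.5} together with the uniform Lipschitz bound of Lemma \ref{LEM4.8}, the uniform bound $|c_k[u(t,\cdot)]|\leq \|u(t,\cdot)\|_{L^1}\leq M$ from Lemma \ref{LEM4.3}, and the summability of $|k|^2 c_k[K]$ coming from $K\in C^m_{per}$ with $m\geq (N+5)/2$ (the paper establishes this via Cauchy--Schwarz with the weight $|k|^{-(N+1)/2}$ and \eqref{4.7}, which is the clean justification of the $\ell^1$ claim you eventually settle on after your detour). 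Your $\varepsilon/2$ head--tail splitting is just the dominated convergence argument of the paper in a different packaging, so the two proofs coincide in substance.
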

\begin{proof}
		By Assumption \ref{ASS1.2} $ K\in C^m_{per}(\R^N) $ with $ m\geq \frac{N+5}{2} $, therefore by Temam \cite[page. 50]{Temam1997}
		\begin{equation}\label{4.7}
		\sum_{k\in\Z^N}(1+|k|^2)^s c_k[K]^2<\infty. 
		\end{equation}
		Moreover, we can deduce from \eqref{4.5} that for each $ k\in \Z^N \backslash \lbrace 0 \rbrace $
		\[ \lim_{t\to+\infty}\int_{t}^{t+\tau} \left|c_k[u(s,\cdot)]\right|^2\di s=\lim_{t\to+\infty}\int_{0}^{\tau} |c_k[u(s+t,\cdot)]|^2\di s=0. \]
		The last equality together with the results in Lemma \ref{LEM4.8}, we can deduce
		\begin{equation}\label{4.8}
		\lim_{t\to+\infty} c_k[u(t,\cdot)]=0,\quad  k\in \Z^N \backslash \lbrace 0 \rbrace.
		\end{equation}
		Since for any finite $ t $, $ u(t,\cdot)$ is $ L_{per}^\infty $ bounded by \eqref{2.23}, thus $ L_{per}^2 $ bounded, we deduce
		\begin{equation*}
		\begin{aligned}
		\mathrm{div}\,\mathbf{v}(t,x)=&-\frac{1}{|\T^N|} \int_{\T^N} \Delta  K(x-y)u(t,y)\di y\\
		=&-\frac{1}{|\T^N|} \int_{\T^N} \Delta  K(x-y)\sum_{k\in\Z^N} e^{-ik\cdot y}c_k[u(t,\cdot)] \di y\\
		=&-\frac{1}{|\T^N|} \int_{\T^N} \sum_{k\in\Z^N}\Delta  K(z)e^{ik\cdot (z-x)} c_k[u(t,\cdot)] \di z\\
		=&\sum_{k\in \Z^N}|k|^2c_k[K]c_k[u(t,\cdot)]e^{-ik\cdot x},
		\end{aligned}
		\end{equation*}
		where the last series converges due to \eqref{4.7}. In fact, by Lemma \ref{LEM4.3}, we can find a constant $M>0$ such that for each $k \in  \Z^N$ we have 
		$$
		|c_k[u(t,\cdot)]| < \|u(t,\cdot) \|_{L^1} \leq M,\, \forall t \geq 0. 
		$$
		Therefore,
		\begin{equation*}
		\begin{aligned}
		\|\mathrm{div}\,\mathbf{v}(t,x)\|_{C^0} =&\left\|\sum_{k\in \Z^N}|k|^{2}c_k[K]c_k[u(t,\cdot)]e^{-ik\cdot x}\right\|_{C^0} \\
		\leq& M \sum_{k\in \Z^N}|k|^2c_k[K]=M \sum_{k\in \Z^N \backslash \lbrace 0\rbrace}|k|^{-\frac{N+1}{2}}|k|^{2+\frac{N+1}{2}}c_k[K]\\
		\leq& M \left(\sum_{k\in \Z^N \backslash \lbrace 0\rbrace} \frac{1}{|k|^{N+1}}\right)^\frac{1}{2} \left(\sum_{k\in \Z^N \backslash \lbrace 0\rbrace} |k|^{N+5}c_k[K]^2\right)^\frac{1}{2}, 
		\end{aligned}
		\end{equation*}
		and due to  \eqref{4.7}, this last series converges. Therefore, by Lebesgue dominated convergence theorem we have
		\[ \limsup_{t \to + \infty}	 \|\mathrm{div}\,\mathbf{v}(t,x)\|_{C^0}\leq \limsup_{t \to + \infty} \sum_{k\in \Z^N}|k|^{2}c_k[K]|c_k[u(t,\cdot)]|=0. \]
\end{proof}

As a consequence of Lemma \ref{LEM4.9}, we obtain Theorem \ref{THM4.11} and Corollary \ref{COR4.12} which are the main results of this section.
\begin{theorem}\label{THM4.11}
	 Let Assumptions \ref{ASS1.1}, \ref{ASS1.2}, \ref{ASS4.1} and \ref{ASS4.4} be satisfied. Suppose $ \mathbf{u}=\mathbf{u}(t,x) $ is the solution of \eqref{1.1}-\eqref{1.3}. Then we have for each $ i=1,2, $ 
	\begin{equation*}
	\sup_{t\geq 0}\| u_i(t,\cdot)\|_{L^\infty} <+\infty,
	\end{equation*}
	and more precisely we have
	$$ 
	\limsup_{t\to+\infty} \|u_i(t,\cdot)\|_{L^\infty} \leq r_i.
	$$
	Moreover, for any $ x\in \R^N $ such that $ u_i(0,x)>0 $. Then the solution evolving along the characteristics converges point-wisely to the positive equilibrium $ r_i $ for $ i=1,2. $ That is, for any $ x\in \mathcal{U}_i $ where $  \mathcal{U}_i=\lbrace x\in \R^N : u_i(0,x)>0 \rbrace$
	\[ \lim_{t\to \infty} u_i(t,\Pi _{\mathbf{v}}(t,0;x)) = r_i. \]
	Or equivalently, for any $ x\in \R^N $ we have
	\[ u_i(t,\Pi _{\mathbf{v}}(t,0;x))\xrightarrow{p.w.} r_i \mathbbm{1}_{\mathcal{U}_i}(x),\;\text{ as  }t\to \infty. \] 
\end{theorem}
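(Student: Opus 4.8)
The plan is to reduce the whole statement to a scalar ODE analysis along the characteristic curves $t\mapsto\Pi_{\mathbf{v}}(t,0;x)$, exploiting the segregation property (Theorem \ref{THM3.1}) and the decay of $\mathrm{div}\,\mathbf{v}$ provided by Lemma \ref{LEM4.9}. Fix $i\in\{1,2\}$. If $u_i(0,x)=0$ then \eqref{2.6} gives $u_i(t,\Pi_{\mathbf{v}}(t,0;x))\equiv 0$, so only $x\in\mathcal U_i$ needs attention. For such $x$, segregation forces $u_j(0,x)=0$ for $j\neq i$, so the $j$-th component vanishes along this characteristic, and using the form of $h_i$ in Assumption \ref{ASS4.1} together with \eqref{4.1}, the function $w(t):=u_i(t,\Pi_{\mathbf{v}}(t,0;x))$ satisfies (cf. \eqref{2.2}, \eqref{2.6})
\[
w'(t)=w(t)\bigl(h_i(w(t))-\varepsilon(t)\bigr),\qquad w(0)=u_i(0,x)>0,\qquad \varepsilon(t):=\mathrm{div}\,\mathbf{v}(t,\Pi_{\mathbf{v}}(t,0;x)),
\]
with $w(t)>0$ for all $t\ge 0$, where $\sup_{x}|\varepsilon(t)|=\|\mathrm{div}\,\mathbf{v}(t,\cdot)\|_{C^0}\to 0$ as $t\to\infty$ by Lemma \ref{LEM4.9}. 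Thus $w$ obeys an asymptotically vanishing perturbation of the logistic-type equation $w'=w\,h_i(w)$, whose positive equilibrium $r_i$ is globally attracting on $(0,\infty)$ by Assumption \ref{ASS4.1}.

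For the upper estimates I would use a trapping argument. Given $\epsilon>0$, the hypotheses $h_i<0$ on $(r_i,\infty)$ and $\limsup_{u\to\infty}h_i(u)<0$ imply $-c_\epsilon:=\sup_{u\ge r_i+\epsilon}h_i(u)<0$. By Lemma \ref{LEM4.9} pick $T_0$ with $\|\mathrm{div}\,\mathbf{v}(t,\cdot)\|_{C^0}<c_\epsilon/2$ for $t\ge T_0$; then for $t\ge T_0$ and $w(t)\ge r_i+\epsilon$ one has $w'(t)\le-\tfrac{c_\epsilon}{2}w(t)\le-\tfrac{c_\epsilon}{2}(r_i+\epsilon)<0$. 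Since \eqref{2.23} bounds $C(T_0):=\sup_{t\in[0,T_0]}\|u_i(t,\cdot)\|_{L^\infty}<\infty$ uniformly in $x$, every characteristic with $w(T_0)\ge r_i+\epsilon$ decreases below $r_i+\epsilon$ within a time $T_1=T_1(\epsilon,C(T_0))$ independent of $x$, and cannot cross $r_i+\epsilon$ upward afterwards (again $w'<0$ there for $t\ge T_0$). As $z\mapsto\Pi_{\mathbf{v}}(t,0;z)$ is a bijection of $\R^N$, this gives $\|u_i(t,\cdot)\|_{L^\infty}\le r_i+\epsilon$ for $t\ge T_0+T_1$, hence $\limsup_{t\to\infty}\|u_i(t,\cdot)\|_{L^\infty}\le r_i$ after letting $\epsilon\to 0$; combined with the finite-interval bound \eqref{2.23} this also yields $\sup_{t\ge 0}\|u_i(t,\cdot)\|_{L^\infty}<\infty$.

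For the lower bound on $\mathcal U_i$, fix $\epsilon\in(0,r_i)$; continuity and positivity of $h_i$ on $[0,r_i)$ give $c_\epsilon':=\inf_{0\le u\le r_i-\epsilon}h_i(u)>0$. Choose $T_0$ with $\|\mathrm{div}\,\mathbf{v}(t,\cdot)\|_{C^0}<c_\epsilon'/2$ for $t\ge T_0$. Then for $t\ge T_0$, as long as $0<w(t)\le r_i-\epsilon$ one has $w'(t)\ge\tfrac{c_\epsilon'}{2}w(t)>0$, so $w(t)\ge w(T_0)e^{c_\epsilon'(t-T_0)/2}$, which forces $w$ to leave $(0,r_i-\epsilon]$ in finite time; it cannot re-enter for $t\ge T_0$ since $w'>0$ on $\{w=r_i-\epsilon\}$. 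Hence $\liminf_{t\to\infty}w(t)\ge r_i-\epsilon$ for every $\epsilon>0$, i.e. $\liminf_{t\to\infty}w(t)\ge r_i$; combined with the previous step, $w(t)\to r_i$. Together with the trivial case $x\notin\mathcal U_i$ this is exactly $u_i(t,\Pi_{\mathbf{v}}(t,0;x))\xrightarrow{p.w.}r_i\mathbbm 1_{\mathcal U_i}(x)$.

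On the rigor and the main obstacle: the ODE comparisons above are elementary, but to apply them to solutions merely integrated along the characteristics one must justify that $t\mapsto w(t)$ is absolutely continuous with the claimed a.e. derivative and that all bounds are uniform in $x$; this is done exactly as in Lemma \ref{LEM4.3} and Theorem \ref{THM4.6}, namely proving the estimates first for classical solutions with $\mathbf u_0\in C^1_{per}(\R^N)^2$, where \eqref{2.2} holds with a $C^1$ integrand and the computations are literal, and then passing to the limit along $\mathbf u_0^n\to\mathbf u_0$ in $L^1$ via the continuity of the semiflow of Theorem \ref{THM2.3}. The genuinely essential ingredient — and the step one should worry about — is the uniform decay $\|\mathrm{div}\,\mathbf{v}(t,\cdot)\|_{C^0}\to 0$ of Lemma \ref{LEM4.9}: without it the perturbation $\varepsilon(t)$ could keep $w$ away from $r_i$, and it is precisely this decay that substitutes for the $L^2$-boundedness argument of \cite{Ducrot2014} and allows the more general nonlinearities $h_i$ considered here.
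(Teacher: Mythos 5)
Your proposal is correct and follows essentially the same route as the paper: reduce to the scalar equation $w'=w\,(h_i(w)-\mathrm{div}\,\mathbf{v})$ along characteristics via the segregation property, use Lemma \ref{LEM4.9} for the uniform decay of $\mathrm{div}\,\mathbf{v}$, and conclude by comparison (the paper phrases your $\epsilon$-trapping argument as a comparison with the auxiliary ODEs $\overline{w}_i,\underline{w}_i$ and the thresholds $\overline{\Phi}_i(\tau),\underline{\Phi}_i(\tau)\to r_i$, which is the same mechanism), finishing with the bijectivity of $z\mapsto\Pi_{\mathbf{v}}(t,0;z)$, the finite-time bound \eqref{2.23}, and the same classical-solution-plus-density justification.
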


\begin{remark}\label{REM4.10}
	Notice from the above theorem, we obtain the following $ L^2 $ uniform boundedness of the solution $ u=u_1+u_2 $, that is
	\[ \sup_{t\geq 0}\| u(t,\cdot) \|_{L^2}<\infty. \]
Moreover for any sequence $ \lbrace t_n\rbrace_{n\geq 0} $ which tends to infinity, since for Fourier coefficients, one has
	\[ \lim_{n\to\infty}c_k[u(t_n,\cdot)]=0,\quad \forall k\in \Z^N \backslash \lbrace 0 \rbrace, \]
	therefore Banach-Alaoglu-Bourbaki theorem together with the fact that $L^2$ is an Hilbert space we deduce that there exists a subsequence $ \lbrace t_{n_l} \rbrace_{l\geq 0}  $ such that
\[ 
u(t_{n_l},\cdot) \rightharpoonup c \text{ in } L^2 
\]
where $c$ stands for a constant distribution which depends on the subsequence and the convergence is the weak convergence in $ L^2 $. With the above argument we can deduce 
\begin{equation}\label{4.10.0}
\lim_{t\to \infty} \|\mathbf{v}(t,.) \|_{C^0}=0.
\end{equation}
In fact, for any sequence $ \lbrace t_n\rbrace_{n\geq 0} $ with $ t_n\to \infty $ as $ n\to \infty $,  we can find a subsequence such that
\[ \mathbf{v}(t_{n_l},x)=\int_{\T^N}\nabla K(x-y) u(t_{n_l},y)dy \to c\int_{\T^N}\nabla K(x-y)dy=0  \]
where the last equation is by the periodic boundary condition. Thus, equation \eqref{4.10.0} follows.
\end{remark}
\begin{proof}
	Suppose $ \mathbf{u}=(u_1,u_2) $ the classical solution. The usual limiting procedure allows us to extend the estimation to solutions integrated along the characteristics. We recall the notation in \eqref{2.5} where $ w_i(t,x):=u_i(t,\Pi _{\mathbf{v}}(t,0;x)),\,i=1,2, $ and for any $ x\in \R^N $ we have
	\begin{equation*}
	\begin{aligned}
	\frac{\di  w_i(t,x)}{\di t}=&w_i(t,x) \left[-\mathrm{%
		div }\; \mathbf{v}(t,\Pi _{\mathbf{v}}(t,0;x)) +h_i((w_1+w_2)(t,x)) \right]\\
	=&w_i(t,x) \left[-\mathrm{%
		div }\; \mathbf{v}(t,\Pi _{\mathbf{v}}(t,0;x)) +h_i(w_i(t,x)) \right],
	\end{aligned}
	\end{equation*}
	where the second equation is due to segregation. Now we apply the comparison principle to the solution along the characteristics with the solution of the following ordinary differential equation. For any $ \tau>0 $, let $ \overline{w}_i(t) $ to be the solution of the following Cauchy problem
	\begin{equation*}
	\begin{cases}
	\dfrac{\di \overline{w}_i(t)}{\di t}&=\overline{w}_i(t)\left[\displaystyle\sup_{t\geq \tau}\|\mathrm{div }\; \mathbf{v}(t,\cdot)\|_{C^0}+h_i(\overline{w}_i(t)) \right],\quad t>\tau\\
	\overline{w}_i(\tau)&=\|w_i(\tau,\cdot)\|_{L^\infty}.
	\end{cases}
	\end{equation*}
	Then we see that for any $ \tau>0 $,
	\[ \limsup_{t\to+\infty} \overline{w}_i(t)\leq \overline{\Phi}_i(\tau):=\inf\lbrace z> r_i: \sup_{t\geq \tau}\|\mathrm{div }\; \mathbf{v}(t,\cdot)\|_{C^0}+h_i(y)\leq  0,\;\forall y\geq z \rbrace. \]
	If the set is empty, then $ \overline{\Phi}_i(\tau)=+\infty $.
	By comparison principle, for any $ \tau>0 $  we have
	\[ \limsup_{t\to+\infty} \|w_i(t,\cdot)\|_{L^\infty} \leq \limsup_{t\to+\infty} \overline{w}_i(t)\leq \overline{\Phi}_i(\tau), \]
	while due to Assumption \ref{ASS4.1} where $ h_i(u)<0 $ for any $ u>r_i $ and $ \limsup_{u\to \infty}h_i(u)<0 $ and Lemma \ref{LEM4.9}, we have $ \lim_{\tau\to+\infty}\overline{\Phi}_i(\tau)=r_i $ thus we have
	\begin{equation}\label{4.9}
	\limsup_{t\to+\infty}\,\| u_i(t,\Pi _{\mathbf{v}}(t,0;\cdot))\|_{L^\infty}\leq r_i.
	\end{equation}
	Since $ x\mapsto  \Pi _{\mathbf{v}}(t,0;x)$ is invertible on $ \R^N $, we have
	\[ \limsup_{t\to+\infty} \|u_i(t,\cdot) \|_{L^\infty}\leq r_i. \]
	Together with the $ L^\infty $ estimation of $ \mathbf u $ in finite time in \eqref{2.23} , we see that 
	\begin{equation*}
	\sup_{t\geq 0}\| u_i(t,\cdot)\|_{L^\infty} <\infty.
	\end{equation*}
	Now we prove the second part of the theorem. For any fixed $ x\in \R^N $ with $ u_i(0,x)>0 $, we can see from the definition of the solution integrated along the characteristics \eqref{2.9} that
	\[ w_i(t,x)=u_i(t,\Pi _{\mathbf{v}}(t,0;x))>0,\;\forall t>0. \]
	For any $ \tau>0 $, define the solution $ \underline{w}_i(t) $ to be the solution of the following Cauchy problem 
	\begin{equation*}
	\begin{cases}
	\dfrac{\di  \underline{w}_i(t)}{\di t}&=\underline{w}_i(t)\left[-\displaystyle\sup_{t\geq \tau}\|\mathrm{div }\; \mathbf{v}(t,\cdot)\|_{C^0}+h_i(\underline{w}_i(t)) \right]\\
	\underline{w}_i(\tau)&=w_i(\tau,x)>0.
	\end{cases}
	\end{equation*}
	Then we see that for any $ \tau>0 $,
	\[ \liminf_{t\to+\infty} \underline{w}_i(t)\geq \underline{\Phi}_i(\tau):=\sup\lbrace z>0: -\sup_{t\geq \tau}\|\mathrm{div }\; \mathbf{v}(t,\cdot)\|_{C^0}+h_i(y)\geq 0,\forall y\leq z\rbrace. \]
	If the set is empty, then $ \underline{\Phi}_i(\tau)=-\infty $. As before we use the comparison principle, for any $ \tau>0 $ and any $ x\in \lbrace x\in \R^N : u_i(0,x)>0 \rbrace $ we have
	\[ \liminf_{t\to+\infty} w_i(t,x) \geq \liminf_{t\to+\infty} \underline{w}_i(t)\geq \underline{\Phi}_i(\tau). \]
	Due to Assumption \ref{ASS4.1} where $ h_i(u)>0 $ for any $ u\in [0,r_i) $, we have $ \lim_{\tau\to+\infty}\underline{\Phi}_i(\tau)=r_i $ thus we have for any $ x\in \lbrace x\in \R^N : u_i(0,x)>0 \rbrace $,
	\[ \liminf_{t\to+\infty}\, u_i(t,\Pi _{\mathbf{v}}(t,0;x))\geq r_i, \]
	together with \eqref{4.9} the result follows.
\end{proof}
Next corollary is a consequence of Theorem \ref{THM4.11}.
\begin{corollary}\label{COR4.12}
	Let Assumptions \ref{ASS1.1}, \ref{ASS1.2}, \ref{ASS4.1} and \ref{ASS4.4} be satisfied. Suppose $ \mathbf{u}=\mathbf{u}(t,x) $ is the solution of \eqref{1.1}-\eqref{1.3}. If for some constant $ \delta >0$, $ u(0,x)=\sum_{i=1,2}u_i(0,x)\geq \delta>0 $ for $a.e.\, x\in \T^N  $. Moreover, we assume $ r_1=r_2=:r $ in Assumption \ref{ASS4.1}. Then
	\[  \lim_{t\to \infty}\|u(t,\cdot)-r\|_{L^\infty}=0. \]
\end{corollary}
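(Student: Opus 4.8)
The plan is to upgrade the comparison argument from the proof of Theorem~\ref{THM4.11} to an estimate that is \emph{uniform in $x$}; the crucial new ingredient is that the hypothesis $u(0,\cdot)\geq\delta$, combined with the segregation property, produces a uniform positive floor for each species on the region where it is initially present.

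First I would set $\mathcal{U}_i=\{x\in\T^N:u_i(0,x)>0\}$ and note, using Theorem~\ref{THM3.1}, that $\mathcal{U}_1\cap\mathcal{U}_2=\emptyset$, while $u(0,x)=u_1(0,x)+u_2(0,x)\geq\delta$ a.e.\ gives $\mathcal{U}_1\cup\mathcal{U}_2=\T^N$ up to a null set and, more importantly, $u_i(0,x)\geq\delta$ for a.e.\ $x\in\mathcal{U}_i$ (on $\mathcal{U}_i$ the other species vanishes, so $u(0,\cdot)=u_i(0,\cdot)$ there). Writing $w_i(t,x)=u_i(t,\Pi_{\mathbf{v}}(t,0;x))$ as in \eqref{2.5} and using the representation \eqref{2.6} together with $\sup_{t\geq0}\|u_i(t,\cdot)\|_{L^\infty}<\infty$ (Theorem~\ref{THM4.11}), which bounds $|h_i|$ along the orbit by some constant $C_0$, and $D_0:=\sup_{t\geq0}\|\mathrm{div}\,\mathbf{v}(t,\cdot)\|_{C^0}<\infty$ (Lemma~\ref{LEM4.3}), I obtain for every $\tau>0$ the $x$-independent bound $w_i(\tau,x)\geq\delta_\tau:=\delta\,e^{-\tau(C_0+D_0)}>0$ for a.e.\ $x\in\mathcal{U}_i$, $i=1,2$.

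Next I would rerun the lower comparison of the proof of Theorem~\ref{THM4.11}, but starting from this uniform datum. Fix $\eta>0$; since $\varepsilon_\tau:=\sup_{t\geq\tau}\|\mathrm{div}\,\mathbf{v}(t,\cdot)\|_{C^0}\to0$ as $\tau\to\infty$ (Lemma~\ref{LEM4.9}) and $\underline{\Phi}_i(\tau)\to r_i$ as $\tau\to\infty$ (shown in that proof), pick $\tau$ large with $\underline{\Phi}_i(\tau)>r_i-\eta$. For $t\geq\tau$ and a.e.\ $x\in\mathcal{U}_i$ one has, using \eqref{2.6} and segregation, $\tfrac{\di}{\di t}w_i(t,x)=w_i(t,x)\big[-\mathrm{div}\,\mathbf{v}(t,\Pi_{\mathbf{v}}(t,0;x))+h_i(w_i(t,x))\big]\geq w_i(t,x)\big(-\varepsilon_\tau+h_i(w_i(t,x))\big)$ and $w_i(\tau,x)\geq\delta_\tau$, so ODE comparison gives $w_i(t,x)\geq\underline{w}_i(t)$, where $\underline{w}_i$ is the \emph{single} solution of $\dot z=z(-\varepsilon_\tau+h_i(z))$ with $z(\tau)=\delta_\tau$. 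Exactly as in the proof of Theorem~\ref{THM4.11} (whose liminf bound is insensitive to the positive initial value), $\liminf_{t\to\infty}\underline{w}_i(t)\geq\underline{\Phi}_i(\tau)>r_i-\eta$, so there is $T(\eta)$ depending only on $\eta$, not on $x$, with $w_i(t,x)>r_i-\eta$ for all $t\geq T(\eta)$ and a.e.\ $x\in\mathcal{U}_i$. Combined with the uniform upper bound $\limsup_{t\to\infty}\|w_i(t,\cdot)\|_{L^\infty}=\limsup_{t\to\infty}\|u_i(t,\cdot)\|_{L^\infty}\leq r_i$ of Theorem~\ref{THM4.11}, this shows $|w_i(t,x)-r_i|\to0$ as $t\to\infty$, uniformly over a.e.\ $x\in\mathcal{U}_i$, for $i=1,2$.

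To conclude, I would assemble the two species. Since \eqref{2.2} shows $w_i(t,x)=0$ for a.e.\ $x\notin\mathcal{U}_i$, and $w_1(t,\cdot)+w_2(t,\cdot)=u(t,\Pi_{\mathbf{v}}(t,0;\cdot))$, using $r_1=r_2=r$ and the a.e.\ partition $\T^N=\mathcal{U}_1\sqcup\mathcal{U}_2$ one gets $|w_1(t,x)+w_2(t,x)-r|\to0$ uniformly over a.e.\ $x\in\T^N$, i.e.\ $\|u(t,\Pi_{\mathbf{v}}(t,0;\cdot))-r\|_{L^\infty}\to0$. Since $x\mapsto\Pi_{\mathbf{v}}(t,0;x)$ is a $C^1$ diffeomorphism of $\T^N$ (Lemma~\ref{LEM2.1}) it preserves null sets, so composition with it preserves the $L^\infty$ norm and fixes the constant $r$; hence $\|u(t,\cdot)-r\|_{L^\infty}=\|u(t,\Pi_{\mathbf{v}}(t,0;\cdot))-r\|_{L^\infty}\to0$. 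The main obstacle is precisely this uniformity in $x$: for a logistic-type scalar ODE the time needed to enter a fixed neighborhood of the equilibrium blows up as the initial value tends to $0$, so without $u(0,\cdot)\geq\delta$ one only obtains a.e.\ pointwise convergence of $w_i$, not $L^\infty$ convergence; the hypothesis is used exactly to manufacture the $x$-independent floor $\delta_\tau$ at time $\tau$, after which a single comparison ODE controls all of $\mathcal{U}_i$ simultaneously.
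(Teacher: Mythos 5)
Your argument is correct, and its skeleton is the one the paper uses: the uniform upper bound $\limsup_{t\to\infty}\|u_i(t,\cdot)\|_{L^\infty}\leq r_i$ from Theorem \ref{THM4.11}, the decay $\sup_{t\geq\tau}\|\mathrm{div}\,\mathbf{v}(t,\cdot)\|_{C^0}\to0$ from Lemma \ref{LEM4.9}, a scalar comparison ODE started at a time $\tau$ with an $x$-independent positive datum, and the bijectivity of $\Pi_{\mathbf{v}}(t,0;\cdot)$ to return from $w$ to $u$. Where you differ is in how the hypothesis $u(0,\cdot)\geq\delta$ is converted into a uniform floor. The paper works with the sum $w=w_1+w_2$ directly: it sets $\underline{h}=\min\{h_1,h_2\}$, uses segregation to write $w_1h_1(w_1)+w_2h_2(w_2)\geq (w_1+w_2)\underline{h}(w_1+w_2)$, and feeds the single quantity $w$ into one comparison ODE, obtaining the needed positivity $\inf_{x}w(\tau,x)>0$ qualitatively (positivity of $w(\tau,\cdot)$ plus periodicity, for the classical solution, then a density argument). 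You instead partition $\T^N$ (up to a null set) into $\mathcal{U}_1\sqcup\mathcal{U}_2$, observe that segregation forces $u_i(0,\cdot)\geq\delta$ on $\mathcal{U}_i$, and run a separate comparison for each species with its own $h_i$, the floor being the explicit $\delta_\tau=\delta e^{-\tau(C_0+D_0)}$ read off from the exponential representation \eqref{2.6}. This buys you two things: you never need to introduce $\underline{h}$ (the hypothesis $r_1=r_2$ enters only at the final reassembly), and the quantitative floor applies directly to the solution integrated along the characteristics, so you avoid the continuity-of-$w(\tau,\cdot)$ step hidden in the paper's ``$\inf_x w(\tau,x)\geq\tilde{\delta}>0$''. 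The paper's route is shorter because a single ODE controls the sum everywhere. Both proofs lean on the same two points glossed in Theorem \ref{THM4.11} — the assertion $\liminf_{t\to\infty}\underline{w}_i(t)\geq\underline{\Phi}_i(\tau)$ for the comparison ODE and the classical-to-mild limiting procedure — so you are not introducing any gap the paper does not already carry.
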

\begin{proof}
	Here again we only prove the convergence when $ \mathbf{u}=(u_1,u_2) $ is the classical solution. We use the same notations as in Theorem \ref{THM4.11} and define
	\begin{equation*}
	w(t,x):=w_1(t,x)+w_2(t,x).
	\end{equation*}
	Due to estimation \eqref{4.9} in Theorem \ref{THM4.11} and segregation property, we have
	\begin{equation}\label{4.10}
	\limsup_{t\to+\infty} \sup_{x\in \R^N}w(t,x) \leq r.
	\end{equation}
	Moreover, we can obtain
	\begin{equation*}
	\frac{\di w(t,x)}{\di t}=-w(t,x) \mathrm{div }\; \mathbf{v}(t,\Pi _{\mathbf{v}}(t,0;x)) +\sum_{i=1}^{2}w_ih_i(w_i).
	\end{equation*}
	In order to use comparison principle, we set $ \underline{h}(u)=\min_{u\geq 0} \lbrace h_1(u),h_2(u) \rbrace $ and
	by the separation property in Theorem \ref{THM3.1} we have
	\begin{align*}
	w_1h_1(w_1)+w_2h(w_2)&\geq w_1 \underline{h}(w_1)+w_2 \underline{h}(w_2)=(w_1 +w_2) \underline{h}(w_1+w_2).
	\end{align*}
	Hence,
	\begin{equation*}
	\frac{\di  w(t,x)}{\di t}\geq w(t,x)\left[-\sup_{t\geq \tau}\|\mathrm{div }\; \mathbf{v}(t,\cdot)\|_{C^0} +\underline{h}(w(t,x))\right],\;t\geq \tau.
	\end{equation*}
	For any $ \tau>0 $, we have
	$ \inf_{x\in \R^N}w(\tau,x)>0 $.  In fact, by our assumption, $ u(0,x)\geq \delta >0$ on $ \T^N $, thus $ u(0,x)\geq \delta >0$ on $ \R^N $ and by equation \eqref{2.9} we have $ w(\tau,x)>0 $ for any $ x\in \R^N $  and since $ w(t,x+2\pi)=w(t,x) $ for any $ x\in \R^N $, we have $ \inf_{x\in \R^N}w(\tau,x)\geq \tilde{\delta}>0 $ for some positive $ \tilde{\delta} $.
	Thus, for any $ \tau>0 $,  we define $ \underline{w}(t) $ to be the solution of the following ordinary differential equation
	 \begin{equation*}
	 \begin{cases}
	 \dfrac{\di  \underline{w}(t)}{\di t}&=\underline{w}(t)\left[-\displaystyle\sup_{t\geq \tau}\|\mathrm{div }\; \mathbf{v}(t,\cdot)\|_{C^0}+\underline{h}(\underline{w}(t)) \right],\\
	 \underline{w}(\tau)&=\inf_{x\in \R^N}w(\tau,x)>0.
	 \end{cases}
	 \end{equation*}
	 By the similar argument as in Theorem \ref{THM4.11}, we can see that
	 \[ \liminf_{t\to+\infty} \inf_{x\in \R^N}w(t,x) \geq \liminf_{t\to+\infty} \underline{w}(t)\geq r. \]
	 Together with \eqref{4.10}, we have 
	 \[ \lim_{t\to \infty}\|w(t,\cdot)-r\|_{L^\infty}=0. \]
	 Since for any $ t>0 $, the mapping $t\mapsto \Pi _{\mathbf{v}}(t,0;\cdot) $ is a bijection, we have
	 \[ \|w(t,\cdot)-r\|_{L^\infty}=\|u(t,\Pi _{\mathbf{v}}(t,0;\cdot))-r\|_{L^\infty}=\|u(t,\cdot)-r \|_{L^\infty}. \]
	 Thus, we obtain \[ \lim_{t\to \infty}\|u(t,\cdot)-r\|_{L^\infty}=0. \]
\end{proof}
\begin{remark}
	Note that the result in the corollary, we only assume the roots of two different reaction functions $ h_1,h_2 $ to be the same while we obtain convergence in $ L^\infty $. 
\end{remark}

\section{Young Measure}
In order to introduce the notion of Young measures we introduce it informally first. The basic idea of Young measure is to replace the map $(t,x) \to u(t,x)=u_1(t,x)+u_2(t,x)$ by the map 
$$
(t,x) \to \delta_{u(t,x)}
$$
from $\left[0, \infty \right) \times \T^N $ into a probability space. Namely, for fixed $t$ and $x$,  the Dirac mass $\delta_{u(t,x)}$ is regarded as an element of the dual space the continuous functions $C([0, \gamma], \mathbb{R})$ (where $ \displaystyle \gamma:=\Vert u \Vert_{L^\infty(\left[0, \infty \right) \times \T^N)}$) by using the following mapping
$$
f \longmapsto \int_{[0, \gamma]} f(\lambda)\delta_{u(t,x)}(\di \lambda)=  f(u(t,x)).
$$
This means that the map  $(t,x) \to \delta_{u(t,x)}$ is identified to an element of $L^\infty \left( \left[0, \infty \right) \times \T^N, C([0, \gamma], \mathbb{R})^\star \right) $.  The goal of this procedure is to use the weak star topology, by considering Young measure as an element the dual space of
$$
L^1 \left( \left[0, \infty \right) \times \T^N, C([0, \gamma], \mathbb{R}) \right). 
$$
The space of Young measures in our specific context is nothing but  
$L^\infty \left( \left[0, \infty \right) \times \T^N, \mathbb{P}\left(\left[0, \gamma \right] \right) \right)$ (where $\mathbb{P}\left(\left[0, \gamma \right] \right)$ is the space of probabilities on $\left[0, \gamma \right]$) endowed with the weak star topology. 

In Corollary \ref{COR4.12}, we have the $ L^\infty $ convergence of the solution $ u(=u_1+u_2) $ when the initial distribution is strictly positive. Then one would like to know about the convergence of the solution when the initial distribution admits zero values. To answer this question, we prove the following result. 

	\begin{theorem}\label{THM5.5}
		Let Assumptions \ref{ASS1.1}, \ref{ASS1.2}, \ref{ASS4.1} and \ref{ASS4.4} be satisfied. Suppose $ \mathbf{u}=\mathbf{u}(t,x) $ is the solution of \eqref{1.1}-\eqref{1.3} provided by Theorem \ref{THM2.3}. 
		Let us denote by $\gamma $ as in \eqref{5.1}. Furthermore, suppose we have 
		\[  r_1=r_2=r,  \]
		in Assumption \ref{ASS4.1} and define 
		\begin{equation*}
		E_{\infty }:=\lim_{t\rightarrow \infty }E[(u_1,u_2)(t,\cdot)],
		\end{equation*} 
		where $E[(u_1,u_2)(t,\cdot)]$ is the energy functional defined in \eqref{4.3}.
		
		\noindent  Then for each  $i=1,2$ and each $t\geq 0
		$ the Dirac measure $\delta _{(u_1+u_2)(t,x)}$ belongs to the space of Young measures $Y\left( \mathbb{T}^{N};[0,\gamma ]\right) $ which means
		\[ (u_1+u_2)(t,x) \in [0,\gamma ],\, \forall  t \geq 0 \text{ and almost every } x \in \mathbb{R}^N, \]
		\[ \int_{A\times [0,\gamma]} \eta(\lambda)\delta_{(u_1+u_2)(t,x)}(\di \lambda)\di x=\int_A\eta((u_1+u_2)(t,x))\di x,\;\forall\,A\in \mathcal{B}(\T^N),\; \forall \eta \in C([0,\gamma], \mathbb{R}). \]
		Moreover, we obtain
		\[ 		r\leq E_{\infty }\leq 2r \]
		and 
		\begin{equation*}
		\lim_{t\rightarrow \infty }\delta _{(u_1+u_2)(t,x)}=(E_{\infty }/r-1)\delta _{0}+(2-E_{\infty }/r) \delta_{r},
		\end{equation*}%
		in the sense of the narrow convergence topology of $ Y(\T^N;[0,\gamma])$. This means that for each continuous function $\eta:[0, \gamma] \to \mathbb{R}$ (indeed bounded function) and for any $ A\in \mathcal B(\T^N) $
		\begin{equation*}
		\lim_{t\rightarrow \infty } \int_A\eta((u_1+u_2)(t,x))\di x =\int_A (E_{\infty }/r-1)\eta(0)+(2-E_{\infty }/r) \eta(r)\di x . 
		\end{equation*}%
	\end{theorem}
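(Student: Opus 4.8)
The plan is to rewrite the energy functional using the segregation property, to upgrade the \emph{time‑averaged} dissipation estimates \eqref{4.5}--\eqref{4.6} into a genuine pointwise‑in‑$t$ decay, and then to identify every subsequential narrow limit of the Young measures $\delta_{u(t,\cdot)}$ (writing $u:=u_1+u_2$) by combining that decay with the vanishing of the nonzero Fourier modes and with the limiting value $E_\infty$ of the energy. First I would observe that, by Theorem \ref{THM4.11}, $\sup_{t\ge0}\|u_i(t,\cdot)\|_{L^\infty}<\infty$, so $\gamma$ is finite and $u(t,x)\in[0,\gamma]$ for all $t\ge0$ and a.e.\ $x$; the first displayed assertions of the theorem are then nothing but the definition of $\delta_{u(t,\cdot)}\in Y(\T^N;[0,\gamma])$. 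Since $r_1=r_2=r$ we have $G_1=G_2=:G$ with $G(0)=r$ and $G(r)=0$, and the segregation property $u_1u_2\equiv0$ gives the pointwise identity $G(u_1)+G(u_2)=G(u)+r$; hence
\[
E[(u_1,u_2)(t,\cdot)]=\frac1{|\T^N|}\int_{\T^N}G(u(t,x))\di x+r .
\]
As $G\ge0$ and $E$ is nonincreasing (Theorem \ref{THM4.6}), $E_\infty$ exists, $E_\infty\ge r$, and $\frac1{|\T^N|}\int_{\T^N}G(u(t,\cdot))\di x\to E_\infty-r$.

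The technical core will be to show that $g(t):=\int_{\T^N}\tilde\eta(u(t,x))\di x\to0$ for a fixed $\tilde\eta(\lambda):=\lambda(\lambda-r)^2$ (a higher even power if some $h_i$ is very flat at $r_i$; generically $h_i'(r_i)<0$ and this choice works). I would argue in three points. (i) On $[0,\gamma]$ one has $\tilde\eta(\lambda)\le C\,\lambda\,|\ln(\lambda/r)|\min_i|h_i(\lambda)|$ for a suitable $C$ (the ratio is bounded near $\lambda=0$, near $\lambda=r$, and on the compact complement), and by segregation the right‑hand side evaluated at $u(t,x)$ is $\le\sum_i u_i|h_i(u_i)\ln(u_i/r)|(t,x)$; using \eqref{4.4} at $t=0$ and letting $\tau\to\infty$ then gives $\int_0^\infty g(s)\di s\le C|\T^N|(E(0)-E_\infty)<\infty$. (ii) $g$ is Lipschitz on $[0,\infty)$, uniformly in $t$: for $\mathbf u_0\in W_{per}^1(\R^N)^2$ one computes, using $\tilde\eta'(u)\nabla u=\nabla(\tilde\eta(u))$ and one integration by parts on the torus,
\[
g'(t)=-\int_{\T^N}\bigl(u\tilde\eta'(u)-\tilde\eta(u)\bigr)\,\mathrm{div}\,\mathbf v(t,\cdot)\di x+\int_{\T^N}\tilde\eta'(u)\sum_i u_ih_i(u_i)\di x,
\]
and since $\lambda\tilde\eta'(\lambda)-\tilde\eta(\lambda)$, $\tilde\eta'$, $u$ and $u_ih_i(u_i)$ are bounded on $[0,\gamma]$ and $\|\mathrm{div}\,\mathbf v(t,\cdot)\|_{C^0}$ is bounded by Lemma \ref{LEM4.3}(ii), $|g'(t)|$ is bounded uniformly in $t$; the general case $\mathbf u_0\in L^\infty_{per}(\R^N)^2$ follows by approximation and the continuity in Theorem \ref{THM2.3}. (iii) A nonnegative, uniformly Lipschitz, integrable function on $[0,\infty)$ must tend to $0$. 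This also shows $u(t,\cdot)\to\{0,r\}$ in measure.

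Next I would identify the limit. Let $t_n\to\infty$. Since $Y(\T^N;[0,\gamma])$ is sequentially compact and metrizable for the narrow topology, along a subsequence $\delta_{u(t_n,\cdot)}\rightharpoonup\nu=\{\nu_x\}_{x\in\T^N}$ narrowly. Testing against $\tilde\eta$ and using the previous step gives $\int_{\T^N}\!\int_{[0,\gamma]}\tilde\eta\di\nu_x\di x=0$, hence $\mathrm{supp}\,\nu_x\subseteq\{0,r\}$ for a.e.\ $x$, i.e.\ $\nu_x=(1-\theta(x))\delta_0+\theta(x)\delta_r$ with $\theta:\T^N\to[0,1]$ measurable. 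Testing against $e^{-ik\cdot x}\lambda$ and using $c_k[u(t_n,\cdot)]\to0$ for $k\ne0$ (equation \eqref{4.8}) forces the barycenter $x\mapsto\theta(x)r$ to have all its nonzero Fourier coefficients equal to $0$, so $\theta\equiv\theta_0$ is constant; testing against $G$ and using the energy identity gives $(1-\theta_0)r=E_\infty-r$, i.e.\ $\theta_0=2-E_\infty/r$. Since $(1-\theta_0,\theta_0)$ are the weights of a probability measure, $\theta_0\in[0,1]$, whence $r\le E_\infty\le2r$. Thus every subsequential narrow limit equals $(E_\infty/r-1)\delta_0+(2-E_\infty/r)\delta_r$; by the subsequence principle $\lim_{t\to\infty}\delta_{u(t,\cdot)}$ exists and equals it, and unwinding the definition of narrow convergence (together with the in‑measure convergence above, for the extension to bounded $\eta$) yields the stated limit of $\int_A\eta(u(t,x))\di x$.

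The hard part will be the uniform Lipschitz bound in the second step: choosing $\tilde\eta$ so that the pointwise comparison with $\sum_i u_i|h_i(u_i)\ln(u_i/r)|$ holds, justifying the integration by parts at the available regularity, and passing from $W_{per}^1$ to general $L^\infty_{per}$ initial data. This is precisely where one must go beyond the merely time‑averaged dissipation \eqref{4.6}: the positive orbit $\{u(t,\cdot):t\ge0\}$ is not relatively compact in $L^1$ — which is why Young measures are needed at all — so no soft compactness argument delivers the pointwise‑in‑$t$ decay of $g(t)$ for free.
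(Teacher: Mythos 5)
Your proposal reaches the stated conclusions and is sound in architecture, but it follows a genuinely different route from the paper. The paper never upgrades the time-averaged dissipation \eqref{4.6} to pointwise-in-$t$ decay: instead it works with the time-parametrized Young measures $\delta_{u_i(t+t_n,x)}$ on $\R\times\T^N$, proves equicontinuity in $t$ of the associated measures (Lemma \ref{LEM5.8}, via exactly the integration-by-parts identity you write for $g'(t)$), extracts limits in the local narrow topology (Lemma \ref{LEM5.10}), identifies the support from \eqref{4.6} (Lemma \ref{LEM5.11}), and then removes the $t$-dependence of the weights by testing the weak form of the PDE and using $\|\mathbf v(t,\cdot)\|_{C^0}\to0$ (Lemma \ref{LEM5.12}); the constancy in $x$ and the value of the constant come from \eqref{4.8} and the energy (Lemma \ref{LEM5.13}), and $r\le E_\infty\le 2r$ from segregation at infinity (Lemma \ref{LEM5.14}). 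You instead prove, by a Barbalat-type argument (time-integrability from \eqref{4.4} plus a uniform Lipschitz bound in $t$, which is in substance the paper's Lemma \ref{LEM5.8} estimate), that $\int_{\T^N}\tilde\eta(u(t,x))\di x\to0$, and then work with single-time Young measures of the sum $u_1+u_2$ only. This buys a real simplification: you do not need the $\R\times\T^N$ formalism, nor the per-species analysis, nor Lemmas \ref{LEM5.12} and \ref{LEM5.14} (your bound $r\le E_\infty\le2r$ falls out of $\theta_0\in[0,1]$, which is cleaner than the paper's route through $c_1c_2=0$); the identification of $\theta_0$ via the Fourier modes and via $G$ is the same mechanism as Lemma \ref{LEM5.13}, and your final metrizability/subsequence argument matches the paper's.

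The one concrete gap is your comparison function. The inequality $\lambda(\lambda-r)^2\le C\,\lambda\,|\ln(\lambda/r)|\min_i|h_i(\lambda)|$ near $\lambda=r$ requires $|h_i(\lambda)|\gtrsim|\lambda-r|$, i.e.\ nondegeneracy of $h_i$ at $r$, and Assumption \ref{ASS4.1} only gives $h_i(r)=0$ with the sign conditions; your fallback of taking a higher even power handles finite-order flatness but not a $C^1$ (even $C^\infty$) $h_i$ vanishing to infinite order at $r$, which the hypotheses allow. The fix is to abandon polynomials and take a comparison function built from $h_i$ itself, e.g.\ a smooth $0\le\tilde\eta(\lambda)\le\lambda\min\bigl(|\ln(\lambda/r)|,1\bigr)\min_i|h_i(\lambda)|$ vanishing exactly on $\{0,r\}\cap[0,\gamma]$ (such a smooth minorant exists), for which your steps (i)--(iii) and the support identification go through verbatim; alternatively one can fall back on the paper's time-averaged argument, which needs no such comparison. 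With that repair, and with the density argument for the Lipschitz bound carried out as in Lemma \ref{LEM5.8} (the constants depend only on $\gamma$ and $\sup_t\|\mathrm{div}\,\mathbf v\|_{C^0}$, so they pass to the limit from $W^1_{per}$ to $L^\infty_{per}$ data), your proof is complete.
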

	\begin{remark}\label{REM5.6}
		Under the same assumptions as in Theorem \ref{THM5.5}, let $ \lbrace t_n \rbrace_{n\geq 0} $ be any sequence tending to $ \infty $ as $ n\to \infty $. Then the sequence $ \lbrace (u_1+u_2)(t_n,\cdot) \rbrace_{n\geq 0}\subset L^{\infty}_{per}(\R^N) $ is relative compact in $ L^1_{per}(\R^N) $ if and only if
		\[ E_{\infty}=r\quad \text{or}\quad E_{\infty}=2r. \]
		This is a direct consequence of Young measure properties (see \cite[Corollary 3.1.5]{Castaing2004}), which says if the sequence of Young measures $ \lbrace \delta_{(u_1+u_2)(t_n,x)} \rbrace_{n\geq 0}$ converges in the narrow sense to a Young measure $ \nu(x,\cdot) $ and $ \nu(x,\cdot) $ is a Dirac measure $ \delta_{\phi(x)}(\cdot) $ for almost all $ x\in \T^N $. Then given that $ (u_1+u_2)(t,.) $ is uniformly bounded, we deduce
		\[ (u_1+u_2)(t_n,x)\xrightarrow{L^1} \phi(x),\quad n\to \infty. \] 
		In our case, when $ E_{\infty}=r $ (resp. $ =2r $), then 
		\[ (u_1+u_2)(t_n,x)\xrightarrow{L^1} r\text{ (resp. 0)},\quad n\to \infty. \] 
	\end{remark}
	\begin{remark}
		When $ E_\infty $ lies strictly in the interval $(r,2r) $, then $ \delta _{(u_1+u_2)(t,x)} $ converges to two Dirac measures as $ t\to \infty $. To illustrate the notion of narrow convergence to two Dirac measures, one may consider the following example. For each $ n \in \mathbb{N}$, 
		\[ u_n(x)=\begin{cases}
		1,& x\in \Delta x\,[j,j+p),\\
		0,& x\in \Delta x\,[j+p,j+1).
		\end{cases},\quad j=0,1,...,n,\quad p\in (0,1),\;\Delta x=\frac{2\pi}{n+1}. \]
		Then one can prove that 
		\[ \lim_{n\to \infty} \delta_{u_n(x)} = p\delta_{1}+(1-p) \delta_0 \]
		in the sense of narrow convergence.
		Indeed, for any $ \eta \in C_b([0,1]) $ one has
		\begin{align*}
		&\int_{[0,2\pi]} \varphi(x)\int_{[0,1]} \eta (\lambda)\delta_{u_n(x)}(d\lambda)dx =\int_{[0,2\pi]} \varphi(x)\eta(u_n(x))dx\\
		=& \sum_{j=0}^{n}\int_{\Delta x[j,j+p)} \varphi(x) \eta (1) dx +\int_{\Delta x[j+p,j+1)} \varphi(x) \eta (0) dx.
		\end{align*}
		and the result follows when $n \to \infty$. 
	\end{remark}
Since we will consider the convergence of $ t \to \delta_{u_i(t,x)}$ (with values in the probability space) as $t$ goes to infinity in the sense of the narrow convergence topology, we need to introduce the notion of Young measure and the notion of narrow convergence topology in a general sense.   
\begin{definition}[Young measure]
	Let $(\mathcal{S},d)$ be a separable metric space and let $\mathbb{P}(\mathcal{S})$ be the set
	of probability measures on $(\mathcal{S},d)$. Let $(\Omega,\mathcal{A},\mu)$ be a finite measure space endowed with $\sigma -$algebra $\mathcal{A}$ (in practice $\mu$ will be a Lebesgue measure in our case). A map $\nu :\Omega \rightarrow 
	\mathbb{P}(\mathcal{S})$ (i.e. the map $\nu$ maps each $x \in \Omega$ to a probability $B \to \nu(x,B)$ on $\mathcal{S}$) is said to be a \textbf{Young measure} if for each Borel set 
	$B\in \mathcal{B}(\mathcal{S})$ the function $x \mapsto \nu(x,B)$ is measurable from $(\Omega ,\mathcal{A})$ into $[0,1]$. The set of all Young measures from 
	$(\Omega ,\mathcal{A})$ into $\mathcal{S}$ is denoted by $Y\left( \Omega ,\mathcal{A}
	;\mathcal{S}\right) $.
\end{definition}
\begin{definition}[Narrow convergence topology]
		The set $ Y(\Omega,\mathcal{A};\mathcal{S}) $ is endowed with narrow convergence topology which is the weakest topology on 	$ Y(\Omega,\mathcal{A};\mathcal{S}) $ such that all the functionals from $Y(\Omega,\mathcal{A};\mathcal{S})$ into $\mathbb{R}$ defined by 
		$$  
		\nu \longmapsto \int_A\int_{\mathcal{S}} \eta (\lambda)\nu(x,\di \lambda)\mu(\di x) 		
		$$
		is continuous whenever $ A\in \mathcal A $ and $ \eta \in C_b(\mathcal{S};\R) $.
	\end{definition}
	\begin{remark}
		Note that a sequence $ \lbrace \nu^n \rbrace_{n\in \mathbb{N}} \subset Y(\Omega,\mathcal{A};\mathcal{S}) $ narrowly converges to $ \nu \in Y(\Omega,\mathcal{A};\mathcal{S}) $ if and only if for any $ \eta \in C_b(\mathcal{S};\R) $ and $ A\in \mathcal A $
		\[ \lim_{n\to \infty} \int_A\int_{\mathcal{S}} \eta (\lambda)\nu^n(x,\di \lambda)\mu(\di x) =\int_A\int_{\mathcal{S}} \eta (\lambda)\nu(x,\di \lambda)\mu(\di x). \]
		For the sake of simplicity, we use $ Y(\Omega;\mathcal{S}) $ to denote $ Y(\Omega,\mathcal{A};\mathcal{S}) $ if $ \mathcal A=\mathcal B(\Omega) $.
	\end{remark}
	In order to to incorporate the time $t$, we introduce the local narrow convergence topology.
	\begin{definition}[Local narrow convergence topology]
		Let $(\mathcal{S},d)$ be a separable metric space and let $(\Omega,\mathcal{A},\mu)$ be a
		finite measure space (in practice $\mu$ will be a Lebesgue measure in our case). The set $ Y\left(\mathbb{R}\times \Omega,\mathcal{B}(	\mathbb{R})\otimes \mathcal{A}; \mathcal{S} \right)$ is endowed with the local narrow convergence topology denoted by $\mathcal T_{loc}$ which is defined as the weakest topology on $Y\left(\mathbb{R}\times \Omega,\mathcal{B}(\mathbb{R})\otimes \mathcal{A};\mathcal{S}\right)$ such that all the functionals from $Y(\Omega,\mathcal{A};\mathcal{S})$ into $\mathbb{R}$ defined by  
		\begin{equation*}
		\nu \longmapsto \int_{I\times A}\left(\int_\mathcal S
		\eta(\lambda)\nu(t, x,\di \lambda)\right)\left(\di  t\otimes \mu(\di x)\right),
		\end{equation*}
		is continuous for each bounded interval $I\subset \mathbb{R}$, $A\in \mathcal{A}$ and $\eta\in C_b(\mathcal{S};\mathbb{R})$.
	\end{definition}
	For our case, we consider $\Omega =\T^N$, $\mathcal A= \mathcal B\left(\T^N\right)$ is the Borel $ \sigma $-- algebra and $\mu$ is the Lebegues measure.   By setting   
	\begin{equation}\label{5.1}
	\gamma:=\sup_{t\geq 0} \sum_{i=1,2}\| u_i(t,\cdot) \|_{L^\infty}<\infty, 
	\end{equation}
	the set $\mathcal{S}=[0,\gamma]$ corresponds to the range of $u_1+u_2$ which is of course an Euclidean metric space.
To simplify the notations,  we set 
\[ 
Y(\T^N;[0,\gamma]):=Y(\T^N,\mathcal{B}(\T^N);[0,\gamma]). 
\]

We define $Y_{loc}\left(\R\times\T^N;[0,\gamma]\right)$ to be the topological space $Y\left(\R\times\T^N;[0,\gamma]\right)$ endowed with the local narrow convergence topology $\mathcal T_{loc}$.

	
	Furthermore, let us consider the probability space $\mathbb{P}\left( 
	\mathbb{T}^{N}\times \lbrack 0,\gamma ]\right) $ and let us recall that the
	usual weak $\ast -$topology on $\mathbb{P}\left( \mathbb{T}^{N}\times
	\lbrack 0,\gamma ]\right) $ is metrizable by using the so-called bounded dual
	Lipschitz metric (Wasserstein metric $ W_p $ when $ p=1 $) defined for each $ \mu, \nu \in \mathbb{P}\left( \mathbb{T}^{N}\times \lbrack 0,\gamma ]\right)$ by 
	\begin{equation*}
	\Theta\left( \mu ,\nu \right) =\sup \left\{ \left\vert \int_{\mathbb{T}
		^{N}\times \lbrack 0,\gamma ]}f(x,\lambda)\, (\mu-\nu) (\di x, \di \lambda) \right\vert \;f\in \mathrm{Lip}\;\left( \mathbb{T}
	^{N}\times \lbrack 0,\gamma ]\right) ,\;\Vert f\Vert _{\mathrm{Lip}}\leq
	1\right\} .
	\end{equation*}%
	Recall that the Lipschitz norm for metric space $ (X,d) $ is defined as follows 
	\[ \Vert f\Vert _{\mathrm{Lip}}=\sup_{x\in X}|f(x)|+\sup_{(x,y)\in
		X^{2},\;x\neq y}\frac{|f(x)-f(y)|}{d(x,y)},\;\;\forall f\in \mathrm{Lip}(X).\]
	We refer to  Dudley \cite[Theorem 18]{Dudley1966} for the equivalence between the weak $\ast -$topology on $\mathbb{P}\left( \mathbb{T}^{N}\times \lbrack 0,\gamma ]\right) $ and the topology induced by $\Theta \left( .,.\right) $.
	In the sequel the probability space $\mathbb{P}\left( \mathbb{T}^{N}\times \lbrack 0,\gamma ]\right)$ is always endowed with the metric topology induced by $\Theta$ without further precision.\\
	Let $\{t_{n}\}_{n\geq 0}$ be a given increasing
	sequence tending to $\infty $ as $n\rightarrow \infty $. Using the above
	definition, we can prove the following lemma. 

\begin{lemma}
	\label{LEM5.8} Let Assumptions \ref{ASS1.1}, \ref{ASS1.2}, \ref{ASS4.1} and \ref{ASS4.4} be satisfied. Let $T>0$ and $i=1,2$ be
	given. The sequence of maps 
	$\left\lbrace  t\longmapsto \mu_{i,t}^n  \right\rbrace_{n \in \mathbb{N}}$
	from $[-T,T]$ to $\mathbb{P}\left(\mathbb{T}^N\times [0,\gamma]\right)$ (endowed with the above metric $\Theta$) and defined by 
	\begin{equation*}
	\int_{\mathbb{T}^N\times [0,\gamma]} g(x,y)\mu_{i,t}^n(\di x,\di y)=|\mathbb{T}%
	^N|^{-1}\int_{\mathbb{T}^N}g\left(x,u_i(t+t_n,x)\right)\di x, \forall g \in C\left(\mathbb{T}^N\times [0,\gamma];\mathbb{R}\right),
	\end{equation*}
	is relatively compact in $C\left([-T,T]; \mathbb{P}\left(\mathbb{T}%
	^N\times [0,\gamma]\right)\right)$.
\end{lemma}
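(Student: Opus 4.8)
The plan is to obtain the relative compactness from the Arzel\`a--Ascoli theorem applied in the space $C\left([-T,T];\mathbb{P}\left(\mathbb{T}^N\times[0,\gamma]\right)\right)$. The first point is that the range $\left(\mathbb{P}\left(\mathbb{T}^N\times[0,\gamma]\right),\Theta\right)$ is a \emph{compact} metric space: $\mathbb{T}^N\times[0,\gamma]$ is compact, so $\mathbb{P}\left(\mathbb{T}^N\times[0,\gamma]\right)$ is weak-$\ast$ compact (and second countable), and by Dudley \cite{Dudley1966} the metric $\Theta$ metrizes that topology. Consequently the pointwise relative compactness in Arzel\`a--Ascoli is automatic, and the whole proof reduces to showing that the family $\left\{t\mapsto\mu_{i,t}^{n}\right\}_{n\in\mathbb{N}}$ is equicontinuous on $[-T,T]$, with a modulus of continuity independent of $n$. (Because $\lbrace t_n\rbrace$ increases to $\infty$, we may discard the finitely many indices with $t_n<T$, so that $t+t_n\ge 0$ for all $t\in[-T,T]$.) I also record that $\mu_{i,t}^{n}$ is simply the push-forward of the normalized Lebesgue measure on $\mathbb{T}^N$ under $x\mapsto\bigl(x,u_i(t+t_n,x)\bigr)$.

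For the equicontinuity I would argue directly with the representation of the solution along the characteristics rather than with the $L^1$ continuity of the semiflow. Fix $s\le t$ in $[-T,T]$ and set $\tau:=t-s$. By the semiflow property (Theorem \ref{THM2.3}) and the very definition of a solution integrated along the characteristics, $u_i(t+t_n,\cdot)$ is the value at ``local time'' $\tau$ of the solution started from $\mathbf u(s+t_n,\cdot)$; writing $\Pi_{\mathbf v}$ for the corresponding characteristic flow and $\mathbf v$ for the corresponding velocity field (a time-shift of the original one), equation \eqref{2.2} gives $u_i\bigl(t+t_n,\Pi_{\mathbf v}(\tau,0;z)\bigr)=E_i(\tau,z)\,u_i(s+t_n,z)$ with $E_i(\tau,z)=\exp\bigl(\int_0^\tau[h_i-\mathrm{div}\,\mathbf v]\,\di l\bigr)$ evaluated along the characteristics, and the change of variables $x=\Pi_{\mathbf v}(\tau,0;z)$, whose Jacobian $J(\tau,z)=\exp\bigl(\int_0^\tau\mathrm{div}\,\mathbf v\,\di l\bigr)$ is given by \eqref{2.1}, turns $\int_{\mathbb{T}^N}f\bigl(x,u_i(t+t_n,x)\bigr)\di x$ into
\[
\int_{\mathbb{T}^N}f\bigl(\Pi_{\mathbf v}(\tau,0;z),\,E_i(\tau,z)\,u_i(s+t_n,z)\bigr)\,J(\tau,z)\,\di z .
\]
Subtracting $\int_{\mathbb{T}^N}f\bigl(z,u_i(s+t_n,z)\bigr)\di z$ and using that $f$ is $1$-Lipschitz and that $u_i$ takes values in $[0,\gamma]$, the difference is bounded, up to the factor $|\mathbb{T}^N|$, by $\bigl(\|\Pi_{\mathbf v}(\tau,0;\cdot)-\mathrm{Id}\|_{L^\infty}+\gamma\|E_i-1\|_{L^\infty}\bigr)\|J\|_{L^\infty}+\|J-1\|_{L^\infty}$. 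Now Lemma \ref{LEM4.3}(ii) bounds $\|\mathbf v(l,\cdot)\|_{C^1}$ by a constant independent of $l$, Theorem \ref{THM4.11} guarantees $\gamma<\infty$, and $h_i$ is bounded on the relevant compact range; hence each of the three $L^\infty$ norms above is controlled by an explicit function of $\tau$ vanishing as $\tau\to 0$, with constants depending only on $\|K\|_{C^2}$, $\gamma$, $\sup_{l\ge0}\|u_i(l,\cdot)\|_{L^1}$ and $\max_{[0,\gamma]}|h_i|$ --- in particular not on $n$ nor on the base time $s+t_n$. Taking the supremum over all $1$-Lipschitz $f$ and dividing by $|\mathbb{T}^N|$ yields $\Theta\bigl(\mu_{i,t}^{n},\mu_{i,s}^{n}\bigr)\le\omega(|t-s|)$ with $\omega(\tau)\to0$ as $\tau\to0$, uniformly in $n$, and Arzel\`a--Ascoli then delivers the claim.

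The step I expect to require the most care is precisely this uniformity in $n$. Since $t_n\to\infty$, the finite-time continuity estimates from Section 2 (which grow like $e^{t\bar h}$ and depend on the instantaneous $L^\infty$ size of the data) are useless as they stand; everything must instead be phrased in terms of quantities bounded \emph{uniformly on $[0,\infty)$}, and this is exactly what Lemma \ref{LEM4.3} (for $\|u_i(l,\cdot)\|_{L^1}$ and $\|\mathbf v(l,\cdot)\|_{C^1}$) and Theorem \ref{THM4.11} (for the finiteness of $\gamma$) provide. I would also emphasize that nowhere do we use --- nor could we use --- any $L^1$-precompactness of the orbit $\lbrace u_i(l,\cdot)\rbrace_{l\ge0}$: the change-of-variables estimate uses only that the test function is Lipschitz, so it is immune to the possible fine oscillations of $u_i(l,\cdot)$ as $l\to\infty$, which is the whole reason Young measures enter here. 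A routine alternative to the characteristic computation would be to differentiate $\int_{\mathbb{T}^N}g(x,u_i(l,x))\di x$ in $l$ for smooth $g$, integrate the transport term by parts after writing $\partial_\lambda g(x,u_i)\nabla u_i=\nabla_x\bigl[g(x,u_i)\bigr]-(\nabla_x g)(x,u_i)$ to eliminate the uncontrolled $\nabla u_i$, bound the outcome by $C\|g\|_{C^1}$ uniformly in $l$, and then pass to Lipschitz test functions by mollification; this yields the same modulus but needs the customary regularization to reduce to classical solutions, whereas the characteristic version applies verbatim to solutions integrated along the characteristics.
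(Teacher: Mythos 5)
Your proof is correct, and its skeleton --- compactness of $\bigl(\mathbb{P}(\T^N\times[0,\gamma]),\Theta\bigr)$ via Prohorov/Dudley, a $\Theta$-equicontinuity estimate uniform in $n$, then Arzel\`a--Ascoli --- is exactly the paper's; the difference lies in how the equicontinuity bound is produced. The paper works in Eulerian variables: for classical solutions and $g\in C^1$ it differentiates $\int_{\T^N}g(x,u_i(t,x))\,\di x$ in time, uses the equation and the identity $u_i\nabla_x\left[\partial_u g(x,u_i)\right]=\nabla_x\left[u_i\partial_u g(x,u_i)-g(x,u_i)\right]+(\nabla_x g)(x,u_i)$ together with periodicity to remove all derivatives of $u_i$ (formula \eqref{5.1.0}), bounds the result by $M\|g\|_{\mathrm{Lip}}$ using precisely the uniform estimates you invoke (Theorem \ref{THM4.11} and Lemma \ref{LEM4.3}), and then extends to solutions integrated along the characteristics by a density argument, obtaining the Lipschitz bound $\Theta(\mu_{i,t}^n,\mu_{i,s}^n)\le M|t-s|$. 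You instead work in Lagrangian variables: the representation \eqref{2.2} (valid for the shifted problem by the semiflow property of Theorem \ref{THM2.3}), the Jacobian formula \eqref{2.1} and a change of variables give the estimate directly against the test functions defining $\Theta$, with a modulus uniform in $n$ because the displacement of the characteristics and the exponential factors are controlled by the time-independent bounds of Lemma \ref{LEM4.3} and the finiteness of $\gamma$ from Theorem \ref{THM4.11}. What each route buys: yours applies verbatim to solutions integrated along the characteristics, avoiding the paper's regularization step (you correctly identify the paper's computation as the ``routine alternative''), while the paper's Eulerian computation yields the weak-formulation identity \eqref{5.1.0}, whose spirit reappears in the later identification arguments. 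Two points worth making explicit, both consistent with what you do: the constraint $\|f\|_{\mathrm{Lip}}\le 1$ in the definition of $\Theta$ also gives $\sup|f|\le 1$, which is what your $\|J-1\|_{L^\infty}$ term requires; and the distances $\|\Pi_{\mathbf v}(\tau,0;\cdot)-\mathrm{Id}\|_{L^\infty}$ should be read on the torus, which only improves the bound.
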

\begin{remark}
In the following, we will use the notation 
	\begin{equation*}
	\mu_{i,t}^n(\di x,\di y)=|\mathbb{T}^N|^{-1} \di x\otimes \delta_{u_i(t+t_n,x)}(\di y). 
	\end{equation*}
	\end{remark}
	\begin{proof}  Let us first consider a classical solution. 
		For each $  g\in C^1(\T^N\times\R) $
		\begin{equation*}
		\int_{\T^N}g(x,u_i(t,x)) \di x -\int_{\T^N}g(x,u_i(s,x)) \di x =\int_s^t \frac{\di }{\di l} \int_{\T^N} g(x,u_i(l,x))\di x \di l.
		\end{equation*}
		Since $u_i$ is bounded, we have
		\begin{equation}\label{5.2}
		\begin{aligned}
		\frac{\di }{\di t} \int_{\T^N} g(x,u_i(t,x))\di x
		=&\int_{\T^N} \partial_u g(x,u_i(t,x))\partial_tu_i(t,x)\di x\\
		=&\int_{\T^N} \partial_u g(x,u_i(t,x))\left(-\mathrm{div}(u_i \mathbf{v})+u_ih_i(u_i)\right)\di x\\
		=&\int_{\T^N} u_i \nabla_x \left[ \partial_u  g(x,u_i(t,x)) \right]\cdot \mathbf{v}+\partial_u g(x,u_i(t,x)) u_ih_i(u_i)\di x,
		\end{aligned}
		\end{equation}
		where the last equality is obtained by applying the Green's formula together with periodic boundary condition. 
		We can see that
		\[  u_i(t,x) \nabla_x \left[ \partial_u  g(x,u_i(t,x)) \right]= \nabla_x \left[u_i(t,x)\partial_u g(x,u_i(t,x))-g(x,u_i(t,x))\right]+\mathbf{p}(x,u_i(t,x)) \]
		where $\mathbf{p}(x,u)=\nabla_x g(x,u)$. 
		 
By substituting the last formula into \eqref{5.2} and by using again the periodicity we derive that
\begin{equation} \label{5.1.0}
\begin{aligned}
\frac{\di }{\di t} \int_{\T^N} g(x,u_i(t,x))\di x=&-\int_{\T^N} \left[u_i(t,x) \partial_u g(x,u_i(t,x))-g(x,u_i(t,x))\right] \mathrm{div}\; \mathbf{v}(t,x)\di x\\
&+\int_{\T^N} \mathbf{p}(x,u_i(t,x)) \cdot \mathbf{v}(t,x)\di x\\
&+\int_{\T^N} \partial_u g(x,u_i(t,x)) u_i(t,x)h_i(u_i(t,x))\di x.
\end{aligned}
\end{equation}
The formula \eqref{5.2} extends to the solution integrated along the characteristics by usual density arguments. 
Incorporating the estimation of $ \sup_{t\geq 0}\| u(t,\cdot)\|_{L^\infty} $ in Theorem \ref{THM4.11},  the estimation of $ \mathbf{v} $ in Lemma \ref{LEM4.3}, and the above equality   \eqref{5.1.0}, we deduce that there exists a constant $ M >0 $ such that 
		\[\left|\int_{\R^N}g(x,u_i(t,x)) \di x -\int_{\R^N}g(x,u_i(s,x)) \di x\right|\leq M \| g\|_{\mathrm{Lip}(\T^N\times [0,\gamma])}|t-s| . \]
		 From the definition of the metric on $ \Theta\left( \mu ,\nu \right) $, we can see that
		 \begin{equation*} 
		 \Theta\left(\mu_{i,t}^n,\mu_{i,s}^n\right)\leq M|t-s|.
		 \end{equation*}
		 From this we observe that each map $t \to \mu_{i,t}^n$ is continuous from $[-T,T]$ to $\mathbb{P}\left(\mathbb{T}^N\times [0,\gamma]\right)$. By Prohorov's compactness theorem \cite[Theorem 5.1]{Billingsley1999}, the space $\mathbb{P}\left(\mathbb{T}^N\times [0,\gamma]\right)$ endowed with the metric $ \Theta$ is a compact metric space. Therefore we can apply Arzela-Ascoli theorem and the result follows. 
	\end{proof}		

Since $ u $ is uniformly bounded, one can deduce the following compact result for Young measures (see \cite[Theorem 9.15]{Serre1996}).
	\begin{lemma}
		\label{LEM5.10} The sequence $\left\{\delta_{u_i(t+t_n,x)}\right\}_{n\geq 0}$ is
		relatively compact for the local narrow convergence topology of $Y_{loc}\left(\R
		\times\T^N;[0,\gamma]\right)$.
	\end{lemma}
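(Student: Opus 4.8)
The plan is to obtain the statement from Lemma~\ref{LEM5.8} together with a diagonal exhaustion of $\R$ by bounded time intervals; the point that makes everything work is that the fibre space $[0,\gamma]$ is a \emph{fixed compact} interval (finite by Theorem~\ref{THM4.11}), so tightness of all the probability measures involved is automatic and the only genuine issue is the non-compactness of the time axis. (Alternatively one may simply invoke the abstract compactness theorem for Young measures over a $\sigma$-finite measure space with values in a compact metric space, e.g.\ \cite[Theorem~9.15]{Serre1996}, whose sole hypothesis — tightness — holds here for free; what follows is the self-contained version of this.)

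First I would fix $T\in\mathbb N$ and apply Lemma~\ref{LEM5.8}: the sequence of curves $\{t\mapsto\mu_{i,t}^n\}_{n}$ is relatively compact in $C([-T,T];\mathbb P(\T^N\times[0,\gamma]))$, so some subsequence converges uniformly on $[-T,T]$ to a curve $t\mapsto\mu_{i,t}^{(T)}$. Running this successively for $T=1,2,\dots$ and extracting a diagonal subsequence $\{t_{n_k}\}$, I get one subsequence along which $t\mapsto\mu_{i,t}^{n_k}$ converges in $C([-T,T];\mathbb P(\T^N\times[0,\gamma]))$ for \emph{every} $T$, the limits being consistent; denote the resulting curve by $t\mapsto\mu_{i,t}$ on all of $\R$. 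Since for every $n$ the first ($x$-)marginal of $\mu_{i,t}^n$ is $|\T^N|^{-1}\,\di x$ and the marginal projection $\mathbb P(\T^N\times[0,\gamma])\to\mathbb P(\T^N)$ is weak-$*$ continuous, the first marginal of $\mu_{i,t}$ is again $|\T^N|^{-1}\,\di x$; by the disintegration theorem, and using the continuity of $t\mapsto\mu_{i,t}$ to arrange joint measurability in $(t,x)$, there is a Young measure $\nu_i\in Y(\R\times\T^N;[0,\gamma])$ with $\mu_{i,t}(\di x,\di y)=|\T^N|^{-1}\,\di x\otimes\nu_i(t,x,\di y)$.

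Next I would check that $\delta_{u_i(\cdot+t_{n_k},\cdot)}\to\nu_i$ in the local narrow topology $\mathcal T_{loc}$. Fix $\eta\in C_b([0,\gamma])$ and put $f^{k}(t,x):=\eta(u_i(t+t_{n_k},x))$, a family uniformly bounded by $\|\eta\|_\infty$ in $L^\infty(\R\times\T^N)$. For a bounded interval $I$ (choose $T$ with $I\subset[-T,T]$), any $g\in L^1(I)$ and any $\psi\in C(\T^N)$ one has
\[
\int_I g(t)\!\int_{\T^N}\!\psi(x)f^{k}(t,x)\,\di x\,\di t=|\T^N|\int_I g(t)\,\langle\mu^{n_k}_{i,t},\psi\otimes\eta\rangle\,\di t\longrightarrow|\T^N|\int_I g(t)\,\langle\mu_{i,t},\psi\otimes\eta\rangle\,\di t
\]
by the uniform-in-$t$ convergence of Lemma~\ref{LEM5.8} (dominated convergence in $t$) since $\psi\otimes\eta\in C(\T^N\times[0,\gamma])$, the right-hand side being $\int_I g(t)\int_{\T^N}\psi(x)\int_{[0,\gamma]}\eta\,\di\nu_i(t,x,\cdot)\,\di x\,\di t$. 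As the products $g\otimes\psi$ are dense in $L^1(I\times\T^N)$ and the $f^k$ are $L^\infty$-bounded, this means $f^k\rightharpoonup\int_{[0,\gamma]}\eta\,\di\nu_i$ weak-$*$ in $L^\infty(I\times\T^N)$; testing against the $L^1$ function $\mathbbm{1}_{I'\times A}$ for $I'\subset I$ a subinterval and $A\in\mathcal B(\T^N)$ gives exactly the convergence of the functionals that define $\mathcal T_{loc}$. Thus every sequence $\{t_n\}\to\infty$ admits, along the diagonal subsequence, the narrow convergence $\delta_{u_i(\cdot+t_{n_k},\cdot)}\to\nu_i$ with $\nu_i\in Y(\R\times\T^N;[0,\gamma])$; since $\mathcal T_{loc}$ is metrized on the $L^\infty$-bounded set carrying our sequence by a countable determining subfamily of these functionals (rational intervals, a countable dense set of $\eta$'s, a countable generating algebra of $\T^N$), this sequential statement is the asserted relative compactness.

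I expect the main obstacles to be (a) making the disintegration in the second step land genuinely inside $Y(\R\times\T^N;[0,\gamma])$, i.e.\ securing the joint $(t,x)$-measurability of $(t,x)\mapsto\nu_i(t,x,\cdot)$, handled via the continuity of $t\mapsto\mu_{i,t}$; and (b) upgrading, in the third step, from continuous test functions $\psi(x)$ to Borel indicators $\mathbbm{1}_A(x)$ — the trick there being that the $x$-base measure is the \emph{same} fixed Lebesgue measure for all $n$, so the claim collapses to weak-$*$ convergence in $L^\infty$, which automatically pairs with $L^1$ and hence with indicators. The remaining ingredients — the diagonal extraction, the dominated convergence in $t$, and the density arguments — are routine.
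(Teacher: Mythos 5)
Your argument is correct, but it takes a genuinely different (and much longer) route than the paper: the paper's proof of Lemma \ref{LEM5.10} is a one-line appeal to the abstract compactness theorem for Young measures (\cite[Theorem 9.15]{Serre1996}), whose sole hypothesis --- tightness --- is automatic because $u_i$ takes values in the fixed compact interval $[0,\gamma]$ provided by Theorem \ref{THM4.11}; Lemma \ref{LEM5.8} is not used at this point. You instead rebuild the compactness by hand: Lemma \ref{LEM5.8} on each $[-T,T]$, a diagonal extraction in $T$, identification of the $x$-marginal and disintegration of the limit to produce $\nu_i\in Y(\R\times\T^N;[0,\gamma])$, and a density/weak-$*$ argument in $L^\infty(I\times\T^N)$ to upgrade from continuous test functions $\psi$ to indicators $\mathbbm{1}_A$, which is exactly what the functionals defining $\mathcal{T}_{loc}$ require. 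What the paper's route buys is brevity and minimal hypotheses (only $\gamma<\infty$ is needed); what your route buys is self-containedness and more information: it delivers simultaneously the convergences \eqref{5.4} and \eqref{5.4bis} together with the product structure $\mu_{i,t}^{\infty}=|\T^N|^{-1}\di x\otimes\nu_{i,t}(x,\di y)$, which the paper assembles separately after Lemmas \ref{LEM5.8} and \ref{LEM5.10}. Two points worth tightening: the joint $(t,x)$-measurability of the disintegration is most cleanly secured by observing that, for each $\eta$ in a countable dense subset of $C([0,\gamma])$, the map $(t,x)\mapsto\int_{[0,\gamma]}\eta(\lambda)\,\nu_i(t,x,\di\lambda)$ is a weak-$*$ limit in $L^\infty(I\times\T^N)$ of the measurable functions $\eta(u_i(t+t_{n_k},x))$ (continuity of $t\mapsto\mu_{i,t}$ alone does not immediately give a jointly measurable version); and the passage from sequential compactness to relative compactness does require the metrizability of $\mathcal{T}_{loc}$ on probability-valued Young measures, which holds, as you indicate, because the underlying $\sigma$-algebra is countably generated and $[0,\gamma]$ is compact.
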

	
	Using the above Lemma \ref{LEM5.8} and Lemma \ref{LEM5.10}, up to a subsequence, one can assume that there exists a Young measure $\nu\equiv \nu_{i,t}(x,.)\in Y\left(\mathbb{R}%
	\times\mathbb{T}^N;[0,\gamma]\right)$ such that 
	\begin{equation}\label{5.4}
	\lim_{n\to\infty} \delta_{u_i(t+t_n,x)}=\nu_{i,t}(x,.)\text{ in the topology of }%
	Y_{loc}\left(\mathbb{R}\times\mathbb{T}^N;[0,\gamma]\right).
	\end{equation}
	and
	\begin{equation}  \label{5.4bis}
	\lim_{n\to\infty} \mu_{i,t}^n= \mu_{i,t}^{\infty}
	\end{equation}
	where the limit holds to the locally uniform continuous topology of $C\left(\mathbb{R}; \mathbb{P}\left(\mathbb{T}^N\times
	[0,\gamma]\right)\right)$. Here we would like to recall that the limits $\mu_{i,t}^{\infty}$ and $\nu_{i,t}(x,.)$ depend on the choice of subsequence. \\
	Next, by definition one has for each continuous function $f\in C\left(\mathbb{T}^N\times [0,\gamma];\mathbb{R}\right)$ and each $n\geq 0$:
	\begin{equation*}
	\int_{\mathbb{T}^N\times [0,\gamma]} f(x,y)\mu_{i,t}^n(\di x,\di y)=|\mathbb{T}%
	^N|^{-1}\int_{\mathbb{T}^N}\int_{[0,\gamma]}f\left(x,y\right)\delta_{u_i(t+t_n,x)}(\di y)\; \di x.
	\end{equation*}
	From \eqref{5.4} and \eqref{5.4bis}, passing to the limit $n\to\infty$ yields to
	\begin{equation*}
	\int_{\mathbb{T}^N\times [0,\gamma]} f(x,y)\mu_{i,t}^{\infty}(\di x,\di y)=|\mathbb{T}%
	^N|^{-1}\int_{\mathbb{T}^N}\int_{[0,\gamma]}f\left(x,y\right)\nu_{i,t}(x,\di y)\;\di x.
	\end{equation*}
	This rewrites as
	\begin{equation*} 
	\mu_{i,t}^{\infty}(\di x,\di y)=|\mathbb{T}^N|^{-1} \di x\otimes \nu_{i,t}(x,\di y).
	\end{equation*}	
	
	The aim of the following lemmas is to identify the family of measures $\nu_{i,t}(x,.)$.
	Our next result describes the support of  $\nu_{i,t}(x,.)$.
	\begin{lemma}\label{LEM5.11}
		Under the same assumptions of Lemma \ref{LEM5.8}, for $ i=1,2, $ there exist measurable maps $a_i:\mathbb{R}\times\mathbb{T}^N\to 
		\mathbb{R}$ such that $0\leq a_i(t,x)\leq 1$ $a.e.$ $(t,x)\in\mathbb{R}\times%
		\mathbb{T}^N$ and 
		\begin{equation*}
		\nu_{i,t}(x,.)=\left(1-a_i(t,x)\right)\delta_0(.)+a_i(t,x)\delta_{r_i}(.),\;\;a.e.\;(t,x)\in%
		\mathbb{R}\times\mathbb{T}^N.
		\end{equation*}
	\end{lemma}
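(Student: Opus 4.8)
The plan is to use the dissipation produced by the energy estimate — concretely the limit \eqref{4.6} — to force the limit Young measure $\nu_{i,t}(x,\cdot)$ to be carried by the zero set of a suitable nonnegative continuous function. For $i=1,2$ set
\[ \phi_i(\lambda):=\lambda\,\bigl|h_i(\lambda)\ln(\lambda/r_i)\bigr|\ \text{ for }\lambda\in(0,\gamma],\qquad \phi_i(0):=0. \]
First I would check that $\phi_i\in C\bigl([0,\gamma];\R\bigr)$: $h_i$ is continuous, hence bounded on $[0,\gamma]$, and $\lambda\ln(\lambda/r_i)\to 0$ as $\lambda\to 0^+$, so $\phi_i$ extends continuously at $0$; since $[0,\gamma]$ is compact this makes $\phi_i$ a bounded continuous test function. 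Moreover, by Assumption \ref{ASS4.1} the only zero of $h_i$ on $[0,\infty)$ is $r_i$, and $\ln(\lambda/r_i)$ vanishes only at $\lambda=r_i$; hence $\phi_i\ge 0$ on $[0,\gamma]$ with zero set exactly $\{0,r_i\}\cap[0,\gamma]$.

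Next I would transport \eqref{4.6} through the narrow convergence \eqref{5.4}. Fix $T>0$ and apply the definition of the local narrow convergence topology (equivalently, the sequential characterization) with the bounded interval $I=(-T,T)$, with $A=\T^N$, and with the test function $\eta=\phi_i$: this gives
\[ \lim_{n\to\infty}\int_{-T}^{T}\!\int_{\T^N}\phi_i\bigl(u_i(t+t_n,x)\bigr)\,\di x\,\di t=\int_{-T}^{T}\!\int_{\T^N}\!\int_{[0,\gamma]}\phi_i(\lambda)\,\nu_{i,t}(x,\di\lambda)\,\di x\,\di t. \]
On the left-hand side the change of variable $s=t+t_n$ turns the integral into $\int_{t_n-T}^{(t_n-T)+2T}\int_{\T^N}\phi_i\bigl(u_i(s,x)\bigr)\,\di x\,\di s$, which tends to $0$ as $n\to\infty$ by \eqref{4.6} with $\tau=2T$ (since $t_n-T\to\infty$). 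Hence $\int_{-T}^{T}\int_{\T^N}\int_{[0,\gamma]}\phi_i\,\di\nu_{i,t}(x,\cdot)\,\di x\,\di t=0$. Because the integrand $(t,x)\mapsto\int_{[0,\gamma]}\phi_i\,\di\nu_{i,t}(x,\cdot)$ is nonnegative and measurable ($\phi_i$ continuous, $\nu$ a Young measure), it vanishes for a.e. $(t,x)\in(-T,T)\times\T^N$, and taking the union over $T\in\mathbb N$ yields $\int_{[0,\gamma]}\phi_i(\lambda)\,\nu_{i,t}(x,\di\lambda)=0$ for a.e. $(t,x)\in\R\times\T^N$.

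Finally, for a.e. such $(t,x)$ the measure $\nu_{i,t}(x,\cdot)$ is a probability on $[0,\gamma]$ integrating the nonnegative continuous function $\phi_i$ to zero, and $\phi_i>0$ off $\{0,r_i\}$; therefore $\nu_{i,t}(x,\cdot)$ is supported in $\{0,r_i\}$, i.e.
\[ \nu_{i,t}(x,\cdot)=\bigl(1-a_i(t,x)\bigr)\delta_0(\cdot)+a_i(t,x)\,\delta_{r_i}(\cdot),\qquad a_i(t,x):=\nu_{i,t}\bigl(x,\{r_i\}\bigr)\in[0,1], \]
and $(t,x)\mapsto a_i(t,x)$ is $\mathcal B(\R)\otimes\mathcal B(\T^N)$-measurable since $\nu_{i,t}$ is a Young measure on $\R\times\T^N$. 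The main obstacle is bookkeeping rather than analysis: one must verify that $\phi_i$ is genuinely a bounded continuous test function across the logarithmic singularity at $\lambda=0$, and must pass carefully from the time-averaged vanishing \eqref{4.6} to the pointwise-a.e. statement for $\nu_{i,t}$ via the change of variables together with Fubini and nonnegativity; no new input beyond Theorem \ref{THM4.6} and the compactness Lemmas \ref{LEM5.8}--\ref{LEM5.10} is required.
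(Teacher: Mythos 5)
Your proposal is correct and follows essentially the same route as the paper: it tests the limit Young measure against $F_i(\lambda)=\lambda\,|h_i(\lambda)\ln(\lambda/r_i)|$, uses the dissipation limit \eqref{4.6} transported through the narrow convergence \eqref{5.4}--\eqref{5.4bis} to conclude $\int_{[0,\gamma]}F_i\,\di\nu_{i,t}(x,\cdot)=0$ a.e., and then identifies the support as $\{0,r_i\}$ to get the two-Dirac decomposition with $a_i(t,x)=\nu_{i,t}(x,\{r_i\})$. The only differences are cosmetic (time window $(-T,T)$ versus $[0,\tau]$, and your explicit check of continuity of $F_i$ at $\lambda=0$, which the paper leaves implicit).
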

	\begin{proof}
		Let us reconsider $ F_i(u):=u\left| h_i(u)\ln({u}/{r_i})\right| $ for $ u\in [0,\infty) $ and recall that from equation \eqref{4.6} we have for any $ \tau>0 $
		\begin{equation*}
		\lim_{t\to+\infty}\int_{t}^{t+\tau}\int_{\mathbb{T}^N}F_i(u_i(s,x))\di x\di s=0,\;i=1,2.
		\end{equation*}	
		Therefore, for $ i=1,2$ and from equation \eqref{5.4bis}
		\begin{align*}
		0=&\lim_{n\to \infty} \int_{0}^{\tau}\int_{\mathbb{T}^N}F_i(u_i(t+t_n,x))\di x\di t\\
		=&\lim_{n\to \infty}|\T^N|\int_{0}^{\tau}\int_{\mathbb{T}^N\times[0,\gamma]}F_i(\lambda)\mu_{i,t}^n(\di x,\di \lambda)\di t\\
		=&\int_{0}^{\tau}\int_{\mathbb{T}^N\times[0,\gamma]}F_i(\lambda)\nu_{i,t}(x,\di \lambda)\di x \di t.
		\end{align*}
		Since the map $u\mapsto F_i(u)$ is non-negative and only vanishes at $u=0$
		and $u=r_i$ one obtains that 
		\begin{equation*}
		\mathrm{supp}\;\nu_{i,t}(x,.)\subset \{0\}\cup \{r_i\},\;\;a.e.\;(t,x)\in\mathbb{R}%
		\times\mathbb{T}^N.
		\end{equation*}
		The above characterization of the support allows us to rewrite
		\begin{equation*}
		\nu_{i,t}(x,.)=\nu_{i,t}\left(x,\left\{0\right\}\right)\delta_0(.)+\nu_{i,t}\left(x,\left\{r_i\right\}\right)\delta_{r_i}(.),\;a.e.\;(t,x)\in\mathbb{R}\times\mathbb{T}^N.
		\end{equation*}
		Finally set $a_i(t,x)\equiv \nu_{i,t}(x,
		\left\{r_i\right\})$. Recalling that $(t,x)\mapsto \nu_{i,t}(x,.)$ is measurable with value as a probability measure, thus $ \nu_{i,t}\left(x,\left\{0\right\}\right)=1- \nu_{i,t}\left(x,\left\{r_i\right\}\right)$ and $ (t,x)\mapsto a(t,x)$ is measurable,  the result follows.
	\end{proof}
	
	Our next result shows the measurable function $ a_i(t,x) $ is independent of the variable $ t $.
	\begin{lemma}\label{LEM5.12} 
		Under the same assumptions of Lemma \ref{LEM5.8}, there exists a measurable map $c_i:\T^N\to \T^N$ such
		that $a_i\equiv a^i(t,x)$ provided by Lemma \ref{LEM5.11} is independent of $ t $ and satisfies for any $  t\in \R $, 
		\[ 
		a_i(t,x)\equiv c_i(x),\  a.e.\ x \in\mathbb{T}^N,\;i=1,2. \]
		Moreover, for any $ t\in \R $,
	\[ \nu_{i,t}(x,.)= (1-c_i(x))\delta_0(.)+c_i(x)\delta_{r_i}(.),\;\ a.e. \ x\in\T^N,\;i=1,2, \]
		for some measurable functions $c_i:\T^N\to \T^N,\,i=1,2$. 

Furthermore, we have
	\begin{equation}\label{5.6}
	\lim_{n\to \infty}\delta_{u_i(t+t_n,x)}= (1-c_i(x))\delta_0+c_i(x)\delta_{r_i},
	\end{equation}
	in the sense of the narrow convergence and where the limit depends on the choice of subsequence.
	\end{lemma}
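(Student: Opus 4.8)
The plan is to exploit the evolution identity \eqref{5.1.0} after a time shift, letting the shift go to infinity, and to show that the limiting Young measure cannot depend on $t$. Fix $i\in\{1,2\}$, a test function $g\in C^1(\T^N\times\R)$ and reals $s<t$. Applying \eqref{5.1.0} (which, as noted just above, extends to solutions integrated along the characteristics) to the shifted solution $(l,x)\mapsto u_i(l+t_n,x)$ and integrating over $l\in[s,t]$, I would write
\begin{equation*}
\int_{\T^N}g(x,u_i(t+t_n,x))\di x-\int_{\T^N}g(x,u_i(s+t_n,x))\di x=A_n+B_n+C_n,
\end{equation*}
where $A_n$ collects the $\mathrm{div}\,\mathbf v$-term, $B_n$ the $\mathbf p\cdot\mathbf v$-term with $\mathbf p(x,u)=\nabla_xg(x,u)$, and $C_n=\int_s^t\!\int_{\T^N}\partial_ug(x,u_i)\,u_ih_i(u_i)(l+t_n,x)\,\di x\di l$. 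Since $u_i(l+t_n,\cdot)$ takes values in $[0,\gamma]$ by Theorem \ref{THM4.11} and \eqref{5.1}, the integrands of $A_n$ and $B_n$ are bounded uniformly in $n$ and in $l\in[s,t]$; because $l+t_n\to\infty$, with $\|\mathrm{div}\,\mathbf v(l+t_n,\cdot)\|_{C^0}\to0$ by Lemma \ref{LEM4.9} and $\|\mathbf v(l+t_n,\cdot)\|_{C^0}\to0$ by \eqref{4.10.0}, uniformly on $[s,t]$, I would conclude $A_n\to0$ and $B_n\to0$.

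The key step — the one I expect to be the main obstacle — is $C_n\to0$. Here I would set $\psi(x,y):=\partial_ug(x,y)\,y\,h_i(y)$, which is continuous and bounded on $\T^N\times[0,\gamma]$, so $C_n=|\T^N|\int_s^t\!\int_{\T^N\times[0,\gamma]}\psi\,\di\mu_{i,l}^n\,\di l$; using the \emph{locally uniform in $l$} convergence $\mu_{i,l}^n\to\mu_{i,l}^\infty=|\T^N|^{-1}\di x\otimes\nu_{i,l}(x,\di y)$ from \eqref{5.4bis}, this converges to $\int_s^t\!\int_{\T^N}\int_{[0,\gamma]}\psi(x,y)\,\nu_{i,l}(x,\di y)\di x\di l$. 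The point is that Lemma \ref{LEM5.11} gives that $\nu_{i,l}(x,\cdot)$ is carried by $\{0,r_i\}$ for a.e.\ $(l,x)$, while $y\,h_i(y)=0$ at $y=0$ and at $y=r_i$ (recall $h_i(r_i)=0$); hence the inner integral of $\psi$ vanishes and $C_n\to0$. Meanwhile the left-hand side converges, again by \eqref{5.4bis}, to $|\T^N|\bigl(\int g\,\di\mu_{i,t}^\infty-\int g\,\di\mu_{i,s}^\infty\bigr)$. Passing to the limit yields $\int g\,\di\mu_{i,t}^\infty=\int g\,\di\mu_{i,s}^\infty$ for all $g\in C^1(\T^N\times\R)$ and all $s,t\in\R$, hence for all $g\in C(\T^N\times[0,\gamma])$ by density; since $t\mapsto\mu_{i,t}^\infty$ is continuous (Lemma \ref{LEM5.8}), this shows $\mu_{i,t}^\infty\equiv\mu_i^\infty$ is independent of $t$.

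The remaining conclusions are then soft. Writing $\mu_i^\infty=|\T^N|^{-1}\di x\otimes\nu_{i,t}(x,\di y)$ with first marginal the normalized Lebesgue measure for every $t$, uniqueness of the disintegration forces $\nu_{i,t}(x,\cdot)$ to agree, for a.e.\ $x$, with a single kernel $\nu_i(x,\cdot)$ for every $t$; combined with the form supplied by Lemma \ref{LEM5.11} this gives $\nu_{i,t}(x,\cdot)=(1-c_i(x))\delta_0+c_i(x)\delta_{r_i}$ with $c_i(x):=\nu_i(x,\{r_i\})$ measurable, i.e.\ $a_i(t,x)\equiv c_i(x)$. Finally, for the narrow convergence \eqref{5.6} at a fixed $t$: by Lemma \ref{LEM5.8} and \eqref{5.4bis} one has $\mu_{i,t}^n\to\mu_i^\infty$ in $\mathbb P(\T^N\times[0,\gamma])$, and all these probabilities share the same first marginal. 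Approximating $\mathbbm 1_A$, $A\in\mathcal B(\T^N)$, in $L^1(\T^N)$ by continuous functions — legitimate because $\bigl|\int(\mathbbm 1_A-\varphi)(x)\eta(y)\,\di\mu\bigr|\le\|\eta\|_{\infty}\|\mathbbm 1_A-\varphi\|_{L^1}$ uniformly over all these measures — I would obtain, for every $\eta\in C([0,\gamma];\R)$,
\begin{equation*}
\int_A\eta(u_i(t+t_n,x))\di x\;\longrightarrow\;\int_A\bigl[(1-c_i(x))\eta(0)+c_i(x)\eta(r_i)\bigr]\di x,
\end{equation*}
which is exactly the asserted narrow convergence \eqref{5.6}.
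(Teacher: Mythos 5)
Your proof is correct and follows essentially the same route as the paper: pass to the limit in the time-integrated weak formulation, kill the advection terms via $\|\mathbf v(t,\cdot)\|_{C^0}\to0$ (equation \eqref{4.10.0}) and $\|\mathrm{div}\,\mathbf v(t,\cdot)\|_{C^0}\to0$ (Lemma \ref{LEM4.9}), and kill the reaction term because $\lambda h_i(\lambda)$ vanishes on the support $\{0,r_i\}$ of the limit measure from Lemma \ref{LEM5.11}. The only cosmetic difference is that you test with a general $g(x,u)$ through \eqref{5.1.0} and obtain time-independence of the full limit measure $\mu_{i,t}^{\infty}$ before disintegrating, whereas the paper tests only with $\phi\in C_c^1(\T^N)$ and reads off $a_i(T+\delta,x)=a_i(T,x)$ from the first moment $r_i\,a_i(t,x)$.
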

	\begin{proof}
		Suppose $ \mathbf{u}=(u_1,u_2) $ the classical solution. For any $ \lbrace  t_n\rbrace_{n\geq 0}$ with $ t_n\to \infty $ as $ n\to 0 $ and any $ \phi\in C_c^1(\T^N) $, 
		 \[ \int_{ \T^N}  \phi(x)  \partial_t u_i(t+t_n,x)\di x +\int_{\T^N} \phi (x) \mathrm{div}(u(t+t_n,x) \textbf{v}(t+t_n,x))\di x=\int_{\T^N}\phi(x) u_i(t+t_n,x)h_i(u_i(t+t_n,x))\di x. \]
		Since $ \phi $ has compact support, we have
		 \[ \int_{ \T^N}  \phi(x)  \partial_t u_i(t+t_n,x)\di x= \int_{\T^N} \nabla\phi (x) \cdot \textbf{v}(t+t_n,x) u(t+t_n,x) \di x +\int_{\T^N}\phi(x) u_i(t+t_n,x)h_i(u_i(t+t_n,x))\di x. \]
		 Let $T\in \R$ and $ \delta >0 $ be given. Integrating the both sides over $ (T,T+\delta)
		 $ leads to 
		 \begin{equation}\label{5.6.0}
		 \begin{aligned}
		 -\int_{\T^N}\phi(x)\big(u_i(T+\delta+t_n,x)-u_i(T+t_n,x)\big)\di x=&\int_{T}^{T+\delta} \int_{\T^N} \nabla\phi (x) \cdot \textbf{v}(t+t_n,x) u(t+t_n,x) \di x\di t\\
		  &+\int_{T}^{T+\delta}\int_{\T^N}\phi(x) u_i(t+t_n,x)h_i(u_i(t+t_n,x))\di x \di t.
		 \end{aligned}
		 \end{equation}
		Equation \eqref{5.6.0} remains true for general mild by using density argument and applying Theorem \ref{THM2.3}-(iii). 
		
		 For the right-hand-side of \eqref{5.6.0}, by \eqref{4.10.0} in Remark \ref{REM4.10} that $ \lim_{t\to \infty} \| \textbf{v}(t,.) \|_{C^0}=0 $ we have for the first term
		 \begin{equation}\label{5.6.1}
		 \begin{aligned}
		 &\lim_{n\to \infty}\left|\int_{T}^{T+\delta}\int_{\T^N} \nabla\phi (x) \cdot \textbf{v}(t+t_n,x) u(t+t_n,x) \di x\di t\right| \\
		 \leq& \lim_{n\to \infty} \delta |\T^N| \|\phi  \|_{C^1} \sup_{t\geq 0}\| u_i(t,.) \|_{L^\infty} \| \textbf{v}(t+t_n,.)\|_{C^0}=0.
		 \end{aligned}
		 \end{equation}
		 and the second term
		 \begin{equation*}
		 \int_{T}^{T+\delta} \int_{\T^N}\phi(x) u_i(t+t_n,x)h_i(u_i(t+t_n,x))\di x \di t=\int_{T}^{T+\delta}\int_{\mathbb{T}^N} \phi(x)\left[\int_{[0,\gamma]}\lambda h_i\left(\lambda
		 \right)\delta_{u_i(t+t_n,x)}(\di \lambda)\right]\di x\di t.
		 \end{equation*}
		  Letting $n\to\infty$, we have
		 \begin{equation}\label{5.6.2}
		 \begin{aligned}
		 &\lim_{n\to \infty}\int_{T}^{T+\delta}\int_{\T^N}\phi(x) u_i(t+t_n,x)h_i(u_i(t+t_n,x))\di x \di t\\
		 =& \int_{T}^{T+\delta}\int_{%
		 	\mathbb{T}^N}\phi (x)\left[\int_{[0,\gamma]}\lambda h_i\left(\lambda\right)\left[\left(1-a_i(t,x)\right)%
		 \delta_0+a_i(t,x)\delta_{r_i}\right](\di \lambda)\right]\di x\di t= 0
		 \end{aligned}
		 \end{equation}
		 Therefore, by \eqref{5.6.1} and \eqref{5.6.2} we deduce the left-hand-side of \eqref{5.6.0}
		 \begin{equation*}
		 \lim_{n\to \infty }-\int_{\T^N}\phi(x)\big(u_i(T+\delta+t_n,x)-u_i(T+t_n,x)\big)\di x=-r_i\int_{\T^N}\phi(x)\big(a_i(T+\delta,x)-a_i(T,x)\big)\di x=0.
		 \end{equation*}
		 Hence we have
		 \[ \int_{\T^N}\phi(x)\big(a_i(T+\delta,x)-a_i(T,x)\big)\di x=0,\;\forall \phi(x)\in C_c^1(\T^N). \]
		 Since $ T \in \R $ and $ \delta>0 $ is arbitrary, we deduce for any $ t\in \R $
		 \begin{equation}\label{5.8.1}
		  a_i(t,x)=c_i(x),\;a.e. \; x\in \T^N.
		 \end{equation}	 
		 The last part of the lemma now follows by the above equation \eqref{5.8.1}, \eqref{5.4} and Lemma \ref{LEM5.12}. 
		\end{proof}

	Next, we study the narrow convergence of the measure $ \delta_{(u_1+u_2)(t+t_n,x)} $ as $ n\to \infty $.

		\begin{corollary}
			Let $\{t_{n}\}_{n\geq 0}$ be a given increasing
			sequence tending to $\infty $ as $n\rightarrow \infty $. Then, up to a subsequence, we have two measurable functions $ c_i(x)\in [0,1]$ for $ i=1,2, $ such that for any $ t\geq 0 $
			\begin{equation*}
			\lim_{n\to \infty} \delta_{(u_1+u_2)(t+t_n,x)}=\left(1-\sum_{i=1,2}c_i(x)\right)\delta_{0}+\sum_{i=1,2}c_i(x)\delta_{r_i} \;\text{in the sense of narrow convergence.}
			\end{equation*}
			\end{corollary}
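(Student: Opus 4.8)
The plan is to reduce the statement to Lemma~\ref{LEM5.12} applied to each species separately, and to use the segregation property of Theorem~\ref{THM3.1} to glue the two one--species limits together. First I would extract a common subsequence: Lemma~\ref{LEM5.12} (through Lemma~\ref{LEM5.10}) provides, for $i=1$, a subsequence of $\{t_n\}$ and a measurable function $c_1:\T^N\to[0,1]$ along which $\delta_{u_1(t+t_n,x)}\to(1-c_1(x))\delta_0+c_1(x)\delta_{r_1}$ in the narrow sense, for every $t$; applying the lemma once more to this subsequence with $i=2$ produces a single subsequence (not relabelled) and a measurable $c_2:\T^N\to[0,1]$ along which both one--species Dirac Young measures converge simultaneously.

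The key point is a pointwise algebraic identity coming from segregation. By Theorem~\ref{THM3.1}, for every $t>0$ and $x\in\T^N$ we have $u_1(t,x)u_2(t,x)=0$, so at least one of $u_1(t,x),u_2(t,x)$ vanishes; hence, for any bounded function $\eta:[0,\gamma]\to\R$,
\begin{equation*}
\eta\big((u_1+u_2)(t,x)\big)=\eta(0)+\big[\eta(u_1(t,x))-\eta(0)\big]+\big[\eta(u_2(t,x))-\eta(0)\big],
\end{equation*}
because whichever of the two arguments equals $0$ contributes a vanishing bracket. Now fix $t\ge0$, $A\in\mathcal B(\T^N)$ and $\eta\in C_b([0,\gamma])$, integrate this identity over $A$ at time $t+t_n$, and let $n\to\infty$ along the common subsequence. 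The narrow convergence of the previous step gives, for $i=1,2$,
\begin{equation*}
\int_A\big[\eta(u_i(t+t_n,x))-\eta(0)\big]\di x\longrightarrow\int_A c_i(x)\big[\eta(r_i)-\eta(0)\big]\di x,
\end{equation*}
and therefore
\begin{equation*}
\lim_{n\to\infty}\int_A\eta\big((u_1+u_2)(t+t_n,x)\big)\di x=\int_A\Big[\big(1-\textstyle\sum_{i=1,2}c_i(x)\big)\eta(0)+\sum_{i=1,2}c_i(x)\eta(r_i)\Big]\di x.
\end{equation*}
By the definition of narrow convergence in $Y(\T^N;[0,\gamma])$, this is precisely the claimed convergence $\delta_{(u_1+u_2)(t+t_n,x)}\to\big(1-\sum_{i}c_i(x)\big)\delta_0+\sum_{i}c_i(x)\delta_{r_i}$, valid for every $t\ge0$.

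It remains to check that the limiting object is a genuine probability measure for a.e.\ $x$, i.e.\ that $c_1(x)+c_2(x)\le1$ almost everywhere. This follows from the fact that any narrow limit of the Dirac Young measures $\delta_{(u_1+u_2)(t+t_n,x)}$ is again probability--valued, so the coefficient $1-c_1(x)-c_2(x)$ of $\delta_0$ must be nonnegative; alternatively it can be read off directly from segregation, since $c_i(x)>0$ forces $u_i(t+t_n,x)$ to stay near $r_i$ and hence $u_{3-i}(t+t_n,x)=0$ along the subsequence. I expect this normalization remark, together with the bookkeeping needed to produce a single subsequence serving both species, to be the only mildly delicate points; the substance of the corollary is an immediate consequence of Lemma~\ref{LEM5.12} and the segregation identity above.
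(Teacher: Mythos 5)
Your proposal is correct and takes essentially the same route as the paper: the segregation identity (in the paper's form $\delta_0+\delta_{(u_1+u_2)(t,x)}=\delta_{u_1(t,x)}+\delta_{u_2(t,x)}$, identical to your rearranged bracket identity) is tested against $\eta\in C_b([0,\gamma])$ and a spatial test function, and the limit is computed from the two one--species convergences of Lemma \ref{LEM5.12} before cancelling the $\delta_0$ contribution. Your explicit extraction of a common subsequence and the check that $c_1(x)+c_2(x)\leq 1$ a.e.\ are sensible refinements the paper leaves implicit; just note that your alternative justification of the normalization (``$c_i(x)>0$ forces $u_i(t+t_n,x)$ to stay near $r_i$'') is not valid pointwise, whereas your primary argument via the limit being probability--valued suffices.
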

			\begin{proof}
				From segregation property in Theorem \ref{THM3.1}, we have for any $ \eta\in C([0,\gamma]) $ that
				\begin{equation*}
				\eta \left(u_1(t,x)+u_2(t,x)\right)+\eta(0)=\eta (u_1(t,x))+\eta (u_2(t,x)),\quad \forall (t,x)\in \R_+\times \T^N,
				\end{equation*}
				which is equivalent to say that 
				$$
				\delta_0+\delta_{(u_1+u_2)(t,x)}=\delta_{u_1(t,x)}+\delta_{u_2(t,x)}.
				$$
				Therefore, for any $ \varphi \in L^1(\T^N) $, we have
				\begin{equation*}
				\begin{aligned}
				&\lim_{n\to \infty} \int_{\T^N}\varphi(x)\int_{[0,\gamma]} \eta(\lambda)\left(\delta_0+\delta_{(u_1+u_2)(t+t_n,x)}\right)(\di \lambda)\di x\\
				=&\lim_{n\to \infty} \int_{\T^N}\varphi(x)\int_{[0,\gamma]} \eta(\lambda)\left(\delta_{u_1(t+t_n,x)}+\delta_{u_2(t+t_n,x)}\right)(\di \lambda)\di x\\
				=&\int_{\T^N}\varphi(x)\int_{[0,\gamma]} \eta(\lambda)\left(\left(2-\sum_{i=1,2}c_i(x)\right)\delta_{0}+\sum_{i=1,2}c_i(x)\delta_{r_i}\right)(\di \lambda)\di x
				\end{aligned}
				\end{equation*}
				By simplifying the term $\delta_0$ from each side, we deduce that 
				\begin{equation}\label{5.7}
				\lim_{n\to \infty} \delta_{(u_1+u_2)(t+t_n,x)}=\left(1-\sum_{i=1,2}c_i(x)\right)\delta_{0}+\sum_{i=1,2}c_i(x)\delta_{r_i} 
				\end{equation}
				in the sense of the narrow convergence topology of $ Y(\T^N;[0,\gamma])$. Here we recall that the limit depends on the choice of subsequence.
			\end{proof}
	\begin{lemma}\label{LEM5.13}
		Under the same assumptions as in Lemma \ref{LEM5.8}, the  following equality holds true: 
		\[ r_1c_1(x)+r_2c_2(x)\equiv r_1+r_2-E_\infty,\;a.e.\ x\in \T^N, \]
		where $ E_\infty :=  \lim_{t\to \infty}E\left[(u_1,u_2)(t,\cdot)\right] $ in \eqref{4.3}.
	\end{lemma}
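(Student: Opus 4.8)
The plan is to pass to the limit, along the subsequence $\{t_n\}$ fixed in Lemmas \ref{LEM5.8}--\ref{LEM5.12}, in two scalar quantities: the nonzero Fourier coefficients $c_k[u(t_n,\cdot)]$ with $k\in\Z^N\setminus\{0\}$, and the energy $E[(u_1,u_2)(t_n,\cdot)]$ from \eqref{4.3}. Both limits are computed from the narrow convergence \eqref{5.6}, namely $\delta_{u_i(t_n,x)}\to(1-c_i(x))\delta_0+c_i(x)\delta_{r_i}$, once one records the elementary facts that, since $\gamma<\infty$ by \eqref{5.1}, the identity map $\lambda\mapsto\lambda$ and each $G_i$ from \eqref{4.3.0} (extended by $G_i(0)=r_i$, using continuity of $u\mapsto u\ln(u/r_i)$ at $0$) belong to $C_b([0,\gamma];\R)$, and that $G_i(0)=r_i$ while $G_i(r_i)=0$.

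First I would show that $r_1c_1+r_2c_2$ is a.e.\ equal to a constant. Testing \eqref{5.6} against $\eta(\lambda)=\lambda$ and a function $\varphi\in L^1(\T^N)$ (extension from indicators to $L^1$ being justified by density of simple functions together with the uniform bound $|\int_{\T^N}\varphi\int_{[0,\gamma]}\eta\,d\nu^n|\le\|\eta\|_{\infty}\|\varphi\|_{L^1}$, valid since each $\nu^n$ is a probability measure) gives $\int_{\T^N}\varphi\,u_i(t_n,\cdot)\to r_i\int_{\T^N}\varphi\,c_i$, hence $c_k[u(t_n,\cdot)]\to c_k[r_1c_1+r_2c_2]$ for every $k\in\Z^N$. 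On the other hand $c_k[u(t_n,\cdot)]\to0$ for $k\ne0$ by \eqref{4.8}. Therefore every nonzero Fourier coefficient of $r_1c_1+r_2c_2$ vanishes, so $r_1c_1(x)+r_2c_2(x)\equiv c$ a.e.\ on $\T^N$, where $c:=|\T^N|^{-1}\int_{\T^N}(r_1c_1+r_2c_2)\,dx$. (Alternatively one could combine \eqref{5.6} with the weak $L^2$ compactness recorded in Remark \ref{REM4.10}, but the Fourier route avoids passing to a further subsequence.)

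Next I would identify $c$ via the energy. Since $E[(u_1,u_2)(\cdot)]$ is nonnegative and nonincreasing, $E_\infty=\lim_{t\to\infty}E[(u_1,u_2)(t,\cdot)]$ exists and equals $\lim_{n\to\infty}E[(u_1,u_2)(t_n,\cdot)]$. Applying \eqref{5.6} with $\varphi\equiv1$ and $\eta=G_i$ yields $|\T^N|^{-1}\int_{\T^N}G_i(u_i(t_n,x))\,dx\to|\T^N|^{-1}\int_{\T^N}\bigl((1-c_i(x))G_i(0)+c_i(x)G_i(r_i)\bigr)\,dx=r_i-|\T^N|^{-1}\int_{\T^N}r_i c_i(x)\,dx$. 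Summing over $i=1,2$ and invoking the constancy from the previous step gives $E_\infty=r_1+r_2-|\T^N|^{-1}\int_{\T^N}(r_1c_1+r_2c_2)\,dx=r_1+r_2-c$, hence $c=r_1+r_2-E_\infty$, which is exactly the claimed identity $r_1c_1(x)+r_2c_2(x)\equiv r_1+r_2-E_\infty$ a.e.

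As in Lemma \ref{LEM4.3} and Theorem \ref{THM4.6}, the computation is first performed for classical solutions and then transferred to solutions integrated along the characteristics by the usual density/regularization argument; in fact almost nothing is needed here since the whole proof is phrased in terms of the limiting Young measures. The only delicate point — and the one I expect to be the main (and fairly minor) obstacle — is the book-keeping needed to legitimately test \eqref{5.6} against the non-simple data $\varphi(x)=e^{-ik\cdot x}$ (equivalently its real and imaginary parts) and against $G_i$ near $\lambda=0$; both reduce to the membership $\varphi\in L^1(\T^N)$, $G_i\in C_b([0,\gamma])$ already discussed, so no genuine difficulty arises.
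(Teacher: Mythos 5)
Your proposal is correct and follows essentially the same route as the paper: testing the narrow limit $\delta_{u_i(t+t_n,x)}\to(1-c_i(x))\delta_0+c_i(x)\delta_{r_i}$ against $\eta(\lambda)=\lambda$ (with the characters $e^{-ik\cdot x}$) to see that all nonzero Fourier coefficients of $r_1c_1+r_2c_2$ vanish via \eqref{4.8}, hence constancy, and against $\eta=G_i$ to compute $\lim_n E_i[u_i(t+t_n,\cdot)]=r_i-|\mathbb{T}^N|^{-1}\int_{\mathbb{T}^N}r_ic_i\,\mathrm{d}x$, which identifies the constant as $r_1+r_2-E_\infty$. The only difference is the (harmless) order of the two steps and your more explicit justification for testing against $L^1$ weights and for the continuity of $G_i$ at $0$, which the paper leaves implicit.
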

	\begin{proof}
		Recall equation \eqref{4.3.0} where we have $ G_i(0)=r_i,G(r_i)=0 $, we can see that
		\begin{equation}\label{5.9}
		\begin{aligned}
		\lim_{n\to \infty}E_i\left[u_i(t+t_n,\cdot)\right]=&\lim_{n\to \infty}\frac{1}{|\T^N|} \int_{\T^N}G_i(u_i(t+t_n,x)) \di   x\\
		=&\lim_{n\to \infty}\frac{1}{|\T^N|} \int_{\T^N\times [0,\gamma]} G_i(\lambda) \delta_{u_i(t+t_n,x)}(\di \lambda)\di  x\\
		=&\lim_{n\to \infty}\frac{1}{|\T^N|} \int_{\T^N\times [0,\gamma]} G_i(0)(1-c_i(x)) +G_i(r_i) c_i(x)\di  x\\
		=&r_i-\frac{1}{|\T^N|} \int_{\T^N} r_ic_i(x) \di  x.
		\end{aligned}
		\end{equation}
		Meanwhile, from \eqref{4.8} the Fourier coefficients satisfy
		\[ \lim_{t\to \infty}c_k\left[(u_1+u_2)(t,\cdot)\right]=0, \quad \forall k\in \Z^N \backslash \lbrace 0 \rbrace.\]
		On the other hand, we have for all $ k\in \Z^N \backslash \lbrace 0 \rbrace $
		\begin{align*}
		\lim_{n\to \infty}c_k\left[(u_1+u_2)(t+t_n,\cdot)\right]=&\lim_{n\to \infty}\frac{1}{|\T^N|} \int_{\T^N} e^{-ikx}(u_1+u_2)(t+t_n,x) \di x\\
		=&\lim_{n\to \infty}\frac{1}{|\T^N|} \int_{\T^N\times [0\times\gamma]} e^{-ikx} \lambda \left(\delta_{u_1(t+t_n,x)}+\delta_{u_1(t+t_n,x)}\right)(\di \lambda)\di  x\\
		=&\frac{1}{|\T^N|} \int_{\T^N} e^{-ikx} (r_1c_1(x)+r_2c_2(x))\di  x.
		\end{align*}
		Since $ c_1,c_2\in L^\infty(\T^N)\subset L^2(\T^N) $ and $ \lbrace e^{-ikx}\rbrace_{k\in\Z} $ is a basis of $ L ^2(\T^N) $.
		This implies that $ r_1c_1(x)+r_2c_2(x)$ is a constant function. Recall that
		\[ E_\infty= \lim_{n\to \infty}\sum_{i=1,2}E_i\left[u_i(t+t_n,\cdot)\right]=r_1+r_2-\frac{1}{|\T^N|} \int_{\T^N} \sum_{i=1,2}r_ic_i(x)dx,\]
		thus the result follows.
	\end{proof}
	
	\begin{lemma}[Segregation at $ t=\infty $]\label{LEM5.14}
		Under the same assumptions as in Lemma \ref{LEM5.8}, the following equation holds 
		\[  c_1(x)c_2(x)=0,\quad a.e.,\;x\in \T^N. \]
		Moreover when $ r_1=r_2=r $, then 
		\[ r\leq E_\infty \leq 2r. \]
	\end{lemma}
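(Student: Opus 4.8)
The statement has two parts, and the plan is to dispatch the second (the bound on $E_{\infty}$) quickly from Lemma \ref{LEM5.13}, and then to work harder on the first (the segregation identity $c_1c_2=0$) by combining the Young‑measure picture with the characteristic representation of the solution.

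\emph{The bound $r\le E_\infty\le 2r$ when $r_1=r_2=r$.} First I would record that $0\le c_1(x)+c_2(x)\le 1$ for a.e.\ $x$. The lower bound is immediate since $0\le c_i\le 1$; for the upper bound I would fix continuous $\eta_i:[0,\gamma]\to[0,1]$ with $\eta_i(0)=0$ and $\eta_i(r_i)=1$ and note that, by the segregation property (Theorem \ref{THM3.1}), at every $(s,x)$ at least one of $u_1(s+t_n,x),u_2(s+t_n,x)$ vanishes, so that $\eta_1(u_1(s+t_n,x))+\eta_2(u_2(s+t_n,x))\le 1$ pointwise; passing to the narrow limit via Lemma \ref{LEM5.12} gives $c_1(x)+c_2(x)\le 1$ a.e. (Equivalently, the limiting measure in \eqref{5.7} is a probability measure, so its coefficient $1-c_1-c_2$ is nonnegative.) Then Lemma \ref{LEM5.13} with $r_1=r_2=r$ reads $r\bigl(c_1(x)+c_2(x)\bigr)=2r-E_\infty$, whence $E_\infty=2r-\tfrac{r}{|\mathbb T^N|}\int_{\mathbb T^N}(c_1+c_2)\,\mathrm dx\in[2r-r,\,2r]=[r,2r]$.

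\emph{The identity $c_1c_2=0$.} I would first introduce the \emph{joint} Young measure of the pair. Applying the compactness arguments behind Lemma \ref{LEM5.8} and Lemma \ref{LEM5.10} to the maps $x\mapsto(u_1(t+t_n,x),u_2(t+t_n,x))$ with values in $[0,\gamma]^2$, one extracts (along a further subsequence) a Young measure $\sigma_t(x,\cdot)\in Y(\mathbb R\times\mathbb T^N;[0,\gamma]^2)$ with $\delta_{(u_1(t+t_n,x),u_2(t+t_n,x))}\to\sigma_t(x,\cdot)$ narrowly. Its two marginals are $\nu_{1,t}(x,\cdot)$ and $\nu_{2,t}(x,\cdot)$, so by Lemma \ref{LEM5.12} it is supported in $\{0,r_1\}\times\{0,r_2\}$, i.e.\ on the four corners; testing against the continuous function $(a,b)\mapsto ab$ and using $u_1u_2\equiv 0$ forces the atom at $(r_1,r_2)$ to vanish, and matching the marginals gives $\sigma_t(x,\cdot)=\bigl(1-c_1(x)-c_2(x)\bigr)\delta_{(0,0)}+c_1(x)\delta_{(r_1,0)}+c_2(x)\delta_{(0,r_2)}$.

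\emph{The main obstacle.} The remaining step — upgrading the above to $c_1(x)c_2(x)=0$ — cannot be done by manipulating test functions alone: under segregation the pair $(u_1,u_2)$ always lives on the two coordinate axes, and so does $\sigma_t(x,\cdot)$, hence every continuous functional is blind to the product $c_1c_2$. Here I would go back to the characteristic representation $u_i(t,x)=w_i(t,\Pi_{\mathbf v}(0,t;x))$, where $w_i(t,z)=u_i(t,\Pi_{\mathbf v}(t,0;z))\to r_i\mathbbm 1_{\mathcal U_i}(z)$ pointwise on $\mathbb T^N$ by Theorem \ref{THM4.11}, with $\mathcal U_i=\{u_i(0,\cdot)>0\}$ and $\mathcal U_1\cap\mathcal U_2=\emptyset$ by Assumption \ref{ASS4.1}. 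Since $\|\mathbf v(t,\cdot)\|_{C^0}\to 0$ (see \eqref{4.10.0} in Remark \ref{REM4.10}) while $\|\mathbf v(t,\cdot)\|_{C^1}$ stays bounded (Lemma \ref{LEM4.3}), Lemma \ref{LEM2.4} shows that on any fixed-length time window the flow $\Pi_{\mathbf v}(t+\cdot\,,t;\cdot)$ is uniformly $C^0$-close to the identity once $t$ is large; together with the $t$-independence of $c_i$ from Lemma \ref{LEM5.12}, this should let one transfer the disjointness of $\mathcal U_1$ and $\mathcal U_2$ through the (possibly non-convergent) transport, forcing the asymptotic occupation densities $c_1,c_2$ to have essentially disjoint supports, i.e.\ $c_1(x)c_2(x)=0$ a.e. I expect this transfer to be the hard part: it is precisely where the loss of compactness of the orbit must be reconciled with the rigidity coming from the \emph{common} characteristic flow acting on the disjoint initial supports, and the argument has to use simultaneously the decay of $\mathbf v$, the vanishing of the nonzero Fourier modes of $u$ (equation \eqref{4.8}), and the pointwise limit of Theorem \ref{THM4.11}. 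Once $c_1c_2=0$ is known, the inequality $c_1+c_2=\max(c_1,c_2)\le 1$ re-derives the bound $E_\infty\ge r$ of the first part as a byproduct.
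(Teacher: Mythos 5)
Your treatment of the bound is fine: since the limit measure in \eqref{5.7} is a probability measure (its atom at $0$ has mass $1-c_1(x)-c_2(x)\geq 0$), or by your test-function argument, one gets $c_1+c_2\leq 1$ a.e., and combined with Lemma \ref{LEM5.13} with $r_1=r_2=r$ this yields $r\leq E_\infty\leq 2r$. This is a mild variant of the paper, which instead gets the lower bound $\min\{r_1,r_2\}\leq E_\infty$ from the already-established identity $c_1c_2=0$; your route has the small advantage of not needing that identity for this part.

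The main assertion, $c_1(x)c_2(x)=0$ a.e., is however not proved in your proposal, and this is a genuine gap. Your joint Young measure step is correct as far as it goes: the limit $\sigma_t(x,\cdot)$ of $\delta_{(u_1,u_2)(t+t_n,x)}$ is supported in $\{0,r_1\}\times\{0,r_2\}$, segregation kills the atom at $(r_1,r_2)$, and matching marginals gives $\sigma_t=(1-c_1-c_2)\delta_{(0,0)}+c_1\delta_{(r_1,0)}+c_2\delta_{(0,r_2)}$; but, as you yourself observe, this object is blind to the product $c_1c_2$. The substitute you propose (transferring the disjointness of $\mathcal{U}_1,\mathcal{U}_2$ through the characteristics using $\|\mathbf{v}(t,\cdot)\|_{C^0}\to 0$, Lemma \ref{LEM2.4}, \eqref{4.8} and Theorem \ref{THM4.11}) is only a program, and the decisive step is missing: smallness of the flow's displacement on time windows of fixed length says nothing about the cumulative deformation $\Pi_{\mathbf{v}}(t_n,0;\cdot)$ over $[0,t_n]$, which is exactly what links the disjoint initial supports to the asymptotic occupation densities; nothing you list rules out finer and finer interleaving of $\Pi_{\mathbf{v}}(t_n,0;\mathcal{U}_1)$ and $\Pi_{\mathbf{v}}(t_n,0;\mathcal{U}_2)$ as $t_n\to\infty$, which is precisely the scenario encoded by $c_1c_2>0$ while segregation holds at every finite time. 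The paper argues quite differently: it tests the pointwise identity $\eta\bigl((u_1+u_2)^2\bigr)=\eta\bigl(u_1^2+u_2^2\bigr)$ (valid by Theorem \ref{THM3.1}) over Borel sets $A$, computes the limit of the left side with the Young measure \eqref{5.7} of the sum, computes the limit of the right side with the product $\bigl[(1-c_1)\delta_0+c_1\delta_{r_1}\bigr]\otimes\bigl[(1-c_2)\delta_0+c_2\delta_{r_2}\bigr]$ of the marginal limits \eqref{5.6}, and compares to obtain $c_1c_2\bigl[\eta(0)+\eta(r_1^2+r_2^2)-\eta(r_1^2)-\eta(r_2^2)\bigr]=0$, hence $c_1c_2=0$. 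Note that the factorization of the joint limit into the product of the marginal limits is exactly the step your \emph{blindness} remark calls into question; if you deliberately avoid it, you must actually supply the missing transport argument, and as written your proposal establishes only the bound on $E_\infty$, not the segregation identity.
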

	\begin{proof}
		By using the segregation property in Theorem \ref{THM3.1}, we can see that, for any $ \eta \in C_b([0,\gamma]) $,
		\[ \eta \bigg(\left(u_1(t,x)+u_2(t,x)\right)^2 \bigg)=\eta \bigg( u_1^2(t,x)+u_2^2(t,x)\bigg),\;\forall\, t\in \R_+,\;a.e.\,x\in  \T^N. \]
		Therefore, for any Borel set $ A\in \mathcal B(\T^N) $, we deduce the following equation
		\begin{equation}\label{5.8}
		\int_{A\times [0,\gamma]}\eta (\lambda^2) \delta_{(u_1+u_2)(t+t_n,x)}(\di\lambda) \di x=\int_{A\times [0,\gamma]^2}\eta (\lambda_1^2+\lambda_2^2) \delta_{u_1(t+t_n,x)}(\di \lambda_1)\delta_{u_2(t+t_n,x)}(\di \lambda_2) \di x.
		\end{equation}
		By equation \eqref{5.6} and \eqref{5.7}, we let $ n\to \infty $, then for the Left-Hand-Side (L.H.S.) of equation \eqref{5.8}
		\begin{align*}
		\lim_{n\to \infty} \mathrm{L.H.S.} =&\int_{A\times [0,\gamma]} \eta(\lambda^2) \left[\left(1-\sum_{i=1,2}c_i(x)\right)\delta_{0}(\di \lambda) +\sum_{i=1,2}c_i(x) \delta_{r_i}(\di \lambda) \right] \\
		=&\int_{A}  \eta (0)\left(1-\sum_{i=1,2}c_i(x)\right) +\sum_{i=1,2}\eta(r_i^2)c_i(x)\di x.
		\end{align*}
		Then for the Right-Hand-Side (R.H.S.) of equation \eqref{5.8}
		\begin{align*}
		&\lim_{n\to \infty} \mathrm{R.H.S.}\\ =&\int_{A\times [0,\gamma]^2} \eta(\lambda_1^2+\lambda_2^2) \left[\left(1-c_1(x)\right)\delta_{0}(d\lambda_1) +c_1(x)\delta_{r_1}(d\lambda_1) \right]\otimes \left[\left(1-c_2(x)\right)\delta_{0}(d\lambda_2) +c_2(x)\delta_{r_2}(d\lambda_2) \right] dx\\
		=&\int_{A\times [0,\gamma]}  \eta (\lambda_2^2)\left(1-c_1(x)\right) +\eta(r_1^2+\lambda_2^2)c_1(x)\left[\left(1-c_2(x)\right)\delta_{0}(d\lambda_2) +c_2(x)\delta_{r_2}(d\lambda_2) \right] dx\\
		=&\int_{A}\Big( \eta(0)\prod_{i=1,2}(1-c_i(x)) +\eta(r_1^2)c_1(x)(1-c_2(x))\\
		&+\eta(r_2^2)c_2(x)(1-c_1(x))+\eta(r_1^2+r_2^2)c_1(x)c_2(x)\Big)dx.
		\end{align*}
		Comparing the two limits and noticing that $ A\in\mathcal{B}(\T^N) $ is arbitrary, we conclude that
		\[ c_1(x)c_2(x)\bigg[\eta(0)+\eta(r_1^2+r_2^2)-\eta(r_1^2)-\eta(r_2^2)\bigg]=0,\;\text{for a.e. } x\in \T^N. \]
		Furthermore, since $ \eta \in C_b([0,\gamma]) $ is any given function, we can choose  an $ \eta  $ such that 
		\[  \eta(0)+\eta(r_1^2+r_2^2)-\eta(r_1^2)-\eta(r_2^2)\neq 0, \]
		thus 
		\begin{equation}\label{5.11}
		c_1(x)c_2(x)=0,\quad a.e.,\;x\in \T^N. 
		\end{equation}
		Since by Lemma \ref{LEM5.11} and \ref{LEM5.12}, one has $ 0\leq c_i(x)\leq 1 $ for any $ x\in \T^N $. Hence, one can deduce from Lemma \ref{LEM5.13} \[ 0\leq E_\infty \leq r_1+r_2 . \]
		Moreover, one can deduce from \eqref{5.11} that 
		\[ \min\lbrace r_1,r_2 \rbrace\leq E_\infty \leq r_1+r_2. \]
		If we assume  $ r_1=r_2=r $, then 
		\[ r\leq E_\infty \leq 2r. \]
	\end{proof}
%
	
	\begin{proof}[Proof of Theorem \ref{THM5.5}]
		By Lemma \ref{LEM5.10}, the sequence $ \lbrace  \delta_{u_i(t+t_n,x)}\rbrace_{n\geq 0} $ is relatively compact in $Y_{loc}\left(\R
		\times\T^N;[0,\gamma]\right)$ with locally narrow topology, thus, up to a sequence, we have 
		\[ \lim_{n\to\infty} \delta_{u_i(t+t_n,x)}=\nu_{i,t}(x,.)\text{ in the topology of }%
		Y_{loc}\left(\mathbb{R}\times\mathbb{T}^N;[0,\gamma]\right).\]
		The key arguments of the proof lies in the two consequences of the decreasing energy functional, namely, equation \eqref{4.5} and  equation \eqref{4.6}.
		Lemma \ref{LEM5.11} is a consequence of the first equation \eqref{4.5} by which we can determine the support of $ \nu_{i,t}(x,.)  $, i.e., there exists measurable functions $ a_i(t,x) $ such that
			\[ \nu_{i,t}(x,.)= (1-a_i(t,x))\delta_0(.)+a_i(t,x)\delta_{r_i}(.),\;\ a.e. \ x\in\T^N,\;i=1,2. \] 
		Moreover, Lemma \ref{LEM5.8} and Lemma \ref{LEM5.12} enable us to write $ a_i(t,x)\equiv c_i(x),\,i=1,2. $ Thus, we have 
		\begin{equation*}
		\lim_{n\to \infty}\delta_{u_i(t+t_n,x)}= (1-c_i(x))\delta_0+c_i(x)\delta_{r_i} \text{ in the topology of }%
		Y_{loc}\left(\mathbb{R}\times\mathbb{T}^N;[0,\gamma]\right)
		\end{equation*}
		Applying the segregation property, we have 
		\[ \delta_0+\delta_{(u_1+u_2)(t,x)}=\delta_{u_1(t,x)}+\delta_{u_2(t,x)}, \]
		hence by Corollary \ref{COR4.12},
		\begin{equation}\label{5.12}
		\lim_{n\to \infty} \delta_{(u_1+u_2)(t+t_n,x)}=\left(1-\sum_{i=1,2}c_i(x)\right)\delta_{0}+\sum_{i=1,2}c_i(x)\delta_{r_i} 
		\end{equation}
		If in addition, we assume that  $ r_1=r_2=r $, we apply Lemma \ref{LEM5.13} where we used the decaying of Fourier coefficients in equation \eqref{4.6}, which yields
		\[ \sum_{i=1}^{2}c_i(x)=2-\frac{E_\infty}{r}. \] 
		together with equation \eqref{5.12} we obtain 
		\[ \lim_{n\to \infty} \delta_{(u_1+u_2)(t+t_n,x)}=(E_{\infty }/r-1)\delta _{0}+(2-E_{\infty }/r) \delta_{r},  \]
		in the sense of the narrow convergence topology of $ Y(\T^N;[0,\gamma])$ and by  Lemma \ref{LEM5.14} we have $ E_\infty\in [r,2r] $. Now the limit does not depend on $ t $ and the choice of the subsequence. Since $ \lbrace t_n\rbrace_{n\geq 0} $ is any given sequence that tends to infinity and $\left( \mathbb{T}^{N},\mathcal{B}(\mathbb{T}%
		^{N})\right) $ is a countably generated $\sigma-$algebra then the topology $ Y(\T^N;[0,\gamma])$ is metrizable (see for instance \cite[Theorem 1]{Valadier1990} or the monograph \cite{Castaing2004}), therefore we can conclude that
		\[ \lim_{t\to \infty} \delta_{(u_1+u_2)(t,x)}=(E_{\infty }/r-1)\delta _{0}+(2-E_{\infty }/r) \delta_{r}.  \]
	\end{proof}
\section{Discussion and Numerical Simulations}
In this section we will study the  system \eqref{1.1} numerically for the one dimensional case. Our original motivation is coming from two species of cell growing in a petri dish. The two dimensional case will be considered in some future work. 

Here we will focus on the coexistence and the exclusion principle for two species. From Theorem \ref{THM5.5}, we deduce
\begin{equation*}
\lim_{t\rightarrow \infty }\delta _{(u_1+u_2)(t,x)}=(E_{\infty }/r-1)\delta _{0}+(2-E_{\infty }/r) \delta_{r},\text{ in the sense of narrow convergence.}
\end{equation*}
Therefore the limit $ E_{\infty}:= \lim_{t\to \infty} E[(u_1,u_2)(t,\cdot)] $ is an important index to determine whether solution $ u_1+u_2 $ converges to a Young measure in the sense of narrowly convergence or to a constant in $ L^1 $ norm (see Remark \ref{REM5.6}). To that aim, we trace the curve $ t\longmapsto E[(u_1,u_2)(t,\cdot)] $ in numerical simulations, which has been analytically proved decreasing in Theorem \ref{THM4.6}. Moreover, we also plot the curve $ t\longmapsto E_i[u_i(t,\cdot)],\;i=1,2, $ respectively. This will help us to understand  the limit for each species $ u_i $.

In the numerical simulations, we focus on the convergence of the energy functional which implies the convergence of the total number for each species. In fact, by using \eqref{5.6} we obtain 
$$
\lim_{t\to \infty} \dfrac{1}{|\T|}\int_{\T}u_i(t,x)dx=\lim_{t\to \infty} \dfrac{1}{|\T|}\int_{\T} \int_{[0,\gamma]} \lambda \delta_{u_i(t,x)}(d\lambda)dx= \dfrac{r_i}{|\T|}\int_{\T} \int_{[0,\gamma]} c_i(x) dx.
$$
Hence by using \eqref{5.9} one has
\begin{equation}\label{6.1.0}
	\lim_{t\to\infty} E_i[u_i(t,\cdot)]  =r_i\left(1-\frac{1}{|\T|}\int_{\T}c_i(x)dx\right)=  r_i - \lim_{t\to \infty} \dfrac{1}{|\T|}\int_{\T}u_i(t,x)dx.
\end{equation}
That is to say that the energy functional inform us about the asymptotic number of individuals for each species.

Our numerical simulations will be decomposed as follows. 
\bigskip

	\noindent \textbf{Coexistence:} If $ r_1= r_2 =r$, then $ c_1(x),c_2(x)\in (0,1),\;a.e.,\;x\in \T^N $. For each species, the following limits exist
	\[ \lim_{t\to\infty}\| u_i(t,\cdot)\|_{L^1}= r\int_{\T^N}c_i(x)dx\in (0,r),\;i=1,2. \]
We will see that the relative location of each species influences the asymptotic number in each species. Moreover, we have \[ (u_1+u_2)(t,x)\xrightarrow{L^1} r,\;t\to \infty. \]
	
	\bigskip
	\noindent \textbf{Exclusion Principle:} If $ r_1 > r_2 $ (resp. $ r_1<r_2 $) then $ c_1(x)=1,\,c_2(x)=0$ (resp. $  c_1(x)=0,\,c_2(x)=1 $) $\;a.e.,\;x\in \T^N $ , which implies 
	\[ u_1(t,x)\xrightarrow{L^1}r_1,\quad u_2(t,x)\xrightarrow{L^1}0,\; \mathrm{(}resp.\; u_1(t,x)\xrightarrow{L^1}0,\quad u_2(t,x)\xrightarrow{L^1}r_2 \mathrm{)},  \]
	and
	\[ (u_1+u_2)(t,x)\xrightarrow{L^1}  \max\lbrace r_1,r_2 \rbrace,\;t\to \infty. \]
	
\bigskip
\subsection{The case $ r_1=r_2 $ implies coexistence}
The goal of this first part is to confirm numerically Theorem \ref{THM5.5}. 
	It is interesting to notice that in Theorem \ref{THM5.5}, we only assume the equilibrium of the corresponding ODE system for each species to be the same without imposing any other condition on $ h $, which means that the dynamics for these two species can be different. Hence, we will use the following two different reaction functions for different species 
	\begin{equation}\label{6.1.1}
	u_1h_1(u_1+u_2)=u_1\bigg(\frac{b_1}{1+\gamma(u_1+u_2)}-\mu\bigg),\quad u_2h_2(u_1+u_2)=b_2u_2\bigg(1-\frac{u_1+u_2}{K}\bigg).
	\end{equation}

One can verify that $ h_i $ satisfies Assumption \ref{ASS1.1} and Assumption \ref{ASS4.1} with their roots (i.e., $ h_i(r_i)=0,\,i=1,2 $) as 
\[ r_1:=\frac{b_1-\mu}{\gamma\mu},\quad r_2=K. \]
Our kernel $ \rho $ in the simulation is chosen as  
\begin{equation}\label{6.1}
\rho(\mathbf{x})=e^{-\pi |\mathbf{x}|^2},\quad \mathbf{x}\in \R^N,
\end{equation}
which is the Gaussian kernel and we consider the dimension $ N=1 $ in this section. Therefore, due to Remark \ref{REM1.3} and Remark \ref{REM4.5}, Assumption \ref{ASS1.2} and Assumption \ref{ASS4.4} are satisfied. 

We set the initial distribution for two species to be of compact supports and separated. From Theorem \ref{THM3.1}, we shall observe the segregation property of two species as time evolves. Our parameters in system \eqref{1.1} are given as
\begin{equation}\label{6.2}
	b_1=b_2=1.2,\;\mu=1,\;\gamma=1, \; K=0.2,
\end{equation}
therefore one can calculate 
\[r_1=r_2=0.2.  \]
Now we trace the curve $ t\longmapsto E[(u_1,u_2)(t,\cdot)] $ in numerical simulation, which has been analytically proved decreasing in Theorem \ref{THM5.5}. We also plot the curve $ t\longmapsto E_i[u_i(t,\cdot)],\;i=1,2, $ respectively. Moreover, we plot the variation of the mean value of the total number of individuals for each species, that is 
\[ t\longmapsto \frac{1}{2\pi}\int_{0}^{2\pi}u_i(t,x)dx,\quad i=1,2. \]
\begin{figure}[H]
	\begin{center}
		\includegraphics[width=0.5\textwidth]{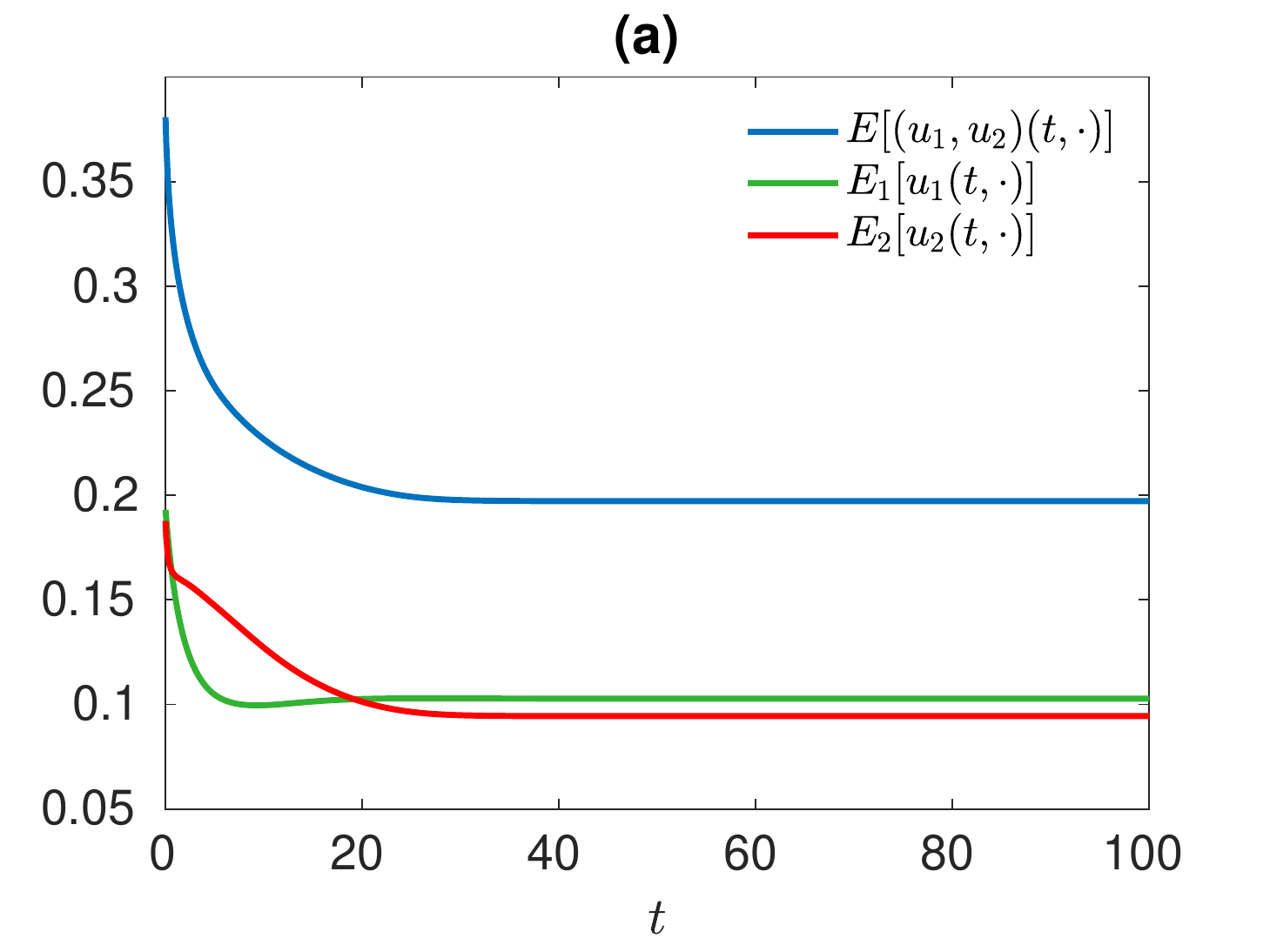}\includegraphics[width=0.5\textwidth]{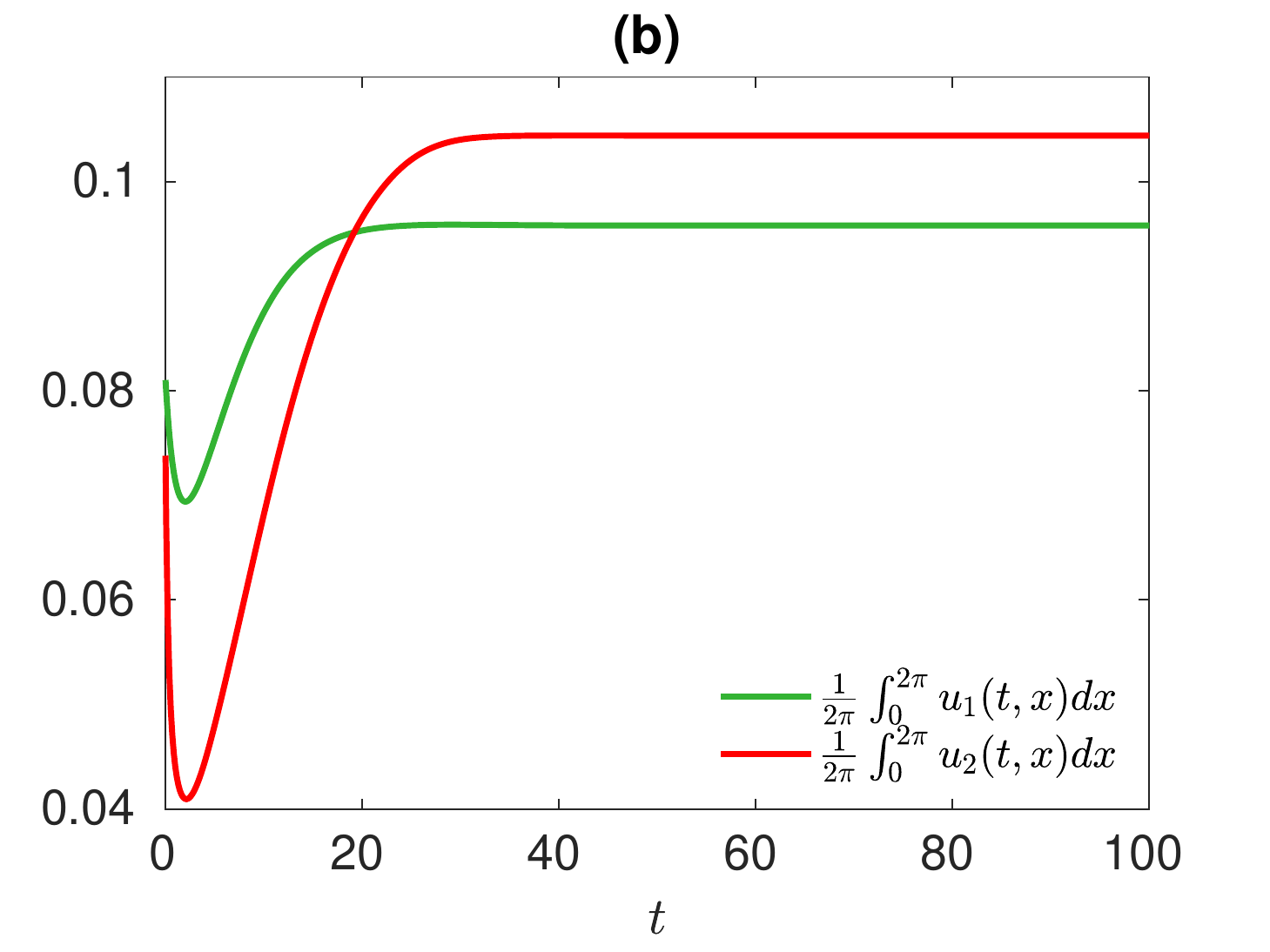}
	\end{center}
	\caption{\textit{Figure (a) is the plot of the curves $ t\longmapsto E_i[u_i(t,\cdot)],\;i=1,2, $ (green and red curves respectively) and $ t\longmapsto E[u_1,u_2)(t,\cdot)] $ (blue curve) under system \eqref{1.1} with reaction functions as \eqref{6.1.1} and kernel $ \rho $ as Gaussian in \eqref{6.1}.  We set our parameters as in \eqref{6.2}. Thus, one has $ r_1=r_2=0.2 $. We trace the curve $ t\longmapsto E[(u_1,u_2)(t,\cdot)] $ which is decreasing. We can see that the curve $ t\longmapsto E_1[u_1(t,\cdot)] $ is not monotone decreasing while $ t\longmapsto E_2[u_2(t,\cdot)] $ is monotone decreasing and they converge when $ t \to \infty$.  Figure (b) is the plot of total number of individuals for each species.}}
	\label{Figure2}
\end{figure}
From Figure \ref{Figure2}, we can see that the limit $ E_\infty $ exists and equals to $ r=0.2 $. From Theorem \ref{THM5.5} and Remark \ref{REM5.6}, the limit $ E_\infty=r $ implies
\[ (u_1+u_2)(t,x) \xrightarrow{L^1} r, \quad t\to \infty. \] 
Moreover, from the simulation we note that each limit $ E_{i,\infty}:=\lim_{t\to \infty}E_i[u_i(t,\cdot)] $ exists for $ i=1,2. $ 
From \eqref{6.1.0} we have 
\begin{equation}\label{6.3}
E_{i,\infty}=r\left(1-\frac{1}{|\T|}\int_{\T}c_i(x)dx\right),\quad i=1,2.
\end{equation}
By our simulation, we can see that $ E_{1,\infty},E_{2,\infty}\in (0,r) $ while $ E_{1,\infty}+E_{2,\infty}=r $, together with  equation \eqref{6.3} we can deduce $ c_1(x),c_2(x)\in(0,1),\; c_1(x)+c_2(x)=1$. Notice that $ c_1(x),c_2(x)\in (0,1)  $ implies the limits
\[ \lim_{n\to \infty} \delta_{u_i(t_n,x)}=(1-c_i(x))\delta_0+c_i(x)\delta_r,\quad i=1,2, \]
is not a single Dirac measure. Therefore, using Young measure and the weak compactness in $ Y(\T;[0,\gamma]) $ help us to understand the limit of the solution.

Now we plot the evolution of the two populations under system \eqref{1.1} in Figure \ref{Figure3}.
		\begin{figure}[H]
			\begin{center}
			\includegraphics[width=0.33\textwidth]{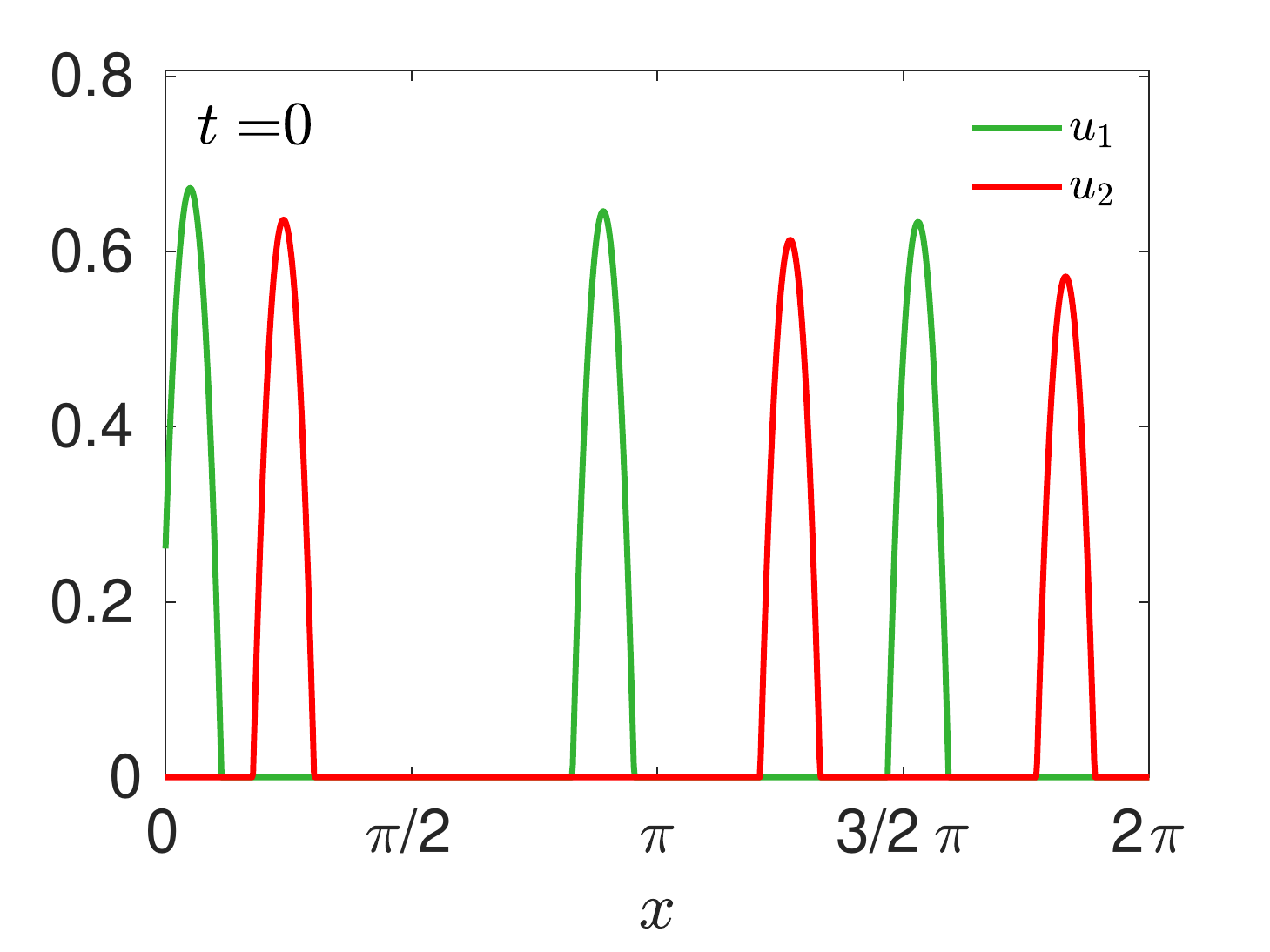}\includegraphics[width=0.33\textwidth]{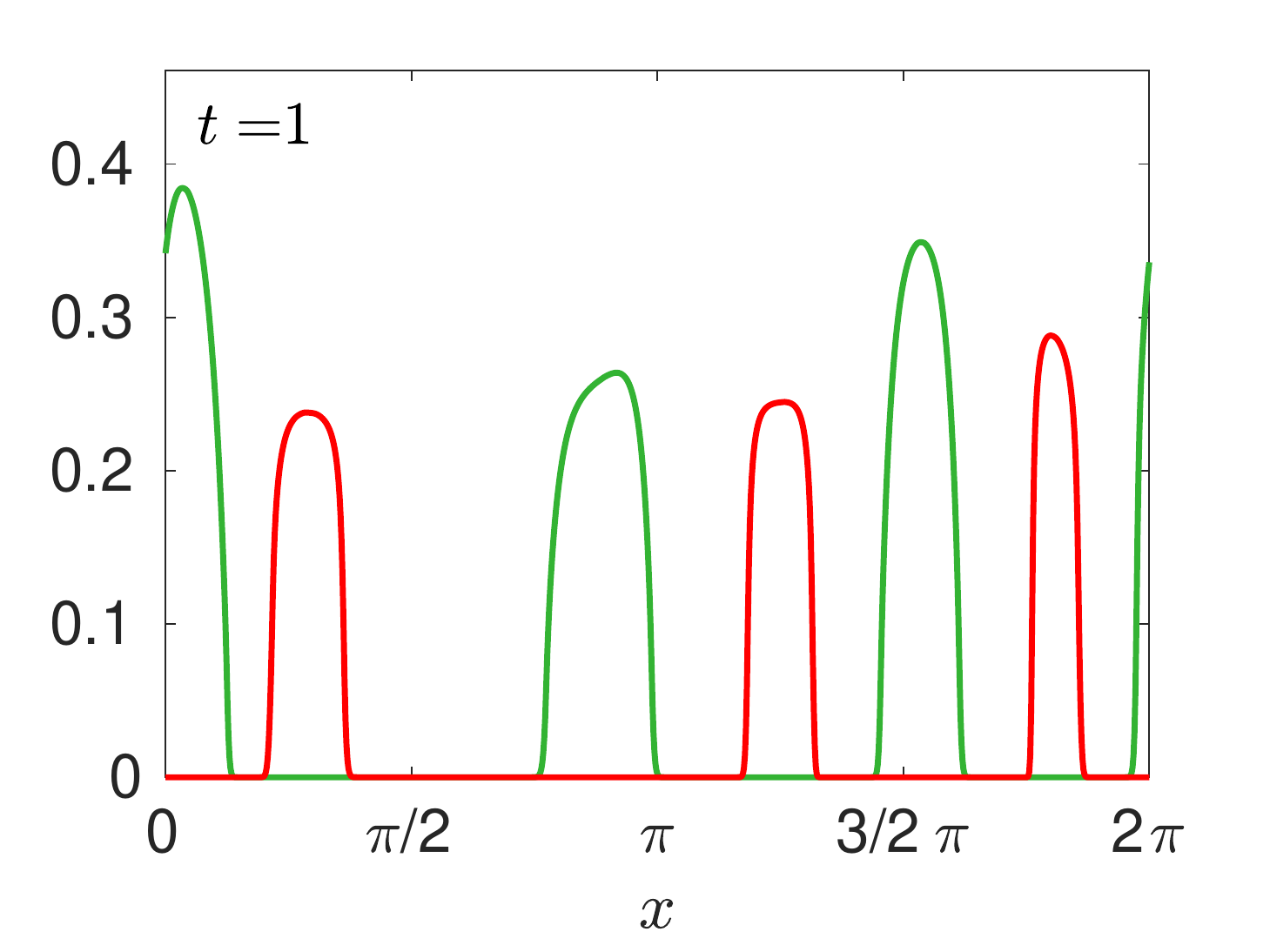}\includegraphics[width=0.33\textwidth]{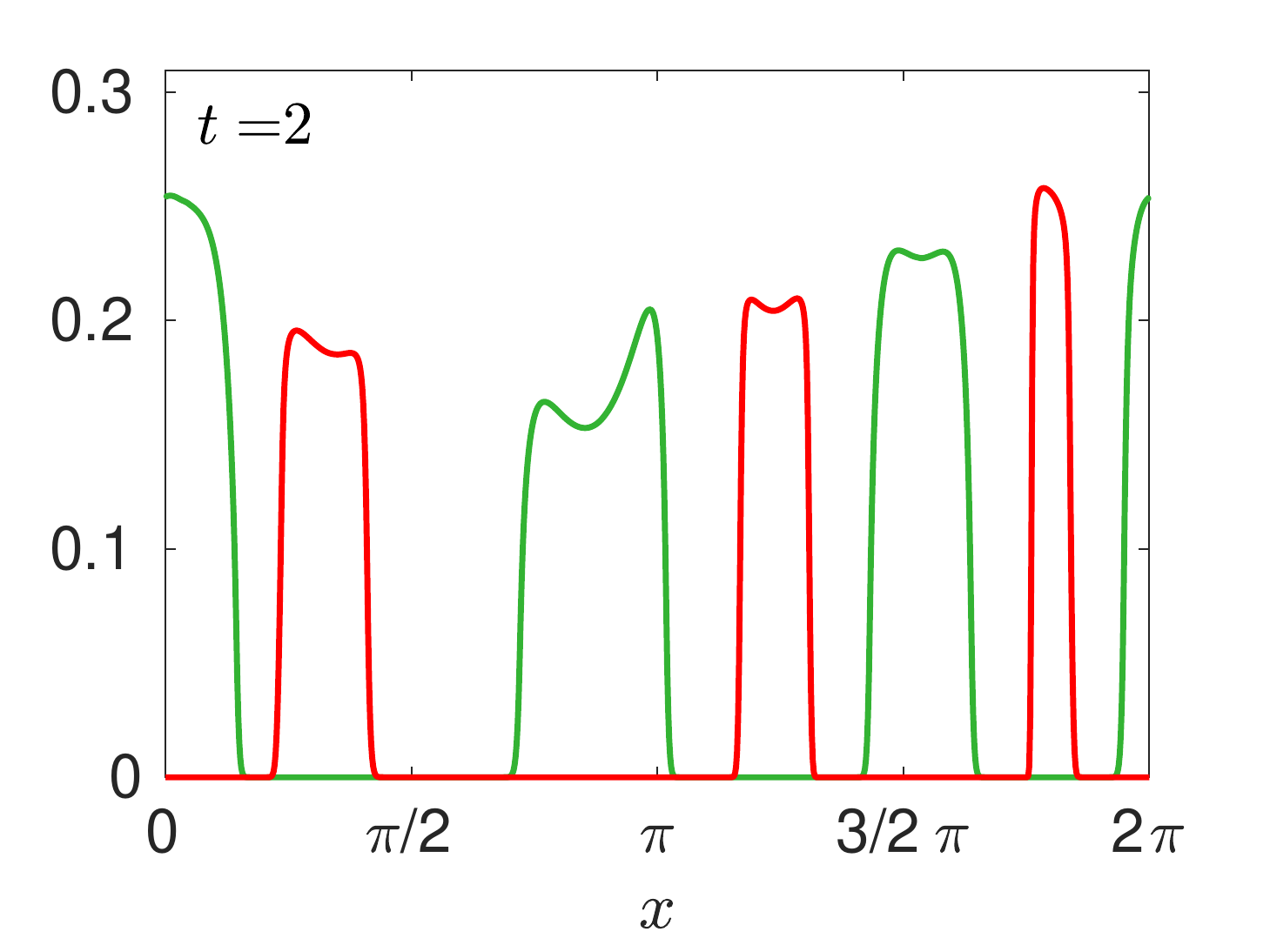}
			\includegraphics[width=0.33\textwidth]{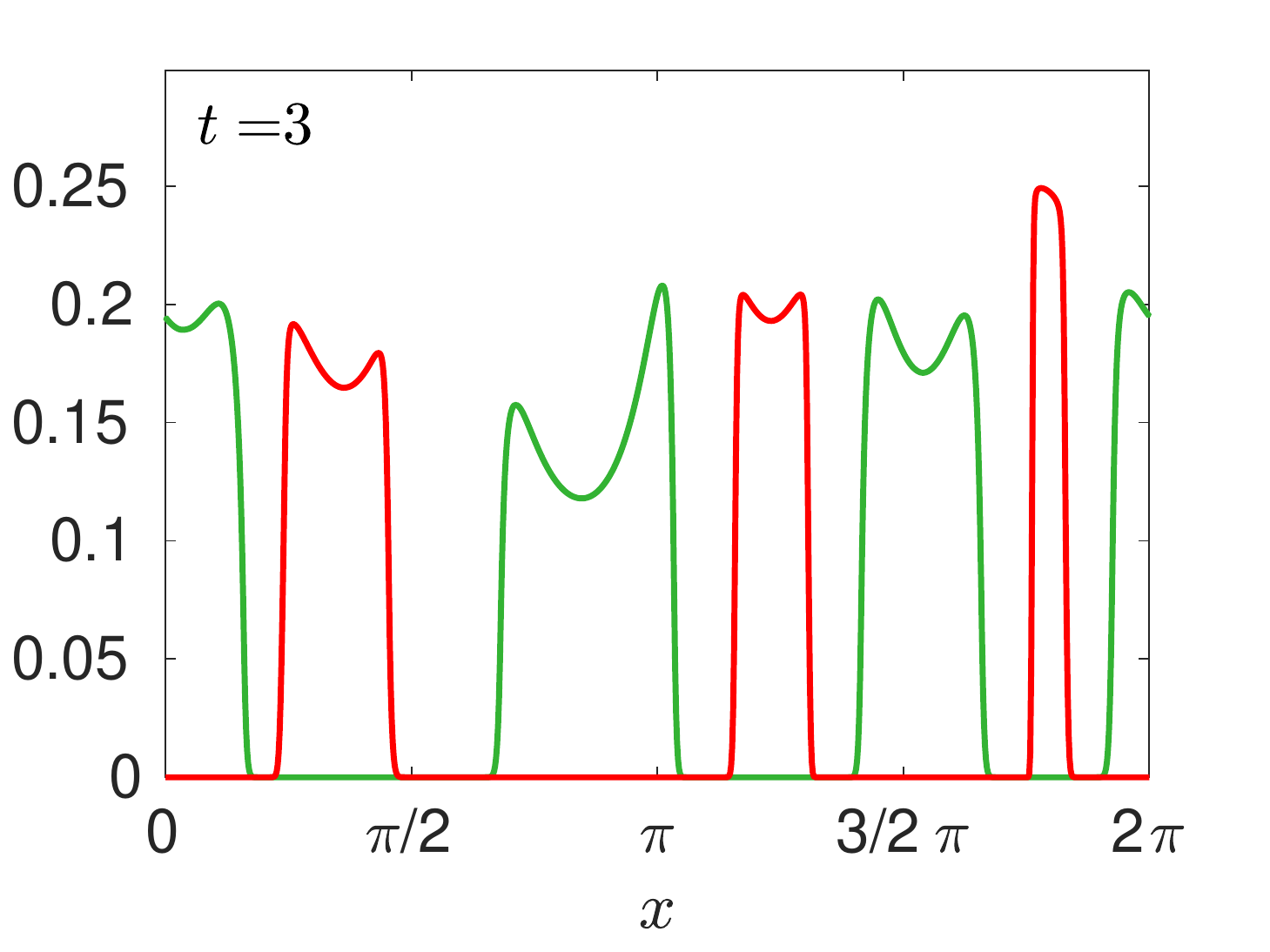}\includegraphics[width=0.33\textwidth]{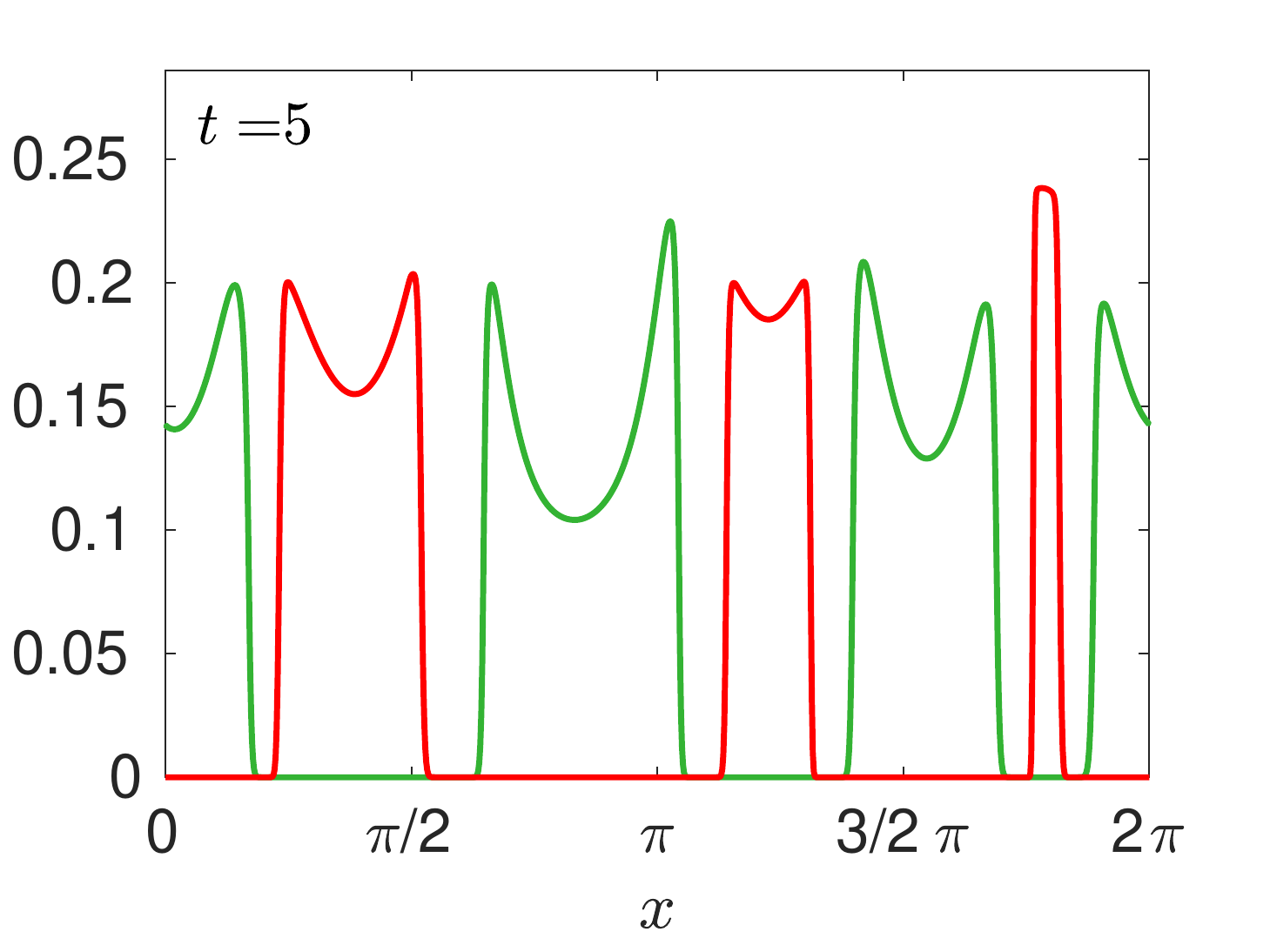}\includegraphics[width=0.33\textwidth]{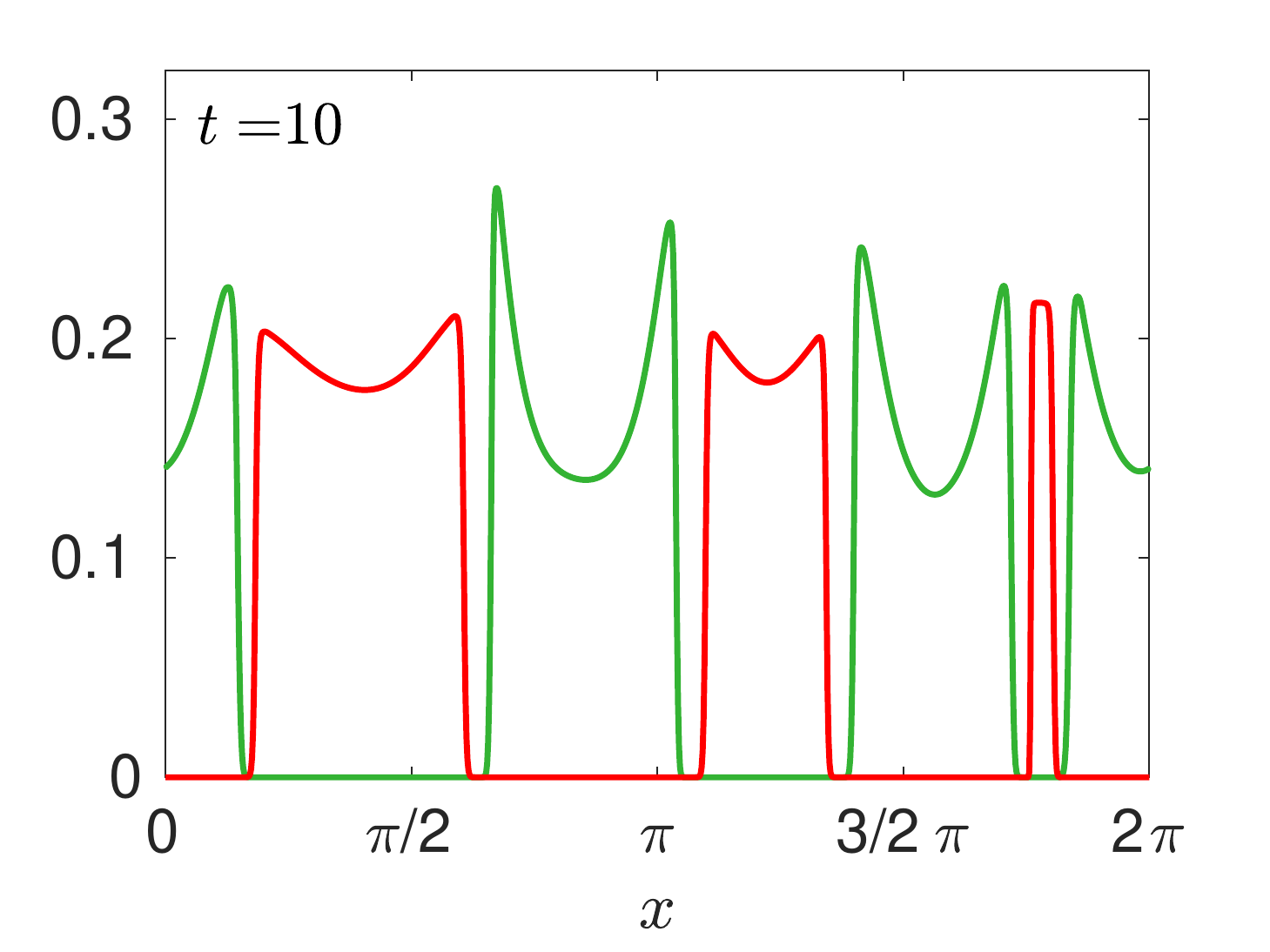}
			\includegraphics[width=0.33\textwidth]{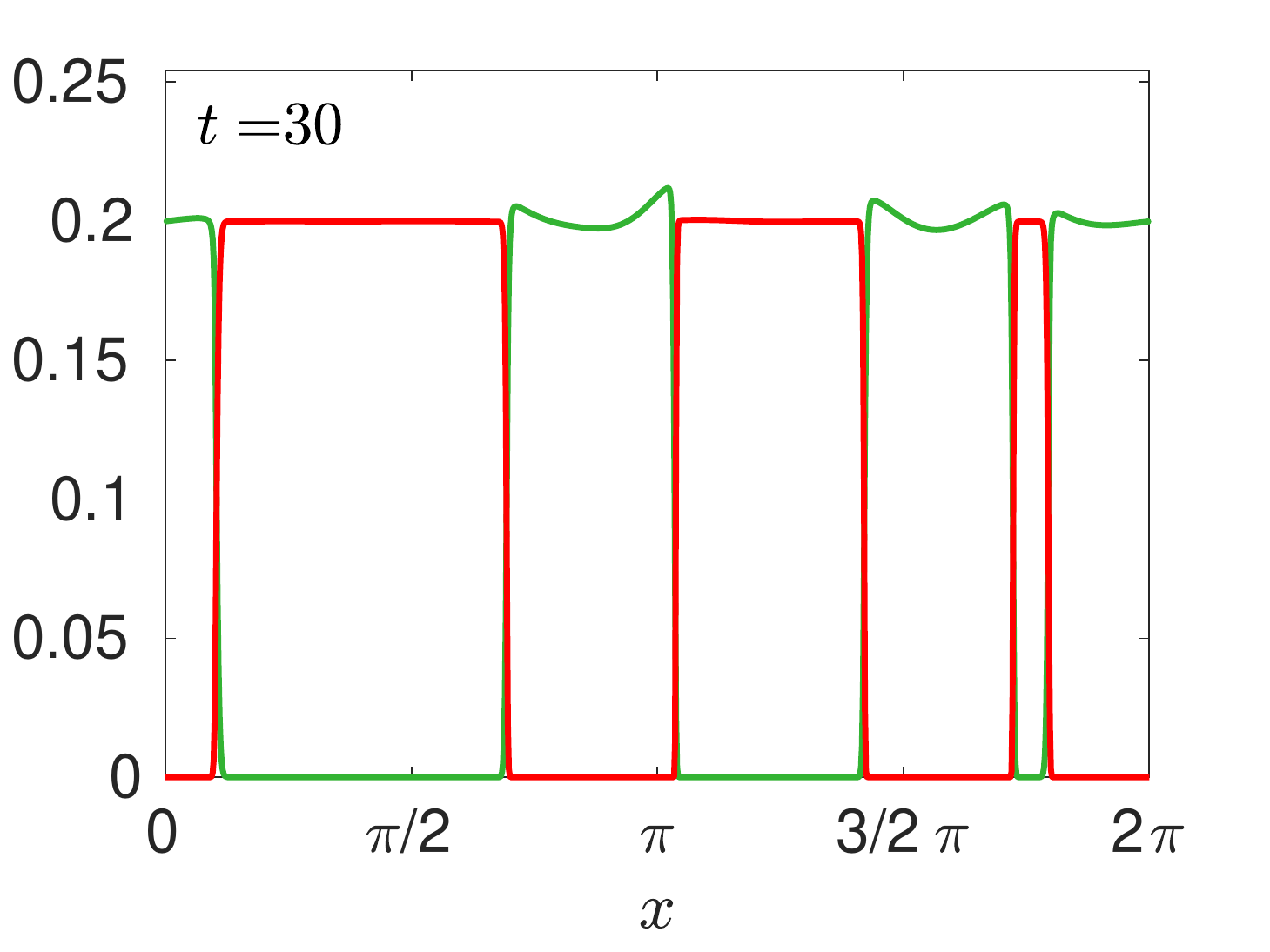}\includegraphics[width=0.33\textwidth]{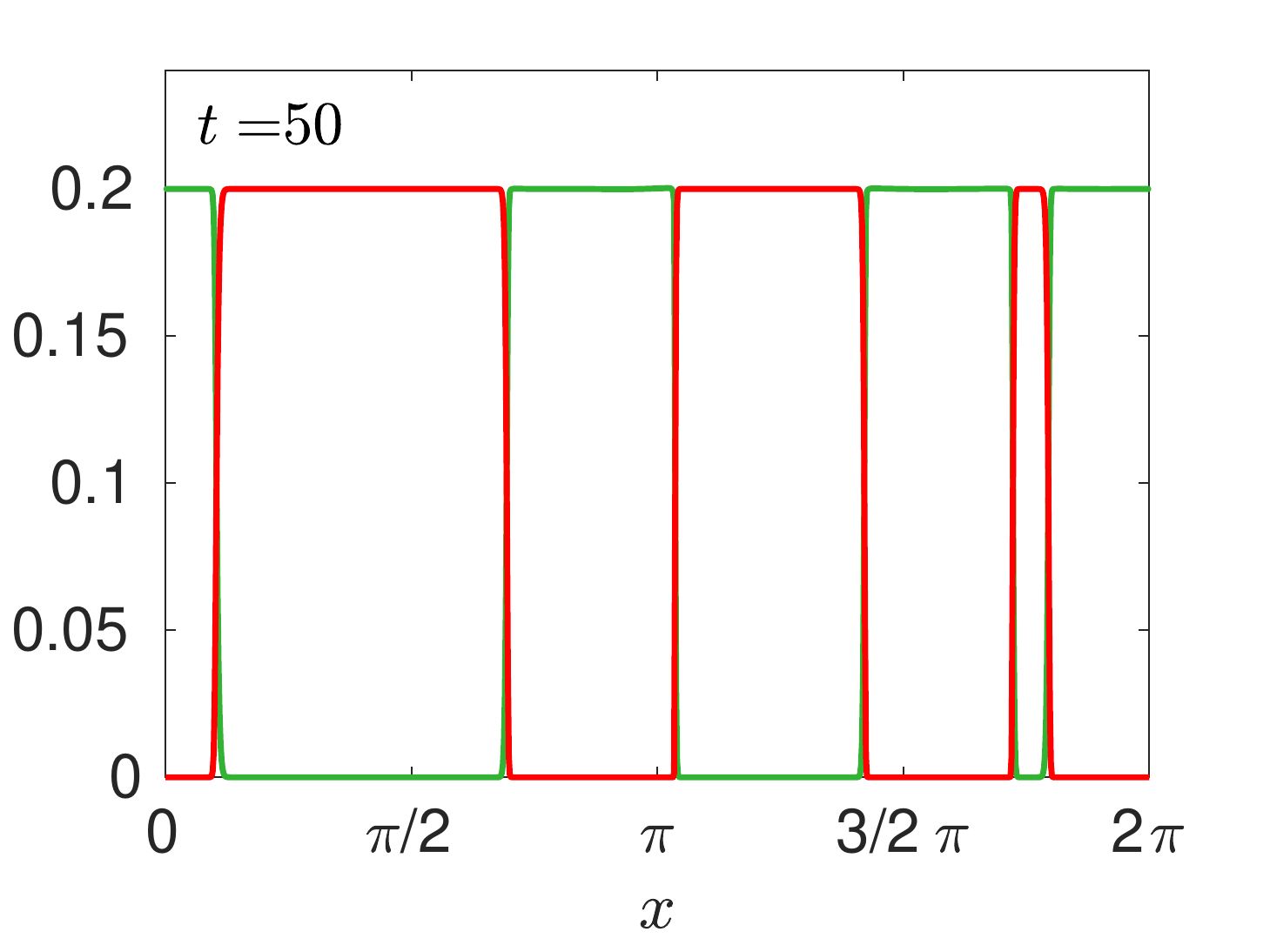}\includegraphics[width=0.33\textwidth]{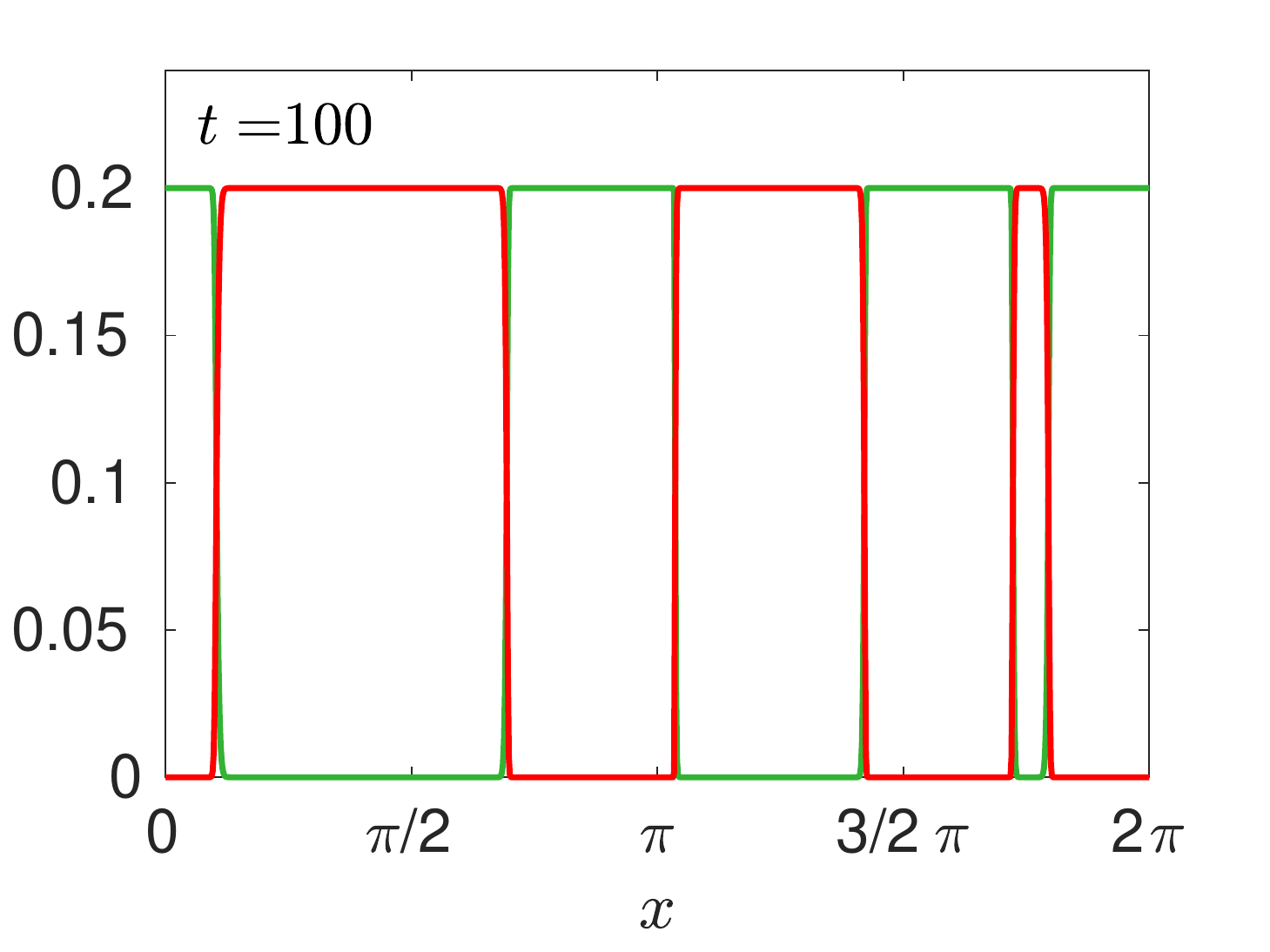}
			\end{center}
			\caption{\textit{The simulation of system \eqref{1.1} with reaction functions as \eqref{6.1.1} and kernel $ \rho $ as Gaussian in \eqref{6.1}. The green curves represent species $ u_1 $, the red represents species $ u_2 $. We set our parameters as in \eqref{6.2}. Thus, one has $ r_1=r_2=0.2 $.  As we have $ r_1=r_2 $, we see the coexistence of the two species and the segregation property and after $ t=100  $ the distributions of the two species stay the same.  }}
			\label{Figure3}
		\end{figure}
		
For the asymptotic behavior of the population, we can see from Figure \ref{Figure3} that the sum of two species $ u_1+u_2 $ reaches a steady state at $ t=100 $. From the pattern at each moment $ t $, we can see two species keep segregated in stead of being mixed. This result is due to our nonlocal advection term which ensures that the propagation speed is finite, which captures the "islets"  phenomenon in the real biological experiments in dimension two (see Figure 1. in \cite{Pasquier2012}).

\subsection{Initial location matters}
	Consider two different initial distributions $ \mathbf{u_0}=(u_1(0,x),u_2(0,x) )$ and $ \mathbf{\tilde{u}_0}=(\tilde{u}_1(0,x),\tilde{u}_2(0,x)) $ and assume their $ L^1 $ norms are the same,  that is
	\[ \int_{\T}u_i(0,x)dx= \int_{\T}\tilde{u}_i(0,x)dx,\quad i=1,2. \] 
	Under the same set of parameters, one may ask
	whether the limits 
	\begin{equation}\label{6.4}
	U_{i,\infty}:=\lim_{t\to \infty} \dfrac{1}{|\T|}\int_{\T}u_i(t,x)dx,\quad \tilde{U}_{i,\infty}:=\lim_{t\to \infty} \dfrac{1}{|\T|}\int_{\T}\tilde{u}_i(t,x)dx, \quad i=1,2.
	\end{equation}
for each species $ i=1,2 $ will be the same or not.

	In the real biological experiments, this situation corresponds to the case where the researchers use the same quantity of cells for each species for two separate petri dishes. Supposing the intrinsic mechanisms of cell population for these two groups are the same, the only difference is the initial cell distributions in two petri dishes. We are interested in whether the final total mass for each population are the same.
	
	Before our simulation, we give two different initial distributions $\mathbf{u_0}=(u_1(0,x),u_2(0,x) ), \mathbf{\tilde{u}_0}=(\tilde{u}_1(0,x),\tilde{u}_2(0,x)) $ as follows.
\begin{figure}[H]
	\begin{center}
		\includegraphics[width=0.5\textwidth]{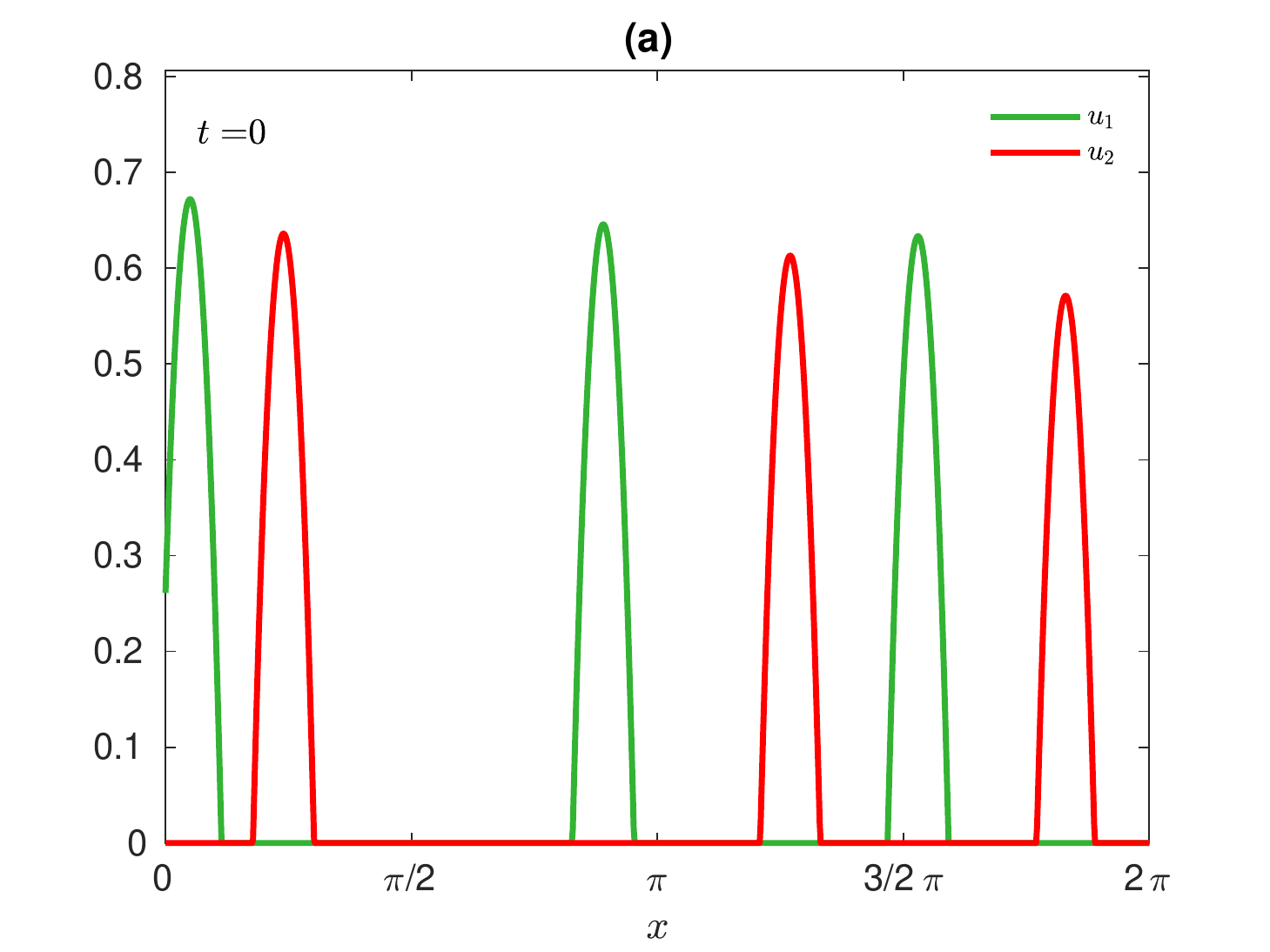}\includegraphics[width=0.5\textwidth]{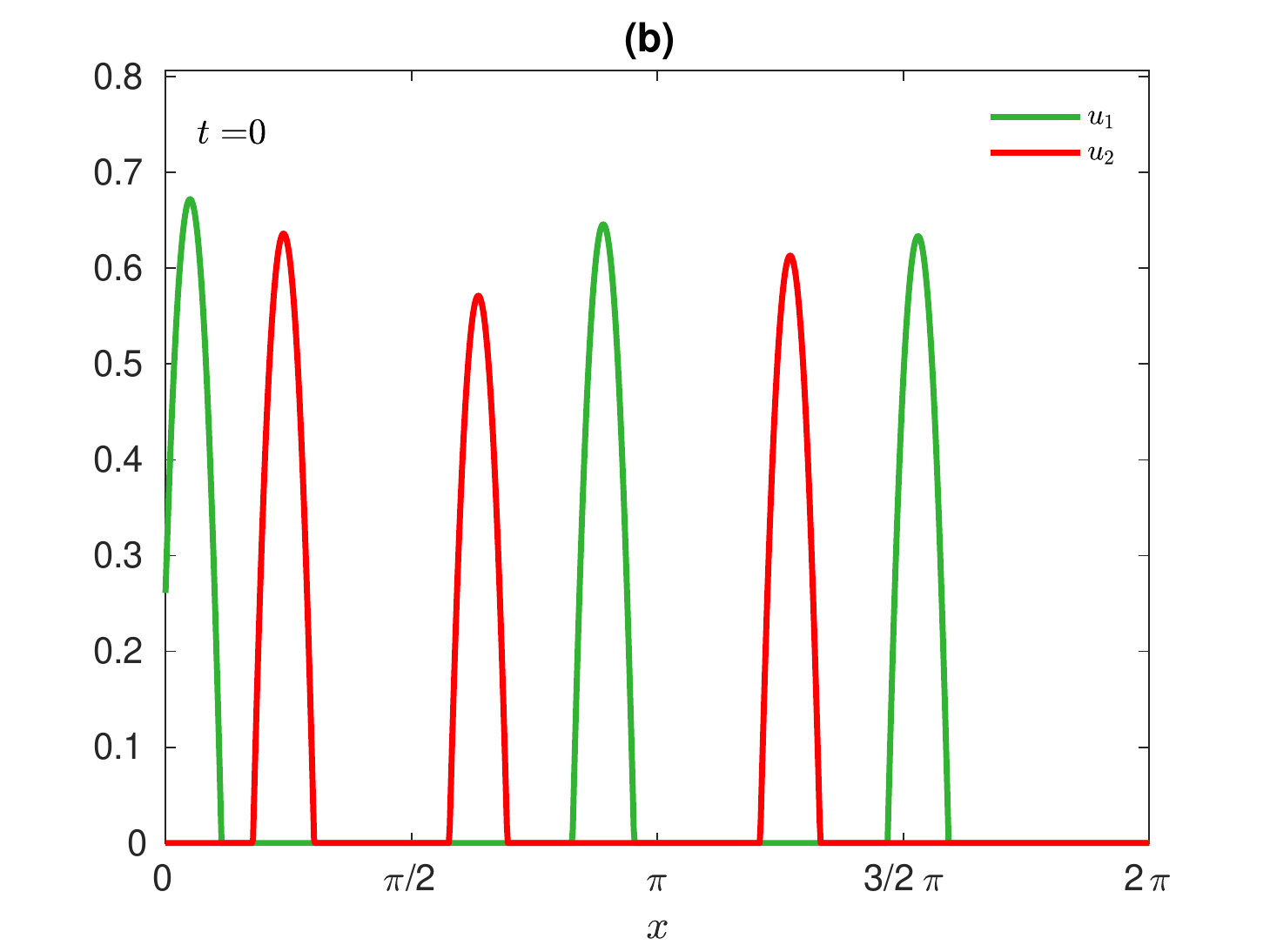}
	\end{center}
	\caption{\textit{Figures (a) and (b) correspond respectively to the initial distributions $\mathbf{u_0} $ and  $ \mathbf{\tilde{u}_0}$. In Figure (a), we shift the population of $ u_2 $ (red curve) at position in between $ 3/2\pi $ and $ 2\pi $ to  position in between $ \pi/2 $ to $ \pi $. Therefore, the number individuals for each species  is conserved.  }}
	\label{Figure4}
\end{figure}	
In Figure \ref{Figure6}, we plot the energy functional and the number of individuals corresponding to each initial distribution in Figure \ref{Figure4}. 
	\begin{figure}[H]
		\begin{center}
			\includegraphics[width=0.5\textwidth]{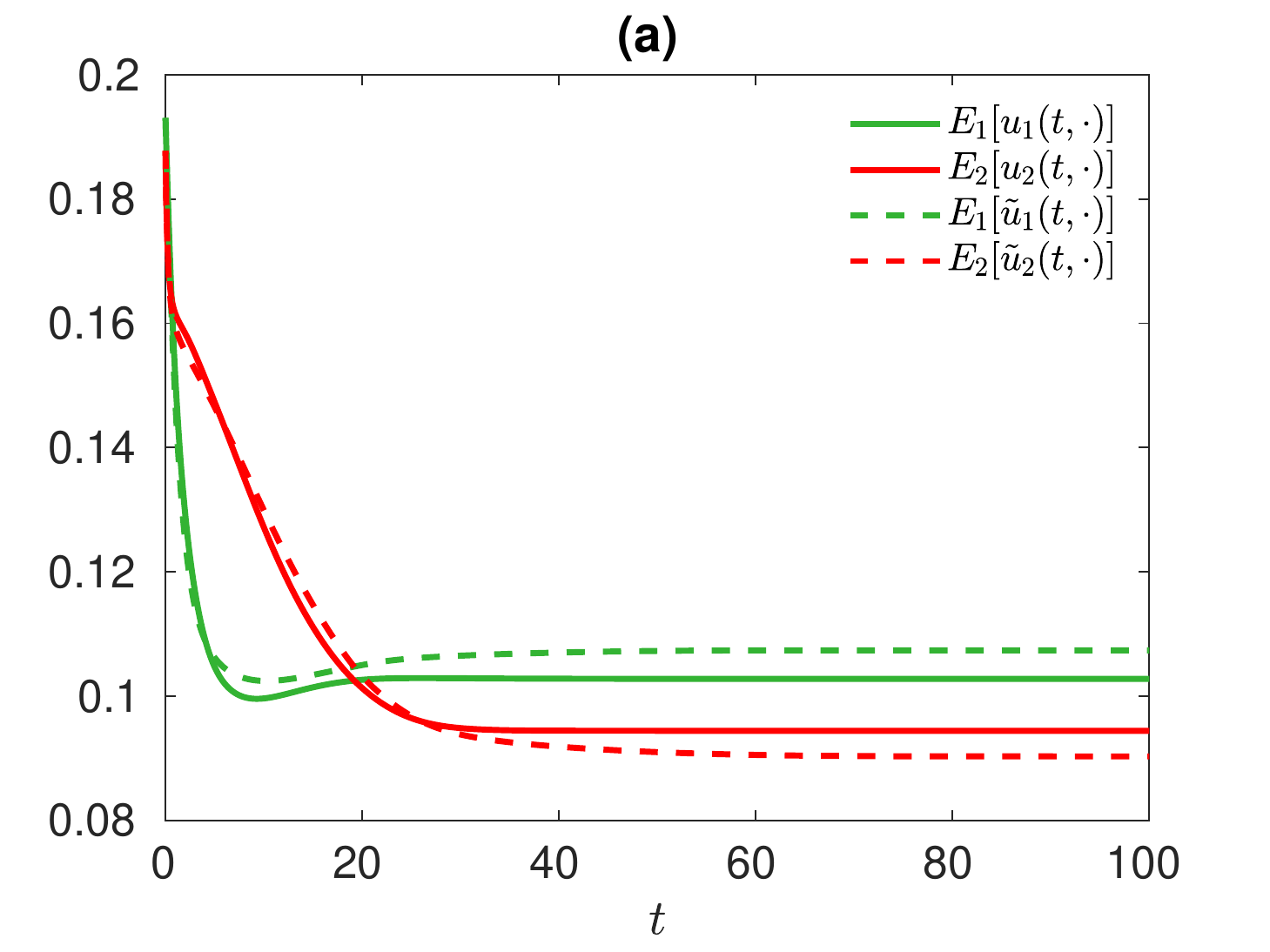}\includegraphics[width=0.5\textwidth]{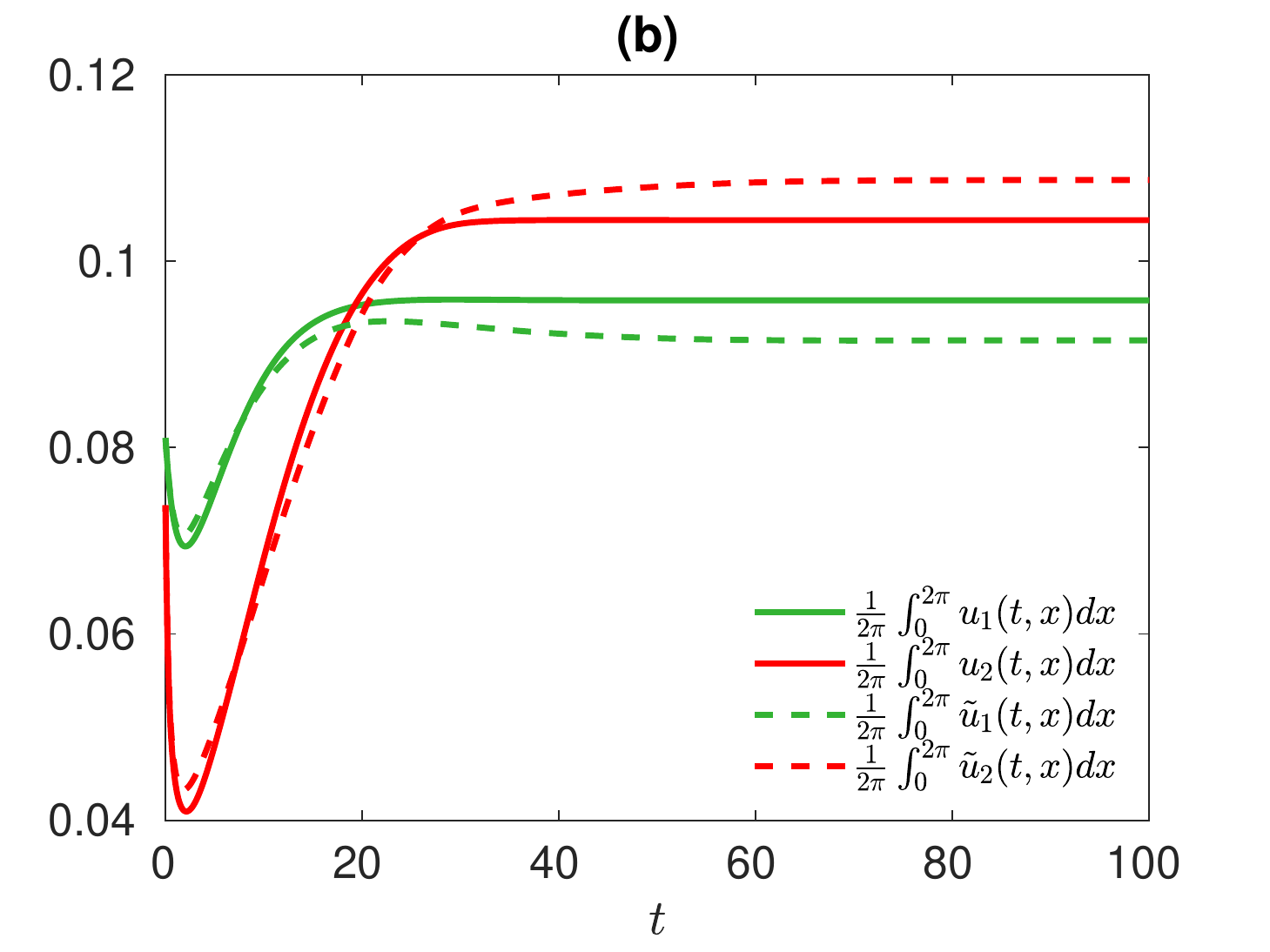}
		\end{center}
		\caption{\textit{In this figure we plot the energy functional $ t\mapsto E_i[u_i(t,.)],\;t\mapsto E_i[\tilde{u}_i(t,.)],\;i=1,2 $ (see Figure (a)) and the mean value of individuals $ t\mapsto \frac{1}{2\pi}\int_{0}^{2\pi}u_i(t,x)dx,\;t\mapsto \frac{1}{2\pi}\int_{0}^{2\pi}\tilde{u}_i(t,x)dx,\;i=1,2 $ (see Figure (b)) corresponding to two sets of different initial distributions in Figure \ref{Figure4}. 
	The green curves represent the species $ u_1 $ while the red curves represent the species $ u_2 $. The dashed lines correspond to the simulation with initial distribution as in Figure \ref{Figure4} (a)  and solide lines correspond to initial distribution as in Figure \ref{Figure4} (b). The parameters are the same as in \eqref{6.2}.}}
		\label{Figure6}
	\end{figure}
	Since the limits $ U_{i,\infty} $ and $ \tilde{U}_{i,\infty} $ have a significant difference from Figure \ref{Figure6} (b), thus we conclude the final total mass depends on the position of the initial value. 	
	Now we give the evolution of the two populations under system \eqref{1.1}.
\begin{figure}[H]
	\begin{center}
		\includegraphics[width=0.33\textwidth]{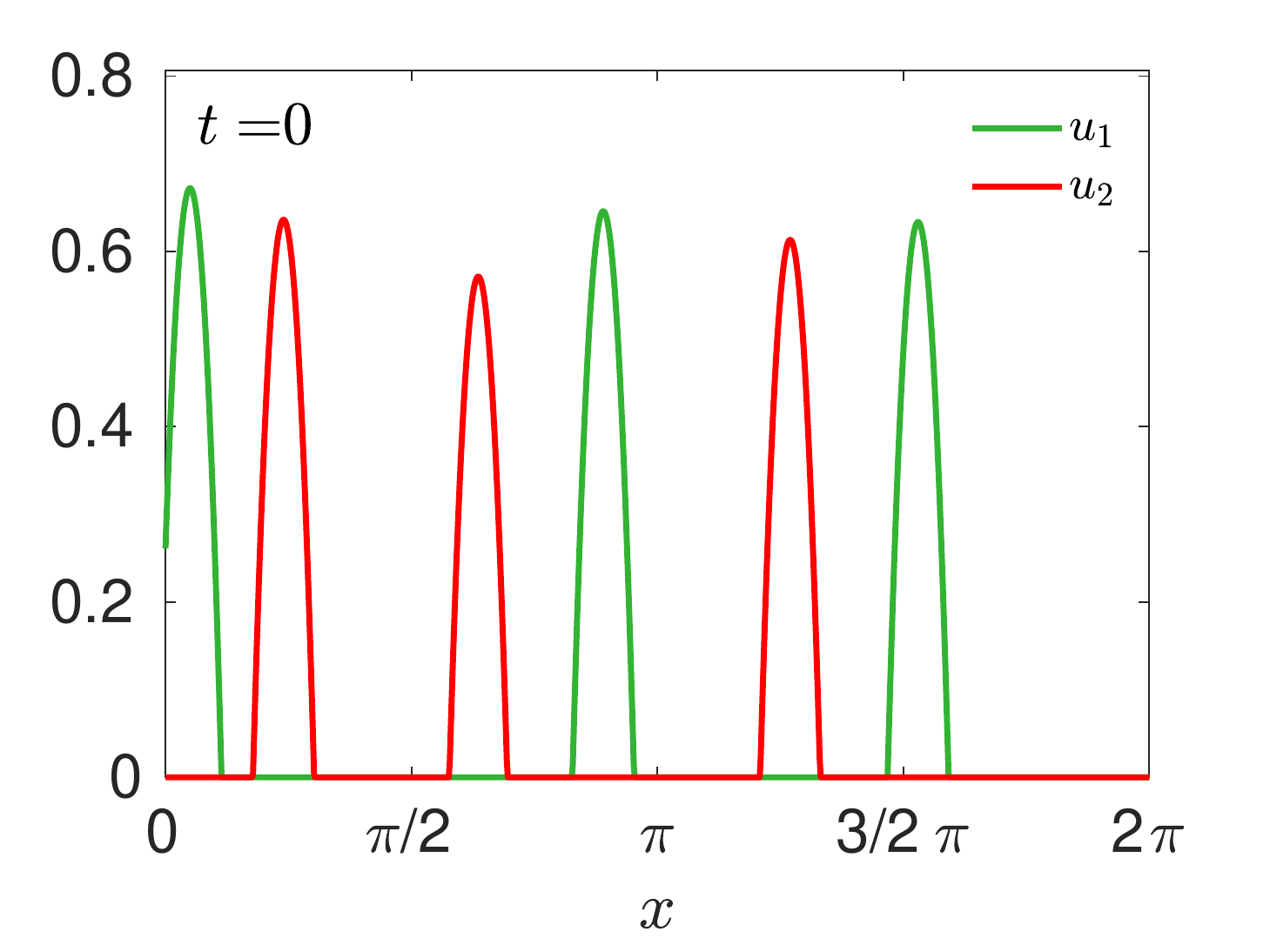}\includegraphics[width=0.33\textwidth]{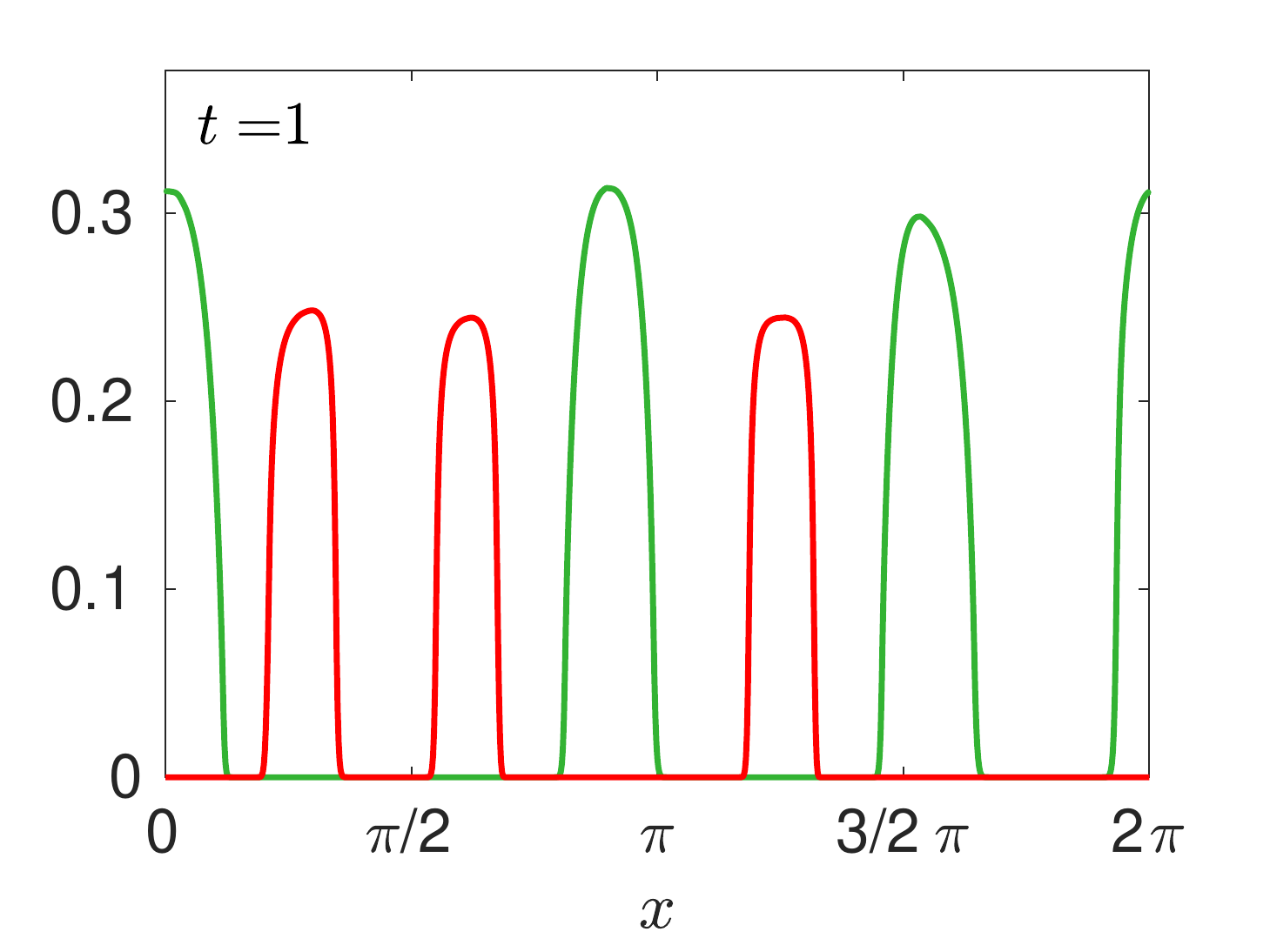}\includegraphics[width=0.33\textwidth]{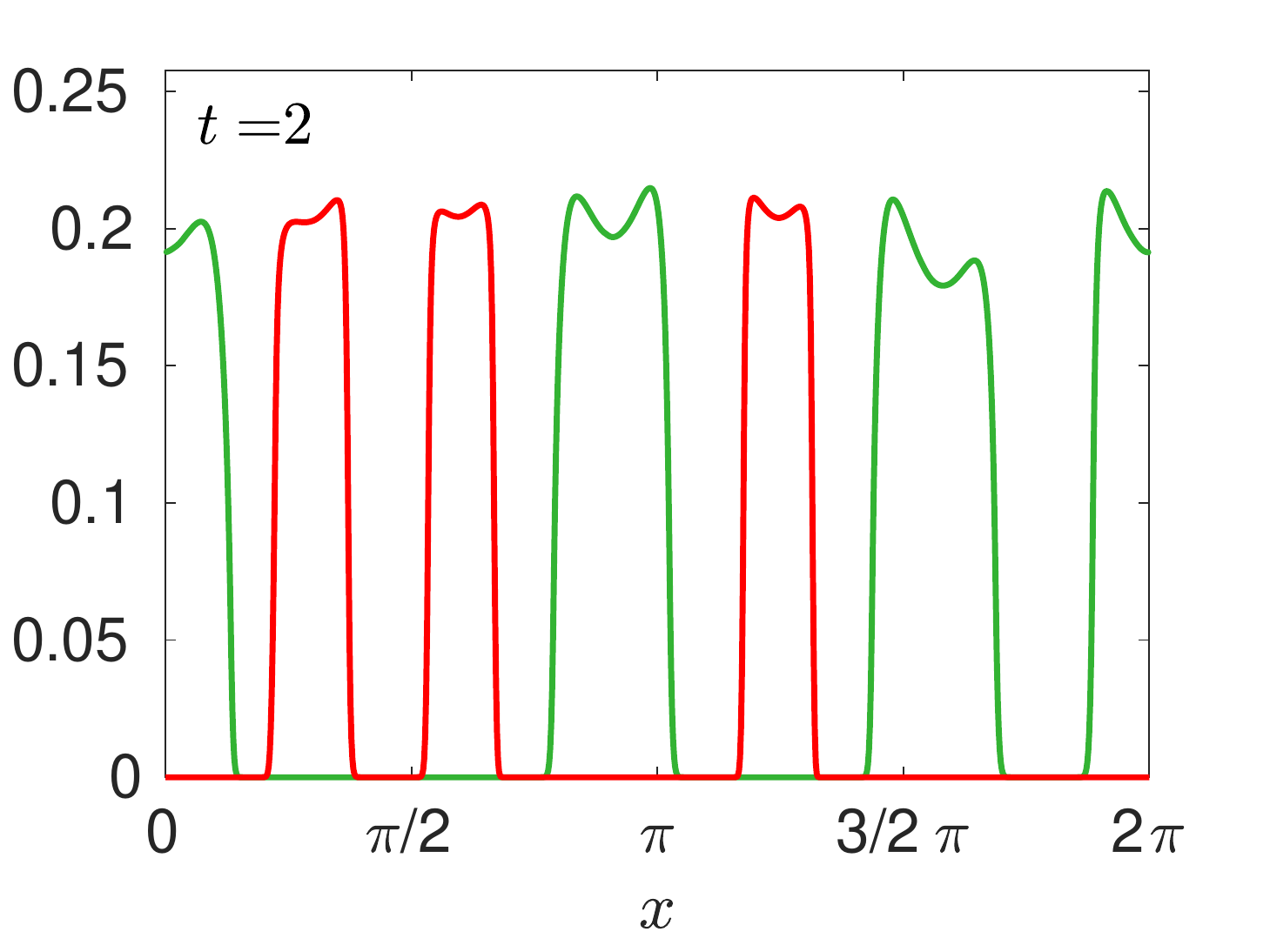}
		\includegraphics[width=0.33\textwidth]{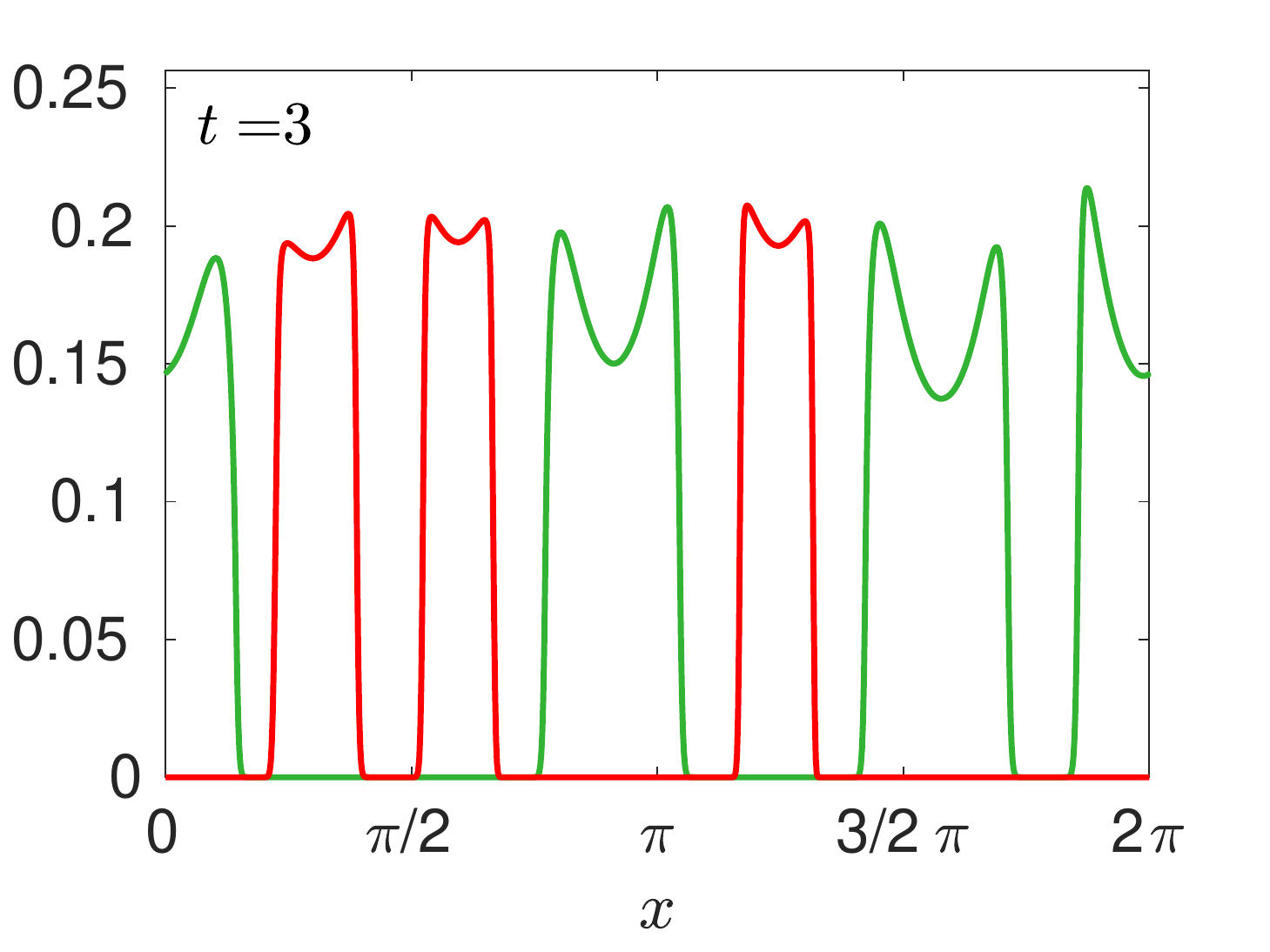}\includegraphics[width=0.33\textwidth]{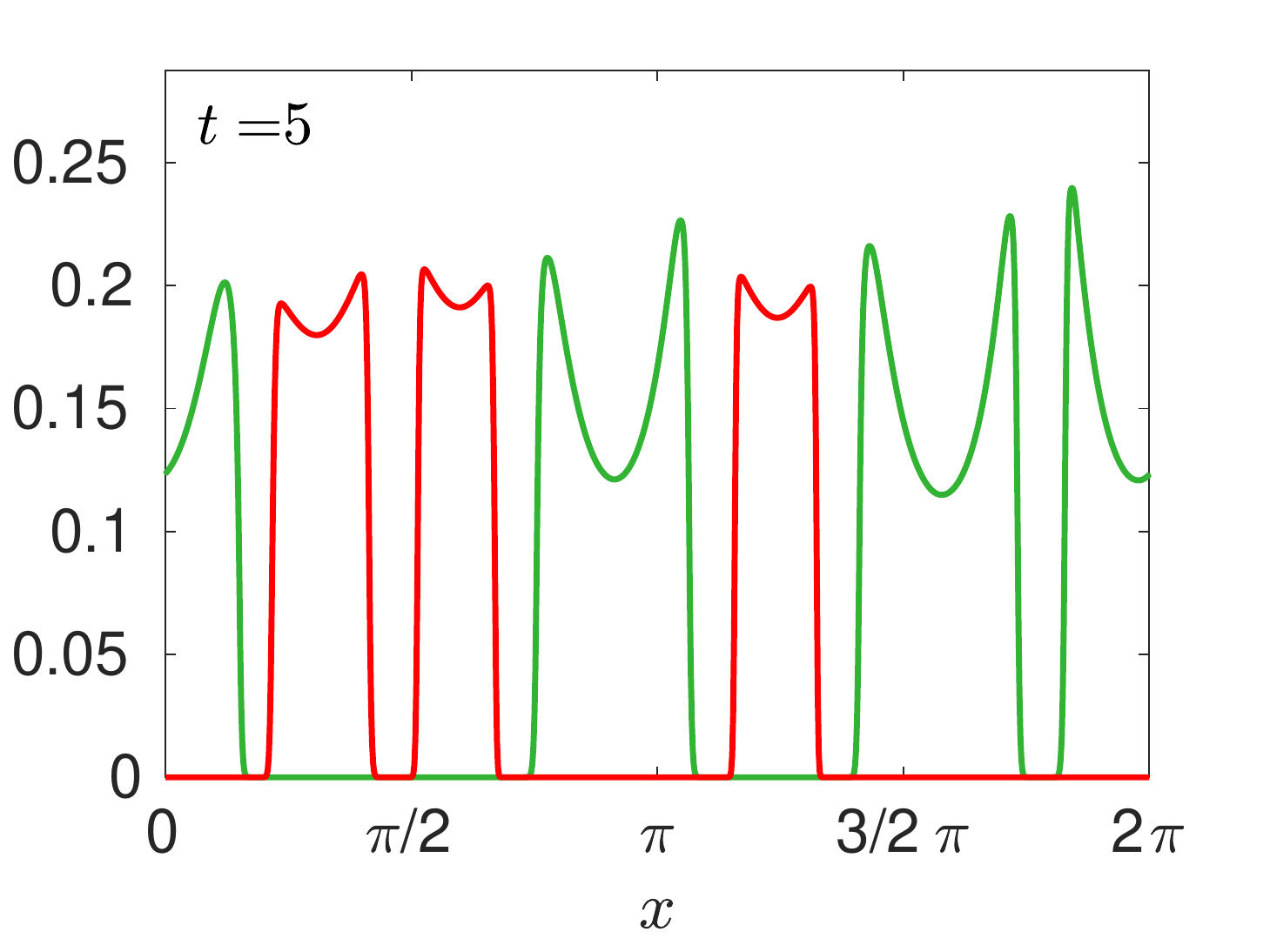}\includegraphics[width=0.33\textwidth]{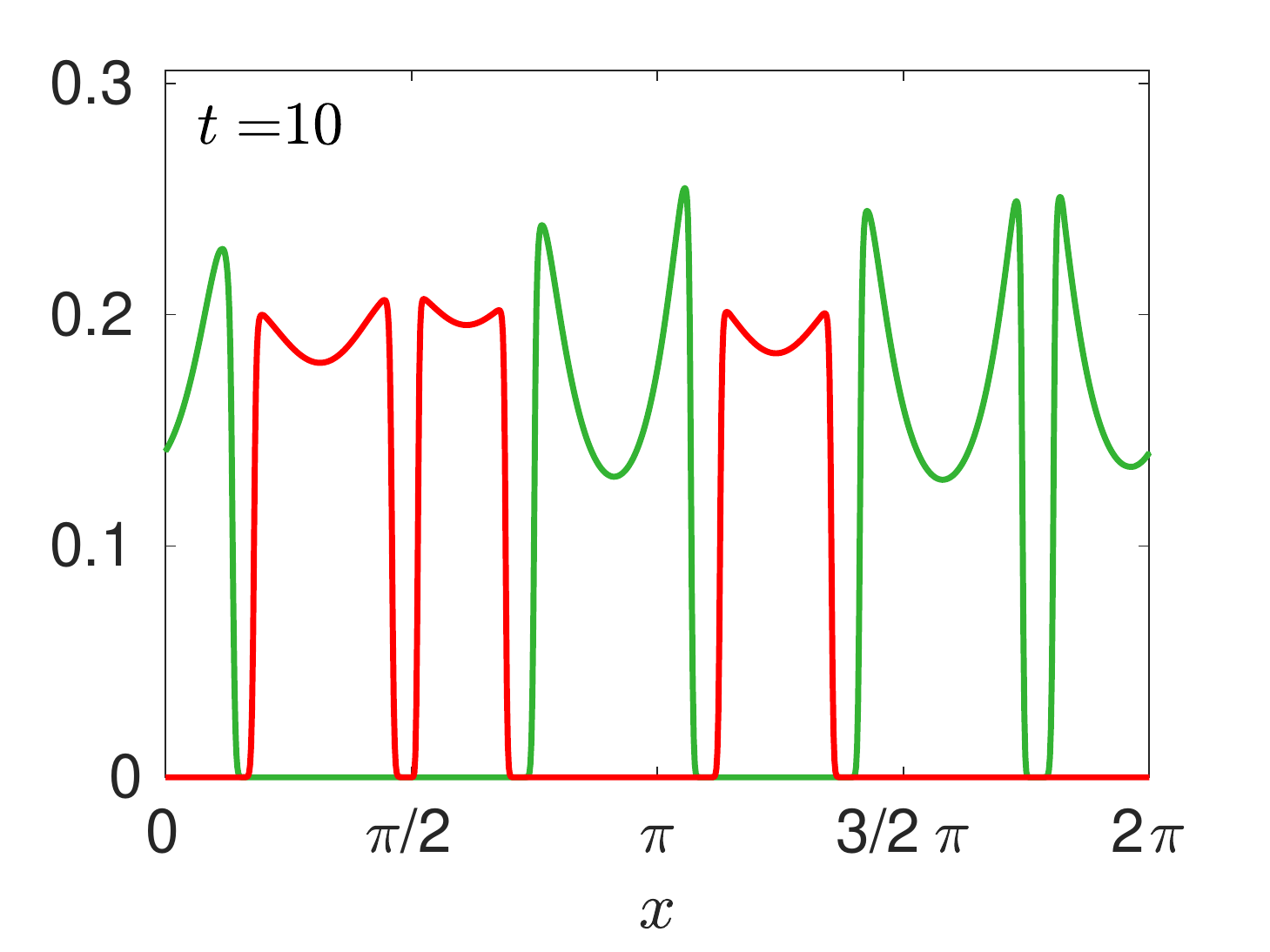}
		\includegraphics[width=0.33\textwidth]{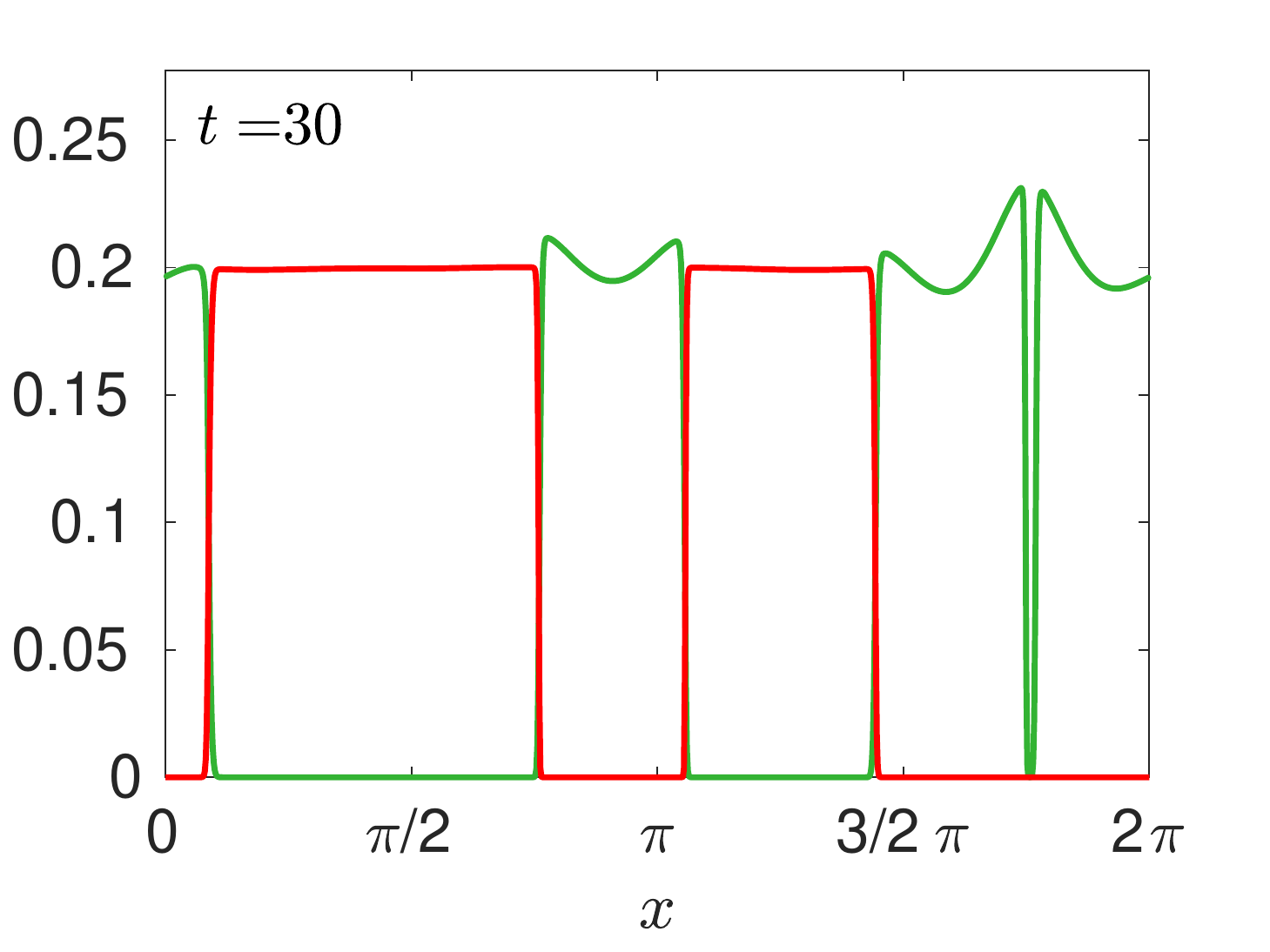}\includegraphics[width=0.33\textwidth]{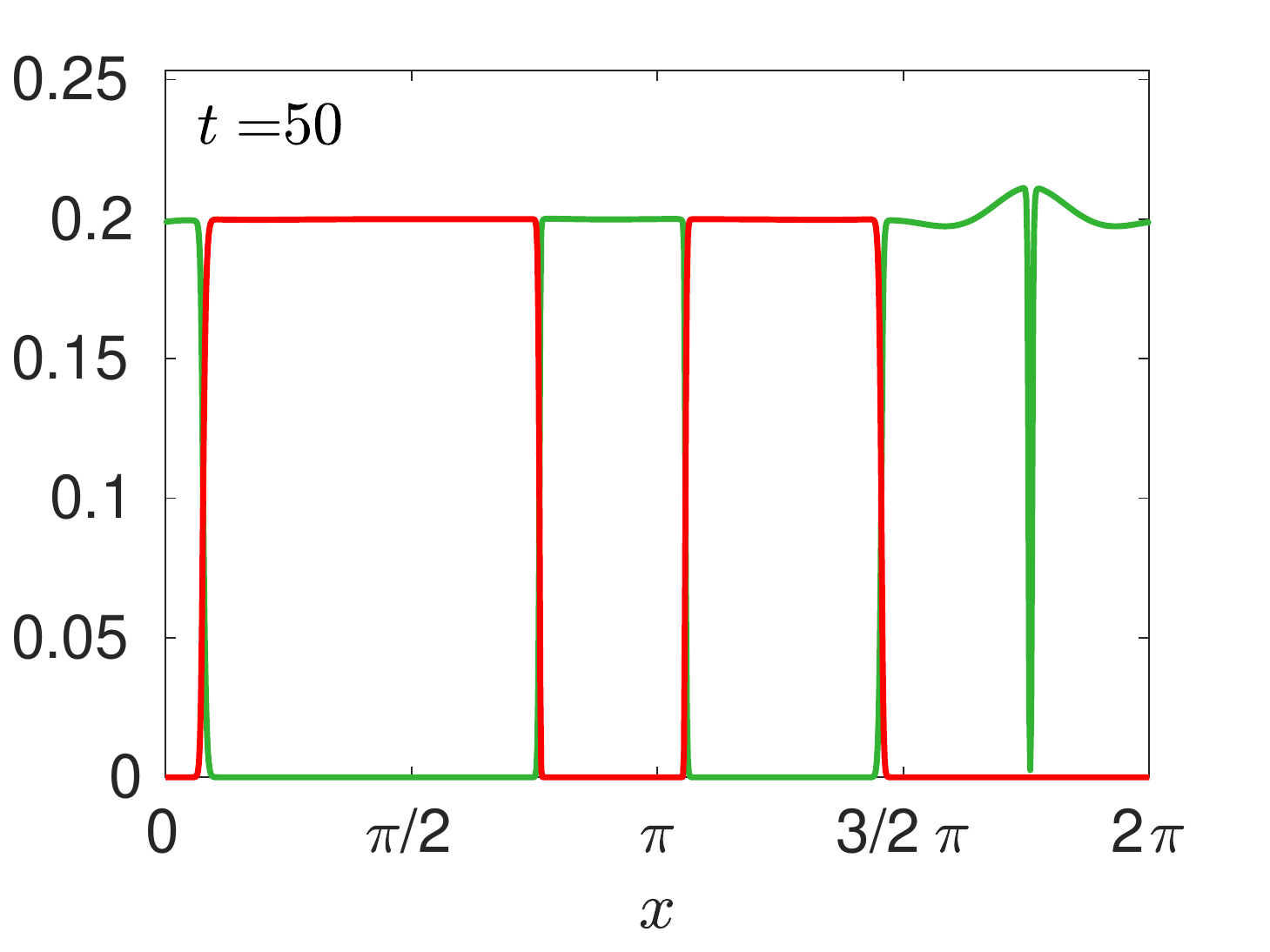}\includegraphics[width=0.33\textwidth]{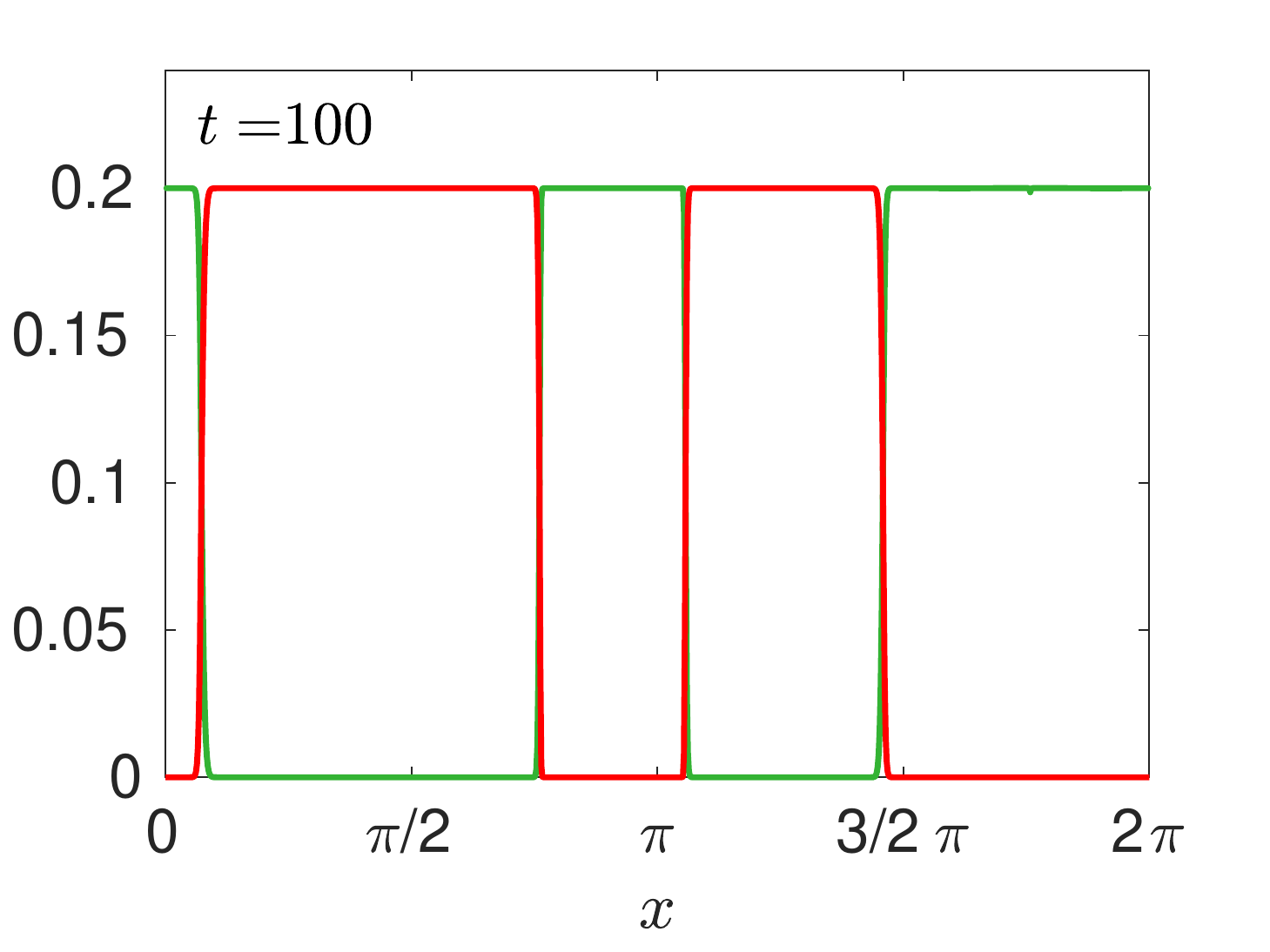}
	\end{center}
	\caption{\textit{The simulation of system \eqref{1.1} with reaction functions as \eqref{6.1.1} and kernel $ \rho $ as Gaussian in \eqref{6.1}. The green curves represent species $ u_1 $, the red represents species $ u_2 $. We set our parameters as in \eqref{6.2}. Thus, one has $ r_1=r_2=0.2 $ which implies the coexistence of the two species and after $ t=100  $ the distributions of the two species stay the same.  }}
	\label{Figure5}
\end{figure}
	As for the simulation in Figure \ref{Figure5}, we can see the same coexistence as in Figure \ref{Figure3} and the sum of the two populations
	\[ (u_1+u_2)(t,x) \xrightarrow{L^1} r, \quad t\to \infty. \] 
	However, the final patterns of two species at $ t=100 $ in Figure \ref{Figure5}. (i) and Figure \ref{Figure3}. (i)  are evidently different.

\subsection{The case $ r_1\neq r_2 $ implies exclusion principle}
Our second scenario complements the results in Theorem \ref{THM5.5}, without loss of generality we allow $ r_1 >r_2 $. This means species $ u_1 $ is favored in the environment. We call this scenario the exclusion principle. Our parameters for the reaction functions \eqref{6.1.1} are given as
\begin{equation}\label{6.6}
b_1=1.5,\;b_2=1.2,\;\mu=1,\;\gamma=1,\;K=0.2.
\end{equation}
therefore we can calculate 
\[r_1=0.5>r_2=0.2.   \]
As before, we trace the curve $ t\longmapsto E[(u_1,u_2)(t,\cdot)] $ in numerical simulation and  we also plot the curve $ t\longmapsto E_i[u_i(t,\cdot)],\;i=1,2, $ respectively. Moreover, we plot the variation of the mean value of the total number of individuals for each species.
\begin{figure}[H]
	\begin{center}
		\includegraphics[width=0.5\textwidth]{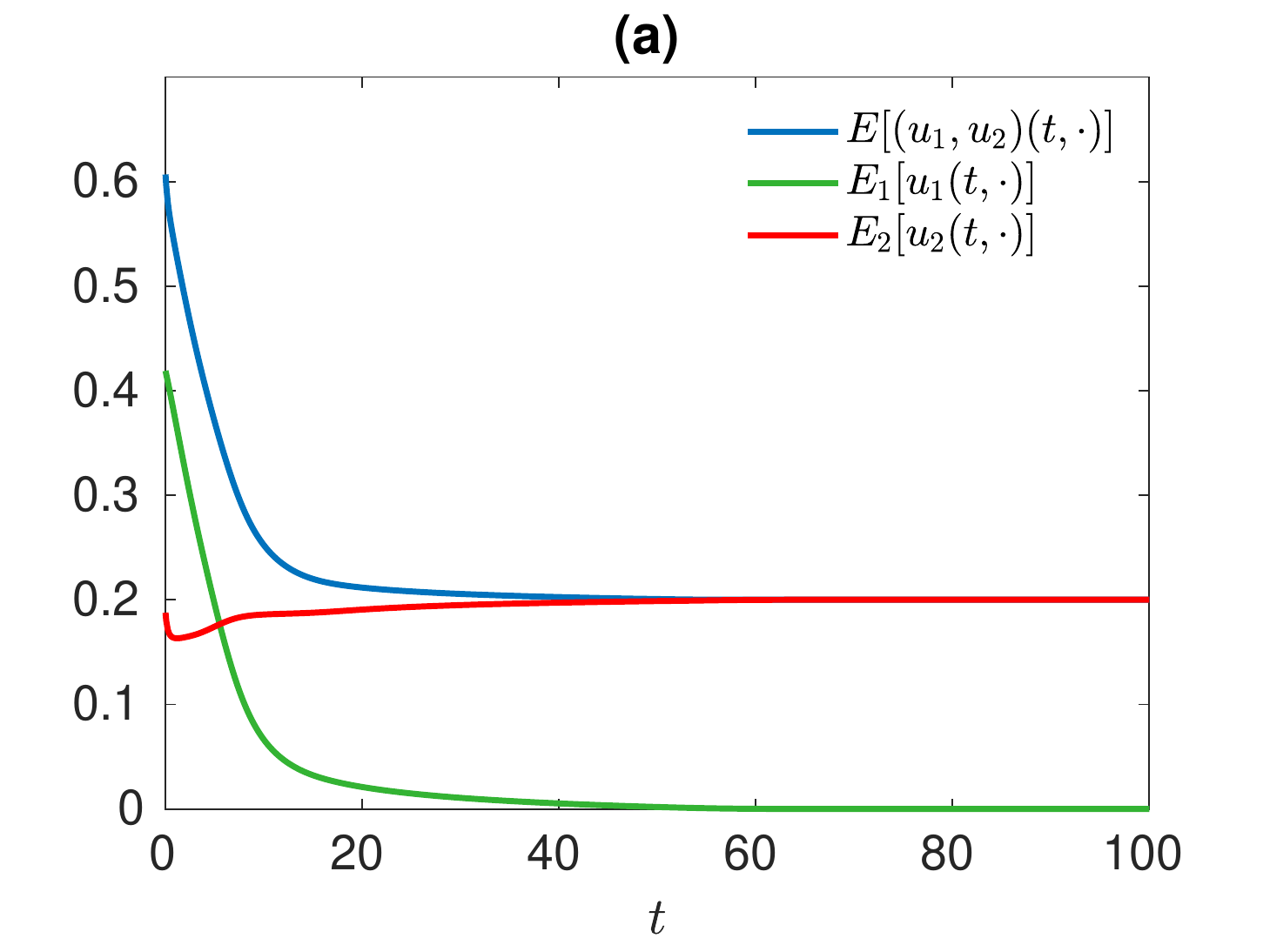}\includegraphics[width=0.5\textwidth]{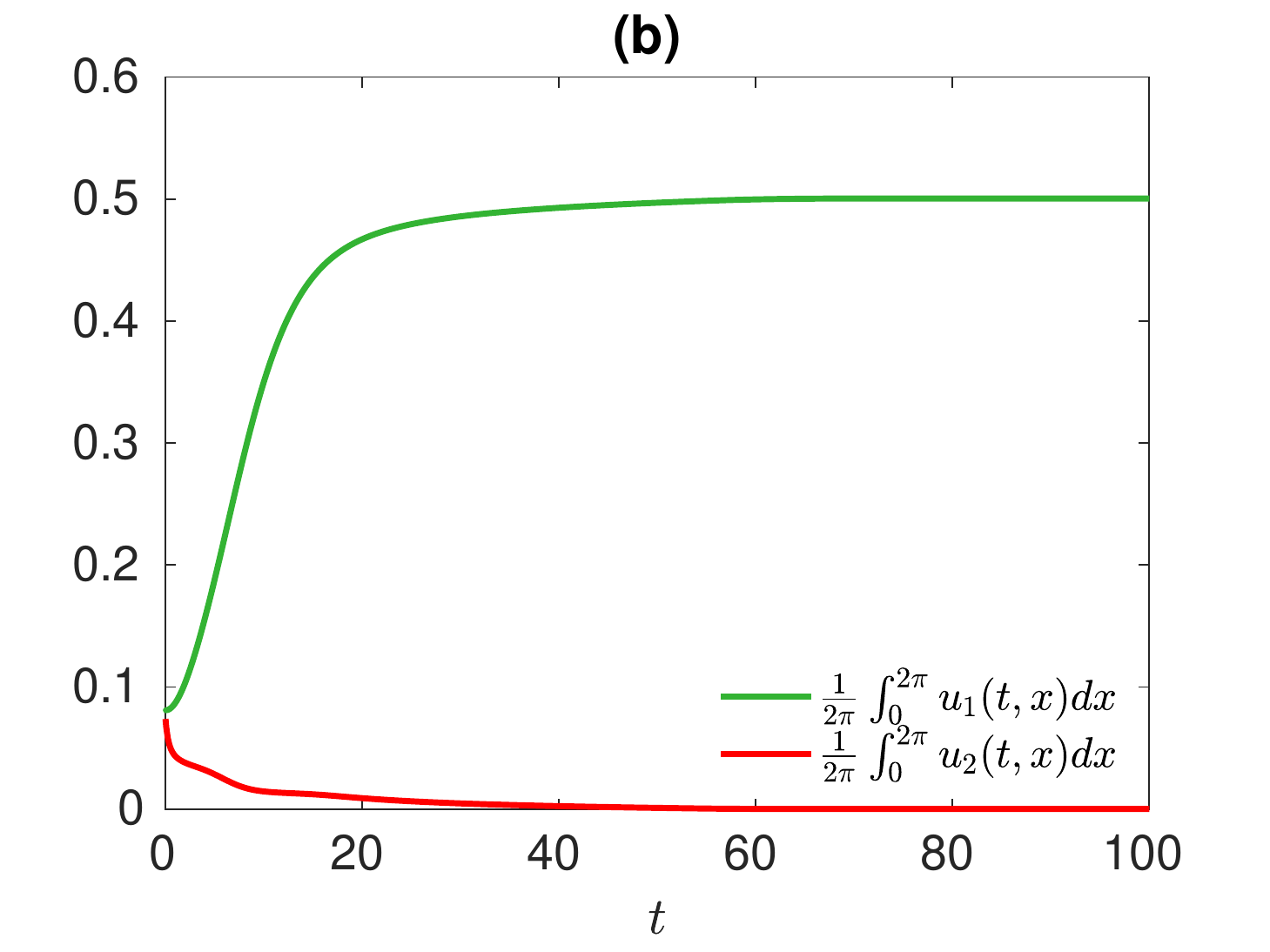}
	\end{center}
	\caption{\textit{
			Figure (a) is the plot of the curves $ t\longmapsto E_i[u_i(t,\cdot)],\;i=1,2, $ (green and red curves respectively) and $ t\longmapsto E[u_1,u_2)(t,\cdot)] $ (blue curve) under system \eqref{1.1} with reaction functions as \eqref{6.1.1} and kernel $ \rho $ as Gaussian in \eqref{6.1}.  We set our parameters as in \eqref{6.6}.  Thus, one has $ r_1=0.5>r_2=0.2 $. We trace the curve $ t\longmapsto E[(u_1,u_2)(t,\cdot)] $ which is decreasing. We can also see the curve $ t\longmapsto E_1[u_1(t,\cdot)] $ is decreasing while $ t\longmapsto E_2[u_2(t,\cdot)] $ is not monotone decreasing and their limits exist.  Figure (b) is the plot of mean value of individuals for each species.}}
	\label{Figure7}
\end{figure}
By tracing the curve $ t\longmapsto E[(u_1,u_2)(t,\cdot)] $, we can see from Figure \ref{Figure7} that it is strictly decreasing and it confirms again the result which has been proved in Theorem \ref{THM4.6}. We can also see that the curve $ t\longmapsto E_1[u_1(t,\cdot)] $ is decreasing while $ t\longmapsto E_2[u_2(t,\cdot)] $ is not monotone decreasing and their limits are 
\[ \lim_{t\to\infty}E_1[u_1(t,\cdot)] =0,\quad \lim_{t\to\infty}E_2[u_1(t,\cdot)] =r_2. \]
If we have $ E_{1,\infty}=0,E_{2,\infty}=r_2 $, since $ c_i(x)\in [0,1],\;a.e.\; x\in \T $ for $ i=1,2 $ and by equation \eqref{6.3} one obtains $ c_1(x)=1,\;c_2(x)=0 $. Therefore, we have $ c_1(x)+c_2(x)=1,\;a.e.\; x\in \T $ and the convergence in Theorem \ref{THM5.5} is in the sense of $ L^1 $ (see Remark \ref{REM5.6})
\[ u_1(t,x)\xrightarrow{L^1}r_1,\quad u_2(t,x)\xrightarrow{L^1}0,\;t\to \infty,  \]
and
\[ (u_1+u_2)(t,x)\xrightarrow{L^1}  r_1,\;t\to \infty. \]
This means if $ r_1>r_2 $ (resp. $ r_2>r_1 $), the species $ u_1 $ will exclude $ u_2 $ (resp. $ u_2 $ will exclude $ u_1 $) when $ t $ tends to infinity. Therefore, we can conclude the exclusion principle as in the beginning of this section. We plot the evolution of the solution as follows.
\begin{figure}[H]
	\begin{center}
	\includegraphics[width=0.33\textwidth]{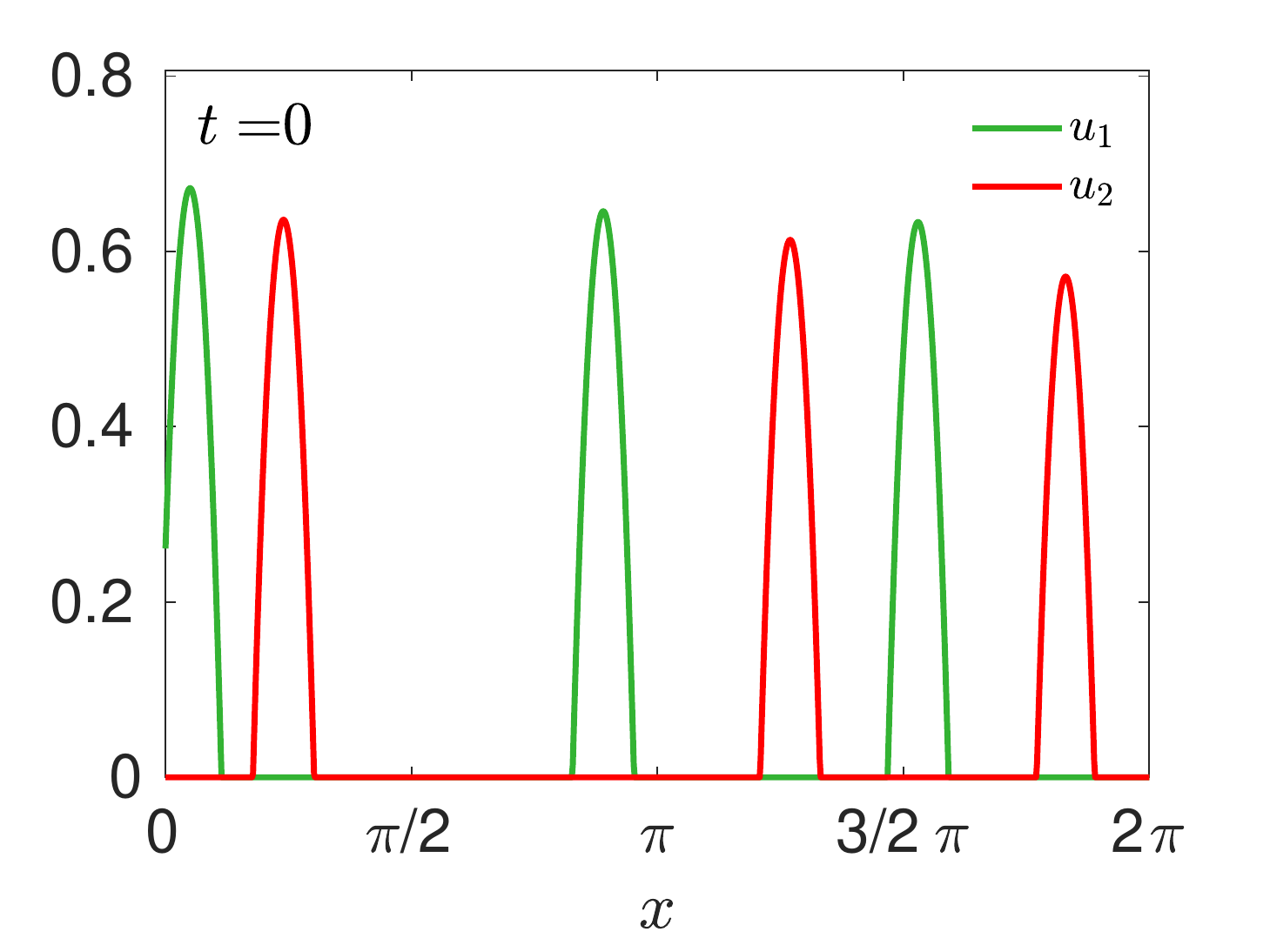}\includegraphics[width=0.33\textwidth]{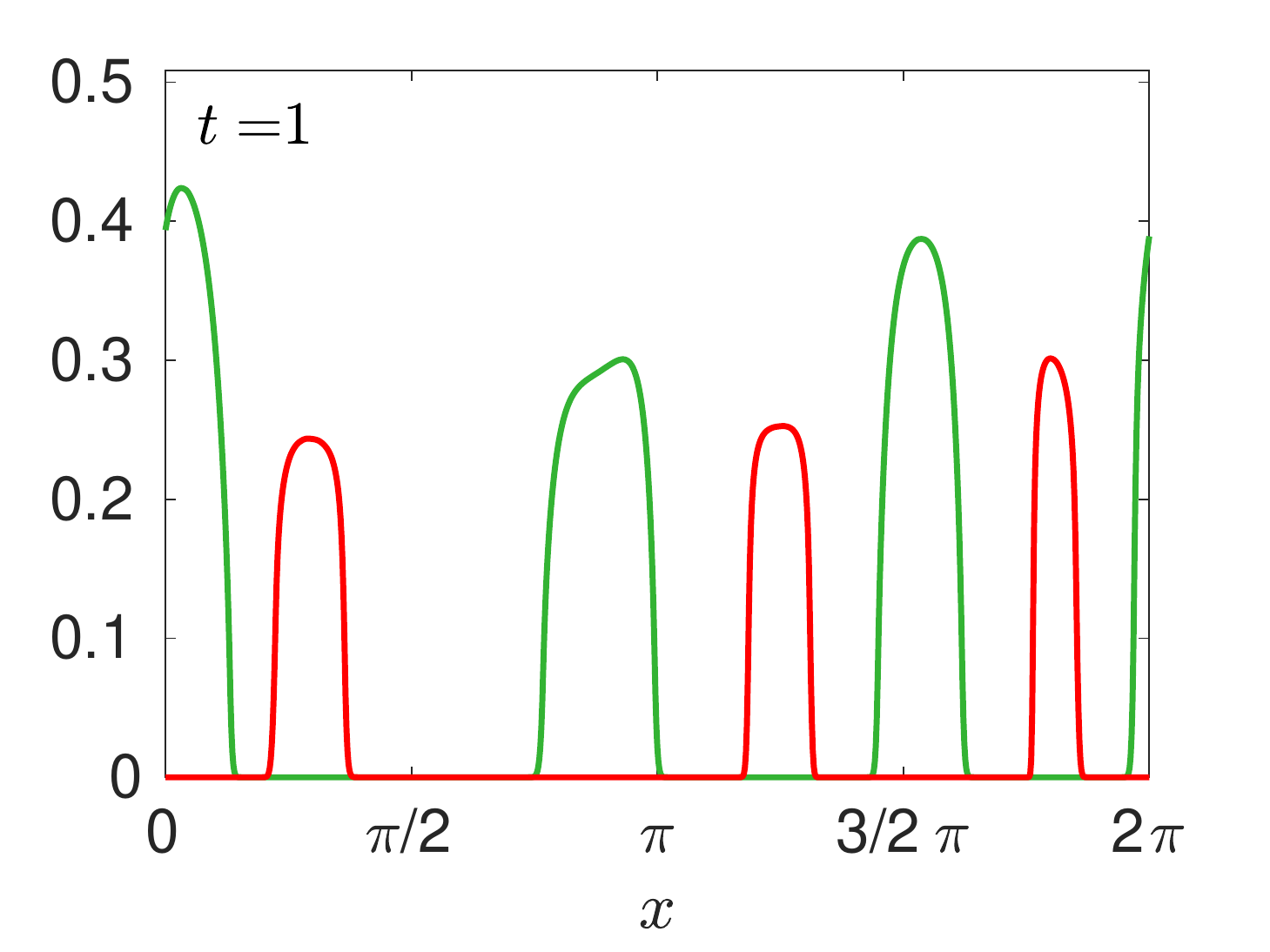}\includegraphics[width=0.33\textwidth]{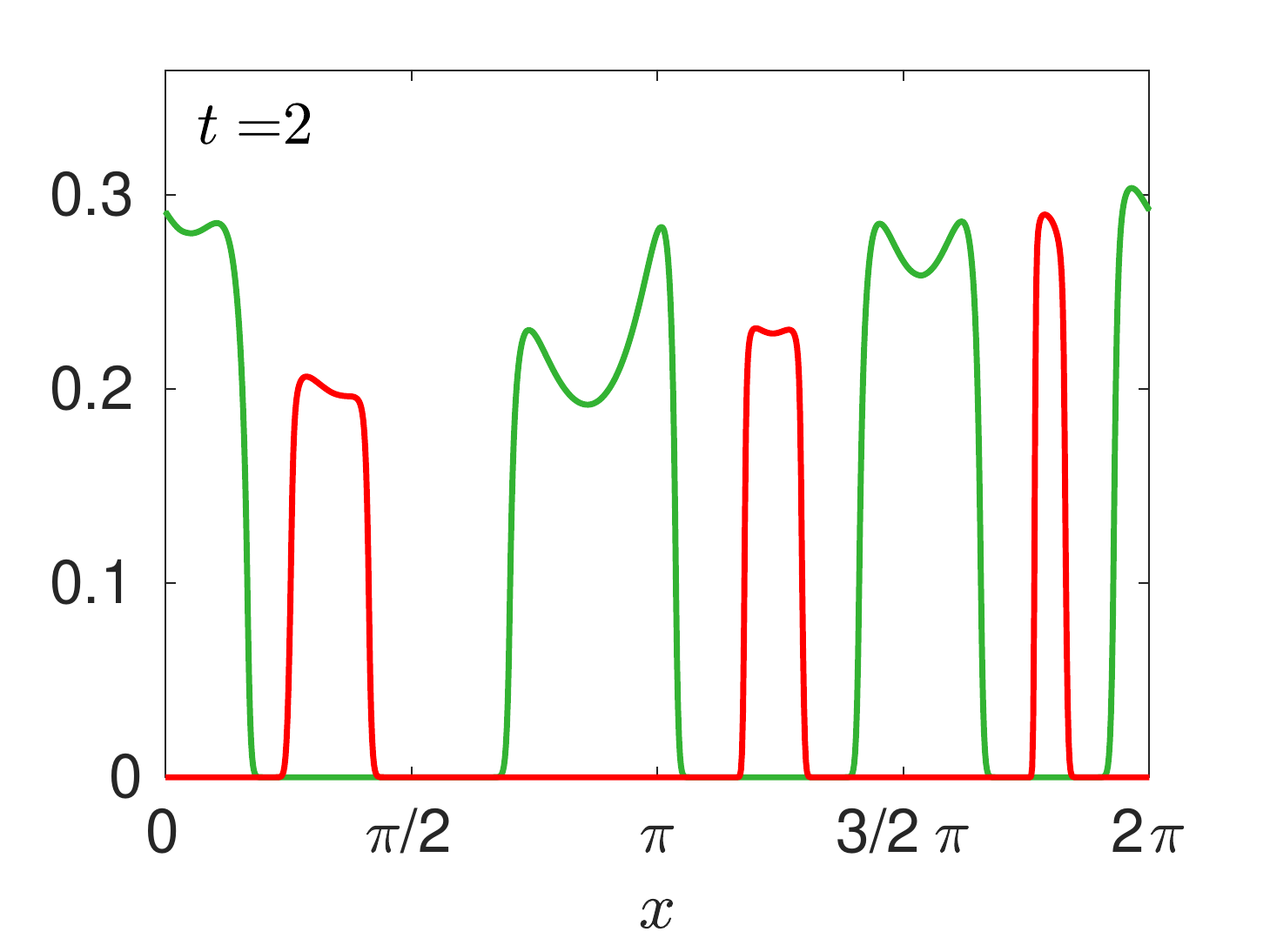}
	\includegraphics[width=0.33\textwidth]{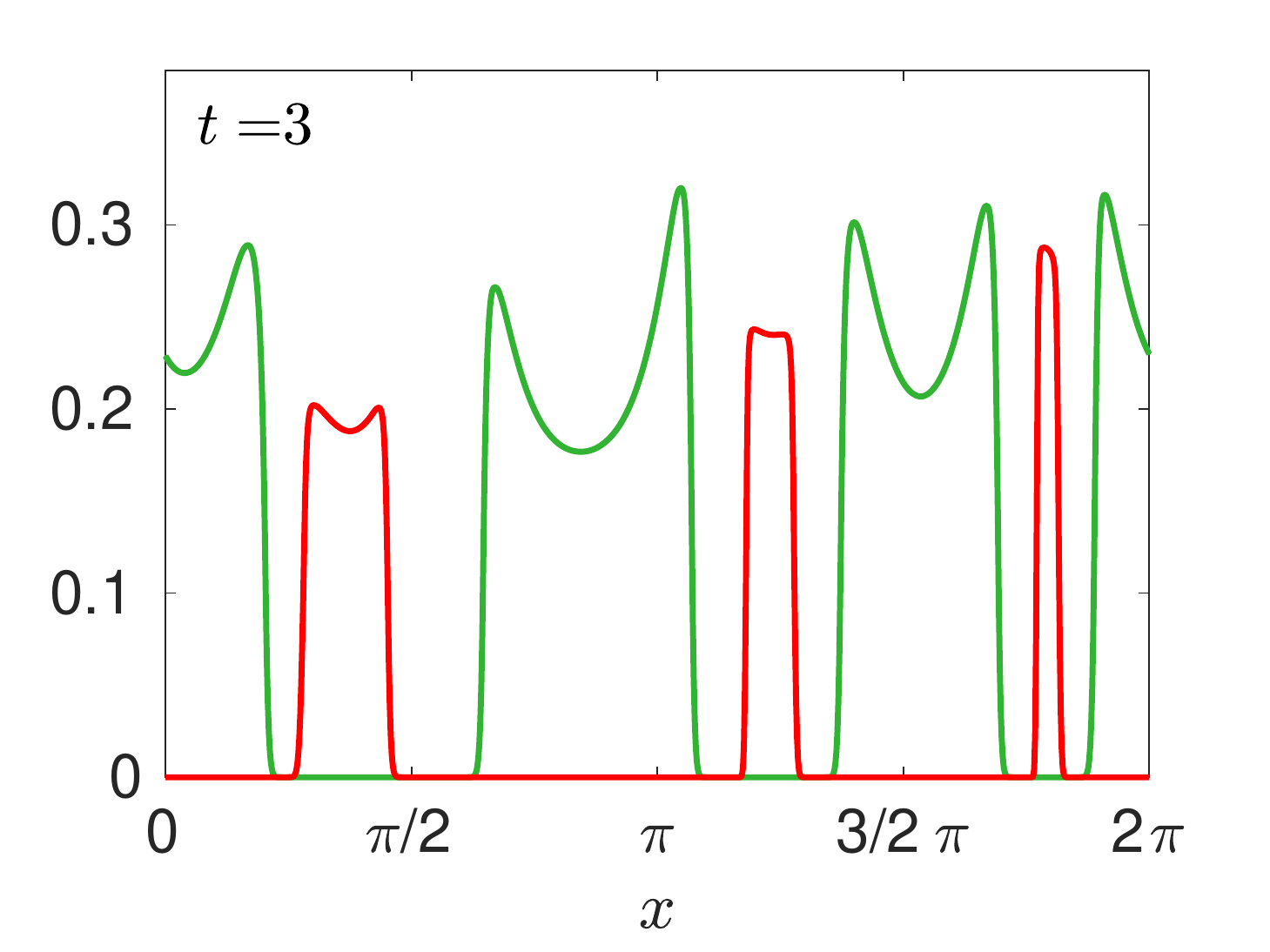}\includegraphics[width=0.33\textwidth]{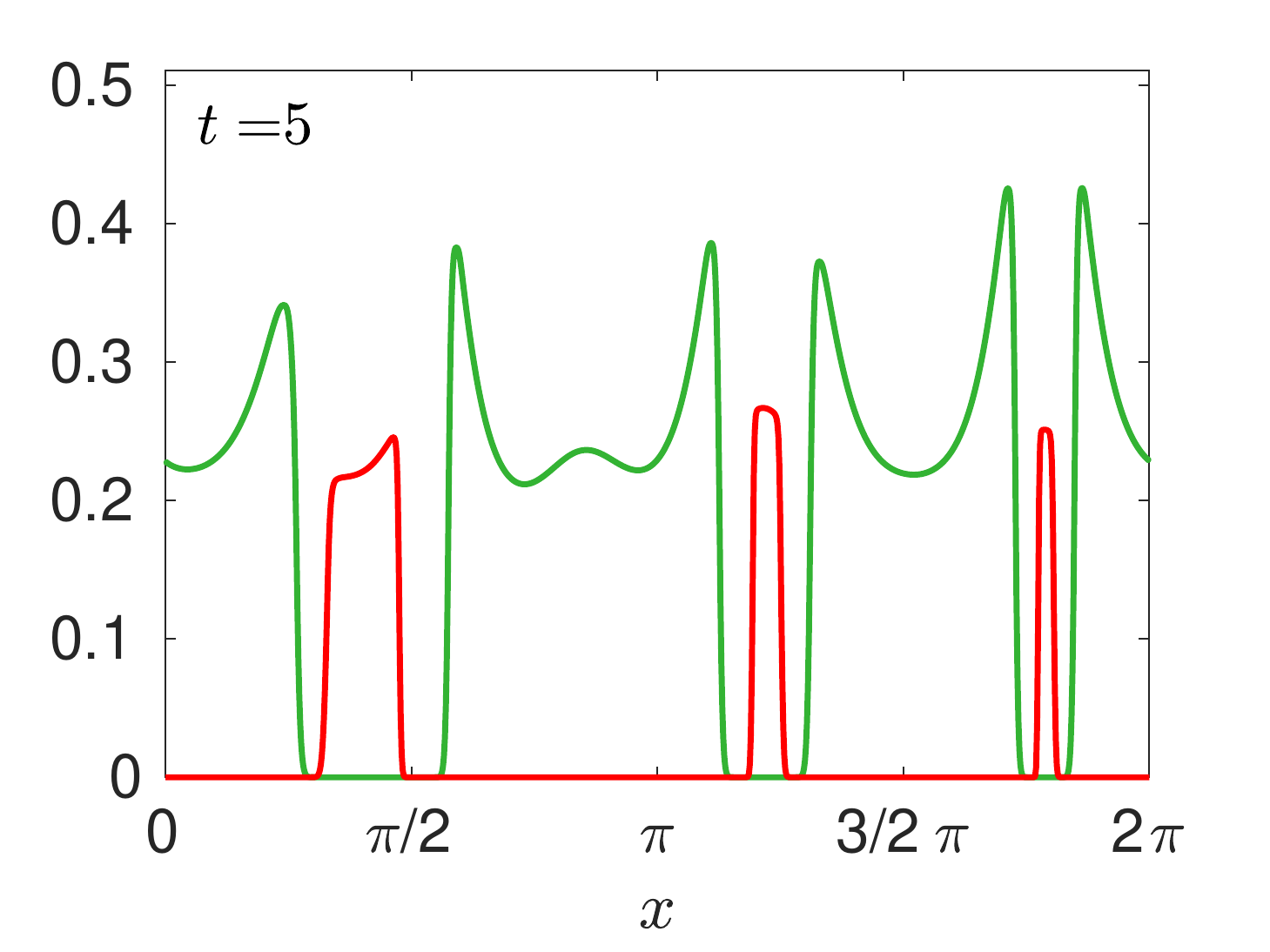}\includegraphics[width=0.33\textwidth]{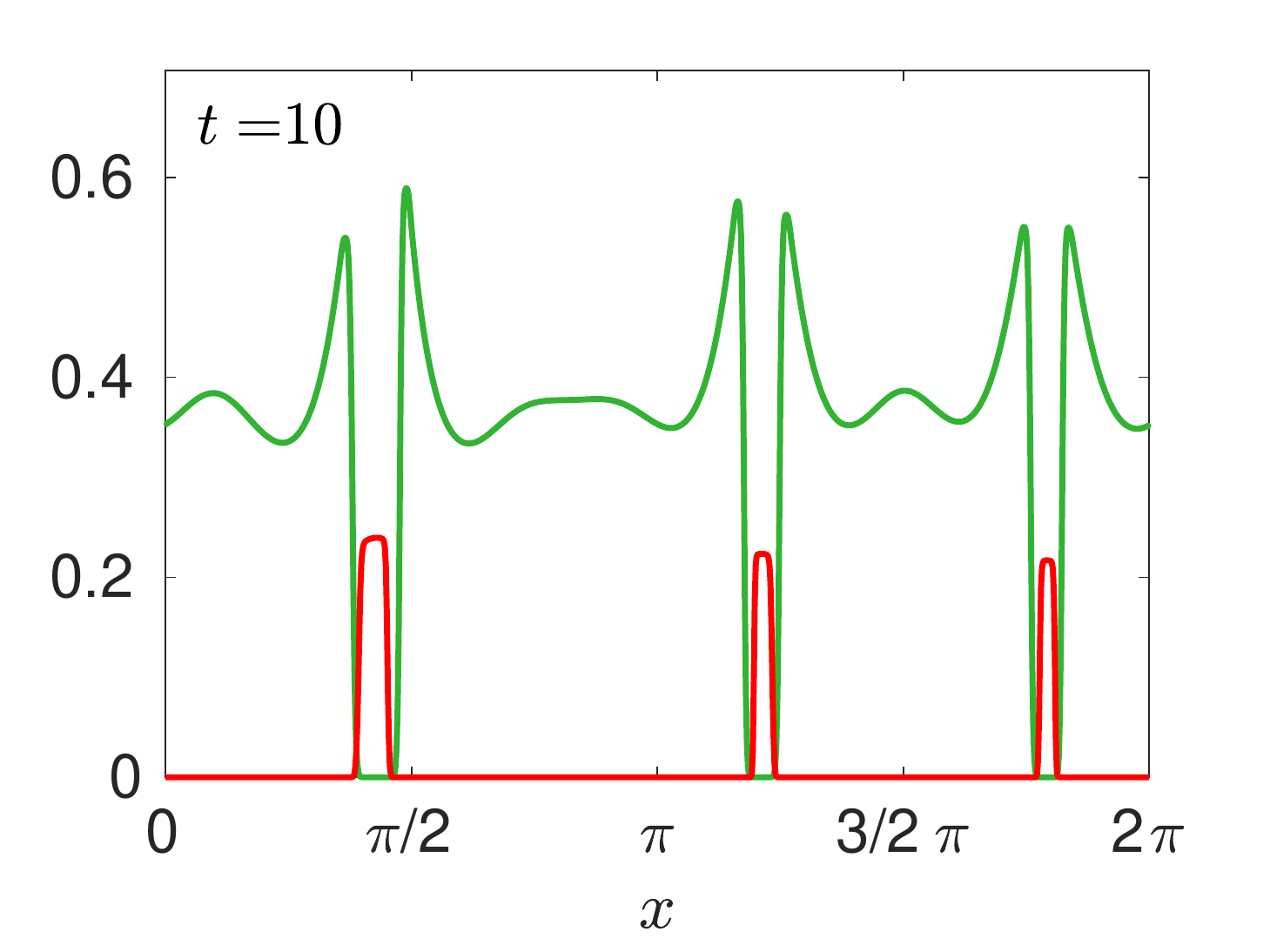}
	\includegraphics[width=0.33\textwidth]{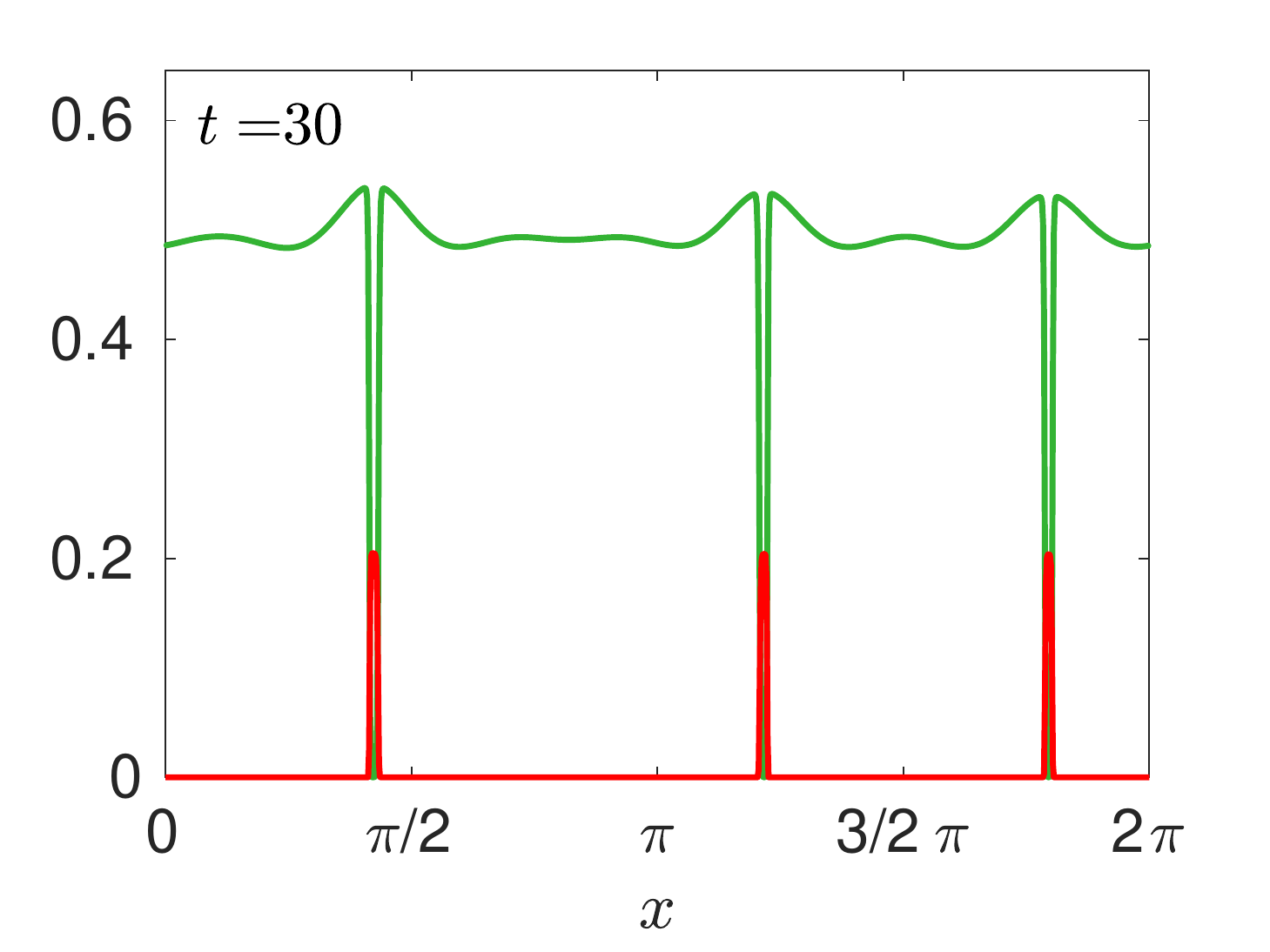}\includegraphics[width=0.33\textwidth]{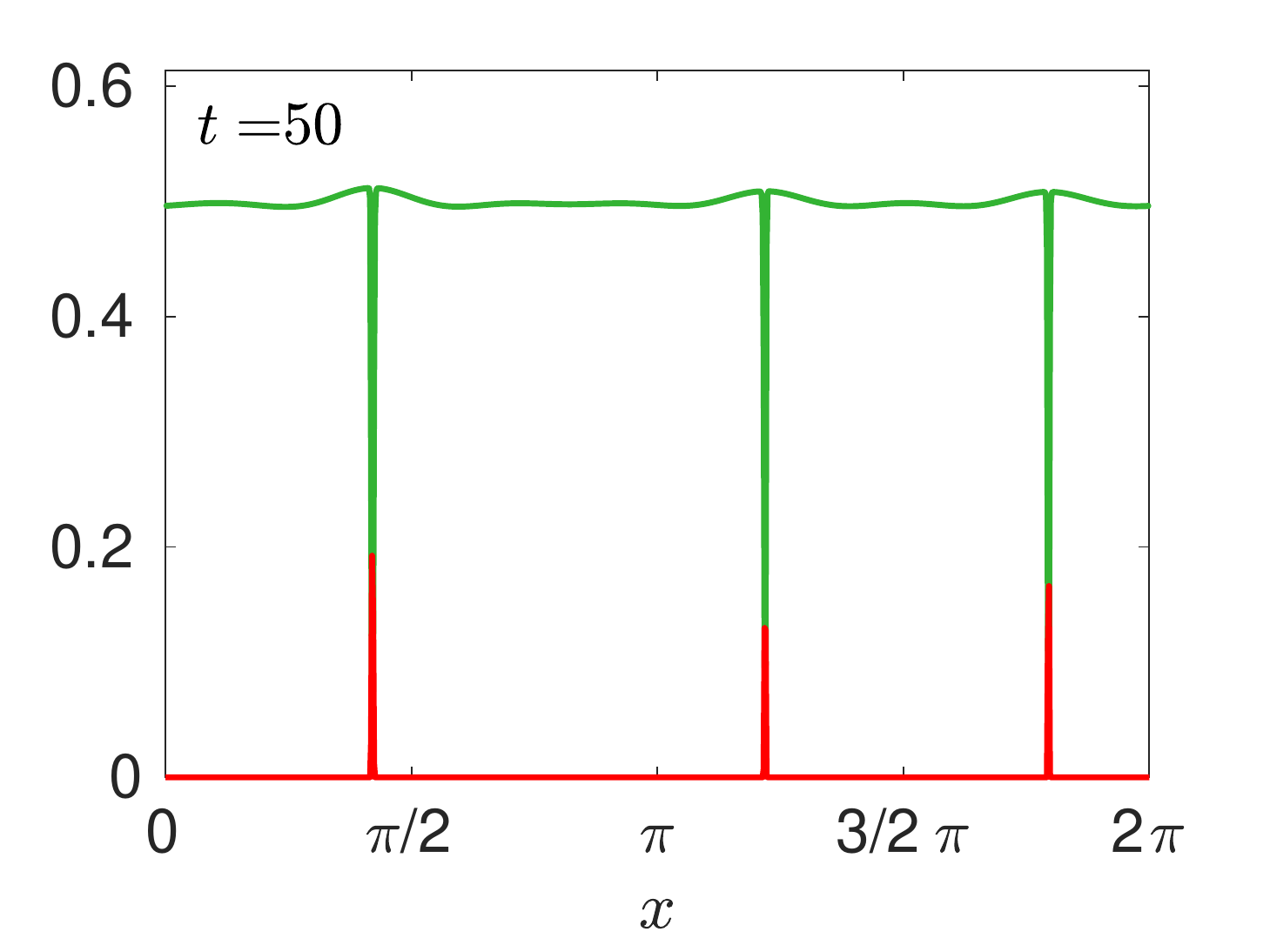}\includegraphics[width=0.33\textwidth]{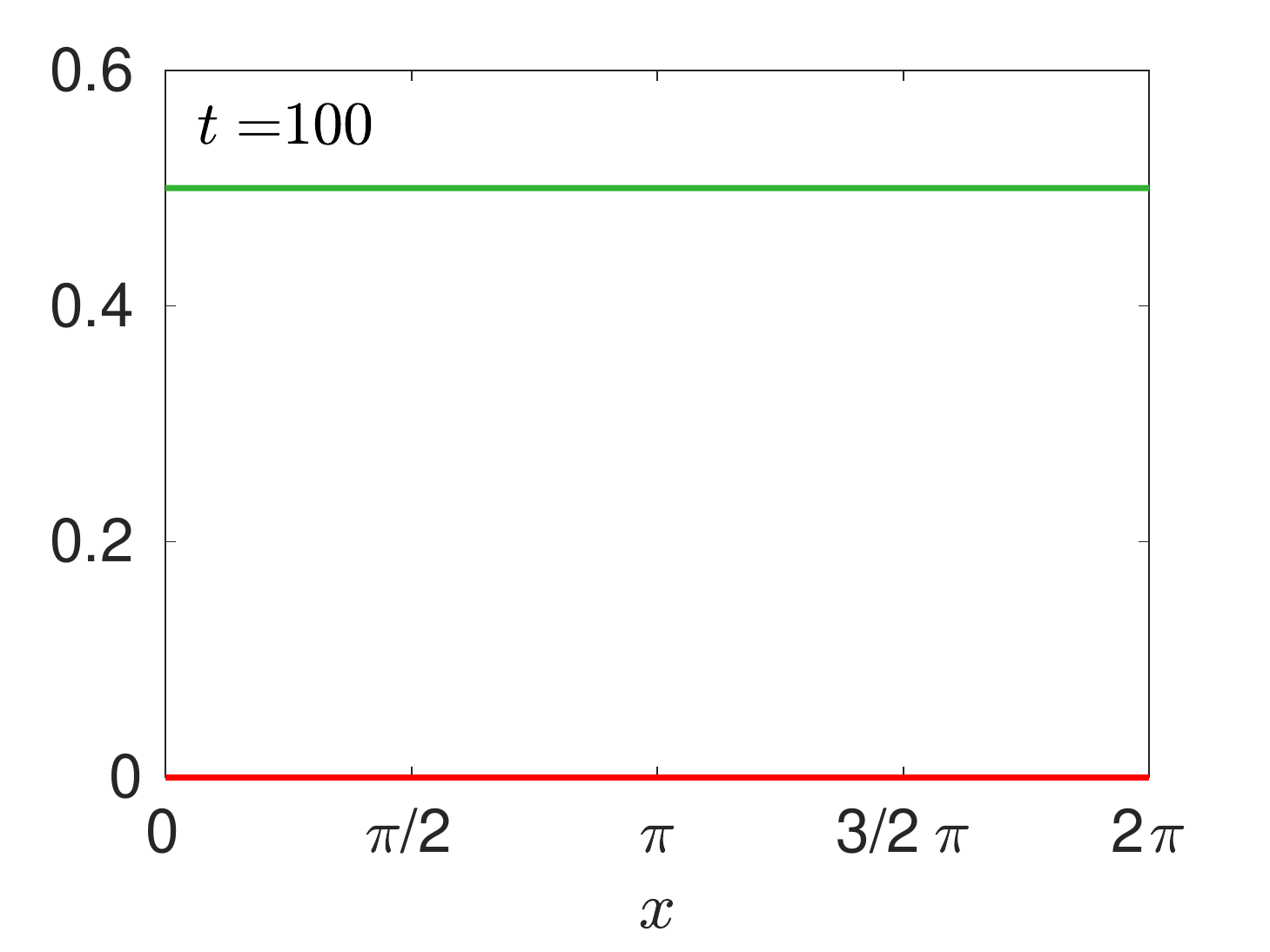}
	\end{center}
	\caption{\textit{The simulation of system \eqref{1.1} with kernel $ \rho $ as in \eqref{6.1}. The green curves represent species $ u_1 $, the red represents species $ u_2 $. We set our parameters as in \eqref{6.6}. Thus, one has $ r_1=0.5>r_2=0.2 $.  As we have $ r_1>r_2 $, we see the principle of exclusion of the two species and the populations maintain the segregation property as time evolves and after $ t=100  $ the distributions of the two species stay the same.  }}
	\label{Figure8}
\end{figure}
In the simulation of Figure \ref{Figure8}, species $ u_1 $ shows its dominance over $ u_2 $ when $ t=5 $. As for asymptotic behavior, in the last figure when $ t=100 $, we can see species $ u_1 $ crowd out species $ u_2 $ completely.
\\
\bigskip

\noindent\textbf{Acknowledgement} 
The authors would like to express their gratitude to Prigent Corentin (Ph.D student at the institute of mathematics of Bordeaux) for his suggestions in the numerical schema (see Appendix).
\section{Appendix}
For simplicity, we give the numerical scheme for the following one species and one dimensional model with periodic boundary condition
\begin{equation*}
\left\lbrace\begin{array}{rll}
\partial_t u+\partial_x\left(u{\bf v}\right)&=&uh(u),\;t>0,\;x\in\T,\\[0.25cm]
\mathbf{v}(t,x)&=&-\partial_x(K\circ u(t,\cdot))(x),\\[0.25cm]
u(0,x)&=&u_0(x)\in L_{per}^1(\T).
\end{array}\right.
\end{equation*}
The numerical method used is based on upwind scheme. We refer to 	\cite{Engquist1981, Leveque2002} for more results about this subject. We briefly illustrate our numerical scheme in this section: the approximation of the convolution term is as follows
\begin{align*}\label{}
(K\circ u(t,\cdot))(x)
=\int_{\T} u(t,y)K(x-y)dy
\approx\sum_{j}K(x-x_j)u(t,x_j)\Delta x.
\end{align*}
In addition, we define
\begin{equation*}
l_i^n:=\sum_{j}K(x_i-x_j)u(t_n,x_j)\Delta x,
\end{equation*}
for $ i=1,2,...,M,\ n=0,1,2,...,N $.
We use the numerical scheme as illustrated in \cite{Hillen2006,Toro2013} to deal with the nonlocal convection term and the scheme reads as follows
\begin{align*}
	&u^{n+1}_i=u^n_i+ \frac{\Delta t}{\Delta x}\bigg(\phi(u_{i+1}^-,u_{i}^+)-\phi(u_{i}^-,u_{i-1}^+)\bigg)+\Delta t \; u_i^nh(u_i^n),\\\nonumber
	&i=1,2,...,M,\ n=0,1,2,...,N,
\end{align*}
with $ \phi(u_{i+1}^-,u_{i}^+),\; \phi(u_{i}^-,u_{i-1}^+) $ defined as
\begin{equation*}
	\begin{aligned}
		\phi(u_{i+1}^-,u_{i}^+)= v_{i+\frac{1}{2}}^n\frac{u_{i+1}^-+u_{i}^+}{2}-|v_{i+\frac{1}{2}}^n|\frac{u_{i+1}^--u_{i}^+}{2},\\
		\phi(u_{i}^-,u_{i-1}^+)= v_{i-\frac{1}{2}}^n\frac{u_{i}^-+u_{i-1}^+}{2}-|v_{i-\frac{1}{2}}^n|\frac{u_{i}^--u_{i-1}^+}{2},
	\end{aligned}
\end{equation*}
where
\begin{equation*}
	v_{i+\frac{1}{2}}^n=\dfrac{l_{i+1}^n-l_i^n}{\Delta x},\ i=0,1,2,\cdots,M,
\end{equation*}
and
\begin{equation*}
	\begin{array}{ll}
		u_{i}^-&=u_{i}^n-\dfrac{1}{2} \mathrm{minmod} (u_{i+1}^n-u_{i}^n,u_{i}^n-u_{i-1}^n),\\[0.25cm]
		u_{i}^+&=u_{i}^n+\dfrac{1}{2} \mathrm{minmod} (u_{i+1}^n-u_{i}^n,u_{i}^n-u_{i-1}^n),
	\end{array}\ i=0,1,2,\cdots,M,
\end{equation*}
where the function $ \mathrm{minmod}(a,b) $ is defined as 
\[ \mathrm{minmod}(a,b)=\begin{cases}
\mathrm{sign}(a) \min \lbrace a,b\rbrace,\; &\text{if}\;\mathrm{sign}(a)=\mathrm{sign}(b),\\
0,\;& \text{otherwise.}
\end{cases} \]
By the periodic boundary condition, one has
$ v_{\frac{1}{2}}^n=v_{M+\frac{1}{2}}^n$ and $ u_0^n=u_M^n,\;  u_1^n=u_{M+1}^n $. Thus,
\[ u_0^{\pm}=u_M^{\pm},\;u_1^{\pm}=u_{M+1}^{\pm}. \]

\end{document}